\renewcommand{\epsilon}{\varepsilon}
\numberwithin{equation}{section}
\newtheoremstyle{thmlemcorr}{10pt}{10pt}{\itshape}{}{\bfseries}{.}{10pt}{{\thmname{#1}\thmnumber{
#2}\thmnote{ (#3)}}}
\newtheoremstyle{thmlemcorr*}{10pt}{10pt}{\itshape}{}{\bfseries}{.}\newline{{\thmname{#1}\thmnumber{
\newtheoremstyle{defi}{10pt}{10pt}{\itshape}{}{\bfseries}{.}{10pt}{{\thmname{#1}\thmnumber{
#2}\thmnote{ (#3)}}}
\newtheoremstyle{remexample}{10pt}{10pt}{}{}{\bfseries}{.}{10pt}{{\thmname{#1}\thmnumber{
#2}\thmnote{ (#3)}}}
\newtheoremstyle{ass}{10pt}{10pt}{}{}{\bfseries}{.}{10pt}{{\thmname{#1}\thmnumber{
A#2}\thmnote{ (#3)}}}
\theoremstyle{thmlemcorr}
\newtheorem{theorem}{Theorem}
\numberwithin{theorem}{section}
\newtheorem{lemma}[theorem]{Lemma}
\newtheorem{corollary}[theorem]{Corollary}
\newtheorem{proposition}[theorem]{Proposition}
\theoremstyle{thmlemcorr*}
\newtheorem{theorem*}{Theorem}
\newtheorem{lemma*}[theorem]{Lemma}
\newtheorem{corollary*}[theorem]{Corollary}
\newtheorem{proposition*}[theorem]{Proposition}
\newtheorem{problem*}[theorem]{Problem}
\newtheorem{conjecture*}[theorem]{Conjecture}
\theoremstyle{defi}
\newtheorem{definition}[theorem]{Definition}
\theoremstyle{remexample}
\newtheorem{remark}[theorem]{Remark}
\theoremstyle{ass}
\newcommand{\Acal}{\mathcal{A}}
\newcommand{\Mcal}{\mathcal{M}}
\newcommand{\Ocal}{\mathcal{O}}
\newcommand{\Ucal}{\mathcal{U}}
\newcommand{\Vcal}{\mathcal{V}}
\newcommand{\NN}{\mathbb{N}}
\DeclareMathOperator{\diverg}{div}
\DeclareMathOperator{\curl}{curl}
\DeclareMathOperator{\dist}{dist}
\DeclareMathOperator{\rank}{rank}
\newcommand{\N}{\mathbb{N}}
\newcommand{\RR}{\mathbb{R}}
\newcommand{\weaklystar}{\overset{*}\rightharpoonup}
\newcommand{\Lin}{\mathrm{Lin}}
\newcommand{\epsi}{\epsilon}
\def\RI{{\mathbb R}}
\def\O{{\Omega}}
\def\o{{\omega}}
\def\B{{\cal B}}
\def\F{{\sl F}}
\def\RI{{\mathbb R}}
\DeclareMathOperator{\Tr}{Tr}
\def\scrk{{\mathscr k}}
\def\calO{\mathcal{O}}\def\totimes{\, \tilde\otimes \,}
\definecolor{Azul}{rgb}{0.372549,0.447059,0.784314}
\definecolor{aspargus}{rgb}{0.501961,0.501961,0}
\definecolor{vg}{rgb}{0.0, 0.40, 0.15}
\definecolor{jazzberryjam}{rgb}{0.65, 0.04, 0.37}
\definecolor{all}{rgb}{0.45, 0.14, 0.37} 
 \definecolor{Korange}{rgb}{0.945,0.561,0}
 \definecolor{Kblue}{rgb}{0,0.651,0.667}
 \definecolor{Korange}{rgb}{0.945,0.561,0}
 \definecolor{Kgreen}{rgb}{0.804,0.808,0}
 \definecolor{Kyellow}{rgb}{0.941,0.71,0}
 \definecolor{Mgray}{rgb}{0,0,0}    
\title[]{Junction in a thin multi-domain\\
 for nonsimple grade two materials in BH}
\author[Rita Ferreira]{Rita Ferreira}
\address{King Abdullah University of Science and Technology (KAUST), CEMSE
Division, Thuwal 23955-6900, Saudi Arabia}
\email{rita.ferreira@kaust.edu.sa}
\author[Jos\'e Matias]{Jos\'e Matias}
\address{University of Lisbon,
Portugal}
\email{jose.c.matias@tecnico.ulisboa.pt}
\author[Elvira Zappale]{Elvira Zappale}
\address{Department of Basic and Applied Sciences for Engineering,
Sapienza-University of Rome, via
A. Scarpa,  16 (00161) Roma, Italy}
\email{elvira.zappale1@uniroma1.it}
\begin{document}

%%%%%%%%%%%%%%%%%%%%%% ABSTRACT %%%%%%%%%%%%%%%%%%%%%%%%%%%%%%%%%
 
\maketitle

 \begin{abstract}  
 \vspace{-12pt}   

We consider a thin  multi-domain in $\RI^N$, with $N\geq
2$,  consisting 
of
a vertical rod on top of a horizontal disk  made of non-simple grade-two materials or multiphase ones.  In this thin
multi-domain, we  consider a classical hyperelastic energy and complement it by adding an interfacial energy with a bulk density of the kind 
$W(D^2U)$, where $W$ is a continuous function with linear growth at $\infty$ 
and $D^2U$ denotes the Hessian tensor of a  vector-valued function $U$ that represents a deformation of the multi-domain. Considering  suitable
boundary conditions on the admissible deformations and assuming that the two volumes tend to
zero at the same rate,   we prove that the limit model is well posed
in the union of the limit domains, with
 dimensions $1$ and $N-1$, respectively,  and its limiting energy keeps memory of the original full dimensional trace constraints in a more accurate way than  previous  related  models  in the literature.  Moreover, we show that the
limit problem is uncoupled if $N\geq 3$, and 
``partially" coupled if $N=2$.

\vspace{8pt}

 \noindent\textsc{MSC (2010):}  49J45, 74B20, 74C99, 74K10, 74K20, 74K30, 74K35, 78M30,
78M35.
 
 \noindent\textsc{Keywords:} junction in thin
multi-domains, beam, wire, thin film,  bounded hessian,
dimension reduction, $\Gamma$-convergence,  plasticity,  martensite-austenite, trace constraints, non-simple grade-two materials, interfacial energy.

 \noindent\textsc{Date:} \today.
 \end{abstract}

\tableofcontents

%%%%%%%%%%%%%%%%%%%%%%%%% INTRODUCTION %%%%%%%%%%%%%%%%%%%%%%
%

\section{Introduction}\label{sect:intro}

 In the field of material science and engineering, the study
of thin structures is crucial for
understanding and predicting the behavior of complex materials, which have
a significant importance due to their applications in various industries,
including aerospace, civil engineering, and technology development.
Thin  structures are geometrically slender
and exhibit behaviors that classical three-dimensional models fail to accurately
predict due to the significant difference in scale along at least one dimension.

The variational analysis within the context of nonlinear elasticity provides
a robust framework for understanding and predicting the behavior of these
complex materials and structures,  
enabling the derivation of lower-dimensional models that are both mathematically
rigorous and physically meaningful (see \cite{ABP,BFMbend,FMP12,FMP13,FJM06,LeDR1,LeDR2,MoMu03,Sca09}
and  the references therein, for example, and \cite{KeZh19, 
ShXi17, ShXi17b, ShXuYa14,  ShZhWa22, Win17, Win18, Zhe19, Zhe22} for recent frontier research achievements on the existence and boundedness of solutions to partial differential equations and several of their  variants).

 Of particular importance
are thin multi-structures, where the interaction between different thin components
can lead to new and complex behaviors not observed in simpler structures.

In this manuscript, we adopt a variational approach to  study thin multi-structures,
such as a vertical rod on top of a horizontal disk,  in the context of non-simple
grade-two materials, including martensitic materials known for their shape-memory
and superelastic characteristics \cite{BhJ, GZsec, GZZsec, JR, LeM, SZ, S, T1,T2}. 
 Following \cite{BhJ}, the study of martensitic  thin films is performed in \cite{BEL}  by adding  to the standard hyperelastic energy term an interfacial penalization term  of the van der Walls type. Such penalization term is expressed in terms of second-order derivatives of the deformations, and aims at modeling the interfaces between austenitic and martensitic phases \cite{BhJ} (also see \cite{BEL,FFL1,  LeM,S}). 
In \cite{BEL}, the authors allow for sharp interfaces  via  a second-order penalization term expressed as the total variation of the non-necessarily regular deformation gradient that, as mentioned before,  is added to a classical hyperelastic energy dependent on the deformation gradient itself. 
 In this work,   as  we focus on thin multi-domains,  a penalization between interfaces as the one in \cite{BEL} might be too restrictive because we may lose information on the contact zone of our original joined fully-dimensional model when the underlying thin dimensions shrink in the limit. To overcome this issue, 
 we propose   an interfacial energy term described by means of a non-convex interfacial energy density with linear growth, which is  defined on more regular fields than those in \cite{BEL},   thus enabling us  to consider more accurately the boundary constraints due to the hypothesis of a partially clamped domain as in \cite{BEL,BhJ,  LeM}. 

We point out that departing from the Sobolev setting,  our asymptotic analysis leads to a more general limiting energy than the one emerged in \cite{BEL}. Indeed, our limit not only takes explicitly into account  the dependence on the possible jumps of the gradient through an appropriate `convexified' energy, but also   unfolds   an extra energetic penalization  resulting from    the boundary constraints of the thin multi-domain.  We stress that  this latter term is not present in \cite{BEL}  because though their  original full dimensional  functional space  (which the deformation belongs to)  requires less a priori regularity, it however  misses information on limiting boundary and transmission conditions between the components of the the multi-domain itself.  In the bi-dimensional case (\(N=2\) in our context), our analysis applies also to the study of junctions' problems in the plasticity framework, see \cite{DemARMA}, and 
extends the results related to the modeling of thin clamped plates to the linear growth energy setting, see \cite{DavPar, GZNODEA, GZ,  GauZa2} for the superlinear case, among a wide literature. In this context, the domain is $\perp$ shaped (see Figure 1). Finally, we also refer to \cite{DavMora, P, PT1, PT2} for related models and results in our functional framework and in the evolutive context.

Our model  is then characterized by  the sum of a classical hyperelastic energy term and an   interfacial  energy one  whose density  depends on second-order derivatives of the deformations,  as in \cite{BhJ, GZNODEA,GZ, LeM}  for instance. However,
in contrast with these works, we account here for  piecewise continuous deformations which second-order gradients may present jumps     as in \cite{BEL},   and sets our study in the framework of free-discontinuity and gradient-discontinuity problems (see \cite{FoHaPa19,MR4139444}, for instance),
which is mathematically encoded in our model by assuming  linear  growth
conditions on the stored energy density. This assumption leads us  to a variational
analysis in the space of functions of  bounded Hessian (BH)  of a model for  thin
multi-domains, 
 which we tackle by means of \(\Gamma\)-convergence techniques.
For the sake of completeness, we mention  also the theory of second-order structured deformations \cite{FoHaPa19,FLPp, FLP,  H, OP} in the $BH$ setting, accounting for the effects of microscopic jumps in the gradients and curvature in the energetic response of a given hyperelastic material.
In this respect, our current work can thus be viewed as a preliminary step towards an analysis of thin (multi-)domains in the context of second-order structured deformations,  allowing for a comparison with the results in \cite{MS} and \cite{CMMO}.
We further  refer to \cite{BlGr13, CMZ, FeZa19,  GGLM1,
 G1}, and references therein,  regarding variational
studies of  thin multi-structures in the context of simple materials, in which
the dependence on a deformation is via its  first-order derivatives.

 It is also worth  mentioning that our analysis   not only enriches the achievements of \cite{BEL}, but it also differs from existing results such as \cite{S}  because our  target  energies depend explicitly on the second-order gradient,  aiming at emphasizing the presence of bending terms in the limit model. We refer to \cite{FFL1, FFL2, DMFL}, where the bending effect is obtained considering the presence of second-order gradients as a perturbation of the energy.
\smallskip

We consider a thin multi-domain in
$\RI^{N}$, with \(N\geq  2\), consisting of a 
  thin tube-shaped type domain placed on top of a thin plate-shaped type domain
(see Figure~1), whose   associated  bulk  interfacial  energy density is of the
form $W(D^2U)$, where $W$ is a continuous function with linear growth\ and
$D^2U$ denotes the Hessian tensor of a field
function $U$.

 By assuming that the volume of the two thin cylinders
tend to zero with the same rate, we  derive the limit energy  under suitable
boundary conditions on
the top of the vertical cylinder and on the lateral surface of the
horizontal cylinder.  In particular,  we show that the
limit problem is  uncoupled,  is well posed in the union of the limit domains
with
 dimensions  $1$ and $N-1$, respectively, and involves four limit fields.
We precise next our setting and main result.

Let $N\geq 2$ be an integer number.  
Let $\omega \subset\RI^{N-1}$ be a  bounded, open, and connected
set
such that the origin in $\RI^{N-1}$, denoted by $0'$, belongs
to
$\omega$. Let $\left\{r_n\right\}_{n \in \mathbb N}$ and
$\left\{h_n\right\}_{n \in \mathbb N}\subset]0,1[$ be two
sequences such that
\begin{equation}\label{hrzero}
\lim_{n }h_n=0= \lim_{n }r_n.
\end{equation} For every  $n \in \mathbb N$, consider
a thin multi-domain consisting of a union between two vertical
cylinders, one
placed upon the other, as follows:  $\Omega_n:=\Omega_n^a\cup\Omega_n^b$
($a$ stands for
``above" and $b$ for ``below"), where  $\Omega_n^a:=r_n\omega\times
[0,1[$ has
small cross section $r_n\omega$ and constant height,
$\Omega_n^b:=\omega\times ]-h_n,0[$ has small thickness $h_n$
and
constant cross section (see Figure~1). Moreover,
set $\O:=\o \times
]-1,1[$.

\bigskip

\begin{center}
\hspace{4cm}\begin{tikzpicture}[scale=.7]

%% Domain above
%
        %linhas verticais
        \draw[Mgray, densely dashed,
           thick] (0.28,0) -- (0.28,-5.0);
         \draw[Mgray, densely dashed,
           thick] (1.72,0)-- (1.72,-5.0);

        %section top
       
        \draw[Mgray, densely dashed,
           thick]
        (1,0)  ellipse (.72 and 0.25);
        
%%                %S^a_\epsi
%%                \draw [   thin,->]
%%(1.5,-1)                 arc (-110:3:10pt);
%%                
%%                \draw(2.1,-.4) node
%%{$\textcolor{red}{S^a_\varepsilon}$};
                
                %\Gamma^a_\epsi
                \draw [   thin,<-]
(.3,0.4)
                arc (110:3:10pt);
                
                \draw(-1.7,0.4) node
{$\textcolor{black}{r_n\omega\times\{1\}=:\Gamma^a_n}$};
        
        %r_\epsi\omega^a\times\{0\}
        \draw[gray,
            dashed,fill=Kyellow!40]
        (1,-5)  ellipse (.72 and 0.25);
        
         \draw(1.02,-5) node 
         {\fontsize{4}{4}\selectfont   $\textcolor{black}
         {r_n\omega\!\!\times\!\!\{0\}}$
        };

%%        %L
%%        \draw[densely dashed, <->] (.1,0)
%%--
%%        (.1,-5);
%%        
%%        \draw(-.2,-2.5) node      
%%        {$\textcolor{red}{}$};

%% Domain below
%

 \draw[densely dashed,
            thick,]
        (1,-5)  ellipse (4.32 and 1.5);

 \draw[densely dashed,
            thick,]
        (1,-5.5)  ellipse (4.32 and 1.5);

        %linhas de cima

        %linhas de verticais
        \draw[Mgray,
        ]  (-3.35,-5)
         -- (-3.35,-5.5) ;

         \draw[Mgray,
        ]  (5.35,-5)
         -- (5.35,-5.5) ;

%%                %S^{b,+}_\epsi
%%                \draw [   thin,<-]
%%(-3.7,-4.2)
%%                arc (110:3:10pt);
%%                
%%                \draw(-4.2,-4.25) node
%%                              {
%%$\textcolor{blue}{S^{b,+}_n}$};

                %\Gamma^b_\epsi
                \draw [   thin,->]
(5.15,-5.65)
                arc (-110:3:10pt);
                
                \draw(7.7,-4.9) node
                              {$\textcolor{black}{\Gamma^b_n:=\partial\omega\times]-h_n,0[}$};
                
%%                %S^{b,-}_\epsi
%%                \draw [thin,->]
%%(3.6,-6.3)
%%                arc (-140:4:17pt);
%%                
%%                \draw(5,-5.6) node
%%                {$\textcolor{blue}{S^{b,-}_n}$};
                
        %h_\epsi
        \draw [densely dashed,<->] (-3.6,-5.5)
                --  (-3.6,-5.09);
        \draw(-4.05,-5.3) node
                {${\textcolor{black}{h_n}}$};
       
        %x3
        \draw[   thick, ->] (1,0)
--         (1,1);
        
        \draw[densely dashed, -] (1,0)
--
        (1,-5);
 
        \draw(1.45,.8) node { $x_3$};
        
%caption
\draw(0,-8) node {Figure 1. Examples of  thin multi-domains $\Omega_n$ for \(N=2\) (left) and \(N=3\) (right).};

%%% T-Shape
\draw[Mgray,densely dashed
        ]  (-10,-5) -- (-4.7,-5) -- (-4.7,-5.5)
         -- (-10,-5.5) -- (-10,-5) ;
\draw [densely dashed,<->] (-4.47,-5.5)
                --  (-4.47,-5.09);         

\draw[Mgray,densely dashed
        ]  (-7.5,-5) -- (-7.5,0) 
          ;
 \draw[Mgray,densely dashed
        ]   (-7.5,0) -- (-7.2,0) 
          ;
 \draw[Mgray,densely dashed
        ]    (-7.2,-5) -- (-7.2,0)
          ;       
               
\draw [   thin,->]
(-7.3,.2)
                arc (120:68:109pt);

               \end{tikzpicture}

\end{center}

\bigskip

In the sequel, $x=(x_1,\cdots
, x_{N-1},x_N)=(x',x_N)$  denotes a generic point of $\mathbb
R^N$; the gradient and
the Hessian tensor with
respect to the first $N-1$ variables are denoted by $D_{x'}$
and $D^2_{x'}$, respectively,
while the first and the second
derivatives with respect to  the last variable  are represented
 by $D_{x_N}$
and
$D^2_{x_N}$, respectively. Using these notations, $D^2_{x',x_N}$
stands for
$(D_{x_N})_{x'}$.
Moreover, denoting by  $\mathbb{(R}^{k
\times  k}_s)^d$, with  $k\in\NN$,   the 
space of \(\mathbb{R}^d\)-valued   symmetric
$(k\times 
k)$-matrices, we write a generic element  in \((\mathbb{R}^{{N\times  N}}_s)^d\)
  as  \(M=((m_{i,j})^k_{1\leq i,j\leq N})_{k=1}^d\);  
   for our purposes, it is convenient to decompose any such matrix \(M\)
as follows: we set     $A :=((m_{i,j})^k_{1\leq i,j\leq N-1})_{k=1}^d
\in \big(\mathbb{R}^{(N-1) \times 
(N-1)}_s\big)^d$, $B =((m_{N,j})^k_{1\leq j\leq N-1})_{k=1}^d
\in \mathbb{(R}^{N-1})^d$, and $C\in\mathbb{R}^d$, such that
\begin{equation}
\begin{aligned}\label{eq:defM}
M=\left(
\begin{array}{cc}A &{B}^T\\B &C
\end{array}\right).
\end{aligned}
\end{equation}

We  consider a bulk  interfacial  energy density function \(W :  (\mathbb{R}^{{N\times
 N}}_s)^d\to 
\mathbb{R}\)
 satisfying the following
assumptions:
\begin{align}
& W\hbox{ is  continuous in \((\mathbb{R}^{{N\times
 N}}_s)^d\),  }\label{contvsBorel}\\
& \frac{1}{C} |M| -C  \leq W(M) \leq C(1+|M|),\quad\forall M\in (\mathbb{R}^{{N\times
N}}_s)^d, \label{coerci}
\end{align}
for some \(C>0
\),  and a hyperelastic continuous energy density \(V :  \mathbb{R}^{N\times
d}\to 
\mathbb{R}\) satisfying
\begin{equation}\label{Vgrowth}
\frac{1}{C'} | \zeta |- C' \leq V( \zeta ) \leq C'(| \zeta |+ 1),
\end{equation}
for some positive constant $C'$
and for every $ \zeta  \in \mathbb R^{N\times d}$.
Accordingly, for every $n\in \mathbb N$, we are interested on energy
functionals  of the form \(I:W^{2,1}(\Omega_n;\mathbb
R^d)\to \RR \) defined by
\begin{equation*}%\label{principal energy}
\begin{aligned}
I[U_n]:=  &\int_{\O_n} \left(W\left(D^2U_n\right)  +  V(D u_n) +H_n \cdot U_n\right)dx
\\=&\int_{\O_n}
\left(W\left(\left(\begin{array}{cc}D^2_{x'}U_n &\left(D^2_{x',x_N}U_n\right)^T\smallskip\\
D^2_{x',x_N}U_n &D^2_{x_N}U_n
\end{array}\right)\right) +  V(D_{x'}U_n, D_{x_N}U_n) + H_n \cdot U_n\right) dx,
\end{aligned}
\end{equation*} 
where $H_n \in L^\infty(\Omega_n;\mathbb R^d)$ and the last summand represent the loads that will be specified in the sequel. 
In addition, we require an admissible deformation  $U_n\in W^{2,1}(\Omega_n;\mathbb
R^d)$ to satisfy the   Dirichlet boundary conditions on  the top of $\Omega_n^a$, \(\Gamma^a_n:=r_n\omega\times \{1\}\),
and on the lateral surface of
$\Omega_n^b$,  \(\Gamma^b_n:=\partial\omega\times ]-h_n,0[\), given by
\begin{equation}
\label{eq:Ubc}
\begin{aligned}
&U_n = \theta^a  \text{ on } \Gamma^a_n=r_n\omega \times\{1\}\enspace \text{ with }
\theta^a(x'):=\theta^a_1+\theta^a_2 x' ,\\
&U_n = \theta^b  \text{ on } \Gamma^b_n=r_n\partial \omega \times]-h_n,0[\enspace \text{
with } \theta^b(x',x_N):=\theta^b_1(x')+\theta^b_2(x')
x_N ,
\end{aligned}
\end{equation}
for some 
\begin{equation*}
\begin{aligned}
\theta^a_1\in \mathbb{R}^d,\enspace \theta^a_2\in
\mathbb{R}^{d\times (N-1)},\enspace \theta^b_1\in W^{2,1}(\omega;\mathbb
R^d),\enspace \text{and}\enspace \theta^b_2\in W^{2,1}(\omega;\mathbb
R^d). 
\end{aligned}
\end{equation*}

Our mail goal is to study the asymptotic behavior as \(n\to\infty\) of the
minimization problem
\begin{equation}\label{eq:original}
\begin{aligned}
\inf_{U_n\in W^{2,1}(\Omega_n;\RR^d) \atop U_n \text{ satisfies \eqref{eq:Ubc}}
} I[U_n].
\end{aligned}
\end{equation}

As usual in dimension-reduction problems, we reformulate this
minimization problem on a fixed
domain through an appropriate rescaling  mapping from $\O_n$ into $\O$
(see Figure~1), as proposed by P.G. Ciarlet and P.
Destuynder in \cite{CD}.  Precisely,  we set
\begin{equation*}%\label{riscalamento}
\begin{aligned}
 u_n(x)=\begin{cases}
u^{a}_n(x',x_N):=
 U_n(r_nx',x_N), & \text{if } (x',x_N)\in\Omega^a:=\omega\times ]0,1[,\\[1.5mm]
 u^{b}_n(x',x_N):= U_n(x',h_n x_N), & \text{if }
(x',x_N)\in\Omega^b:=\omega\times ]-1,0[.
\end{cases}
\end{aligned}
\end{equation*}
Then, $ u^a_n\in
W^{2,1}(\O^a;\mathbb R^d)$ assumes the rescaled Dirichlet boundary condition
\begin{equation*}
\begin{aligned}
u^a_n=\theta_n^a,\quad \text{where } \,\theta_n^a(x') :=\theta^a_1+r_n\theta^a_2
x' \text{ with } \theta^a_1\in \mathbb{R}^d, \,\theta^a_2\in
\mathbb{R}^{d\times (N-1)},
\end{aligned}
\end{equation*}
 on the top of $\O^a$,  \(\Gamma^a:=\o\times \{1\}\), while $ u^b_n\in
W^{2,1}(\O^b;\mathbb R^d)$ assumes the rescaled Dirichlet boundary condition
\begin{equation*}
\begin{aligned}
u^b_n=\theta_n^b,\quad \text{where }\, {\theta_n^b(x',x_N):=\theta^b_1}(x')+h_n{\theta^b_2}(x')x_N
\text{ with } \theta^b_1\in W^{2,1}(\omega;\mathbb
R^d),\,\theta^b_2\in W^{2,1}(\omega;\mathbb
R^d),
\end{aligned}
\end{equation*}
on the lateral boundary of $\O^b$,   \(\Gamma_b:=\partial\o\times]-1,0[\).
 Moreover,
 $ u_n=( u^a_n, u^b_n)$  satisfies
the 
junction conditions 
\begin{equation}
\label{eq:bc}
\begin{aligned}
\begin{cases}
u^a_n(x',0)= u^b_n(r_nx',0),& \hbox{ for a.e.
}x' \in\o ,\\[1.5mm]
\displaystyle{ \frac{1}{r_n}}D_{x'}
u^a_n(x',0)=(D_{x'} u^b_n)(r_nx',0),& \hbox{ for a.e.
}x' \in\o ,\\[1.5mm]
D_{x_N} u^a_n(x',0) =
\displaystyle{\frac{1}{h_n}}D_{x_N} u^b_n(r_nx',0), & \hbox{ for a.e.
}x' \in\o.
\end{cases}
\end{aligned}
\end{equation}
where the functions in \eqref{eq:bc} are intended in the sense of traces.
Without loss of generality, one can assume that
 \begin{equation}\label{f=0=g}\theta^b_1=0=\theta^b_2\hbox{ a.e. in }B
 \end{equation}
for some $(N-1)$-dimensional ball $B$ such that $0'\in B\subset
\subset \o$. This condition is not restrictive because we are only interested
 in the values of \(\theta^b_1\) and \(\theta^b_2\) in a neighborhood of
\(\partial\omega\); we observe further that, as  in  \cite{GZNODEA}, condition
\eqref{f=0=g}  enables us to
 more easily deal with the limiting junction condition.
In summary, \(u_n \in\mathcal{U}_n \) with 
\begin{equation}\label{Vn}
\begin{aligned} \mathcal{U}_n:=\Big\{(u^a, u^b) \in \big(\theta_n^a+W_{\Gamma^a}^{2,1}
(\O^a;\RR^d)\big)\times \big(\theta_n^b +W^{2,1}_{\Gamma^b}(\O^b;\RR^d)\big)\colon
u^a \text{ and } u^b \text{ satisfy (\ref{eq:bc})}\Big\},
\end{aligned}
\end{equation}
where $W_{\Gamma^a}^{2,1}(\O^a;\mathbb
R^d)$   is the
closure of
 $\left\{u^a\in C^\infty(\overline{\O^a};\RR^d)\colon
 u^a=0 \hbox{ in a  neighborhood of }\Gamma^a=\o\times \{1\}\right\}$
 with respect to the $W^ {2,1}$-norm, and
  $W^{2,1}_{\Gamma^b}(\O^b;\RR^d)$   the closure
of
 $\big\{u^b\in C^\infty(\overline{\O^b};\RR^d)\colon
 u^b=0 \hbox{ in a neighborhood of }$ $\Gamma^b=\partial\o\times]-1,0[\big\}$
with respect to the $W^{2,1}$-norm.

Analogously, we  rescale the loads by setting
\begin{equation}\label{eq:loads}
\begin{aligned}
 H_n(x)=\begin{cases}
H_n^a(x):= H_n(r_n x', x_N), & \text{if } (x',x_N)\in\Omega^a=\omega\times
]0,1[,\\[1.5mm]
 H_n^b(x):= H_n(x', h_n x_N), & \text{if }
(x',x_N)\in\Omega^b=\omega\times ]-1,0[,
\end{cases}
\end{aligned}
\end{equation}
and we assume that there exists  $H \in L^\infty(\Omega;\RR^d)$ such that 
  \begin{equation}\label{loadconv}
 H_n^a\weaklystar  H  \hbox{ in } L^\infty(\Omega^a;\RR^d) \quad\text{and}\quad              H_n^b \weaklystar  H  \hbox{ in } L^\infty(\Omega^b;\RR^d).
 \end{equation}

    It is  worth  mentioning that it is also possible to take
into account loads  expressed in a divergence form as in \cite[Section~6.2]{FeZa19}, which we refer to for the Sobolev first-order setting.
Precisely, we may consider loads of the form
   \begin{equation*}
   {\rm div}\,h^{a}_n= H^a_n \hbox{ in } \Omega^a \hbox{ and
}{\rm div}\,h^{b}_n= H^{b}_n \hbox{ in } \Omega^b, 
   \end{equation*} 
   under suitable regularity conditions and \(L^\infty\) bounds on  $h^{a}_n$ and
$h^{b}_n$, to be compatible with the coercivity constant
of $W$. 
   By divergence theorem,
this case will naturally give rise to the presence of surface forces on the
boundary of our multi-domain, thus calling for the analysis
of possible  effects described in terms of the squared rescaled second-order derivatives.  The study of this case is left for a future work.  

 We also observe that other forces as in \cite{BEL2} could be considered but we omit to treat them here, referring to the aforementioned paper. 

In the  setting described above, the  rescaled energy (divided through
$r_n^{N-1}$) becomes
\(F_n:W^{2,1}(\Omega;\mathbb
R^d)\to \RR \) given by
\begin{align}\label{energy}
 F_n[u_n]=F_n[( u^a_n, u^b_n)] := K_n^a[ u^a_n]+ \frac{h_n}{r_n^{N-1}}K_n^b[
u^b_n],
\end{align}
where
\(K_n^a:  W^{2,1}(\O^a;\RR^d)\to\RR\) and 
\(K_n^b:  W^{2,1}(\O^b;\RR^d)\to\RR\) are the
functionals defined, respectively, by
\begin{equation}\label{kna}
K_n^a[ u^a] := \int_{{\O}^a} \left[
 W\left(\left(\begin{array}{cc}\displaystyle{\frac{1}{r_n^2}D^2_{x'}u^{a}}
&\displaystyle{\left(\frac{1}{r_n}D^2_{x',x_N}u^{a}\right)^T}\smallskip\\
\displaystyle{\frac{1}{r_n}D^2_{x',x_N}u^{a}} &D^2_{x_N}u^{a}
\end{array}\right)\right) +  V\left(\frac{1}{r_n} D_{x'}u^a, D_{x_N}u^a\right)+ H^a_n \cdot u^a\right]dx,
\end{equation}\medskip

\begin{equation}\label{knb}
K_n^b[ u^b]:= \int_{{\O}^b}\left[
W\left(\left(\begin{array}{cc}D^2_{x'}u^b
&\left(\displaystyle{\frac{1}{h_n}D^2_{x',x_N}u^b}\right)^T\smallskip\\
\displaystyle{\frac{1}{h_n}D^2_{x',x_N}u^b}
&\displaystyle{\frac{1}{h^2_n}D^2_{x_N}u^b
}\end{array}\right)\right)  + V\left(D_{x'}u^b, \frac{1}{h_n}D_{x_N}u^b\right) + H^b_n \cdot u^b\right]dx.
\end{equation}
Finally, we note that the rescaled minimization problem corresponding to \eqref{eq:original}
reads as \begin{equation}\label{minimumproblem_n}
\inf_{(u^a_n, u^b_n)\in \mathcal{U}_n}\left\{K_n^a[u^a_n]+
\frac{h_n}{r_n^{N-1}}K_n^b[u^b_n] \right\}.
\end{equation}

The
 aim of this paper consists of describing the asymptotic behavior,
as $n\to +\infty$,  of the energy  in
(\ref{energy}) and of the minimization problem in \eqref{minimumproblem_n} when the volumes of
 $\Omega_n^a$ and   $\Omega_n^b$ tend to zero at the same
rate; that is, when\begin{equation}\label{rate} \lim_{n\to\infty
}\frac{h_n}{r_n^{N-1}}=q\in\,\,]0,+\infty[.
\end{equation}
This limiting  behavior is encoded in our main results,  Theorem~\ref{thm:mainFMZ} and Corollary~\ref{cor:min} below.

 We refer to Section~\ref{pre} for a detailed description of the notations
used in this manuscript. Next, we introduce only the main  ones that allow
for general understanding of our main result.

Set
\begin{equation}
\label{eq:UcalXi}
\begin{aligned}
& \mathcal{U}:=\big\{(u^a, u^b)\in BH(]0,1[;\RR^d) \times BH(\omega;\RR^d)\colon u^a(1) =\theta^a_1 \text{ and }u^b=\theta^b_1  \text{ on }\partial\omega \text{ for any } N\geq 2, \text{ and}\\& 
\hskip71.15mm u^a(0)=u^b(0) \hbox{ if }N=2
\big\}, 
\\
&  \Xi:= BV(]0,1[; \mathbb R^{d \times (N-1)})\times 
                BV(\omega;\mathbb R^d),
\end{aligned}
\end{equation}
and consider  the functional \(K^a\colon  BV(]0,1[;\mathbb
R^d)\times BH(]0,1[;\mathbb R^{d\times (N-1)}) \to \RR\)  defined
  for each \((u^a,\xi^a)  \in \allowbreak
    BV(]0,1[;\mathbb
R^d)\times BH(]0,1[;\mathbb R^{d\times (N-1)})\) by 
\begin{equation}
\label{Ka}
\begin{aligned}
K^a[(u^a,\xi^a)]:= &\int_0^1{\hat W}^{**}\big(
        \nabla_{x_N}\xi^a, \nabla^2_{x_N}u^a\big) \,dx_N + \int_{0}^1({\hat
W}^{**})^{\infty}\bigg(\frac{D^s (\xi^a, \nabla u^a)}{|D^s (\xi^a, \nabla
u^a)|}\bigg)d |D^s (\xi^a, \nabla u^a)|\\
&+ (\hat W^{\ast\ast})^\infty \Big(- (\xi^a)^-(1)+\theta^a_2,- \Big(\frac{d
u^a}{d x_N}\Big)^-(1)
\Big)   +   \int_0^1 V(\xi^a, D_{x_N} u^a )\, d x_N + \int_{\Omega^a} H \cdot u^a \,dx, 
\end{aligned}
\end{equation}
and the functional  \(K^b\colon  BH(\O^b;\RR^d) \times  BV(\O^b;\RR^d) \to
\RR\)  defined
  for each \((u^b,\xi^b)
 \in \allowbreak  BH(\O^b;\RR^d) \times \allowbreak  BV(\O^b;\RR^d)\) by
\begin{equation}
\label{Kb}
\begin{aligned}
K^b[(u^b,\xi^b)]:= &\int_{\omega}Q_{\widetilde{\mathcal A}}\,W_0\big(\nabla^2_{x'}u^b,
\nabla_{x'}\xi^b\big)\,dx' +\int_{\omega}(Q_{\widetilde{\mathcal A}}\,W_0)^{\infty}\bigg(\frac{D^s(\nabla_{x'}u^b,\xi^b)}{|D^s(\nabla_{x'}u^b,
\xi^b)|}\bigg)d |D^s(\nabla_{x'}u^b, \xi^b)|\\
&+\int_{\partial \omega} (Q_{\widetilde{\mathcal A}}\,W_0)^\infty \big(-\big((\nabla_{x'}u^b)^-
- \nabla_{x'}\theta^b_1,(\xi^b)^- - \theta^b_2\big)\otimes{\nu_{\partial
\Omega}}\big)
d \mathcal
H^{N-2} \\   & + \int_\omega V(D_{x'}u^b, \xi^b)\,dx' + \int_{\Omega^b} H \cdot u^b \,dx, 
\end{aligned}
\end{equation}
where, recalling \eqref{eq:defM},  \(\hat W: \RR^{d \times (N-1)}\times \RR^d
\to\RR \) and \(W_0: \big(\mathbb{R}^{(N-1) \times 
(N-1)}_s\big)^d \times \RR^{d \times (N-1)} \to \RR \) are the functions
defined by
\begin{equation}\label{eq:defhatzero}
\begin{aligned}
\hat W(B,C):=\inf_A W\left( \left(
\begin{array}{cc}A &{B}^T\\B &C
\end{array}\right) \right) \quad \text{ and } \quad W_0 (A,B): = \inf_C W\left(
\left(
\begin{array}{cc}A &{B}^T\\B &C
\end{array}\right) \right)
\end{aligned};
\end{equation}
moreover, \({\hat W}^{**}\) denotes the convex envelope of \(\hat W\) and
$({\hat W}^{**})^\infty$ the recession function of \({\hat W}^{**}\), while
\(Q_{\widetilde{\mathcal A}}\,W_0\)  denotes the \(\widetilde{\mathcal A}\)-quasiconvex
envelope of \(W_0\) and \((Q_{\widetilde{\mathcal A}}\,W_0)^{\infty}\) the
recession function of \(Q_{\widetilde{\mathcal A}}\,W_0\) with \(\widetilde{\mathcal
A}\) a convenient differential linear operator with constant coefficients,
corresponding to a cross
2-quasiconvexification--quasiconvexification of \(W_0\), which we detail
in Subsection~\ref{sect:Aqcx}. (also see \eqref{S101}).

\begin{remark}
   Note that in the  $N=2$ case,  the expression for \eqref{Kb} is very similar to that of \eqref{Ka} because \(Q_{\widetilde{\mathcal A}}\,W_0\) = \({\hat W_0}^{**}\)  and the integral term on $\partial \omega$
   is just the sum of the corresponding values at the boundary points,  given that the limit sample is one-dimensional. Precisely,  if $\omega= (\alpha, \beta)$, with $-\infty<\alpha < 0< \beta <+\infty$, then
 \begin{equation*}
\begin{aligned}
K^b[(u^b,\xi^b)]:= & \int_{\omega}(W_0)^{**}\big(\nabla^2_{x'}u^b,
\nabla_{x'}\xi^b\big)\,dx' +\int_{\omega}(W_0^{**})^{\infty}\bigg(\frac{D^s(\nabla_{x'}u^b,\xi^b)}{|D^s(\nabla_{x'}u^b,
\xi^b)|}\bigg)d |D^s(\nabla_{x'}u^b, \xi^b)|\\
&+(W_0^{**})^\infty \big(-\big((u^b)')^-(\alpha)
- (\theta^b)'_1(\alpha),(\xi^b)(\alpha) - \theta^b_2(\alpha)\big) \\
&+ (W_0^{**})^\infty \big(\big((u^b)')^-(\beta)
- ((\theta^b)')_1(\beta),(\xi^b)(\beta) - \theta^b_2(\beta)\big)\\
&+ \int_\omega V(D_{x'}u^b, \xi^b) \,dx'+
\int_{\Omega^b} H \cdot u^b \,dx. 
\end{aligned}
\end{equation*}
  
\end{remark}

Our main theorem is the following.
\begin{theorem}\label{thm:mainFMZ}  Let    \(W \colon  (\mathbb{R}^{{N\times 
N}}_s)^d\to 
\mathbb{R}\)  be a function satisfying conditions  (\ref{contvsBorel})--(\ref{coerci}),   let $V \colon \mathbb R^{d \times N}\to \mathbb R$ be a continuous function satisfying \eqref{Vgrowth},  and consider the corresponding sequence of  functionals
\((\F_n)_{n\in\N}\) introduced by  \eqref{energy}, where the parameters \(r_n\)
and \(h_n\) satisfy conditions     
         (\ref{hrzero}) and (\ref{rate}),  and where the loads
\(H_n\) satisfy 
\eqref{eq:loads}--\eqref{loadconv}.  Consider further the spaces  $\mathcal U_n$ and 
 \(\mathcal
U \times \Xi\) and  the functionals  \(K^a\) and \(K^b\)  introduced
in \eqref{Vn}, \eqref{eq:UcalXi}, \eqref{Ka}, and \eqref{Kb}, respectively. Finally, for \(n\in\NN\),
consider the functional \[E_n\colon \left(L^1(\Omega^a;\RR^d) \times L^1(\Omega^b;\RR^d)\right)
\times \left(L^1( \Omega^a; \mathbb
R^{d \times (N-1)}\big) \times L^1(\Omega^b;\RR^d)\right) \to\RR\] defined
  by
\begin{equation*}
\begin{aligned}
E_n[(u^a, u^b), (\xi^a, \xi^b)]:=\begin{cases}
F_n[(u^a,u^b)] & \text{if } (u^a,u^b)\in \Ucal_n,\,\, \xi^a =\frac{1}{r_n}D_{x'}u^{a},
 \text{ and }\, \xi^b =\frac{1}{h_n}D_{x_N}u^b,\\
 +\infty &\text{otherwise.} 
\end{cases}
\end{aligned}
\end{equation*}
Then, the 
sequence \((E_n)_{n\in\NN}\) \(\Gamma\)-converges to the functional 
\[E\colon \left(L^1(\Omega^a;\RR^d) \times L^1(\Omega^b;\RR^d)\right)
\times \left(L^1( \Omega^a; \mathbb
R^{d \times (N-1)}\big) \times L^1(\Omega^b;\RR^d)\right) \to\RR\]
defined   by
\begin{equation*}
\begin{aligned}
E[(u^a, u^b), (\xi^a, \xi^b)]:=\begin{cases}
K^a[({u}^a, \xi^a)]+ q
K^b[({u}^b,{\xi}^b)] & \text{if } (( u^a,  u^b),  (\xi^a,  \xi^b))\in \mathcal
U \times \Xi,\\
 +\infty &\text{otherwise.} 
\end{cases}
\end{aligned}
\end{equation*}

 \end{theorem}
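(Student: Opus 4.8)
The plan is to prove the \(\Gamma\)-convergence statement in two halves—the \(\liminf\) inequality (lower bound) and the existence of recovery sequences (upper bound)—and, within each half, to exploit the decoupled structure \(E_n = K_n^a + \tfrac{h_n}{r_n^{N-1}} K_n^b\) together with the rescaling conventions. First I would establish a \textbf{compactness} result: if \(\sup_n E_n[(u_n^a,u_n^b),(\xi_n^a,\xi_n^b)] < \infty\), then, using the coercivity bound \eqref{coerci} on \(W\), one controls \(\tfrac1{r_n^2}\|D_{x'}^2 u_n^a\|_{L^1}\), \(\tfrac1{r_n}\|D^2_{x',x_N}u_n^a\|_{L^1}\), \(\|D^2_{x_N}u_n^a\|_{L^1}\) (and the analogous \(b\)-quantities with \(h_n\)), so that after passing to the limit the first \(N-1\) variables become slaved: \(u^a\) depends only on \(x_N\) and lies in \(BH(]0,1[;\R^d)\), while \(u^b\) depends only on \(x'\) and lies in \(BH(\omega;\R^d)\); the auxiliary fields \(\xi^a=\tfrac1{r_n}D^2_{x',x_N}u_n^a\) and \(\xi^b=\tfrac1{h_n}D^2_{x',x_N}u_n^b\) converge weakly-* in the appropriate \(BV\) spaces, and the junction/boundary conditions encoded in \(\mathcal U_n\) pass to those in \(\mathcal U\) (in particular, for \(N=2\) the surviving condition \(u^a(0)=u^b(0)\) because then \(r_n x' \to 0'\) and the trace at the junction is a single point; for \(N\ge 3\) the rescaled junction conditions \eqref{eq:bc} become vacuous in the limit, giving the uncoupling).

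For the \textbf{lower bound}, I would treat the \(a\)-part and \(b\)-part separately since the energies and constraints decouple for the purpose of taking \(\liminf\) (the only coupling, when \(N=2\), is a constraint, not an energy term, and is preserved by compactness). For the \(a\)-part, the key is that minimizing \(W\) over the \(A\)-block produces \(\hat W\) as in \eqref{eq:defhatzero}, and since \(\tfrac1{r_n^2}D^2_{x'}u_n^a\) is unconstrained one can insert the optimal \(A\), so the relevant integrand becomes \(\hat W(\tfrac1{r_n}D^2_{x',x_N}u_n^a, D^2_{x_N}u_n^a) = \hat W(\nabla_{x_N}\xi_n^a, \nabla^2_{x_N}u_n^a)\) up to controlled errors; then a standard lower semicontinuity result for autonomous integral functionals on \(BV\) with linear growth (in the spirit of Ambrosio–Dal Maso / Reshetnyak, applied to the pair \((\xi^a,\nabla u^a)\) whose distributional derivative is a measure) yields \(\liminf \ge K^a\), with the convex envelope \((\hat W)^{**}\) and its recession function \((\hat W^{**})^\infty\) appearing automatically from relaxation in \(BV\); the boundary term at \(x_N=1\) comes from the Dirichlet condition \(u_n^a=\theta_n^a\) on \(\Gamma^a\) via the usual trace-penalization device (extending by the affine boundary datum and accounting for the jump). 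For the \(b\)-part, the analogous computation eliminates the \(C\)-block to produce \(W_0\) from \eqref{eq:defhatzero}; because here the \emph{in-plane} Hessian \(D^2_{x'}u_n^b\) survives and the curl-type constraints linking \(D^2_{x'}u_n^b\) with \(D_{x'}\xi_n^b\) (both being pieces of a genuine Hessian) are nontrivial, the relevant relaxation is not mere convexification but \(\widetilde{\mathcal A}\)-quasiconvexification, giving \(Q_{\widetilde{\mathcal A}}W_0\)—this is where the machinery of Subsection~\ref{sect:Aqcx} is invoked—and the \(\mathcal H^{N-2}\) boundary term on \(\partial\omega\) arises from the Dirichlet condition \(u_n^b=\theta_n^b\) on \(\Gamma^b\). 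The overall lower bound is then \(\liminf E_n \ge K^a[(u^a,\xi^a)] + q\,K^b[(u^b,\xi^b)]\), using \eqref{rate}.

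For the \textbf{upper bound} (recovery sequences), I would first reduce to the case where the target \(((u^a,u^b),(\xi^a,\xi^b)) \in \mathcal U\times\Xi\) is ``nice'' (e.g.\ \(u^a,u^b\) smooth away from finitely many jump points/interfaces, or piecewise affine, with matching boundary data), by a density argument in the strong \(BH\times BV\) topology combined with lower semicontinuity of \(E\) and the usual diagonal/\(\Gamma\)-\(\limsup\) lower-semicontinuity trick; the density step must be performed \emph{within} the constraint classes \(\mathcal U\) and \(\Xi\), which is where the normalization \eqref{f=0=g} pays off by making the junction condition (for \(N=2\)) and the \(b\)-boundary condition easy to preserve under mollification. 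Then, for nice targets, I would build \(u_n^a(x',x_N)\) by adding to \(u^a(x_N)\) a small corrector of the form \(r_n^2 \psi_A(x', x_N) + r_n\,(\text{something involving }\xi^a)\) designed so that \(\tfrac1{r_n}D^2_{x',x_N}u_n^a \to \xi^a\) and \(\tfrac1{r_n^2}D^2_{x'}u_n^a\) equals (approximately) the optimal \(A\) achieving the infimum in \(\hat W\); the energy then converges to \(\int_0^1 \hat W^{**}(\nabla_{x_N}\xi^a,\nabla^2_{x_N}u^a)\) once we also optimize, via a further mollification/convexification layer, over the convex envelope (standard). For \(b\), one similarly constructs \(u_n^b = u^b(x') + h_n(\cdots) + h_n^2(\cdots)\) with the in-plane corrector chosen along an \(\widetilde{\mathcal A}\)-quasiconvexity optimal sequence (this is the delicate, technically heaviest construction, since one must simultaneously (i) realize the \(\widetilde{\mathcal A}\)-quasiconvex envelope, (ii) keep \(\xi_n^b = \tfrac1{h_n}D^2_{x',x_N}u_n^b \to \xi^b\), (iii) respect the boundary datum on \(\Gamma^b\), and (iv) handle the jump set so the singular and boundary terms in \eqref{Kb} are matched—typically via the blow-up/profile construction familiar from \(\mathcal A\)-quasiconvex relaxation in \(BV\)), and finally the junction condition \eqref{eq:bc} for the constructed pair is satisfied exactly by our choices when \(N\ge 3\) (trivially, since it degenerates) and with an asymptotically vanishing correction when \(N=2\) (matching the single value \(u^a(0)=u^b(0)\)). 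I expect the \textbf{main obstacle} to be precisely this \(b\)-part recovery sequence: reconciling the \(\widetilde{\mathcal A}\)-quasiconvexification with the linear-growth/\(BV\) singular and boundary terms, i.e.\ producing a single sequence that is simultaneously optimal for the bulk density, correct in the auxiliary variable, and well-behaved on (and near) the jump set and \(\partial\omega\); the rest of the argument is a careful but by-now-standard combination of rescaling, slaving of variables, Reshetnyak-type lower semicontinuity, and relaxation density.
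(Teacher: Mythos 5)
Your proposal follows essentially the same three-part architecture as the paper: compactness via the coercivity of $W$ and Poincar\'e-type inequalities (Proposition~\ref{Compactnessres}), a lower bound via minimizing-out the free Hessian block, Fubini slicing, and $\widetilde{\mathcal A}$-quasiconvex lower semicontinuity with boundary terms (Theorems~\ref{Thm4.3} and \ref{Thm4.3conv} applied in Proposition~\ref{lbprop}), and an upper bound that first constructs recovery sequences for nice data with polynomial correctors (Lemmas~\ref{density} and~\ref{ubSobolev}), then relaxes the integrand using the $BrFoLe00$ relaxation machinery plus lower semicontinuity of the $\Gamma$-$\limsup$ (Lemmas~\ref{lscGammalimit} and~\ref{ubSobolev2}), and finally extends to general $BH\times BV$ data (Proposition~\ref{ubprop}).

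Two places where the proposal glosses over a detail that would actually bite you if you tried to fill it in. First, you say the density step is ``in the strong $BH\times BV$ topology''; that fails, because smooth functions are \emph{not} norm-dense in $BH$ (or in $BV$) whenever the Hessian or gradient has a nontrivial singular part. The paper instead uses the $\langle\cdot\rangle$-strict (area-strict) approximation of Theorems~\ref{thm:KRBV}, \ref{Lemma1KRBH}, \ref{thm:mixedKRBV} and the trace-preserving approximations of Propositions~\ref{asLemma4.1BFMTraces}--\ref{BFMTracestog}, together with Reshetnyak upper semicontinuity (Theorem~\ref{ReshetnyaktheoremBCMS}); this is precisely what makes the boundary and singular terms in $K^a$, $K^b$ come out right, rather than being lost in the approximation. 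Second, your claim that the junction conditions \eqref{eq:bc} are ``satisfied exactly\dots trivially, since it degenerates'' for $N\geq 3$ confuses the \emph{limit coupling} (which indeed vanishes for $N\geq 3$) with the \emph{level-$n$} membership $(u^a_n,u^b_n)\in\mathcal U_n$, which imposes all three conditions in \eqref{eq:bc} regardless of $N$. The paper's construction in Lemma~\ref{ubSobolev} inserts a cubic-in-$x_N$ interpolation layer on $\omega\times\,]0,\varepsilon_n[$ whose sole purpose is to enforce \eqref{eq:bc} exactly while contributing $o(1)$ to the energy; you need this layer for all $N$, not only for $N=2$.
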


\begin{corollary}\label{cor:min}
Under the notation and  assumptions of Theorem~\ref{thm:mainFMZ}, let \[((u^a_n, u^b_n), (\xi^a_n, \xi^b_n))_{n\in\NN}\subset  \left(L^1(\Omega^a;\RR^d) \allowbreak \times
L^1(\Omega^b;\RR^d)\right)
\times \left(L^1( \Omega^a; \mathbb
R^{d \times (N-1)}\big) \times L^1(\Omega^b;\RR^d)\right) \]
be a diagonal infimizing sequence for the sequence of minimizing
problem  \((\text{\ref{minEn}})_{n\in\NN}\) defined by
\begin{equation*}
\label{minEn}
\tag{\(\mathcal{E}_n\)}
\begin{aligned}
\inf\Big\{ E_n[(u^a, u^b), (\xi^a, \xi^b)]\colon &(u^a, u^b)\in   L^1(\Omega^a;\RR^d) \times
L^1(\Omega^b;\RR^d), \\
& (\xi^a, \xi^b)\in  L^1( \Omega^a; \mathbb
R^{d \times (N-1)}\big) \times L^1(\Omega^b;\RR^d)\Big\}.
\end{aligned}
\end{equation*}
Then, \(((u^a_n, u^b_n), (\xi^a_n, \xi^b_n))_{n\in\NN}\) is compact
in \( \left(L^1(\Omega^a;\RR^d) \times L^1(\Omega^b;\RR^d)\right)
\times \left(L^1( \Omega^a; \mathbb
R^{d \times (N-1)}\big) \times L^1(\Omega^b;\RR^d)\right) \);
moreover, if
\((\bar u^a, \bar u^b), (\bar \xi^a, \bar \xi^b)\in \left(L^1(\Omega^a;\RR^d) \times L^1(\Omega^b;\RR^d)\right)
\times \left(L^1( \Omega^a; \mathbb
R^{d \times (N-1)}\big) \times L^1(\Omega^b;\RR^d)\right)\) is
a corresponding accumulation point, then \(((\bar  u^a,  \bar u^b),  (\bar \xi^a,  \bar \xi^b))\in
\mathcal
U \times \Xi\) and  %
\begin{equation*}
\begin{aligned}
K^a[(\bar {u}^a, \bar \xi^a)]+ q
K^b[(\bar {u}^b,{\bar \xi}^b)] &= \min \Big\{ E[(u^a, u^b), (\xi^a, \xi^b)]\colon (u^a, u^b)\in
  L^1(\Omega^a;\RR^d) \times
L^1(\Omega^b;\RR^d), \\
&\hskip44.5mm (\xi^a, \xi^b)\in  L^1( \Omega^a; \mathbb
R^{d \times (N-1)}\big) \times L^1(\Omega^b;\RR^d)\Big\}\\
&= \min \big\{ K^a[({u}^a, \xi^a)]+ q
K^b[({u}^b,{\xi}^b)] \colon (( u^a,  u^b),  (\xi^a,  \xi^b))\in
\mathcal
U \times \Xi \Big\}.
\end{aligned}
\end{equation*}
\end{corollary}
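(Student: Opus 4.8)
The plan is to derive Corollary~\ref{cor:min} from the $\Gamma$-convergence of Theorem~\ref{thm:mainFMZ} together with an equicoercivity property, in the spirit of the ``fundamental theorem of $\Gamma$-convergence'': $\Gamma$-convergence plus equicoercivity forces infima to converge and infimizing sequences to be subsequentially compact with minimizing limits, the only problem-specific ingredient being the compactness, which I would extract from the lower bound in \eqref{coerci}. Write $m_n$ for the infimum in \eqref{minEn}. First I would check that $(m_n)_n$ is bounded: the set $\mathcal U\times\Xi$ from \eqref{eq:UcalXi} is nonempty (e.g.\ $u^b:=\theta^b_1$, $\xi^b:=\theta^b_2$, $u^a(x_N):=x_N\theta^a_1$, $\xi^a:=\theta^a_2$, the choice of $u^a$ guaranteeing $u^a(0)=0=\theta^b_1(0)$ when $N=2$, by \eqref{f=0=g}), and on it $K^a,K^b$ are finite and bounded below by the two-sided estimate \eqref{coerci}, the $L^\infty$-bound on $H$, and Poincaré inequalities in $BH$/$BV$ with the prescribed traces. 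Fixing $((v^a,v^b),(\eta^a,\eta^b))\in\mathcal U\times\Xi$, the $\limsup$ (recovery-sequence) half of Theorem~\ref{thm:mainFMZ} yields $((v^a_n,v^b_n),(\eta^a_n,\eta^b_n))$ with $E_n[(v^a_n,v^b_n),(\eta^a_n,\eta^b_n)]\to K^a[(v^a,\eta^a)]+qK^b[(v^b,\eta^b)]$; since $m_n\le E_n[(v^a_n,v^b_n),(\eta^a_n,\eta^b_n)]$, passing to the limit in $n$ and then to the infimum over $\mathcal U\times\Xi$ gives $\limsup_n m_n\le\inf E<\infty$ (recall $E\equiv+\infty$ off $\mathcal U\times\Xi$), while a symmetric use of \eqref{coerci} bounds $m_n$ from below. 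As the given sequence is diagonal infimizing, $E_n[(u^a_n,u^b_n),(\xi^a_n,\xi^b_n)]=m_n+o(1)$ is then bounded; in particular $(u^a_n,u^b_n)\in\mathcal U_n$ and $\xi^a_n=\tfrac1{r_n}D^2_{x',x_N}u^a_n$, $\xi^b_n=\tfrac1{h_n}D^2_{x',x_N}u^b_n$.

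Next I would prove compactness. Substituting the bound on $E_n[(u^a_n,u^b_n),(\xi^a_n,\xi^b_n)]$ into \eqref{coerci} and using \eqref{loadconv}, \eqref{rate} and rescaled Poincaré inequalities exploiting the Dirichlet data on $\Gamma^a$, $\Gamma^b$, I expect uniform $L^1$-estimates
\[
\|r_n^{-2}D^2_{x'}u^a_n\|_{L^1(\Omega^a)}+\|\xi^a_n\|_{L^1(\Omega^a)}+\|D^2_{x_N}u^a_n\|_{L^1(\Omega^a)}+\|u^a_n\|_{W^{1,1}(\Omega^a)}\le C
\]
and the analogue on $\Omega^b$ with $h_n$ in place of $r_n$. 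In particular $\|D^2_{x'}u^a_n\|_{L^1}\le Cr_n^2\to0$ and $\|D^2_{x',x_N}u^a_n\|_{L^1}\le Cr_n\to0$, forcing any $L^1$-limit of $u^a_n$ to be independent of $x'$; combined with the $BH$/$BV$ compactness already used for the lower bound in Theorem~\ref{thm:mainFMZ}, along a subsequence $(n_k)_k$ one has $u^a_{n_k}\to\bar u^a$ in $L^1$ with $\bar u^a\in BH(]0,1[;\mathbb R^d)$ and $\bar u^a(1)=\theta^a_1$, $\xi^a_{n_k}\to\bar\xi^a\in BV(]0,1[;\mathbb R^{d\times(N-1)})$ in $L^1$, and symmetrically $u^b_{n_k}\to\bar u^b\in BH(\omega;\mathbb R^d)$ with $\bar u^b=\theta^b_1$ on $\partial\omega$ and $\xi^b_{n_k}\to\bar\xi^b\in BV(\omega;\mathbb R^d)$ in $L^1$; when $N=2$ the first equation of the junction conditions \eqref{eq:bc} passes to the limit as $\bar u^a(0)=\bar u^b(0)$. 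This is the asserted compactness and shows that every accumulation point $((\bar u^a,\bar u^b),(\bar\xi^a,\bar\xi^b))$ lies in $\mathcal U\times\Xi$.

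To conclude, I would close the optimality loop. The $\liminf$ half of Theorem~\ref{thm:mainFMZ} along $(n_k)_k$, together with $E_{n_k}[(u^a_{n_k},u^b_{n_k}),(\xi^a_{n_k},\xi^b_{n_k})]=m_{n_k}+o(1)$, gives $E[(\bar u^a,\bar u^b),(\bar\xi^a,\bar\xi^b)]\le\liminf_k m_{n_k}$; since $((\bar u^a,\bar u^b),(\bar\xi^a,\bar\xi^b))\in\mathcal U\times\Xi$ we have $\inf E\le E[(\bar u^a,\bar u^b),(\bar\xi^a,\bar\xi^b)]$, and $\limsup_k m_{n_k}\le\limsup_n m_n\le\inf E$ by the first step. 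Hence
\[
\inf E\le E[(\bar u^a,\bar u^b),(\bar\xi^a,\bar\xi^b)]\le\liminf_k m_{n_k}\le\limsup_k m_{n_k}\le\inf E,
\]
so all four quantities agree: $((\bar u^a,\bar u^b),(\bar\xi^a,\bar\xi^b))$ minimizes $E$, the infimum of $E$ is attained and equals $\min E$, and since $E$ equals $(u^a,u^b,\xi^a,\xi^b)\mapsto K^a[(u^a,\xi^a)]+qK^b[(u^b,\xi^b)]$ on $\mathcal U\times\Xi$ and $+\infty$ elsewhere, we obtain $K^a[(\bar u^a,\bar\xi^a)]+qK^b[(\bar u^b,\bar\xi^b)]=\min E=\min\{K^a[(u^a,\xi^a)]+qK^b[(u^b,\xi^b)]:((u^a,u^b),(\xi^a,\xi^b))\in\mathcal U\times\Xi\}$, which is the stated chain of equalities.

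I expect the genuinely delicate step to be the equicoercivity/compactness argument above: deriving the rescaled $L^1$ a priori bounds and, above all, verifying that the $L^1$-limits live in the lower-dimensional spaces $BH(]0,1[;\mathbb R^d)$ and $BH(\omega;\mathbb R^d)$ with the correct boundary traces and (for $N=2$) junction value, and that $\xi^a_n,\xi^b_n$ converge to elements of $\Xi$. This is, however, essentially the same estimate-and-trace analysis underpinning the lower-bound half of Theorem~\ref{thm:mainFMZ}, so once that is in hand the corollary follows with only routine $\Gamma$-convergence bookkeeping.
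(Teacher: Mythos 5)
Your proposal is correct and follows essentially the same approach as the paper, which simply invokes the fundamental theorem of $\Gamma$-convergence (\cite[Corollary~7.20]{Da93}) together with Theorem~\ref{thm:mainFMZ} and the equicoercivity established in Proposition~\ref{Compactnessres}. Your argument unpacks that abstract result into its three standard steps (boundedness of the infima via recovery sequences, subsequential compactness of infimizing sequences from the coercivity estimate, and closing the loop via the $\Gamma$-$\liminf$ inequality); the one small bookkeeping remark is that the compactness step you label as ``the same estimate-and-trace analysis underpinning the lower bound'' is in fact packaged in the paper as a standalone result, Proposition~\ref{Compactnessres}, cited directly.
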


The fact that the limit problem is partially coupled  by the condition $u^a(0)=u^b(0)$ for \(N=2\) is not surprising and  is a parallel result  to those  contained in \cite[Section 4]{GZ} and \cite{GauZa2} regarding  the $BH$ setting.

In the sequel,  as in \cite{BEL, BhJ, GauZa2,  LeM}, we  focus   our  analysis mainly on the interfacial energy term because it concentrates   the more technical difficulties. We further notice that there is no loss of generality in neglecting the hyperelastic energy term as it can be regarded as a continuous perturbation  with respect to our topology; more precisely,  dominated convergence theorem applies, and we  can easily pass  to the limit the associated terms. Similarly, under suitable assumptions, also the bulk loads constitute a continuous term for our asymptotic analysis, and thus can be omitted from our proofs.  We refer to Remark~\ref{rmk:Hnzero} for more details on these continuity arguments.

 For the sake of completeness,  we conclude this introduction by observing   that one could also consider  more general  energies in \eqref{kna} and \eqref{knb} of the type
\begin{equation}\label{knabis}
K_n^a[ u^a] := \int_{{\O}^a} \left[
 W_a\left(\left(\begin{array}{cc}\displaystyle{\frac{1}{r_n^2}D^2_{x'}u^{a}}
&\displaystyle{\left(\frac{1}{r_n}D^2_{x',x_N}u^{a}\right)^T}\smallskip\\
\displaystyle{\frac{1}{r_n}D^2_{x',x_N}u^{a}} &D^2_{x_N}u^{a}
\end{array}\right),\left(\frac{1}{r_n} D_{x'}u^a, D_{x_N}u^a\right), u^a\right)+ H^a_n \cdot u^a\right]dx,
\end{equation}\medskip

\begin{equation}\label{knbbis}
K_n^b[ u^b]:= \int_{{\O}^b}\left[
W_b\left(\left(\begin{array}{cc}D^2_{x'}u^b
&\left(\displaystyle{\frac{1}{h_n}D^2_{x',x_N}u^b}\right)^T\smallskip\\
\displaystyle{\frac{1}{h_n}D^2_{x',x_N}u^b}
&\displaystyle{\frac{1}{h^2_n}D^2_{x_N}u^b
}\end{array}\right), \left(D_{x'}u^b, \frac{1}{h_n}D_{x_N}u^b\right), u^b \right)+ H^b_n \cdot u^b\right]dx,
\end{equation}
respectively, for suitable (continuous) functions $W_a$ and $W_b$.  For simplicity,  this more general setting   is not addressed  here.  We foresee that   it could be handled with arguments closer to the so-called Global Method for Relaxation, as in \cite{FoHaPa19}, but it would  however  lead to less explicit limit energy densities  than those in our main results stated above.

\section{Notation and Preliminaries}\label{pre}

We start 
 by collecting the main functional spaces used in our analysis. We then recall
the trace operator in the space of  functions of bounded variation, \( BV\),
and of bounded Hessian,  \(BH\), and  address some associated preliminary
results. We conclude this section with a brief discussion on $\mathcal A$-quasiconvexity.

\subsection{Functional Spaces}

Let \(\ell, m\in\N\) and let  \(\calO\subset\RR^\ell\) be
an open set.
We represent by \(\mathcal  B(\Ocal)\) the Borel \(\sigma\)-algebra on  \(\Ocal\),
and by \(\mathcal M(\Ocal;\RR^m)\) the Banach space of all \(\RR^m\)-valued
Radon measures endowed with the total variation norm \(|\cdot|(\Ocal)\).
The duality pairing between  \(\mathcal M(\Ocal;\RR^m)\) and \(C_0  (\Ocal;\RR^m)\)
is denoted by \(\langle\cdot, \cdot\rangle\), where, as usual, \(C_0  (\Ocal;\RR^m)\)
is the closure with respect to the supremum norm, \(\Vert\cdot\Vert_\infty\),
of the space of all continuous functions with compact support, \(C_c(\Ocal;\RR^m)\).
The space of smooth functions with compact support is denoted either by \(\mathcal D(\Ocal;\RR^m)\) or by \(C^\infty_c(\Ocal;\RR^m)\) and its dual by \({\mathcal D}'(\Ocal;\RR^m\)).
 We use the subscript ``per" within functional spaces when considering functions
that   are \(1\)-periodic in all directions; or, in other words,
that are $Q$  periodic, with \(Q\) representing the unit cube in the ambient
space. For instance, \(C^\infty_
{\rm per}(\mathbb R^\ell;\mathbb R^m)\) is the space of all  \(\RR^m\)-valued
 \(C^\infty\) functions in \(\RR^\ell\) that  are $Q:=(0, 1)^\ell$  periodic.
Similarly,
 \(L^p_{per}(Q;\RR^m)\) with $Q=(0, 1)^\ell$ is the space of all  locally integrable  \(\RR^m\)-valued
 functions on \(\RR^\ell\) that are \(Q\) periodic.

We denote by \(BV(\Ocal;\RR^m)\) the space of function of bounded variation,
which is the space of all functions \(u\in L^1(\Ocal;\RR^m)\) whose distributional
derivative, \(Du\), can be identified with a Radon measure in \(\mathcal
M(\Ocal;\RR^{m\times \ell})\). We use the standard notation for \(BV\) functions,
cf.~\cite{AFP}. 
   Moreover, the space of \(\RR^m\)-valued functions with bounded
hessian on \(\Ocal\), \(BH(\Ocal;\RR^m)\), defined in 
\cite{DT1,D}, is the space of functions  $u\in W^{1,1}(\Ocal;\RR^m)$
for which the distributional derivative of the gradient, $D(\nabla u)$, can
 be identified with a Radon measure in \(\mathcal
M(\Ocal;(\RR^{\ell\times \ell})^m)\); precisely, 
\begin{align*}%\label{BH}
        BH(\Ocal;\RR^m):=&\, \{u\in W^{1,1}(\Ocal;\RR^m): |D (\nabla u)|(\Ocal)<+\infty\}\\
        =&\, \{u \in W^{1,1}(\Ocal;\RR^m): |D^2 u| \in \mathcal M(\Ocal)\}\\
        =&\, 
        \{u \in L^1(\Ocal;\RR^m): D u \in BV(\Ocal;\RR^{m\times \ell
        })\}.
\end{align*}
We observe that in the above and in the sequel, we adopt the notation for second-order derivatives of vector-valued functions used in the papers \cite{GZNODEA, GZ} dealing with junction problems that are related to ours, for a more straightforward reference. As such, if $u=(u^1,...,u^m):\Ocal\to\RR^m$ is smooth, then $D^2u$ is written as a vector in $\RR^m$ whose $i$th component is the symmetric matrix $(\frac{\partial^2 u^i}{\partial x_k\partial x_j})_{1\leq k,j\leq \ell}$ in $\RR_s^{\ell\times\ell}$. However, we also observe  that   in the framework of $BH$  as in \cite{FLP} for instance,  $D^2u$ is often written as a third-order symmetric tensor.

Endowed with
the norm $\|u\|_{BH(\Ocal;\RR^m)}= \|u\|_{W^{1,1}(\Ocal;\RR^m)}+ |D^2
u|(\Ocal)$,  $BH(\Ocal;\RR^m)$ is a Banach space. Moreover, the Hessian tensor,
\(D(\nabla u)\), can be decomposed into an absolutely
continuous part, a jump part, and a Cantor part, written
$D^2 u= \nabla^2 u+ D^2_j u+ D^2_C u$. The jump part is concentrated in the
jump set of $J_{\nabla u}$.

The following two well-know results (see, for instance,   \cite[Theorems~3.1 and 3.3 and Remark~3.2]
{D} and \cite[Theorems~2.1
and 2.2]{FLPp}, respectively) will be useful in the sequel.

\begin{proposition}\label{embedding}
         Let $ \calO \subset \mathbb R^\ell$ be a Lipschitz and bounded open
set.
Then,
$BH(\calO)\subset W^{1,p}(\calO)$
with continuous embedding if $p \leq \frac{\ell}{\ell-1}$, and   compact
embedding if $p <\frac{\ell}{\ell-1}$.
 If $\ell\leq 2$ and   \(\Ocal\) is, in addition, of class \(C^2\) except at a finite number of points, then $BH(\calO)\subset C(\overline{\calO})$
with continuous embedding.
\end{proposition}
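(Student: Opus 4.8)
The plan is to derive the first assertion from the classical Sobolev and Rellich--Kondrachov embeddings for $BV$ applied to $\nabla u$ (cf.~\cite{AFP}), and the second from the critical Sobolev embedding $W^{2,1}(\RR^2)\hookrightarrow C_0(\RR^2)$ combined with an extension--mollification argument. For the $W^{1,p}$ part, let $u\in BH(\calO)$; since $\calO$ is bounded and Lipschitz, $u\in W^{1,1}(\calO)\subset BV(\calO)$ and $\nabla u\in BV(\calO;\RR^{\ell})$ (vector-valued statements being understood componentwise). Applying the Gagliardo--Nirenberg--Sobolev inequality for $BV$ to both $u$ and $\nabla u$ yields $\|u\|_{L^{\ell/(\ell-1)}(\calO)}+\|\nabla u\|_{L^{\ell/(\ell-1)}(\calO;\RR^{\ell})}\le C\|u\|_{BH(\calO)}$ (when $\ell=1$ one simply has $\nabla u\in BV(\calO)\subset L^\infty(\calO)$, reading $\ell/(\ell-1)=\infty$), which, $\calO$ being bounded, gives the continuous embedding for every $p\le\ell/(\ell-1)$. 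For compactness when $p<\ell/(\ell-1)$, I would take $(u_n)$ bounded in $BH(\calO)$: it is bounded in $W^{1,1}(\calO)$, hence in $BV(\calO)$, so a subsequence converges to some $u$ in $L^p(\calO)$; since $(\nabla u_n)$ is bounded in $BV(\calO;\RR^{\ell})$, a further subsequence gives $\nabla u_n\to v$ in $L^p(\calO;\RR^{\ell})$, and $u_n\to u$ in $L^1(\calO)$ forces $v=\nabla u$, whence $u_n\to u$ in $W^{1,p}(\calO)$.

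For the embedding into $C(\overline\calO)$ when $\ell\le2$, the case $\ell=1$ is immediate, since $u'\in BV(\calO)\subset L^\infty(\calO)$ makes $u$ Lipschitz on $\overline\calO$. For $\ell=2$ I would first construct a bounded linear extension operator $\mathcal E\colon BH(\calO)\to BH(\RR^2)$ with compactly supported images — this is where the $C^2$-regularity away from finitely many points enters — and then exploit the critical embedding: for $w\in C_c^\infty(\RR^2)$ the identity $w(x_1,x_2)=\int_{-\infty}^{x_1}\int_{-\infty}^{x_2}\partial_1\partial_2 w\,dy_2\,dy_1$ gives $\|w\|_{L^\infty(\RR^2)}\le\|D^2 w\|_{L^1(\RR^2)}$. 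Applying this to the mollifications $v_\varepsilon:=\rho_\varepsilon\ast(\mathcal E u)$ and using $\|D^2 v_\varepsilon\|_{L^1(\RR^2)}\le|D^2(\mathcal E u)|(\RR^2)\le C\|u\|_{BH(\calO)}$ produces a uniform $L^\infty$ bound; to upgrade the a.e.\ convergence $v_\varepsilon\to\mathcal E u$ to uniform convergence I would establish equicontinuity of $\{v_\varepsilon\}$, either by working with the strict convergence $v_\varepsilon\to\mathcal E u$ in $BH$, or by slicing — for a.e.\ line parallel to a coordinate axis the restriction of $\mathcal E u$ lies in $BH$ of that line, hence is Lipschitz there — and controlling the moduli along the family of slices. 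The uniform limit is then the sought continuous representative, and the estimate $\|u\|_{C(\overline\calO)}\le C\|u\|_{BH(\calO)}$ passes to the limit.

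The first assertion is soft: it is the $BV$ Sobolev and Rellich embeddings applied to $\nabla u$. The substantive work sits in the endpoint embedding $BH(\calO)\hookrightarrow C(\overline\calO)$ for $\ell=2$, where I expect two difficulties: (i) building a bounded extension operator $BH(\calO)\to BH(\RR^2)$ when $\partial\calO$ is $C^2$ only off finitely many points, which forces a separate localization near each exceptional point since direct reflection is unavailable at a corner; and (ii) passing from the uniform $L^\infty$ control of the mollifications to their uniform convergence, i.e.\ the equicontinuity estimate, which is delicate because $\nabla u$ need not be bounded and $D^2 u$ may concentrate on lines, so the relevant modulus of continuity has to be read off the finer structure of the measure $D^2 u$ rather than from its total mass alone.
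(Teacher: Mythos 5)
The paper does not prove this Proposition but cites it from Demengel~\cite{D}, so I am assessing your argument on its own terms.

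Your treatment of the $W^{1,p}$ embedding and its compactness is correct and is essentially the intended argument: apply the Sobolev and Rellich--Kondrachov embeddings for $BV$ on a bounded Lipschitz domain to $u$ and, componentwise, to $\nabla u\in BV(\calO;\RR^\ell)$, and for compactness pass to a subsequence in $L^p$ separately for $u_n$ and $\nabla u_n$ and identify the limit of $\nabla u_n$ with $\nabla u$ via distributional convergence. This part is fine.

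The $\ell=2$ continuity has a genuine gap, and it is located exactly where you yourself flag trouble. The $L^\infty$ bound on the mollifications $v_\varepsilon=\rho_\varepsilon\ast(\mathcal Eu)$ is unproblematic, but neither of your two proposed routes to uniform convergence works as stated. The Cauchy estimate $\|v_\varepsilon-v_{\varepsilon'}\|_{L^\infty}\le\|\partial_1\partial_2(v_\varepsilon-v_{\varepsilon'})\|_{L^1}$ is useless because $\rho_\varepsilon\ast\mu-\rho_{\varepsilon'}\ast\mu$ need not be small in $L^1$ when the singular part of $\mu:=D_1D_2(\mathcal Eu)$ is nonzero (already $\mu$ a line measure gives a counterexample); and area-strict convergence in $BH$ does not yield equicontinuity either. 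The slicing route does not salvage this directly: the Lipschitz constant of $u(\cdot,x_2)$ along a horizontal slice is controlled only by $|D_1\partial_1u(\cdot,x_2)|(\RR)$, which is merely integrable in $x_2$ and not uniformly bounded, so "controlling the moduli along the family of slices" is precisely what is missing. If you try instead to prove equicontinuity of $\{v_\varepsilon\}$ via the representation $v_\varepsilon(x)=(\rho_\varepsilon\ast\mu)\bigl((-\infty,x_1)\times(-\infty,x_2)\bigr)$, you are led to bound $|\mu|$ on thin neighbourhoods of $\partial\bigl((-\infty,x_1)\times(-\infty,x_2)\bigr)$, i.e.\ near coordinate half-lines, and you again need to know that $|\mu|$ gives no mass to coordinate lines. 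That is the crucial fact your write-up never establishes.

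The way to close the gap is structural, not by mollification estimates: show that the off-diagonal component $\mu=D_1D_2u$ of $D^2u$ charges no axis-parallel line, and then use $u=\mu\bigl((-\infty,x_1)\times(-\infty,x_2)\bigr)$ a.e.\ (for compactly supported $u$), whose right-hand side is then a continuous distribution function with $\sup|u|\le|\mu|(\RR^2)\le|D^2u|(\RR^2)$. The required null-mass property follows from the fine structure of $D(\nabla u)$: the Cantor part of the derivative of a $BV$ function vanishes on $\mathcal H^1$-$\sigma$-finite sets, so $(D^2u)^c$ does not see a line; and the jump part is $[\nabla u]\otimes\nu_{J}\,\mathcal H^1\lfloor J_{\nabla u}$, and by the symmetry of $D^2u$ the jump $[\nabla u]$ is parallel to the normal $\nu_J$, so on any axis-aligned portion of $J_{\nabla u}$ the normal is a coordinate vector and the $12$-entry $[\nabla u]_1(\nu_J)_2=\pm c\,(\nu_J)_1(\nu_J)_2$ vanishes. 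Without this rank-one-plus-symmetry input the continuity simply does not come out of the mollification scheme. (A secondary remark: extending $BH(\calO)$ to $BH(\RR^2)$ across a boundary that is only $C^2$ away from finitely many corner points is itself not trivial and needs a localized construction near each exceptional point; you note this but do not address it.)
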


\begin{theorem}[Interpolation inequality]\label{interpolBH}
         Let $\calO \subset \mathbb R^\ell$ be a Lipschitz and bounded
open set. Then, for every $\varepsilon > 0$ there is a constant, $C = C(\varepsilon)$,
such that
$\|\nabla u\|_{L^1(\calO;\RR^\ell)} \leq C\|u\|_{L^1(\calO)} + \varepsilon|D^2u|(\calO)$
for all $u \in BH(\calO)$.
\end{theorem}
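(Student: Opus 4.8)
The plan is to argue by contradiction, reducing the statement to the compactness of the embedding of $BH(\calO)$ into $W^{1,1}(\calO)$, which is contained in Proposition~\ref{embedding}. Fix $\varepsilon>0$ and suppose that no admissible constant exists. Then for every $n\in\NN$ there is $u_n\in BH(\calO)$ with
\[
\|\nabla u_n\|_{L^1(\calO;\RR^\ell)} > n\,\|u_n\|_{L^1(\calO)} + \varepsilon\,|D^2u_n|(\calO).
\]
The desired inequality holds trivially whenever $\nabla u_n\equiv 0$ (its right-hand side is nonnegative), so we may assume $\|\nabla u_n\|_{L^1(\calO;\RR^\ell)}>0$ and normalize by setting $v_n:=u_n/\|\nabla u_n\|_{L^1(\calO;\RR^\ell)}\in BH(\calO)$. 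Then $\|\nabla v_n\|_{L^1(\calO;\RR^\ell)}=1$, and dividing the displayed inequality by $\|\nabla u_n\|_{L^1(\calO;\RR^\ell)}$ (using the homogeneity of $D^2$) gives
\[
1 > n\,\|v_n\|_{L^1(\calO)} + \varepsilon\,|D^2 v_n|(\calO),
\]
whence $\|v_n\|_{L^1(\calO)} < 1/n$ and $|D^2 v_n|(\calO) < 1/\varepsilon$ for every $n$.

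In particular $\|v_n\|_{BH(\calO)}=\|v_n\|_{L^1(\calO)}+\|\nabla v_n\|_{L^1(\calO;\RR^\ell)}+|D^2 v_n|(\calO)\le 2+1/\varepsilon$, so $(v_n)_n$ is bounded in $BH(\calO)$. Since $\calO$ is Lipschitz and bounded, Proposition~\ref{embedding} provides the compact embedding of $BH(\calO)$ into $W^{1,p}(\calO)$ for any $1\le p<\tfrac{\ell}{\ell-1}$ when $\ell\ge2$ (and in all cases into $W^{1,1}(\calO)$). Hence, up to a subsequence not relabeled, $v_n\to v$ strongly in $W^{1,1}(\calO)$ for some $v\in W^{1,1}(\calO)$; in particular $v_n\to v$ and $\nabla v_n\to\nabla v$ in $L^1$. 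Passing to the limit, $\|v\|_{L^1(\calO)}=\lim_n\|v_n\|_{L^1(\calO)}=0$, so $v=0$ a.e.\ and thus $\nabla v=0$ a.e., while $\|\nabla v\|_{L^1(\calO;\RR^\ell)}=\lim_n\|\nabla v_n\|_{L^1(\calO;\RR^\ell)}=1$, a contradiction. This proves the inequality for the given $\varepsilon$, and $\varepsilon>0$ being arbitrary, the theorem follows.

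An equivalent route is to invoke Ehrling's lemma applied to the chain ``$BH(\calO)$ embeds compactly into $W^{1,1}(\calO)$, which embeds continuously into $L^1(\calO)$'': for every $\delta>0$ there is $C(\delta)$ with $\|u\|_{W^{1,1}(\calO)}\le\delta\|u\|_{BH(\calO)}+C(\delta)\|u\|_{L^1(\calO)}$; choosing $\delta<1$, bounding $\|\nabla u\|_{L^1(\calO;\RR^\ell)}\le\|u\|_{W^{1,1}(\calO)}$, and absorbing the resulting term $\delta\|\nabla u\|_{L^1(\calO;\RR^\ell)}$ into the left-hand side yields $\|\nabla u\|_{L^1(\calO;\RR^\ell)}\le\tfrac{\delta}{1-\delta}|D^2u|(\calO)+\tfrac{C(\delta)+\delta}{1-\delta}\|u\|_{L^1(\calO)}$, where $\tfrac{\delta}{1-\delta}\to0$ as $\delta\to0^{+}$.

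The only genuine input is the compact embedding of $BH(\calO)$ into $W^{1,1}(\calO)$; granting it via Proposition~\ref{embedding}, the argument above is entirely routine, and I do not expect any real obstacle. For a self-contained treatment one would instead observe that a $BH$-bounded sequence has both $(u_n)_n$ and $(\nabla u_n)_n$ bounded in $BV(\calO;\cdot)$, and then invoke the Rellich compactness of $BV$ in $L^1$ on Lipschitz bounded domains twice (once for $u_n$, once for $\nabla u_n$) to extract the $W^{1,1}$-convergent subsequence; this is precisely the content of the results cited from \cite{D,FLPp}.
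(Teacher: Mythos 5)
Your argument is correct. Note that the paper does not actually prove Theorem~\ref{interpolBH}: it is quoted as a known result with a pointer to \cite[Theorem~2.2]{FLPp} (and \cite{D}), so there is no in-paper proof to compare against. Your compactness-plus-contradiction (equivalently, Ehrling) argument is the standard route and is sound: the normalization is legitimate because the total variation $|D^2(\lambda u)|$ is positively homogeneous, the degenerate case $\nabla u\equiv 0$ is handled, and the compact embedding $BH(\calO)\hookrightarrow W^{1,1}(\calO)$ that you need is exactly what Proposition~\ref{embedding} supplies for a bounded Lipschitz $\calO$ (one always has $1<\ell/(\ell-1)$, so $p=1$ is admissible). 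Your closing remark correctly identifies that this embedding can itself be reduced to two applications of the $BV\hookrightarrow L^1$ Rellich theorem, since a $BH$-bounded sequence has both $(u_n)$ and $(\nabla u_n)$ bounded in $BV$ and the limits are compatible in the sense of distributions. The only trade-off worth noting is that the contradiction argument yields a non-constructive $C(\varepsilon)$; the alternative proof common in the literature (covering $\calO$ by small cubes and applying a scaled Poincar\'e inequality to $\nabla u$ minus its average on each cube) gives an explicit dependence of $C$ on $\varepsilon$ and on the geometry of $\calO$, but that is not needed for anything in this paper.
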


Similarly to the \(BV\) case, bounded sequences in \(BH\) are pre-compact
with respect to the weak-\(\ast\) convergence in \(BH\), in which case we
can find a subsequence for which the corresponding sequence of the distributional
derivative of the gradients converge weakly-\(\ast\) in the sense of measures.
In some instances, however, we will need a stronger notion of convergence
for measures, known as area-strict  convergence that we recall next.

\begin{definition}\label{1100}
Let \(\tilde{\mathcal{O}}\subset \RR^\ell\) be a Borel set and, for \(n\in\NN\),
let $\mu_n=m_n^a\mathcal L^\ell+\mu_n^s\in\mathcal M(\tilde{\mathcal{O}};\mathbb
R^{m})$  and  $\mu=m^a\mathcal L^\ell+\mu^s\in\mathcal M(\tilde{\mathcal{O}};\mathbb
R^{m})$  be Radon measures on \(\tilde{\mathcal{O}}\). We say that
\begin{itemize}
\item[(i)]  $(\mu_n)_{n\in\NN}$ converges weakly-\(\ast\) to $\mu$,  written

$\mu_n\overset{   \ast}{\rightharpoonup}\mu$, if
\begin{equation*}%\label{1101}
\lim_{n\to\infty}\int_{\tilde{\mathcal{O}}} \varphi(x)\cdot d\mu_n(x) = \int_{\tilde{\mathcal{O}}}
\varphi(x)\cdot d\mu(x)
\quad\text{for every $\varphi\in C_0(\tilde{\mathcal{O}};\mathbb R^{m})$};
\end{equation*}
\item[(ii)] $(\mu_n)_{n\in\NN}$ converges locally weakly-\(\ast\) to $\mu$
 if
\begin{equation*}%\label{1101}
\lim_{n\to\infty}\int_{\tilde{\mathcal{O}}} \varphi(x)\cdot d\mu_n(x) = \int_{\tilde{\mathcal{O}}}
\varphi(x)\cdot d\mu(x)
\quad\text{for every $\varphi\in C_c({\tilde{\mathcal{O}}};\mathbb R^{m})$};
\end{equation*}
\item[(iii)]  $(\mu_n)_{n\in\NN}$ converges
strictly to $\mu$ if $\mu_n\overset{*}{\rightharpoonup}\mu$
and $|\mu_n|({\tilde{\mathcal{O}}})\to|\mu|({\tilde{\mathcal{O}}})$;
\item[(iv)] $(\mu_n)_{n\in\NN}$ converges
(area) $\langle\cdot\rangle$-strictly to $\mu$ if $\mu_n\overset{*}{\rightharpoonup}\mu$
and $\langle\mu_n\rangle({\tilde{\mathcal{O}}})\to\langle\mu\rangle({\tilde{\mathcal{O}}})$,
where
\begin{equation*}%\label{1017}
\langle\mu\rangle(A)\coloneqq \int_A \sqrt{1+|m^a(x)|^2}\,d x+|\mu^s|(A)
\quad\text{for every \(A\in \mathcal B({\tilde{\mathcal{O}}})\)}.
\end{equation*}
\end{itemize}
\end{definition}

Given a matrix \(A\in (\RR^{\ell\times \ell})^m\), we also write \(\langle
A \rangle:= \sqrt{1+|A|^2}\), where here, and elsewhere in this manuscript,
\(|A|\) denotes the norm of \(A\)
given by
\begin{equation*}
\begin{aligned}
|A|=  \bigg(\sum_{k=1}^m\sum_{i,j=1}^\ell((a_{i,j})^k)^2\bigg)^{\frac12}.
\end{aligned}
\end{equation*}

Note that for all \(A\), \(B\in (\RR^{\ell\times \ell})^m\), we have that
\begin{equation}
\label{eq:estsumang}
\begin{aligned}
\langle
A + B \rangle \leq \langle A \rangle + |B|.
\end{aligned}
\end{equation}

\subsection{Trace operator}\label{subsect:trace}

We start by recalling the trace theorem for Sobolev functions, which can be
found  in  \cite[Section 4]{CDA02}
 for instance. 

\begin{theorem}[{\bf Trace in \(W^{1,p}\), cf.~\cite[Theorem~4.3.12]{CDA02}}]\label{TrBV}
Let $\Ocal\subset \RR^\ell$ be a bounded and open set with Lipschitz boundary,
  $p \in [1,+\infty]$, and 
\begin{equation*}
\begin{aligned}
X({\partial\Ocal}):=\begin{cases}
L^{\frac{(\ell-1)p}{\ell-p}}(\partial
\Ocal;\mathcal H^{\ell-1}) &\text{if } p \in[1,\ell[,\\
L^q(\partial
\Ocal;\mathcal H^{\ell-1}) \text{ with \(q\in [1,+\infty[\)} &\text{if }
p =\ell,\\
C(\partial
\Ocal)  &\text{if } p \in\,]\ell,+\infty].
\end{cases}
\end{aligned}
\end{equation*}
% Then,
Then, there exists a  bounded linear operator $\Tr:W^{1,p}(\Ocal)\to X({\partial\Ocal})$
such that
for every $u \in W^{1,p}(\Ocal)\cap C(\overline \Ocal)$, we have 
\begin{equation}
\label{eq:usustr}
\begin{aligned}
 \Tr u(x)= u(x) \quad \text{for $\mathcal
H^{\ell-1}$-a.e.~$x \in \partial \Ocal$.}
\end{aligned}
\end{equation}
If \(p \in\,]\ell,+\infty]\), then \eqref{eq:usustr} holds for every \(u
\in W^{1,p}(\Ocal)\) and \(x\in \partial \Ocal\). Moreover, for  \(p \in
[1,+\infty]\), we have for every  $u \in W^{1,p}(\Ocal)$ and $\varphi \in
C^1(\mathbb R^\ell;\RR^\ell)$ that
  \begin{equation}
\label{eq:usustr2}
\begin{aligned}
\int_\Ocal u \diverg \varphi\, dx = -\int_\Ocal \varphi \cdot \nabla u\,
dx
+\int_{\partial \Ocal} \varphi\Tr u \cdot \nu_\Ocal\, d {\mathcal H}^{\ell-1},
\end{aligned}
\end{equation}
    and 
\begin{equation}
\label{eq:lebptr}
\begin{aligned}
 \lim_{r \to +\infty} \frac{1}{\mathcal L^N(\Ocal\cap B_r(x_0))}\int_{\Ocal
\cap B_r(x_0)}|u(x)-\Tr u(x_0)|\,d x =0 \quad \text{for $\mathcal H^{\ell-1}$-a.e.~$x_0
\in \partial \Ocal$.}
\end{aligned}
\end{equation}
Finally, the operator $\Tr$ is surjective for \(p=1\).
\end{theorem}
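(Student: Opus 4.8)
The plan is to reduce the whole statement to the model case of a half-space by the standard localization-and-flattening device, establish the basic trace estimate there, transfer back and pass to the limit by density, and then read off the auxiliary assertions \eqref{eq:usustr2} and \eqref{eq:lebptr} and the surjectivity for $p=1$ as corollaries. First I would cover $\partial\Ocal$ by finitely many open boxes $U_1,\dots,U_m$ in each of which, after an orthogonal change of coordinates, $\partial\Ocal\cap U_i$ is the graph of a Lipschitz function $\gamma_i$ and $\Ocal\cap U_i$ lies on one side of it; together with an additional open set $U_0$ with $\overline{U_0}\subset\Ocal$ these cover $\overline\Ocal$, and a subordinate partition of unity $\{\zeta_i\}_{i=0}^m$ lets me write $u=\sum_i\zeta_i u$. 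Since $\zeta_0u$ has zero trace, it suffices to treat a single $\zeta_iu$ with $i\ge1$. The bi-Lipschitz map $\Phi_i(x',x_\ell)=(x',x_\ell-\gamma_i(x'))$ straightens $\partial\Ocal\cap U_i$ onto a piece of $\{x_\ell=0\}$ and sends $\Ocal\cap U_i$ into $\RR^\ell_+$; by Rademacher's theorem and the chain rule for Lipschitz maps it induces an isomorphism of $W^{1,p}$ with equivalent norms and turns surface integrals into weighted $(\ell-1)$-dimensional integrals with bounded density. Thus everything reduces to: for $v\in W^{1,p}(\RR^\ell_+)$ of bounded support, make sense of $v(\cdot,0)$ in the appropriate space $X$.

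On the half-space the estimate is elementary for $v\in C^\infty_c(\overline{\RR^\ell_+})$. For $1\le p<\ell$, with $q=\frac{(\ell-1)p}{\ell-p}$ and $p^*=\frac{\ell p}{\ell-p}$, I would write $|v(x',0)|^q=-\int_0^\infty\partial_t|v(x',t)|^q\,dt\le q\int_0^\infty|v|^{q-1}|\nabla v|\,dt$, integrate over $x'\in\RR^{\ell-1}$, and apply Hölder with the conjugate pair $(p,p')$ together with the Sobolev inequality $\|v\|_{L^{p^*}(\RR^\ell_+)}\le C\|v\|_{W^{1,p}(\RR^\ell_+)}$; the exponents match because $(q-1)p'=p^*$, giving $\|v(\cdot,0)\|_{L^q(\RR^{\ell-1})}\le C\|v\|_{W^{1,p}(\RR^\ell_+)}$. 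For $p=\ell$ the same computation yields every finite $q$ after estimating $\|v\|_{L^{(q-1)p'}}$ by interpolation, equivalently from the local embedding $W^{1,\ell}\hookrightarrow L^q$. For $p>\ell$, Morrey's inequality gives $v\in C^{0,1-\ell/p}(\overline{\RR^\ell_+})$ with the corresponding bound, so the trace is just the restriction and \eqref{eq:usustr} already holds for every $v$ and every boundary point. Transferring through the $\Phi_i$ and summing over $i$ produces a bounded linear map $\Tr$ defined on $W^{1,p}(\Ocal)\cap C^\infty(\overline\Ocal)$ that, by the uniform estimate, agrees with $u|_{\partial\Ocal}$ whenever $u$ is continuous up to the boundary.

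Since $\Ocal$ is a bounded Lipschitz domain, $C^\infty(\overline\Ocal)$ is dense in $W^{1,p}(\Ocal)$ for $p<\infty$, so $\Tr$ extends uniquely to a bounded operator on all of $W^{1,p}(\Ocal)$; for $p=\infty$ one uses $W^{1,\infty}(\Ocal)\subset C^{0,1}(\overline\Ocal)$ directly. The divergence identity \eqref{eq:usustr2} is the classical Gauss--Green formula on Lipschitz domains for $u\in C^1(\overline\Ocal)$ and $\varphi\in C^1(\RR^\ell;\RR^\ell)$, and it passes to the limit in $W^{1,p}$ because both sides are continuous in $u$, the boundary term being controlled by $\|\Tr u\|_{X(\partial\Ocal)}\le C\|u\|_{W^{1,p}(\Ocal)}$. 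Statement \eqref{eq:lebptr} is immediate for $u\in C(\overline\Ocal)$; for general $u$ I would take smooth $u_k\to u$ in $W^{1,p}$, apply the trace estimate on small half-balls $\Ocal\cap B_r(x_0)$ to $u-u_k$ (after rescaling the constant is uniform in $r$), and combine this with a Vitali covering / maximal-function argument to conclude that $\mathcal H^{\ell-1}$-a.e.\ $x_0\in\partial\Ocal$ is a Lebesgue point of $u$ with value $\Tr u(x_0)$.

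The remaining and, I expect, hardest point is the surjectivity of $\Tr$ for $p=1$, i.e.\ that every $g\in L^1(\partial\Ocal;\mathcal H^{\ell-1})$ is attained. After localizing and flattening it suffices to extend $g\in L^1(\RR^{\ell-1})$ of bounded support to some $v\in W^{1,1}(\RR^\ell_+)$ with $v(\cdot,0)=g$. Here I would use Gagliardo's explicit extension $v(x',x_\ell):=(g\ast\rho_{x_\ell})(x')$ with $\rho_{x_\ell}(y')=x_\ell^{-(\ell-1)}\rho(y'/x_\ell)$ a rescaled smooth mollifier on $\RR^{\ell-1}$: Young's inequality gives $\|v(\cdot,x_\ell)\|_{L^1(\RR^{\ell-1})}\le\|g\|_{L^1}$ and $v(\cdot,x_\ell)\to g$ in $L^1$ as $x_\ell\downarrow0$ (so the trace is $g$), while the crucial bound $\int_{\RR^\ell_+}|\nabla v|\,dx\le C\|g\|_{L^1(\RR^{\ell-1})}$ follows by differentiating the convolution, exploiting the cancellations $\int\nabla\rho=0$ and $\int\partial_{x_\ell}\rho_{x_\ell}(y')\,dy'=0$, the scaling of $\rho_{x_\ell}$, and then integrating in $x_\ell$. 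Patching these local extensions against a partition of unity subordinate to the cover yields the global $v$. All the other assertions are routine once the half-space trace inequality and this Gagliardo extension are in place, so the single delicate computation is the $W^{1,1}$-bound for the extension.
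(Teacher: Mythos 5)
The paper does not prove this theorem; it invokes it as a standard result with the citation \cite[Theorem~4.3.12]{CDA02}, so there is no internal proof to compare against. Your outline via localization/flattening, the half-space trace inequality with the exponent arithmetic $(q-1)p'=p^*$, density of $C^\infty(\overline\Ocal)$, Gauss--Green by limit passage, and Lebesgue points of the trace is the standard one and, as a sketch, is essentially sound for those parts.

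There is, however, a genuine gap in the surjectivity for $p=1$. The ``explicit Gagliardo extension'' $v(x',x_\ell)=(g\ast\rho_{x_\ell})(x')$ does \emph{not} land in $W^{1,1}(\RR^\ell_+)$ for general $g\in L^1(\RR^{\ell-1})$. By scaling, $\|\nabla_{x'}\rho_{x_\ell}\|_{L^1(\RR^{\ell-1})}\sim x_\ell^{-1}$, so $\|\nabla_{x'}v(\cdot,x_\ell)\|_{L^1}\lesssim x_\ell^{-1}\|g\|_{L^1}$ and $\int_0^1\|\nabla_{x'}v(\cdot,x_\ell)\|_{L^1}\,dx_\ell$ diverges; the cancellations $\int\nabla\rho=0$ and $\int\partial_{x_\ell}\rho_{x_\ell}\,dy'=0$ only improve this when $g$ has a quantitative $L^1$-modulus of continuity (e.g.\ $g$ in a positive-order Besov space, which is exactly the trace space when $p>1$), not for arbitrary $g\in L^1$. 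Gagliardo's actual construction replaces the convolution by a telescoping device: choose smooth $g_k\to g$ in $L^1$ with $\sum_k\|g_k-g_{k+1}\|_{L^1}<\infty$, pick $t_k\downarrow0$ with $t_k-t_{k+1}$ small relative to $\|\nabla g_k\|_{L^1}+\|\nabla g_{k+1}\|_{L^1}$, and set $v(\cdot,t)$ to be the piecewise-affine-in-$t$ interpolation between $g_k$ and $g_{k+1}$ on $[t_{k+1},t_k]$; then $\partial_t v=(g_k-g_{k+1})/(t_k-t_{k+1})$ contributes $\sum_k\|g_k-g_{k+1}\|_{L^1}$ and the tangential gradient contributes $\sum_k(t_k-t_{k+1})\|\nabla g_k\|_{L^1}$, both finite by construction. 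This is precisely the mechanism the paper adapts to $W^{2,1}$ in Proposition~\ref{thm2.16G_BH} (following \cite[Theorem~2.16]{Giusti} and \cite{D}), and it is the step you should replace; the rest of your outline stands.
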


\begin{remark}[Trace in \(BV\)]\label{rmk:trBV} The preceding theorem can
be extended to \(BV\), giving rise to a bounded, linear, and surjective operator
 $\Tr: BV(\Ocal) \to
L^1(\partial \Ocal;\mathcal H^{\ell-1})$ such that \eqref{eq:usustr} and
\eqref{eq:lebptr} hold for every $u \in BV(\Ocal) \cap C(\overline \Ocal)$,
while \eqref{eq:usustr2} becomes
\begin{equation*}
\begin{aligned}
\int_\Ocal u \diverg \varphi\, dx = -\int_\Ocal \varphi \cdot d D u +\int_{\partial
\Ocal} \varphi \Tr u \cdot \nu_\Ocal \,d {\mathcal H}^{\ell-1}.
\end{aligned}
\end{equation*}
We further observe that this trace operator in \(BV\) is continuous with
respect to the strict convergence in \(BV\).   
\end{remark}

As usual, both in \(W^{1,p}\)
and \(BV\), the bounded linear operator \( \Tr\) above is called the trace
operator on \(\partial \Ocal\) and \(\Tr u\) the trace of \(u\) on \(\partial\Ocal\).

\begin{remark}[Trace in \(BH\)] Recalling the embedding in Proposition~\ref{embedding}, we give sense to \(\Tr(u)\) for \(u\in BH(\Ocal)\) using Theorem~\ref{TrBV}. We note that are using this  identification in \eqref{eq:UcalXi}.  
\end{remark}

 Following \cite[Appendix]{D}, we can also  give sense to the trace
 for $W^{2,1}$ functions as follows.  
 If $\calO \subset \RR^\ell$ is an open and
$C^2$-uniformly regular set, then the trace operator  $\Tr \colon W^{2,1}(\calO)
\to W^{1,1}(\partial \calO)$ is linear and continuous, in the sense that
the tangential gradient of the trace $\nabla_\tau(\Tr u)$ coincides with
the projection of the trace $\Tr
(\nabla u)$ onto the tangent space of $\partial \calO$ where, with an abuse
of notation, this latter trace operator is the one in Theorem~\ref{TrBV};
in other words,$$
\nabla_\tau (\Tr u)= (\Tr (\nabla u))_\tau.$$

On the other hand, under the same assumptions on $\partial \calO$ and denoting
by \(\nu_{\partial\calO}\) the outer normal to \(\calO \), we can set  $(\Tr
(\nabla u))_\nu:=\Tr (\nabla u)\cdot \nu_{\partial \calO}\in L^1(\partial
\calO)$ for
every $u \in W^{2,1}(\calO)$.
Then, \cite[Theorem~1 in the Appendix]{D}, which we recall
next for the readers' convenience, hold.

\begin{theorem}[cf.~{\cite[Theorem~1]{D}}]\label{Dthm1}
The operator $\Tr^{2}\colon W^{2,1}(\calO)\to \Tr (W^{2,1}(\calO))\times
L^1(\partial \calO)$
defined for \(u\in W^{2,1}(\calO) \) by
$$
\Tr^{2}u := (\Tr (u), (\Tr (\nabla u))_{\nu})
$$
is linear, continuous, and surjective.
\end{theorem}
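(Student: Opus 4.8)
The plan is to dispatch linearity and continuity directly from the two trace theorems recalled above, and then to reduce surjectivity to a boundary–corrector construction carried out in a collar of $\partial\calO$.

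\emph{Linearity and continuity.} Linearity of $\Tr^2$ is immediate, since $\Tr$, $u\mapsto\nabla u$ and $v\mapsto v\cdot\nu_{\partial\calO}$ are linear. For continuity (into $W^{1,1}(\partial\calO)\times L^1(\partial\calO)$ with the natural norms) I would bound each component by $\|u\|_{W^{2,1}(\calO)}$. Since $\nabla u\in W^{1,1}(\calO;\RR^\ell)$, applying the $W^{1,1}$–trace estimate of Theorem~\ref{TrBV} to $\nabla u$ gives $\|(\Tr(\nabla u))_\nu\|_{L^1(\partial\calO)}\le\|\Tr(\nabla u)\|_{L^1(\partial\calO;\RR^\ell)}\le C\|\nabla u\|_{W^{1,1}(\calO;\RR^\ell)}\le C\|u\|_{W^{2,1}(\calO)}$; for the first component one combines the same estimate applied to $u$ with the tangential–gradient identity $\nabla_\tau(\Tr u)=(\Tr(\nabla u))_\tau$ recalled before the statement, obtaining $\|\Tr u\|_{W^{1,1}(\partial\calO)}\le\|\Tr u\|_{L^1(\partial\calO)}+\|(\Tr(\nabla u))_\tau\|_{L^1(\partial\calO;\RR^\ell)}\le C\|u\|_{W^{2,1}(\calO)}$.

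\emph{Reduction of surjectivity.} Let $(g,\psi)\in\Tr(W^{2,1}(\calO))\times L^1(\partial\calO)$. By definition of the first factor there is $w\in W^{2,1}(\calO)$ with $\Tr w=g$; set $h:=\psi-(\Tr(\nabla w))_\nu\in L^1(\partial\calO)$ (membership by the continuity just proved). It then suffices to build $v\in W^{2,1}(\calO)$ with $\Tr v=0$ and $(\Tr(\nabla v))_\nu=h$, since $u:=w+v$ will satisfy $\Tr^2u=(g,\psi)$. I would seek $v$ supported in a collar $\calO_\delta:=\{x\in\calO:\dist(x,\partial\calO)<\delta\}$, using that $\partial\calO$ is $C^2$–uniformly regular: for $\delta$ small the distance $d(x):=\dist(x,\partial\calO)$ is $C^2$ on $\calO_\delta$ with $|\nabla d|=1$ there and $\nabla d=-\nu_{\partial\calO}$ on $\partial\calO$, while $x\mapsto(\pi(x),d(x))$, with $\pi$ the nearest–point projection onto $\partial\calO$, is a $C^1$ diffeomorphism of $\calO_\delta$ onto $\partial\calO\times(0,\delta)$.

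\emph{The corrector and the main obstacle.} Fix $\chi\in C^\infty_c([0,\delta))$ with $\chi\equiv1$ near $0$ and set $v(x):=-\chi(d(x))\,d(x)\,\zeta(x)$ on $\calO_\delta$ (extended by $0$ elsewhere), where $\zeta$ is an extension of $h$ to $\calO_\delta$, i.e. $\Tr\zeta=h$. The factor $d$ makes $\Tr v=0$, and a computation of $\nabla v$ shows that on $\partial\calO$ one has $\nabla v=-\chi(0)\,\zeta\,\nabla d=\zeta\,\nu_{\partial\calO}$, whence $(\Tr(\nabla v))_\nu=h$. Expanding $\nabla^2v$, every term lies in $L^1(\calO_\delta)$ as soon as $\zeta\in W^{1,1}(\calO_\delta)$, \emph{except} the term $\chi(d)\,d\,\nabla^2\zeta$, which requires the weighted bound $d\,\nabla^2\zeta\in L^1(\calO_\delta)$. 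Thus the whole proof reduces to: every $h\in L^1(\partial\calO)$ admits an extension $\zeta$ with $\zeta\in W^{1,1}(\calO_\delta)$ \emph{and} $d\,\nabla^2\zeta\in L^1(\calO_\delta)$. Flattening $\partial\calO$ by finitely many $C^2$ charts and summing against a partition of unity reduces this to the half–space $\RR^\ell_+$ with datum in $L^1(\RR^{\ell-1})$, where it is the (second–order) trace/extension statement underlying the surjectivity of the $W^{1,1}$–trace operator invoked in Theorem~\ref{TrBV}. I expect this last point to be the genuine obstacle: the naive choice of $\zeta$ obtained by mollifying $h$ at scale $x_\ell$ produces only $\nabla^2\zeta=O(x_\ell^{-2})$ on slices and already fails the $W^{1,1}$ bound, so one must instead use a Gagliardo–type extension in which the mollification scale varies over $\RR^{\ell-1}$ adaptively to $h$ — a reflection of the failure of Calderón–Zygmund estimates at the $L^1$ endpoint, which also rules out any bounded \emph{linear} right inverse. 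Granting such a $\zeta$, reassembling the local correctors through the partition of unity and the diffeomorphism above yields $v\in W^{2,1}(\calO)$ with $\Tr v=0$ and $(\Tr(\nabla v))_\nu=h$, which finishes the proof.
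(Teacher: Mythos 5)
This theorem is a cited result (Demengel, \cite[Theorem~1, Appendix]{D}); the paper does not reprove it, but the construction underlying its surjectivity is essentially the one carried out explicitly in Proposition~\ref{thm2.16G_BH}, so that is the right point of comparison for your corrector step.

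Your treatment of linearity and continuity is fine, and the reduction of surjectivity to ``given $h\in L^1(\partial\calO)$, build $v\in W^{2,1}(\calO)$ with $\Tr v=0$ and $(\Tr(\nabla v))_\nu=h$'' is correct. The gap is in the corrector itself. You propose $v=-\chi(d)\,d\,\zeta$ with $\zeta$ a $W^{1,1}$ extension of $h$, and you correctly identify that this forces the weighted bound $d\,\nabla^2\zeta\in L^1(\calO_\delta)$; but you then grant this instead of proving it, and it is \emph{not} a consequence of the $W^{1,1}$ trace theory you invoke. The Gagliardo-type extension that furnishes surjectivity of $\Tr\colon W^{1,1}\to L^1(\partial\calO)$ is built by linear interpolation in the normal variable between smooth approximations $h_k$ of $h$ on dyadic slabs $t\in[t_{k+1},t_k]$; such a $\zeta$ is only piecewise affine in $t$, so $\partial_\nu^2\zeta$ is a sum of Dirac layers on $\{t=t_k\}$, not an $L^1$ function, and $d\,\partial_\nu^2\zeta$ is a measure whose mass you would still need to control. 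To make your approach work you would have to design a $C^{1}$-in-$t$ (e.g.\ Hermite-spline) interpolation and then verify, separately, that $t\,\partial^2_t\zeta\in L^1$, $t\,\nabla_{\bar x}\partial_t\zeta\in L^1$, and $t\,\nabla^2_{\bar x}\zeta\in L^1$ under a suitable choice of the $t_k$. That is nontrivial bespoke work that the proposal does not do.

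Demengel's actual construction (reproduced as Proposition~\ref{thm2.16G_BH} in the paper) sidesteps this entirely by \emph{integrating} the interpolant instead of multiplying by $d$: after flattening the boundary one sets
\begin{equation*}
\psi(\bar x, x_\ell):=\int_0^{x_\ell} v(\bar x,t)\,dt,\qquad v(\bar x,t)=\tfrac{t-t_{k+1}}{t_k-t_{k+1}}\varphi_k(\bar x)+\tfrac{t_k-t}{t_k-t_{k+1}}\varphi_{k+1}(\bar x)\ \text{ on }[t_{k+1},t_k],
\end{equation*}
so that $\Tr\psi=0$, $(\Tr\nabla\psi)_\nu=\varphi$, $(\Tr\nabla\psi)_\tau=0$ hold automatically, and the second derivatives of $\psi$ involve only \emph{first} derivatives of $v$: $\partial_\ell^2\psi=\partial_\ell v=(\varphi_k-\varphi_{k+1})/(t_k-t_{k+1})$, $\partial_j\partial_\ell\psi=\partial_j v$, and $\partial_j\partial_k\psi=\int_0^{x_\ell}\partial_j\partial_k v\,dt$. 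Each of these is in $L^1$ by the telescoping estimates~\eqref{eq:cauchyseq}--\eqref{eq:defts} on the $\varphi_k$ and $t_k$, with total mass at most $(1+\varepsilon)\|\varphi\|_{L^1(\partial\calO)}$; in particular the piecewise-affine interpolant is already good enough, and no factor of $d$ and no second derivatives of the extension ever appear. So your plan is not a minor variant of the standard route but a genuinely more demanding one, and the key extension lemma you ``grant'' is the real content of the surjectivity statement, still missing.
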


We further recall that  $\Tr
(W^{2,1}(\calO))$ is a proper subset of $W^{1,1}(\calO)$, as proved in \cite[Theorem~2
in the Appendix]{D}. 

In this manuscript, we mostly deal with functions with prescribed trace on a portion of the boundary with positive Hausdorff measure.
Precisely, for \(\Ocal:=\Omega^a\) and \(\Gamma:=\Gamma^a=\omega\times\{1\} \) or  \(\Ocal:=\Omega_b\) and \(\Gamma:=\Gamma_b=\partial\omega\times\,]-1,0[ \), as in the Introduction, we set
\begin{equation*}
\begin{aligned}
&W^{1,1}_{\Gamma}(\Ocal;\RR^m):=\left\{u\in W^{1,1}(\Ocal;\RR^m) \colon \Tr(u)\lfloor\Gamma=0\right\},\\
&W^{2,1}_{\Gamma}(\Ocal;\RR^m):=\left\{u\in W^{2,1}(\Ocal;\RR^m) \colon \Tr(u)\lfloor\Gamma=0, \, \Tr(\nabla u)\lfloor\Gamma=0\right\}.
\end{aligned}
\end{equation*}
In the particular case  in which our functions depend only on the last variable, \(\Ocal:=]0,1[\) and \(\Gamma:=\{1\}\), we use the notation above and  set
\begin{equation*}
\begin{aligned}
&W^{1,1}_{\Gamma^a}(]0,1[;\RR^m):=\left\{u\in W^{1,1}(]0,1[;\RR^m) \colon \Tr(u)(1)=0\right\},\\
&W^{2,1}_{\Gamma^a}(]0,1[;\RR^m):=\left\{u\in W^{2,1}(]0,1[;\RR^m) \colon \Tr(u)(1)=0,
\, \Tr(\nabla u)(1)=0\right\}.
\end{aligned}
\end{equation*}
However, in the case in which our functions depend only on the first \(N-1\) variables,
\(\Ocal:=\omega\) and \(\Gamma:=\partial\omega\), we use the standard notation %
\begin{equation*}
\begin{aligned}
&W^{1,1}_{0}(\omega;\RR^m):=\left\{u\in W^{1,1}(\omega;\RR^m) \colon
\Tr(u)=0\right\},\\
&W^{2,1}_{0}(\omega;\RR^m):=\left\{u\in W^{2,1}(\omega;\RR^m) \colon
\Tr(u)=0,
\, \Tr(\nabla u)=0\right\}.
\end{aligned}
\end{equation*}

If we do not assume   \(\partial\Ocal\) to be Lipschitz,  we follow   \cite{KR} and  give sense to the trace of a \(BV\) or \(BH\) function
as follows. Given
 $u \in BV(\calO;\mathbb R^m)$, with \(\Ocal\subset\RR^\ell\) open and bounded,
we define the spaces 
\begin{equation}\label{W11u}W^{1,1}_u(\calO;\mathbb R^m):=\big\{v \in W^{1,1}(\calO;\mathbb
R^m)\colon v_u \in BV(\mathbb R^\ell;\mathbb R^m) \hbox{ and } |D v_u|(\partial
\calO)=0\big\}\end{equation}
and
\begin{equation*}%\label{BHu}
        BV_u(\calO;\mathbb R^m):=\big\{v \in BV(\calO;\mathbb R^m)\colon
 v_u\in BV(\mathbb R^\ell;\mathbb R^m) \hbox{ and } |Dv_u|(\partial \calO)=0\big\},
\end{equation*}
where
\begin{equation*}
        \begin{aligned}
                v_u:= \begin{cases}
                        u-v &\hbox{ in }\calO,\\
                        0 & \hbox{ in }\mathbb R^\ell \setminus \overline
\calO.
                \end{cases}
        \end{aligned}
\end{equation*}

Similarly, if  $u \in BH(\calO;\mathbb R^m)$, we set 
 \begin{equation}\label{W21u}W^{2,1}_u(\calO;\mathbb R^m):=\big\{v \in W^{2,1}(\calO;\mathbb
R^m)\colon  v_u \in BH(\mathbb R^\ell;\mathbb R^m) \hbox{ and } |D^2 v_u|(\partial
\calO)=0\big\}\end{equation}
and
\begin{equation*}%\label{BHu}
        BH_u(\calO;\mathbb R^m):=\big\{v \in BH(\calO;\mathbb R^m)\colon
v_u\in
BH(\mathbb R^\ell;\mathbb R^m) \hbox{ and } |D^2v_u|(\partial \calO)=0\big\}.
\end{equation*}

The next two results  allow us to approximate functions in \(BV(\calO;\mathbb
R^m)\)  or 
\( BH(\calO;\mathbb R^m)\) by a sequence of smooth functions in \(W^{1,1}_u(\calO;\mathbb
R^m)\) or \(W^{1,2}_u(\calO;\mathbb
R^m)\), respectively, in the (area) \(\langle\cdot\rangle\)-strict sense.
Note that only a mild regularity
        assumption is imposed on $\partial \calO$.
The first of these results is proved in  \cite[Lemma~1]{KR}. Using similar
arguments, we prove the second one in Section~\ref{sect:aux}. 

\begin{theorem}[cf.~{\cite[Lemma~1]{KR}}]\label{thm:KRBV}
        Let $\calO\subset \mathbb R^\ell$ be a nonempty bounded
and open set.  For each
$\xi \in BV(\calO;\mathbb R^m)$, there exists $\{\xi_j\}_j\subset W^{1,1}_\xi(\calO;\mathbb
R^m)\cap C^\infty(\calO;\mathbb R^m)$ such that $\xi_j \to \xi$ in $L^{1}(\calO;\mathbb
R^m)$ and $D\xi_j$
(area) $\langle\cdot\rangle$-strictly in $\mathcal M(\calO;\mathbb
R^{ m\times \ell})$ to $D\xi $. If $\xi
\in W^{1,1}(\calO;\mathbb R^m)$,
then we may further assume  that $\xi_j \to \xi$ strongly
in $W^{1,1}(\calO;\mathbb R^m)$.
\end{theorem}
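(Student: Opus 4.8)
The plan is to prove Theorem~\ref{thm:KRBV} by interior smoothing with a mollification radius that degenerates near $\partial\calO$, in the spirit of the Meyers--Serrin and Anzellotti--Giaquinta smoothing arguments, and then to add a boundary-layer estimate controlling the defect $\xi-\xi_j$ near $\partial\calO$ --- this last point is the only place where the irregularity of $\partial\calO$ enters, and it is precisely what the space $W^{1,1}_\xi(\calO;\RR^m)$ is designed to capture. Concretely, I would fix a standard symmetric mollifier $\rho\in C_c^\infty(B_1;[0,\infty))$ with $\int\rho=1$, and a smooth function $\tilde d$ on $\calO$ comparable to $d(x):=\dist(x,\RR^\ell\setminus\calO)$ with $|\nabla\tilde d|\le C$; for $j$ large I set $\varepsilon_j(x):=\tfrac1j\min\{\tilde d(x),1\}$, so that $0<\varepsilon_j(x)<d(x)$ on $\calO$, and
\[
\xi_j(x):=\int_{B_1}\xi\big(x-\varepsilon_j(x)\,y\big)\,\rho(y)\,dy
=\int_{\RR^\ell}\xi(z)\,\varepsilon_j(x)^{-\ell}\,\rho\!\left(\frac{x-z}{\varepsilon_j(x)}\right)dz .
\]
Since $\varepsilon_j(x)<d(x)$ the integrand only involves values of $\xi$ inside $\calO$, and differentiation under the integral sign gives $\xi_j\in C^\infty(\calO;\RR^m)$.

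The routine convergences are then obtained as usual: $\xi_j\to\xi$ in $L^1(\calO;\RR^m)$ (dominated convergence together with $L^1$-continuity of translations); $\int_\calO|\nabla\xi_j|\,dx\le|D\xi|(\calO)+o(1)$, whence, by lower semicontinuity of the total variation along $L^1$-convergence, $|D\xi_j|(\calO)\to|D\xi|(\calO)$; and, when $\xi\in W^{1,1}(\calO;\RR^m)$, also $\nabla\xi_j\to\nabla\xi$ in $L^1$, so that $\xi_j\to\xi$ in $W^{1,1}$. For the area-strict statement I would use Reshetnyak lower semicontinuity of $\mu\mapsto\langle\mu\rangle(\calO)$ along $D\xi_j=\nabla\xi_j\,\mathcal L^\ell\weaklystar D\xi$ for the inequality ``$\ge$'', and a Jensen/convexity argument giving $\int_\calO\langle\nabla\xi_j\rangle\,dx\le\langle D\xi\rangle(\calO)+o(1)$ for the inequality ``$\le$'', the estimate \eqref{eq:estsumang} being used to absorb the lower-order term coming from $\nabla\varepsilon_j$; together these give $\langle D\xi_j\rangle(\calO)\to\langle D\xi\rangle(\calO)$, i.e.\ $D\xi_j\to D\xi$ area-strictly.

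The main obstacle is to verify that $\xi_j\in W^{1,1}_\xi(\calO;\RR^m)$, that is, that $v_j:=\xi-\xi_j$, extended by $0$ to $\RR^\ell\setminus\overline\calO$, belongs to $BV(\RR^\ell;\RR^m)$ with $|Dv_j|(\partial\calO)=0$. The mechanism is that $\varepsilon_j(x)\le\tfrac1j d(x)$ forces the kernel at $x$ to be supported in $B_{d(x)/j}(x)$, so on the boundary layer $S_\delta:=\{x\in\calO:d(x)<\delta\}$ a change of variables together with the $BV$ translation estimate yields a bound of the form $\int_{S_\delta}|\xi-\xi_j|\,dx\lesssim\tfrac1j\,\delta\,|D\xi|(S_{2\delta})+(\text{lower order})$, and hence $\tfrac1\delta\int_{S_\delta}|v_j|\,dx\to0$ as $\delta\to0^+$ for each fixed $j$, because $|D\xi|$ is a finite measure and $\bigcap_{\delta>0}S_{2\delta}=\emptyset$. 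The vanishing of this boundary-layer average is exactly what is needed to conclude that $v_j\in BV(\RR^\ell;\RR^m)$ with no mass on $\partial\calO$ and with $Dv_j=D\xi-\nabla\xi_j\,\mathcal L^\ell$ on $\calO$. Keeping the Jacobian and region errors of that change of variables under control for an arbitrary bounded open set $\calO$ is the delicate part; once this is done, $(\xi_j)_j$ has all the stated properties. The same scheme --- mollifying $\nabla u$ rather than $u$ and tracking $D^2u$ --- is what I would adapt in Section~\ref{sect:aux} to obtain the $BH$/$W^{2,1}_u$ version.
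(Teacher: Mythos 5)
Your construction is a genuinely different route from the paper's: you smooth with a single variable mollification radius (Anzellotti--Giaquinta style, $\varepsilon_j(x)\sim d(x)/j$), whereas the paper cites \cite[Lemma~1]{KR} for Theorem~\ref{thm:KRBV} and proves directly only its $BH$ analogue, Theorem~\ref{Lemma1KRBH}, by a partition of unity subordinate to a locally finite cover $\{U_i\}$ of $\calO$, with a \emph{constant} mollification radius $\eta_i$ on each $U_i$ chosen so small that a list of explicit estimates (the analogues of \eqref{1}--\eqref{7}) hold. The routine portions of your outline --- smoothness of $\xi_j$, the $L^1$ and $W^{1,1}$ convergences, and the lower bound $\liminf_j\langle D\xi_j\rangle(\calO)\ge\langle D\xi\rangle(\calO)$ by Reshetnyak --- are fine in either scheme.

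The serious gap is precisely the step you flag as ``delicate'': showing that $\xi_j\in W^{1,1}_\xi(\calO;\RR^m)$, i.e.\ that $v_j:=\xi-\xi_j$ extended by $0$ lies in $BV(\RR^\ell;\RR^m)$ with $|Dv_j|(\partial\calO)=0$. You argue that $\delta^{-1}\int_{S_\delta}|v_j|\,dx\to0$ and assert this is ``exactly what is needed.'' It is not: for an arbitrary bounded open $\calO$ with no regularity on $\partial\calO$, vanishing of this layer average neither implies that the zero extension of a $BV(\calO)$ function is $BV(\RR^\ell)$ nor a priori gives $|Dv_j|(\RR^\ell)<\infty$, which must be known \emph{before} $|Dv_j|(\partial\calO)$ is even meaningful. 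The paper's $BH$ proof (and KR's original $BV$ proof) handle this by pairing the zero-extended $w$ against arbitrary $\psi\in C_c^\infty(\RR^\ell)$, rewriting via the partition of unity and local finiteness, and bounding the pairing by $C\|\psi\|_\infty$ uniformly to get $w\in BV(\RR^\ell)$; then, shrinking the support of $\psi$ onto $\partial\calO$, they deduce $|Dw|(\partial\calO)=0$ (this is where the decomposition into the $U_i$'s with $\eta_i$ summable is essential). Your scheme has no analogue of this duality argument. A secondary but real issue is the upper bound $\int_\calO\langle\nabla\xi_j\rangle\,dx\le\langle D\xi\rangle(\calO)+o(1)$: differentiating $\int_{B_1}\xi(x-\varepsilon_j(x)y)\rho(y)\,dy$ when $\xi$ is merely $BV$ requires a distributional chain rule, and the change of variable $z=x-\varepsilon_j(x)y$ introduces a nonconstant Jacobian that must be controlled; you acknowledge the $\nabla\varepsilon_j$ term but do not carry out the estimate, and it is not automatic. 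As written the proposal defers the crux, so it does not yet constitute a proof.
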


\begin{theorem}\label{Lemma1KRBH}
        Let $\calO\subset \mathbb R^\ell$ be a nonempty bounded
and open set, with  \(\mathcal{L}^\ell(\partial\calO)=0\).  For each
$u \in BH(\calO;\mathbb R^m)$, there exists $\{v_j\}_j\subset
W^{2,1}_u(\calO;\mathbb
R^m)\cap C^\infty(\calO;\mathbb R^m)$ such that $v_j \to u$ in
$W^{1,1}(\calO;\mathbb R^m)$ and ${{D(\nabla v_j)}}
\to D^2 u$
(area) $\langle\cdot\rangle$-strictly in $\mathcal M(\calO;{{\mathbb
(\RR^{\ell \times \ell}}})^m)$. If $u
\in W^{2,1}(\calO;\mathbb R^m)$,
        then we may further assume  that $v_j \to u$ strongly
in $W^{2,1}(\calO;\mathbb R^m)$.
        \end{theorem}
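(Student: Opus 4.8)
The plan is to mimic the proof of Theorem~\ref{thm:KRBV} (from \cite{KR}), replacing first-order quantities by their second-order analogues. First I would reduce to a purely local statement: it suffices to produce, for each $\varepsilon>0$, a single function $v\in W^{2,1}_u(\calO;\RR^m)\cap C^\infty(\calO;\RR^m)$ with $\|v-u\|_{W^{1,1}(\calO;\RR^m)}<\varepsilon$ and $\big|\langle D(\nabla v)\rangle(\calO)-\langle D^2u\rangle(\calO)\big|<\varepsilon$, since a diagonal argument then yields the desired sequence (area-strict convergence being the conjunction of weak-$*$ convergence of $D^2v_j$, which follows from $W^{1,1}$-convergence of $v_j$ together with the uniform bound furnished by $\langle D(\nabla v_j)\rangle(\calO)\to\langle D^2u\rangle(\calO)$, and the convergence of the area-masses).

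The core construction is a mollification adapted to $\partial\calO$. Exhaust $\calO$ by open sets $\calO_k\ss\calO_{k+1}\ss\calO$ with $\bigcup_k\calO_k=\calO$; set $A_0:=\calO_2$ and $A_k:=\calO_{k+2}\setminus\overline{\calO_k}$ for $k\geq1$, and take a smooth partition of unity $\{\varphi_k\}$ subordinate to $\{A_k\}$. Choose mollification parameters $\varepsilon_k\downarrow0$ small enough that $\supp(\rho_{\varepsilon_k}\conv(u\varphi_k))\subset A_k$ and that the errors incurred by commuting $\rho_{\varepsilon_k}\conv$ with the multiplication by $\varphi_k$ and with $\nabla$ are summably small in $W^{1,1}$; then set $v:=\sum_k\rho_{\varepsilon_k}\conv(u\varphi_k)$, which is smooth and locally finite in $\calO$. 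Exactly as in \cite[Lemma~1]{KR} one gets $v\to u$ in $W^{1,1}(\calO;\RR^m)$ as the parameters shrink; the extra point here, needed because we track $D^2$, is that $\nabla v-\sum_k\rho_{\varepsilon_k}\conv(\varphi_k\nabla u)$ is also small in $L^1$, so that $D(\nabla v)$ is a small $W^{1,1}$-type perturbation of a mollification of $D(\nabla u)$ in the interior. For the area-strict control I would use, as in \cite{KR}, the lower semicontinuity $\langle D^2u\rangle(\calO)\leq\liminf_k\langle D(\nabla v_k)\rangle(\calO)$ together with a matching upper bound: by the quasi-locality of the mollification and the estimate \eqref{eq:estsumang}, $\langle D(\nabla v_k)\rangle(\calO)\leq\langle D^2u\rangle(\calO)+(\text{error terms})$, where the error terms are controlled by the $W^{1,1}$-smallness above plus $|D^2u|$ of thin collars $\calO\setminus\calO_k$ which vanish as $k\to\infty$; here the hypothesis $\mathcal L^\ell(\partial\calO)=0$ is what guarantees $|D^2u|(\calO)$ is exhausted by the $\calO_k$ and that no mass escapes to $\partial\calO$.

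It remains to check the membership $v_j\in W^{2,1}_u(\calO;\RR^m)$, i.e.\ that $(v_j)_u$, the extension of $u-v_j$ by zero outside $\overline\calO$, lies in $BH(\RR^\ell;\RR^m)$ with $|D^2(v_j)_u|(\partial\calO)=0$. This follows because the construction makes $v_j$ agree with finer and finer mollifications of $u$ near $\partial\calO$, so $u-v_j$ and $\nabla(u-v_j)$ have vanishing trace on $\partial\calO$ in the appropriate sense; concretely, $(v_j)_u$ inherits from $u$ a BH-bound, and since both $(v_j)_u$ and $\nabla(v_j)_u$ vanish in a neighborhood of $\partial\calO$ intersected with $\RR^\ell\setminus\calO$... more carefully, one argues as in \cite{KR} that $|D(\nabla(v_j)_u)|(\partial\calO)=0$ because the gradient jump across $\partial\calO$ is killed by the vanishing of $\Tr(u-v_j)$ and $\Tr(\nabla(u-v_j))$ there. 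Finally, the last sentence is easy: if $u\in W^{2,1}(\calO;\RR^m)$ then $\rho_{\varepsilon_k}\conv(\varphi_k D^2u)\to D^2u$ in $L^1$ by standard properties of mollifiers, so the same construction gives $v_j\to u$ strongly in $W^{2,1}$.

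I expect the main obstacle to be the bookkeeping in the area-strict upper bound, i.e.\ showing $\limsup_j\langle D(\nabla v_j)\rangle(\calO)\leq\langle D^2u\rangle(\calO)$: one must argue that the mollified measure $\sum_k\rho_{\varepsilon_k}\conv(\varphi_k\,D^2u)$ has area-mass no larger than that of $D^2u$ up to errors that are genuinely small, which requires carefully separating the region where a single $\varphi_k$ is active (where convexity of $A\mapsto\langle A\rangle$ and Jensen give the bound cleanly) from the overlap regions where two consecutive $\varphi_k$'s interact (where the commutator terms $[\rho_{\varepsilon_k}\conv,\varphi_k]$ applied to $\nabla u$ must be absorbed). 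This is exactly the delicate part of \cite[Lemma~1]{KR}, and the only genuinely new feature in our setting is that the relevant objects are $\nabla u\in BV$ and $D^2u\in\mathcal M$ rather than $u\in BV$ and $Du\in\mathcal M$, so the estimates transfer essentially verbatim once one notes that $\nabla u\in BV(\calO;\RR^{m\times\ell})$ with $D(\nabla u)=D^2u$.
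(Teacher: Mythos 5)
Your overall approach is the same as the paper's: exhaust $\calO$ by nested open sets, take a partition of unity subordinate to the annular cover, mollify each $\varrho_i u$ at scale $\eta_i$, define $v:=\sum_i\varphi_{\eta_i}\conv(\varrho_i u)$, and control $\langle D^2 v\rangle(\calO)$ by Jensen's inequality on the $\varrho_i D^2u$ terms plus $L^1$-small commutator errors. (The paper reduces only to $L^1$-smallness of $v-u$ and then invokes the interpolation inequality, Theorem~\ref{interpolBH}, to upgrade to $W^{1,1}$; your direct $W^{1,1}$ reduction works too.)

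The one genuine gap is the verification that $v_\varepsilon\in W^{2,1}_u(\calO;\RR^m)$, i.e.\ that the zero-extension $w$ of $u-v_\varepsilon$ lies in $BH(\RR^\ell;\RR^m)$ with $|D^2 w|(\partial\calO)=0$. You propose to argue that ``the gradient jump across $\partial\calO$ is killed by the vanishing of $\Tr(u-v_j)$ and $\Tr(\nabla(u-v_j))$,'' but this route is not available here: $\calO$ is an arbitrary bounded open set with $\mathcal L^\ell(\partial\calO)=0$, with no Lipschitz (or any) regularity on $\partial\calO$, so traces of $W^{1,1}$ or $W^{2,1}$ functions on $\partial\calO$ are not well defined, and one cannot speak of a ``gradient jump across $\partial\calO$.'' The paper's actual argument is measure-theoretic and trace-free: it tests $\partial w^k/\partial x_l$ against $\partial\psi/\partial x_j$ for $\psi\in C_c^\infty(\RR^\ell)$, uses $\mathcal L^\ell(\partial\calO)=0$ to replace $\int_{\RR^\ell}$ by $\int_\calO$, expands via the partition of unity and the mollification identities to obtain the representation \eqref{eq:boundaryeval} with error terms controlled by \eqref{4}, \eqref{6}, and then, for the condition $|D^2w|(\partial\calO)=0$, runs the same duality computation on test functions supported in a shrinking sequence of open neighborhoods $A_\tau$ of $\partial\calO$, showing (via \eqref{eq:esttau}) that the relevant total variation is controlled by $\|u\|_{BH}$ on sets shrinking to $\partial\calO$ and hence tends to zero. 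You should replace the trace-based sketch by this test-function argument (or its analogue in \cite[Lemma~1]{KR}, which is also trace-free). The remaining steps, including the $W^{2,1}$ upgrade at the end, are correct in outline, though for the last sentence you should, as the paper does, add the explicit requirement $\int_{U_i}|\nabla^2(\varrho_i u)-\nabla^2(\varphi_{\eta_i}\conv\varrho_i u)|\,dx\le\varepsilon 2^{-1-i}$ to the list of conditions on $\eta_i$ rather than rely only on the convergence $\varphi_{\eta_i}\conv(\varrho_i D^2u)\to D^2u$.
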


Using the these two last results,  we obtain the following  area-strict convergence
result in  $BH \times BV$.

\begin{theorem}\label{thm:mixedKRBV}
Let $\calO\subset \mathbb R^\ell$ be a nonempty bounded
and open set, with  \(\mathcal{L}^\ell(\partial\calO)=0\).  For each
$(u,\xi) \in BH(\Omega; \mathbb R^{m_1})\times BV(\calO;\mathbb R^{m_2})$,
there exists $\{(v_j, \xi_j)\}_j\subset W^{2,1}_u(\calO;\mathbb
R^{m_1})\cap C^\infty(\calO;\mathbb R^{m_1})\times W^{1,1}_\xi(\calO;\mathbb
R^{m_2})\cap C^\infty(\calO;\mathbb R^{m_2})$ such that $(v_j,\xi_j) \to
(u,\xi)$ in $W^{1,1}(\Omega;\mathbb R^{m_1})\times L^1(\calO;\mathbb R^{m_2})$
and $(D(\nabla v_j), D\xi_j) \to (D^2 u, D\xi)$
(area) $\langle \cdot\rangle$-strictly  in $\mathcal M(\calO;\mathbb {{\mathbb
(\RR^{\ell \times \ell}}})^{m_1}\times  {{
\RR^{ m_2 \times \ell}}})$. If $(u,\xi)
\in W^{2,1}(\calO;\mathbb R^{m_1})\times W^{1,1}(\calO;\mathbb R^{m_2})$,
then we may further assume  that $(v_j,\xi_j) \to (u,\xi)$ strongly
in $W^{2,1}(\calO;\mathbb R^{m_1})\times W^{1,1}(\calO;\mathbb R^{m_2})$.
\end{theorem}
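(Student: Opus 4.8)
The plan is to combine the two preceding approximation results (Theorem~\ref{thm:KRBV} for the $BV$-component and Theorem~\ref{Lemma1KRBH} for the $BH$-component) and check that running them simultaneously produces a joint area-strict approximation. First I would apply Theorem~\ref{Lemma1KRBH} to $u\in BH(\calO;\RR^{m_1})$ to obtain a sequence $\{v_j\}_j\subset W^{2,1}_u(\calO;\RR^{m_1})\cap C^\infty(\calO;\RR^{m_1})$ with $v_j\to u$ in $W^{1,1}$ and $D(\nabla v_j)\to D^2u$ area-strictly in $\mathcal M(\calO;(\RR^{\ell\times\ell})^{m_1})$; then apply Theorem~\ref{thm:KRBV} to $\xi\in BV(\calO;\RR^{m_2})$ to obtain $\{\xi_j\}_j\subset W^{1,1}_\xi(\calO;\RR^{m_2})\cap C^\infty(\calO;\RR^{m_2})$ with $\xi_j\to\xi$ in $L^1$ and $D\xi_j\to D\xi$ area-strictly in $\mathcal M(\calO;\RR^{m_2\times\ell})$. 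The membership in the subspaces $W^{2,1}_u$ and $W^{1,1}_\xi$, as well as the strong-convergence improvements when $u$ or $\xi$ already lies in the corresponding Sobolev space, are immediate from the two cited theorems, so the only real content is the \emph{joint} area-strict statement, i.e.\ that the pair $(D(\nabla v_j),D\xi_j)$ converges area-strictly to $(D^2u,D\xi)$ as a single $\bigl((\RR^{\ell\times\ell})^{m_1}\times\RR^{m_2\times\ell}\bigr)$-valued measure.

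The key observation is that area-strict convergence of a pair is \emph{not} simply the conjunction of area-strict convergence of the components, because $\langle\cdot\rangle$ couples them nonlinearly under the square root; nevertheless it follows from component-wise strict convergence together with a lower-semicontinuity/Reshetnyak-type argument. Concretely, write $\mu_j:=(D(\nabla v_j),D\xi_j)$ and $\mu:=(D^2u,D\xi)$. Weak-$\ast$ convergence $\mu_j\weaklystar\mu$ is inherited componentwise. For the total area masses, lower semicontinuity of $\nu\mapsto\langle\nu\rangle(\calO)$ under weak-$\ast$ convergence (Reshetnyak lower semicontinuity, applied to the convex $1$-homogeneous-at-infinity integrand $z\mapsto\langle z\rangle=\sqrt{1+|z|^2}$) gives $\langle\mu\rangle(\calO)\le\liminf_j\langle\mu_j\rangle(\calO)$. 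For the matching upper bound, I would use the pointwise inequality $\langle(a,b)\rangle=\sqrt{1+|a|^2+|b|^2}\le\sqrt{1+|a|^2}+|b|=\langle a\rangle+|b|$ (a scalar analogue of \eqref{eq:estsumang}), which yields
\[
\langle\mu_j\rangle(\calO)\le \langle D(\nabla v_j)\rangle(\calO)+|D\xi_j|(\calO).
\]
Now $\langle D(\nabla v_j)\rangle(\calO)\to\langle D^2u\rangle(\calO)$ by the area-strict convergence of the first component, and $|D\xi_j|(\calO)\to|D\xi|(\calO)$ by the strict convergence of the second component; hence $\limsup_j\langle\mu_j\rangle(\calO)\le\langle D^2u\rangle(\calO)+|D\xi|(\calO)$. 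To close the argument I need this last quantity to equal $\langle\mu\rangle(\calO)=\langle(D^2u,D\xi)\rangle(\calO)$, which is false in general from the crude inequality alone; so the clean route is instead to apply the same reasoning on a well-chosen partition of $\calO$ and pass to a supremum.

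More precisely, the robust way to prove $\limsup_j\langle\mu_j\rangle(\calO)\le\langle\mu\rangle(\calO)$ is to decompose $\mu=m^a\mathcal L^\ell+\mu^s$ with $m^a=(\nabla^2u,\nabla\xi)$, fix $\delta>0$, choose finitely many disjoint Borel sets (or a finite Borel partition) $\{A_i\}$ of $\calO$ on which $m^a$ is nearly constant, equal to some value $z_i$, and on which $|\mu^s|$ is concentrated near a controlled set; then use the sub-additivity from the componentwise strict convergences together with $\langle z_i+w\rangle\le\langle z_i\rangle+|w|$ on each piece and sum up, letting $\delta\to0$ at the end. This is exactly the scheme by which Theorem~\ref{Lemma1KRBH} itself upgrades $L^1$-plus-strict to area-strict, so I would lift that argument verbatim to the product setting; alternatively one may invoke the general fact that if $\mu_j^{(1)}\to\mu^{(1)}$ and $\mu_j^{(2)}\to\mu^{(2)}$ strictly (in the ordinary, not area, sense) and additionally $\langle\mu_j^{(1)}\rangle(\calO)\to\langle\mu^{(1)}\rangle(\calO)$, then $(\mu_j^{(1)},\mu_j^{(2)})\to(\mu^{(1)},\mu^{(2)})$ area-strictly, which is a purely measure-theoretic lemma provable by the same partition argument plus Reshetnyak lower semicontinuity. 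The main obstacle is precisely this measure-theoretic lemma: getting the $\limsup$ of the coupled area masses under control, since the naive bound $\langle a\rangle+|b|$ overshoots $\langle(a,b)\rangle$ and one must localize to recover the correct total. Once that lemma is in hand, the theorem follows by taking $m_1$-, $m_2$-, $\ell$-appropriate products of the two cited approximation sequences, with the Sobolev-refinement clause and the subspace-membership clause carried along for free from Theorems~\ref{thm:KRBV} and \ref{Lemma1KRBH}.
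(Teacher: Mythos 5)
The first half of your plan — apply Theorems~\ref{Lemma1KRBH} and~\ref{thm:KRBV} to $u$ and $\xi$ and then worry only about the \emph{joint} area-strict convergence — is exactly the paper's setup, and your diagnosis that the crude bound $\langle(a,b)\rangle\le\langle a\rangle+|b|$ overshoots $\langle(a,b)\rangle$ is correct and important. However, the ``measure-theoretic lemma'' you then invoke — that strict convergence of $\mu_j^{(2)}$ together with area-strict convergence of $\mu_j^{(1)}$ forces $(\mu_j^{(1)},\mu_j^{(2)})$ to converge area-strictly — is \emph{false}. A one-dimensional counterexample on $\calO=\,]0,1[$: take $u(x)=(x-\tfrac12)_+$, $\xi=\chi_{]1/2,1[}$, so $D^2u=D\xi=\delta_{1/2}$, and take $v_j''\approx j\,\chi_{[1/2,\,1/2+1/j]}$, $\xi_j'\approx j\,\chi_{[1/2-1/j,\,1/2]}$ (smoothed). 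Each of $D^2 v_j$ and $D\xi_j$ converges area-strictly to $\delta_{1/2}$ (area masses $\to 1+1=2$), but the bumps sit on \emph{disjoint} sides of $1/2$, so $\langle(D^2 v_j,D\xi_j)\rangle(\calO)\to 1+2=3$ while $\langle(\delta_{1/2},\delta_{1/2})\rangle(\calO)=1+\sqrt2<3$. So separate strict/area-strict convergence of $(v_j)$ and $(\xi_j)$ — even both area-strict — does \emph{not} imply joint area-strict convergence, and there is a genuine gap at this step of your argument.

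The repair is the other route you yourself mention: do not apply the two approximation theorems independently, but run the mollification construction of Theorems~\ref{thm:KRBV}/\ref{Lemma1KRBH} \emph{simultaneously} on the pair $(u,\xi)$ with a single partition of unity $\{\varrho_i\}$ and common parameters $\eta_i$, chosen small enough to satisfy the finitely many smallness conditions for both fields at once; then Jensen's inequality for the convex integrand $(a,b)\mapsto\langle(a,b)\rangle$ applied to $\varphi_{\eta_i}\star(\varrho_i D^2u,\varrho_i D\xi)$ gives the correct joint upper bound, and the rest of the argument (Reshetnyak lower semicontinuity, boundary/subspace membership, Sobolev refinement) goes through exactly as in Theorem~\ref{Lemma1KRBH}. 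It is worth flagging that the paper's own proof of Theorem~\ref{thm:mixedKRBV} also invokes the two theorems separately and then passes to the joint statement via a sandwich $\langle(D^2u,D\xi)\rangle\le\lambda\le\nu$ and \cite[Proposition~1.62]{AFP}; the counterexample above satisfies the paper's stated sandwich and non-charging-of-the-boundary conditions while the conclusion fails, so the same caution applies there — the joint mollification, not a measure-theoretic deduction from the separate area-strict limits, is what closes the argument.
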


\begin{proof}
By ~Theorems \ref{thm:KRBV} and \ref{Lemma1KRBH}, we have that $v_j \to u$
in $W^{1,1}(\calO;\mathbb R^{ m_1})$ and $D(\nabla v_j) \to D^2 u$ (area)
$\langle \cdot\rangle$-strictly in $\mathcal M(\calO;\mathbb {{\mathbb
(\RR^{\ell \times \ell}}})^{m_1})$ and $\xi_j \to \xi $ in $L^1(\calO;\mathbb
R^{ m_2})$ and $D\xi_j \to D\xi $ (area) $\langle
\cdot\rangle$-strictly in $\mathcal \Mcal(\calO; {{
\RR^{ m_2 \times \ell}}})$.

From \eqref{eq:estsumang} and the separate (area) $\langle
\cdot\rangle$-strict convergence of $D(\nabla v_j)$ and $D\xi_j$, we have
that the sequence
$\langle (D(\nabla v_j), D \xi_j)\rangle_j$ is uniformly bounded. Thus, up
to a subsequence, it converges weakly-\(\ast\) in $\mathcal M(\calO;\mathbb
\mathbb {{\mathbb
(\RR^{\ell \times \ell}}})^{m_1}\times  {{
\RR^{ m_2 \times \ell}}})$ to a measure $\lambda$ which is absolutely continuous
with respect to $\langle( D^2)^a u\rangle (\cdot) + |(D^2)^s u|(\cdot)+ \langle
D^a\xi \rangle (\cdot )+ |D^s\xi|(\cdot)$. On the other hand, the separate
area strict convergence guarantees that $\lambda(\overline \calO)=\lambda(\calO)$;
i.e., area-strict
convergence implies strict convergence, which does not charge
the boundary.
Hence, the result is achieved by applying \cite[Proposition 1.62]{AFP}.
\end{proof}

\subsection{$\mathcal A$-quasiconvexity}\label{sect:Aqcx}

Here, we recall the notion of $\mathcal A$-quasiconvexity and some of its
main properties (cf.~\cite{FMAq}, \cite{Dac1,Dac2}), including its  relationship with convexity and quasiconvexity.
These properties will be useful in the proof of our representation result.

We consider a finite family of linear operators 
$\left(A^{(i)}\right)_{i=1}^\ell \subset \Lin(\mathbb R^m ,\mathbb R^\kappa)$,
with $\Lin(X, Y )$  the vector space of linear mappings from
a vector space $X$
into a vector space $Y$, which associated matrices in \(\RR^{\kappa\times
m}\) we still represent by \(A^{(i)}\). Then, we
define a first-order linear operator \(\Acal\) by setting 
\begin{equation*}
\begin{aligned}
\mathcal A v :=
\sum_{
i=1}^\ell
A^{(i)} \frac{\partial v}
{\partial x_i} \quad \text{for }
v: \mathbb R^\ell \to \mathbb R^m.
\end{aligned}
\end{equation*}
We assume that  $\mathcal A$ satisfies the constant-rank
property, which states that there exists $r \in \mathbb{N}$ such that
\begin{equation*}
\begin{aligned}
\rank \left( \sum^\ell_{i=1}A^{(i)}w_i \right) = r \quad \text{for all }w
\in S^{\ell-1},
\end{aligned}
\end{equation*}
where $S^{\ell-1}$ is the unit sphere in $\mathbb R^\ell$. 

\begin{definition}[\(\Acal\)-free fields] Let \(\Ocal\subset\RR^\ell \) be
an open set. For \(p\in[1,+\infty)\), we say that a function \(u\in L^p(\calO;\RR^m)
\) is \(\Acal\)-free, written \(u\in \ker \Acal\) or \(\Acal u =0\), if
\begin{equation*}
\begin{aligned}
-\sum_{i=1}^\ell \int_{\calO} \left( A^{(i)} u \right) \cdot \frac{\partial
\psi}
{\partial x_i} \, dx =0 \quad\text{for all } \psi\in C^1_c (\calO;\RR^\kappa).
\end{aligned}
\end{equation*}
Similarly, we say
that a function  \(v\in L^p_{per}(\RR^\ell;\RR^m)
\) is \(\Acal\)-free, written \(v\in \ker \Acal\) or \(\Acal v =0\), if
\begin{equation*}
\begin{aligned}
-\sum_{i=1}^\ell \int_{Q} \left( A^{(i)} v \right) \cdot \frac{\partial
\psi}
{\partial x_i} \, dx =0 \quad\text{for all } \psi\in C^1_{per} (\RR^\ell;\RR^\kappa),
\end{aligned}
\end{equation*}
where, we recall, $Q$  is the
unit cube in $\mathbb R^\ell$. 
Analogously, we say that a measure $\mu \in \mathcal M (\Ocal;\RR^m)$ is \(\Acal\)-free, written \(\mu \in \ker \Acal\) or \(\Acal \mu =0\),  if
\begin{equation*}
\Acal \mu=\sum_{i=1}^{\ell} A^{(i)} \partial_i \mu = 0 \hbox{ in }{\mathcal D}'(\Ocal;\RR^k).  
\end{equation*}
\end{definition}

\begin{definition}[\(\Acal\)-quasiconvex functions]\label{def:Aqcx}
We say that a Borel function $f : \mathbb R^m \to \mathbb R$
is $\mathcal A$-quasiconvex if
$$f(\xi) \leq \int_{Q} f(\xi + w(x))\,dx$$ for every \(\xi\in\RR^m\)
and $w \in C^\infty_
{\rm per}(\mathbb R^\ell;\mathbb R^m) \cap \ker\mathcal A$, with
$
\int_{Q}
w(y)dy = 0.$
\end{definition}

\begin{definition}[\(\Acal\)-quasiconvex envelope] Given a Borel
function $f : \mathbb R^m\to R$, the \(\Acal\)-quasiconvex envelope
(or  $\mathcal A$-quasiconvexification)
of $f$ at $\xi \in \mathbb R^m$ is given by
$$Q_{\mathcal A}f(\xi) := \inf\left\{
\int_Q
f(\xi + w(x))dx : w \in C^\infty_{\rm per}
(\mathbb R^\ell;\mathbb R^m) \cap \ker{\mathcal A},
\, \int_Q
w(y)dy = 0
\right\}.$$
\end{definition}

Note that \(Q_{\mathcal A}f\) is the greatest \(\Acal\)-quasiconvex
function majorized by \(f\). Moreover, as  observed in the following
remark,  the notion of $\mathcal A$-quasiconvexification  extends
to
the $\mathcal A$-free setting
the notion of convexity and quasiconvexity.

\begin{remark}[Well-known properties of $\mathcal A$-quasiconvexity,
cf.~\cite{BrFoLe00,FMAq}]
\begin{itemize}
\item[(i)] In the \(\Acal\equiv 0\) case, which corresponds to
no  constraints on the admissible fields, $\mathcal A$-quasiconvexity
reduces to convexity by Jesen's inequality. In particular, given
a function 
$f : \mathbb R^m \to \mathbb R$, we have that \(Q_{\mathcal A}f\equiv
f^{**}\)  for  \(\Acal\equiv 0\), where 
$$
f^{**} (\xi)= \sup\{g(\xi)\colon g \leq f ,\, g \hbox{ is convex}\,
\}, \quad \xi\in\RR^m. $$

\item[(ii)] In the \(\Acal\equiv \curl \) case, meaning 
$$\Acal v =0 \quad \text{if, and only if,} \quad \frac{\partial
v_{jk}}
{\partial x_i}-\frac{\partial v_{ji}}{\partial x_k}
= 0 \enspace \text{for all } 1 \leq j \leq m, 1 \leq i, k \leq
\ell,$$
 where $v : \calO \to \mathbb R^{m\times \ell}$ is a matrix-valued
function,  then
\begin{align*}
\ker\left( \sum^\ell_{i=1}A^{(i)}w_i \right) &= \left\{V \in \mathbb
R^{m\times \ell} \colon \left( \sum^\ell_{i=1}A^{(i)}w_i \right)V =
0\right\} \\
&= \left\{V \in \mathbb R^{m\times \ell} : V = a\otimes w
\hbox{ for some } a \in \mathbb R^m\right\}\quad \text{for \(w\in
S^{\ell-1}.\)}
\end{align*}
In this case,  $\mathcal A$-quasiconvexity reduces to quasiconvexity,
and the condition in Definition~\ref{def:Aqcx} becomes 
$$f(\xi)\leq  \int_Qf(\xi + \nabla w(x))\,dx \quad \hbox{ for
every  \(\xi\in\RR^{m\times \ell}\) and
 }w \in C^\infty_0(Q;\mathbb R^m),
$$
while \(Q_{\mathcal A}f\equiv Qf\),
  where
$$
Qf(\xi) := \inf\left\{
\int_Q
f(\xi + \nabla w(x))\,dx\colon w \in C^\infty_0 (Q;\mathbb R^m)\right\}.$$
\item[(iii)] In the case of second-order gradients, in which
\(v\in L^p(\Ocal;(\RR^{\ell\times\ell}_s)^m)\) and \(\Acal v=0\)
if, and only if, \(v=\nabla^2 \phi\) for some \(\phi\in W^{1,p}(\Ocal,\RR^m)\),
then    $\mathcal A$-quasiconvexity reduces to 2-quasiconvexity,
where the latter is the notion in (ii) with ``\(\nabla w\)" replaced
by ``\(\nabla^2 w\)".
\end{itemize}
\end{remark}

In our context, with the set-up introduced in the Introduction, we deal with
the  first-order linear differential
operator of constant rank given by
\begin{equation}\label{Atilde}
\begin{aligned}
\widetilde{\mathcal{A}}: (v, \zeta) \to \widetilde{\mathcal{A}}(v,
\zeta)
= ( \mathcal{A}^1v, \mathcal{A}^2\zeta)
\end{aligned}
\end{equation}
where, for  $v:\mathbb R^{N-1} \to \big( \mathbb R_s^{(n-1)\times
(N-1)} \big)^d$,
\begin{equation*}
\begin{aligned}
\mathcal{A}^1v := \Big( \frac{\partial}{\partial x_i} v_{kj}
- \frac{\partial}{\partial
x_j} v_{ki} \Big)_{1 \leq i,j\leq N-1, 1\leq k \leq d} \in \mathbb
R^d
\end{aligned}
\end{equation*}
and, for $\zeta: \mathbb R^{N-1} \to (\mathbb R^{N-1})^d$,
\begin{equation*}
\begin{aligned}
\mathcal {A}^2\zeta := \Big( \frac{\partial}{\partial x_i}\zeta_j
- \frac{\partial}{\partial
x_j}\zeta_i\Big)_{1 \leq i,j\leq N-1} \in \mathbb R^d.
\end{aligned}
\end{equation*}

Following \cite{FMAq,GZNODEA}, the Kernel of the operator $\widetilde{\mathcal
{A}}$ can be characterized as follows.
If $(x_1, x_2) \in \ker \mathcal{A},$ then
there exist \(w_1, w_2 \in \mathbb
{R}^{N-2}\) and \(b_1, b_2\in\RR^d\) such that
\begin{equation*}
\begin{aligned}
x_1 = b_1 \otimes w_1\otimes w_1 \quad \text{and} \quad x_2 =
b_2 \otimes w_2.
\end{aligned}
\end{equation*}

We further refer to  \cite[Remark 2.4]{GZNODEA} for more details
on the description of the operator  and its kernel in the case
in which $d=3$ and $N=3$. In particular, $\widetilde{\mathcal
A}$ is a constant-rank operator. Moreover (cf.~ \cite[(2.8)--(2.12)]{GZNODEA}),
the $\widetilde{\mathcal A}$-quasiconvexification of the function
\(W_0\) in \eqref{eq:defhatzero}, \(Q_{\widetilde\Acal} W_0\),
corresponds to the cross
2-quasiconvexification--quasiconvexification, where the 2-quasiconvexification
 is with respect to the first argument, while the quasiconvexification
is with respect to the  second argument.

\section{Auxiliary results}

The main goal of this section is to establish certain lower semicontinuity
results involving sequences with prescribed boundary conditions, which we
will use to prove our main results stated in the Introduction. 

We first introduce functionals defined on measures, which will be useful for
our analysis.
Let \(\Ocal\subset\RR^\ell\) be an open and bounded set. Given $\mu=m^a\mathcal
L^\ell+\mu^s\in\mathcal M(\Ocal ;\mathbb R^m)$ and $\Phi\colon \Ocal\times
\mathbb R^m\to[0,+\infty)$ continuous, let 
\begin{equation}\label{959}
        {\mathcal I}(\mu) := \int_{\Ocal} \Phi ( m^a(x))\,dx +\int_{\Ocal}
\Phi^\infty \bigg(\frac{d \mu^s}{d |\mu^s|}(x)\bigg)\,d |\mu^s|(x),
\end{equation}
where $\Phi^\infty$ is the \emph{recession function} of $\Phi$ at infinity,
defined by
\begin{equation*}\label{defrecession}
        \Phi^\infty(\xi):= \limsup_{\substack{\xi' \to \xi\\
t\to +\infty}}\frac{\Phi(x',t \xi')}{t}
\end{equation*}
for every 
$\xi\in\mathbb S^{m-1}$ and extended to
$\mathbb R^m$ by positive $1$-homogeneity.

As  in \cite[Subsection 2.3]{KR1}, we observe that if $\Phi$
is globally Lipschitz, then the above definition reduces to
 \begin{equation}\label{S101}
            \Phi^\infty(\xi):=\limsup_{ t\to+\infty}\frac{\Phi(t\xi)}{t},
\quad \xi\in\RR^\scrk.
                \end{equation}
In this particular case, Reshetnyak upper-semicontinuity theorem (see \cite[Corollary 2.11]{BCMS}) specializes as follows.
                
 \begin{theorem}\label{ReshetnyaktheoremBCMS}
                For \(n\in\NN\), let $\mu_n$, $\mu \in\mathcal M(\Ocal;\mathbb R^m)$ %\red{be a sequence of
%measures and let $\mu=m^a\cL^N+\mu^s\in\cM(U;\R{\ell})$} 
                be such that  $(\mu_n)_{n\in\NN}$ (area) $\langle\cdot\rangle$-strictly
converges to $\mu$.
                Let $\Phi\colon \Ocal\times\mathbb R^{m}\to[0,+\infty)$ be
a Lispchitz function, and consider the functional  
\(\mathcal{I}\) in \eqref{959}. Then, 
                                $${\mathcal I
                }(\mu)\geq\limsup_{n\to\infty} \mathcal I(\mu_n).$$
               
        \end{theorem}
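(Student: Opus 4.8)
The plan is to deduce the statement from the Reshetnyak-type upper semicontinuity result \cite[Corollary~2.11]{BCMS} by the classical device of adjoining one Lebesgue component, which turns $\Phi$ into a positively $1$-homogeneous integrand. First I would introduce $\bar\Phi\colon\Ocal\times\big([0,+\infty)\times\RR^m\big)\to[0,+\infty)$ given by $\bar\Phi(x,t,\xi):=t\,\Phi(x,\xi/t)$ for $t>0$ and $\bar\Phi(x,0,\xi):=\Phi^\infty(x,\xi)$. This function is positively $1$-homogeneous in $(t,\xi)$, satisfies $\bar\Phi(x,1,\cdot)=\Phi(x,\cdot)$, and — because $\Phi$ is globally Lipschitz on $\Ocal\times\RR^m$ with $\Ocal$ bounded, hence of linear growth, so that its recession function is the simplified $\limsup$ recorded in \cite[Subsection~2.3]{KR1} and \eqref{S101} — a short estimate of the type $|t\Phi(x,\xi/t)-t\Phi(x,\xi_0/t)|\le L\,|\xi-\xi_0|$ shows that $\bar\Phi$ is upper semicontinuous up to $\{t=0\}$ and of linear growth.

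Next I would lift the measures by setting $\tilde\mu_n:=(\mathcal L^\ell,\mu_n)$ and $\tilde\mu:=(\mathcal L^\ell,\mu)$ in $\mathcal M(\Ocal;\RR^{1+m})$, where $\mathcal L^\ell$ is the Lebesgue measure on $\Ocal$. Writing $\mu_n=m^a_n\mathcal L^\ell+\mu^s_n$ as in Definition~\ref{1100} and using that the absolutely continuous and singular parts of $\tilde\mu_n$ are mutually singular, one obtains $|\tilde\mu_n|=\sqrt{1+|m^a_n|^2}\,\mathcal L^\ell+|\mu^s_n|=\langle\mu_n\rangle$, and likewise $|\tilde\mu|=\langle\mu\rangle$; moreover the polar of $\tilde\mu$ equals $(1,m^a)/\langle m^a\rangle$ on the a.c.\ set and $(0,\,\tfrac{d\mu^s}{d|\mu^s|})$ on the singular set, so it indeed takes values in $[0,+\infty)\times\RR^m$. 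Since the first component of the lift is frozen, the assumption that $(\mu_n)$ converges area $\langle\cdot\rangle$-strictly to $\mu$ says exactly that $\tilde\mu_n\overset{*}{\rightharpoonup}\tilde\mu$ with $|\tilde\mu_n|(\Ocal)\to|\tilde\mu|(\Ocal)$, i.e.\ $\tilde\mu_n\to\tilde\mu$ strictly in $\mathcal M(\Ocal;\RR^{1+m})$. The positive $1$-homogeneity of $\bar\Phi$, together with $\bar\Phi(x,1,\cdot)=\Phi(x,\cdot)$ and $\bar\Phi(x,0,\cdot)=\Phi^\infty(x,\cdot)$, then gives the identity $\mathcal I(\nu)=\int_\Ocal\bar\Phi\big(x,\tfrac{d\tilde\nu}{d|\tilde\nu|}(x)\big)\,d|\tilde\nu|(x)$ for $\tilde\nu:=(\mathcal L^\ell,\nu)$, valid for $\nu=\mu$ and for each $\nu=\mu_n$.

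Finally, applying \cite[Corollary~2.11]{BCMS} to the strictly convergent sequence $\tilde\mu_n\to\tilde\mu$ and to the upper semicontinuous integrand $\bar\Phi$ of linear growth yields $\mathcal I(\mu)=\int_\Ocal\bar\Phi\big(x,\tfrac{d\tilde\mu}{d|\tilde\mu|}\big)d|\tilde\mu|\ge\limsup_{n\to\infty}\int_\Ocal\bar\Phi\big(x,\tfrac{d\tilde\mu_n}{d|\tilde\mu_n|}\big)d|\tilde\mu_n|=\limsup_{n\to\infty}\mathcal I(\mu_n)$, which is the claim. I expect the only genuinely delicate point to be the verification in the first step that the $1$-homogeneous extension $\bar\Phi$ is upper semicontinuous across the hyperplane $\{t=0\}$ — this is precisely where the global Lipschitz hypothesis on $\Phi$ (beyond mere continuity) is needed; the rest is bookkeeping with the Radon--Nikodym decomposition of $\tilde\mu$.
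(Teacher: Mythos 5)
Your argument is sound and is essentially the standard derivation: the paper itself offers no proof of Theorem~\ref{ReshetnyaktheoremBCMS}, merely remarking that it is a specialization of \cite[Corollary~2.11]{BCMS}, whereas you give a self-contained reduction to a Reshetnyak-type upper semicontinuity theorem for strictly convergent measures via the classical perspective lift. Your bookkeeping is correct: since the absolutely continuous part of $\tilde\mu_n=(\mathcal L^\ell,\mu_n)$ is $(1,m^a_n)\mathcal L^\ell$ and the singular part is $(0,\mu^s_n)$, and these are mutually singular, one indeed has $|\tilde\mu_n|=\sqrt{1+|m^a_n|^2}\,\mathcal L^\ell+|\mu^s_n|=\langle\mu_n\rangle$, so area-strict convergence of $\mu_n$ is precisely strict convergence of $\tilde\mu_n$; and the positive $1$-homogeneity of $\bar\Phi$ with $\bar\Phi(x,1,\cdot)=\Phi(x,\cdot)$, $\bar\Phi(x,0,\cdot)=\Phi^\infty(x,\cdot)$ recovers $\mathcal I$ exactly.

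The one delicate point you flag — upper semicontinuity of $\bar\Phi$ across $\{t=0\}$ — is handled correctly by the Lipschitz hypothesis. If $(t_n,\xi_n)\to(0,\xi_0)$ with $t_n>0$ and $L$ is the Lipschitz constant, then $|t_n\Phi(x,\xi_n/t_n)-t_n\Phi(x,\xi_0/t_n)|\le L|\xi_n-\xi_0|\to 0$, so $\limsup_n t_n\Phi(x,\xi_n/t_n)=\limsup_n\Phi(x,\xi_0/t_n)/(1/t_n)\le\Phi^\infty(x,\xi_0)$; moreover the same estimate shows $\Phi^\infty(x,\cdot)$ is Lipschitz (so the $\limsup$ in \eqref{S101} equals the one in \eqref{defrecession}, and $\bar\Phi$ is continuous on sequences with $t_n\equiv0$ as well). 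Two small remarks. First, make sure the Reshetnyak-type upper-semicontinuity statement you invoke is stated for \emph{strictly} convergent sequences and positively $1$-homogeneous, upper-semicontinuous integrands (bounded on $\Ocal\times S^m$, which $\bar\Phi$ is by the linear growth of $\Phi$); if \cite[Corollary~2.11]{BCMS} is already phrased for area-strict convergence of the original measures and functionals of the form \eqref{959}, then it can be applied directly to $\mu_n$ and $\Phi$ and the lift is superfluous, though certainly not wrong. Second, the polars of $\tilde\mu_n$ and $\tilde\mu$ live in the closed half-space $\{t\ge0\}$, so it is immaterial that $\bar\Phi$ is only defined there; you implicitly use this, but it is worth stating.
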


Observe also that if $\Phi$ a Lipschitz function, then there exists $C_\Phi>0$ such that \begin{equation*}%\label{S100}
 \Phi(\xi)\leq C_\Phi(1+ |\xi|)
\end{equation*}
for every $\xi \in \mathbb
R^\scrk$.

To handle the lower semicontinuity  in the $\mathcal A$-free setting with
linear growth conditions, we can refer to \cite[Theorem 1.1]{BCMS} or \cite[Theorem~1.2]{ARDPR}.
Indeed the following result, which will be applied in the sequel with $\mathcal A$ replaced by $\tilde{\mathcal A}$ in \eqref{Atilde}, holds.

\begin{theorem}\label{thmARDPR}
Let $\scrk \in \mathbb N,$   $ \Phi\colon \mathbb R^\scrk \to [0,+\infty)$
 a Lipschitz and $\mathcal A$-quasiconvex
integrand, and \(\calO\subset\RR^\ell\) an open and bounded Lipschitz set. 
Then, the functional  
\(\mathcal{I}\) in \eqref{959} 
%%
%%
%%\begin{align}\label{funcARDPR}
%%F[\mu] :=
%%\int_\calO \Phi\left(\frac{d \mu^a}{d \mathcal L^\ell}\right) %%dx + \int_\calO
%%\Phi^\infty\left(\frac{d \mu^s}{d |\mu^s|}\right) d |\mu^s|
%%\end{align}
is sequentially weakly-\(\ast\) lower semicontinuous on the space
$\mathcal M(\calO;\mathbb R^\scrk )\cap{\rm ker}\mathcal A :=
\{
\mu \in \mathcal M(\calO;\mathbb R^\scrk )\colon \mathcal  A \mu = 0\}$.
\end{theorem}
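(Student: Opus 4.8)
The plan is to reduce this $\mathcal A$-quasiconvex lower semicontinuity statement to the known result \cite[Theorem 1.1]{BCMS} (equivalently \cite[Theorem~1.2]{ARDPR}), treating the present formulation as essentially a restatement in our notation. First I would recall the setup: we are given a sequence $(\mu_n)_n$ in $\mathcal M(\calO;\RR^\scrk)$ with $\Acal \mu_n = 0$ for all $n$, and $\mu_n \weaklystar \mu$ in $\mathcal M(\calO;\RR^\scrk)$; since $\Acal$ is a constant-coefficient operator, distributional testing passes to the limit and $\Acal \mu = 0$ as well, so the limit stays in the admissible class $\mathcal M(\calO;\RR^\scrk) \cap \ker\Acal$. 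The goal is then $\mathcal I(\mu) \leq \liminf_{n} \mathcal I(\mu_n)$, where $\mathcal I$ is the functional in \eqref{959} built from the Lipschitz, $\Acal$-quasiconvex integrand $\Phi$.

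Next I would verify the hypotheses needed to invoke the cited theorem. Because $\Phi$ is Lipschitz, the recession function reduces to the simpler one-homogeneous form \eqref{S101}, and in particular $\Phi$ has linear growth, $\Phi(\xi) \le C_\Phi(1+|\xi|)$, so that $\mathcal I(\mu_n)$ is finite and the integral functional is well defined on measures. The hypotheses of \cite[Theorem 1.1]{BCMS} are: $\Phi \ge 0$, continuous (indeed Lipschitz) with linear growth, $\Phi$ is $\Acal$-quasiconvex, $\Acal$ has constant rank, and $\calO$ is open, bounded with Lipschitz boundary; all of these are exactly the assumptions here. I would also note that since $\Phi$ does not depend on the spatial variable $x$, the autonomous case of that theorem applies directly and no additional continuity-in-$x$ conditions are needed. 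The conclusion of the cited theorem is precisely that $\mu \mapsto \mathcal I(\mu)$ is sequentially weakly-$\ast$ lower semicontinuous on $\mathcal M(\calO;\RR^\scrk) \cap \ker\Acal$, which is the statement to be proved.

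The only genuinely non-trivial point, which I would address explicitly, is matching the \emph{definition} of the relaxed functional on measures used in \cite{BCMS,ARDPR} with the $\mathcal I$ of \eqref{959}. Those references typically state lower semicontinuity for the functional $\mu \mapsto \int_\calO \Phi^{\ast\ast\text{-type}}$ or for $\Phi$ already $\Acal$-quasiconvex; here $\Phi$ is assumed $\Acal$-quasiconvex from the outset, so the integrand is its own $\Acal$-quasiconvex envelope and the split into absolutely continuous part (integrated against $\mathcal L^\ell$) plus singular part (integrated against $\Phi^\infty$ applied to the polar $\frac{d\mu^s}{d|\mu^s|}$) coincides verbatim with the functional for which lower semicontinuity is established. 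One should check that the recession function $\Phi^\infty$ used in \eqref{959} agrees with the one in the reference; for Lipschitz $\Phi$ both equal the limit in \eqref{S101}, and this $\Phi^\infty$ is itself $\Acal$-quasiconvex and positively $1$-homogeneous, so the singular term is handled by the same blow-up/Reshetnyak machinery internal to \cite{BCMS}. Hence the main (and essentially only) obstacle is this bookkeeping identification of definitions; once it is in place the theorem follows immediately by citing \cite[Theorem 1.1]{BCMS}. I would close by remarking that in our applications $\Acal$ will be the operator $\widetilde{\mathcal A}$ of \eqref{Atilde}, which we have already checked has constant rank, so the theorem applies in that case.
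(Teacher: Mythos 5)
Your overall strategy—reduce the statement to a citation of \cite[Theorem~1.1]{BCMS} or \cite[Theorem~1.2]{ARDPR}—is indeed what the paper does. However, your verification of the hypotheses contains a genuine gap. You assert that the hypotheses of \cite[Theorem~1.1]{BCMS} ``are exactly the assumptions here,'' but this is not the case: both cited results carry \emph{additional} hypotheses beyond those of Theorem~\ref{thmARDPR}. In \cite{ARDPR}, the lower semicontinuity result is proved under extra structural conditions on the recession function $\Phi^\infty$ (not just that it be the limit \eqref{S101}), and in \cite{BCMS} the theorem requires the extra hypothesis that the weak-$\ast$ limit $\Lambda$ of the total-variation measures $|\mu_n|$ does not charge the boundary, $\Lambda(\partial\calO)=0$. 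Neither condition appears in the statement of Theorem~\ref{thmARDPR}, so one cannot simply say the hypotheses match and stop; one must explain why these extra conditions can be dropped.

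This is precisely the content of the paper's Remark~\ref{hypfinfty}, which you omit. There the authors argue, for the \cite{ARDPR} route, that by \cite[Remark~1.11]{ARDPR} the additional conditions on $\Phi^\infty$ are only required for differential operators of order greater than one, hence are superfluous for the first-order operator considered here; and, for the \cite{BCMS} route, that the hypothesis $\Lambda(\partial\calO)=0$ is used in the proof of \cite[Theorem~1.1]{BCMS} only to establish \cite[(3.5)]{BCMS}, which holds trivially when the integrand is nonnegative, while the remaining places in that proof invoking conditions of the form ``$\Lambda(\partial Q')=0$'' can be handled by a suitable choice of cubes and by verifying \cite[Proposition~2.22(iii)]{BCMS} via \cite[Corollary~5.8]{H} and \cite[Theorem~1.4]{Giusti}. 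Your paragraph on ``matching the definition of the relaxed functional'' is, by contrast, not where the difficulty lies---the functional \eqref{959} is literally the one appearing in the cited references---so that effort is misdirected. To make the proof correct you should replace the claim of an exact match of hypotheses with the above discussion of why the extra hypotheses in \cite{ARDPR} and \cite{BCMS} are either inapplicable or can be removed under the assumptions of Theorem~\ref{thmARDPR}.
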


\begin{remark}[On the assumptions in Theorem~\ref{thmARDPR}]\label{hypfinfty}
In \cite{ARDPR}, Theorem~\ref{thmARDPR} is proved under additional assumptions
on the recession function, \(\Phi^\infty\) (cf. \cite[Theorem~1.2]{ARDPR}).
However, as pointed out in \cite[Remark~1.11]{ARDPR}, these additional conditions
 are only needed  to handle differential operators of order higher than one.
In the first-order case, which is the context of  this manuscript, such conditions
on \(\Phi^\infty\) are not necessary.

We further emphasize that Theorem~\ref{thmARDPR} is proved in \cite[Theorem
1.1]{BCMS}, also under additional assumptions that, as we address next, are
not needed in our case.
Precisely, our fields $(D^2_{x'} u^b_n, \frac{1}{h_n} D^2_{x',x_N}u^b_n)$
and $( \frac{1}{r_n}D^2_{x',x_N}u^a_n, D^2_{x_N}u^a_n)$, regarded as measures
$\mu_n^b$ and $\mu^a_n$ on \(\Omega ^a\) and \(\Omega^b\), respectively,
satisfy all the assumptions of \cite[Theorem 1.1]{BCMS}, up to the fact that
\begin{align}\label{noLambda0}
 \Lambda^a(\partial \Omega^a)= \Lambda^b(\partial \Omega^b)=0,
\end{align} 
where \(\Lambda^a\in\mathcal M(\overline
\Omega^a)  \) and \(\Lambda^b\in\mathcal  M(\overline
\Omega^b) \) satisfy $|\mu_n^a|\overset{\ast}{\rightharpoonup} \Lambda^a$
in $\mathcal M(\overline
\Omega^a)$ and $|\mu_n^b|\overset{\ast}{\rightharpoonup} \Lambda^b$ in $\mathcal
M(\overline
\Omega^b)$.  
 However, a careful inspection of the proof shows that the above condition
\eqref{noLambda0} on the limiting measure on the boundary  was needed to
get \cite[(3.5)]{BCMS}, which is trivially verified in our context because
our  densities are positive.
Moreover,
 \cite[Theorem 1.1]{BCMS} relies also on \cite[Proposition 3.1]{BCMS} which
proves \cite[(3.12) and (3.13)]{BCMS}, where the condition \eqref{noLambda0}
above is used just for  \cite[(3.5)]{BCMS}. Also,  the remaining arguments
 invoking conditions of the type $\Lambda^a(\partial Q')$ and $\Lambda^b(\partial
Q')=0$ can be easily verified without the assumption \eqref{noLambda0}. For
instance,  such condition on the boundary of cubes is necessary to apply
\cite[Lemma 2.20]{BCMS}  but also in this case, the argument  relies on a
suitable choice of cubes and not on the assumption \eqref{noLambda0}. Finally,
we observe that the conditions of \cite[Proposition~2.22]{BCMS}, which plays
an important role in the proof of \cite[Theorem 1.1]{BCMS}, are automatically
verified by our fields provided we ensure that \cite[(iii) Proposition
2.22]{BCMS} holds. The latter   follows from \cite[Corollary 5.8]{H} for
the fields that
are hessians (or part of hessians) and \cite[Theorem 1.4]{Giusti}
for the fields that are gradients (or part of gradients).

We also remark that in our subsequent application the linear growth of $\Phi$ and its $\mathcal A$-qasiconvexity with respect to the operator $\tilde{\mathcal A}$ in \eqref{Atilde} imply that $\Phi$ is Lipschitz.
\end{remark}

Next, using the trace operator discussed in Subsection~\ref{subsect:trace},
 we establish several results concerning approximating sequences with certain
prescribed boundary traces.

\begin{proposition}\label{asLemma4.1BFMTraces}
Let $\calO \subset \RR^\ell$ be a bounded  open set with \(C^1\) boundary.
For every $\theta \in L^1(\partial \calO;\mathbb
R^m)$  and $\zeta \in BV(\calO;\mathbb
R^m)$, there exists a sequence $\{\zeta _\varepsilon\}_\epsi \subset W^{1,1}(\calO;\mathbb
R^m)$ such that $\Tr \zeta _\varepsilon= \theta$, $\zeta _\varepsilon
\to \zeta $ in $L^1(\calO;\mathbb R^m)$, 
and 
$$
\limsup_{\varepsilon \to 0} \int_\calO \sqrt{1+ |\nabla\zeta _\varepsilon|^2}
dx \leq \int_\calO \sqrt{1+ |\nabla\zeta |^2}dx + |D^s \zeta |(\calO
)+ \int_{\partial \calO} |\theta- \Tr \zeta | d \mathcal H^{\ell-1}.
$$
\end{proposition}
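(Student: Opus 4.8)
The plan is to reduce the statement to a combination of the classical $BV$ trace-extension/approximation result and a diagonalization argument. First I would invoke Theorem~\ref{thm:KRBV} (the $BV$-approximation result of \cite{KR}) or, more classically, a standard smooth approximation of $\zeta$: there is a sequence $\{\zeta_j\}_j \subset W^{1,1}(\calO;\RR^m)\cap C^\infty(\calO;\RR^m)$ with $\zeta_j \to \zeta$ in $L^1(\calO;\RR^m)$ and $|D\zeta_j|(\calO) \to |D\zeta|(\calO)$ (strict convergence), hence also $\int_\calO \sqrt{1+|\nabla\zeta_j|^2}\,dx \to \langle D\zeta\rangle(\calO) = \int_\calO\sqrt{1+|\nabla\zeta|^2}\,dx + |D^s\zeta|(\calO)$ by the area-strict continuity of the area functional. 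Since the trace operator $\Tr\colon BV(\calO;\RR^m)\to L^1(\partial\calO;\mathcal H^{\ell-1})$ is continuous with respect to strict convergence (Remark~\ref{rmk:trBV}), we get $\Tr\zeta_j \to \Tr\zeta$ in $L^1(\partial\calO;\RR^m)$.

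The core step is then to fix the trace. For each fixed $j$, I would correct $\zeta_j$ near $\partial\calO$ by a boundary-layer construction: using the surjectivity of the $W^{1,1}$ trace operator (Theorem~\ref{TrBV}, final assertion) and the $C^1$ regularity of $\partial\calO$, choose $\Psi_{j}\in W^{1,1}(\calO;\RR^m)$ supported in a thin collar $\{x\in\calO : \dist(x,\partial\calO)<\delta\}$ with $\Tr\Psi_{j} = \theta - \Tr\zeta_j$ and $\int_\calO |\nabla\Psi_j|\,dx \le \int_{\partial\calO}|\theta - \Tr\zeta_j|\,d\mathcal H^{\ell-1} + \eta_j$ for a small error $\eta_j\to 0$; one may also arrange $\|\Psi_j\|_{L^1(\calO)}$ small by shrinking the collar width $\delta=\delta_j\to 0$, at the cost of possibly increasing the gradient — this is the standard tension in such constructions, so the collar width must be chosen carefully depending on $j$. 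Setting $\tilde\zeta_j := \zeta_j + \Psi_j$, we have $\Tr\tilde\zeta_j = \theta$ and $\tilde\zeta_j \to \zeta$ in $L^1$. For the area estimate I would use the subadditivity inequality (an integrated form of \eqref{eq:estsumang}),
\begin{equation*}
\int_\calO \sqrt{1+|\nabla\tilde\zeta_j|^2}\,dx \le \int_\calO \sqrt{1+|\nabla\zeta_j|^2}\,dx + \int_\calO |\nabla\Psi_j|\,dx \le \int_\calO \sqrt{1+|\nabla\zeta_j|^2}\,dx + \int_{\partial\calO}|\theta-\Tr\zeta_j|\,d\mathcal H^{\ell-1} + \eta_j.
\end{equation*}
Passing to the $\limsup$ as $j\to\infty$, the first term on the right tends to $\int_\calO\sqrt{1+|\nabla\zeta|^2}\,dx + |D^s\zeta|(\calO)$, the boundary term tends to $\int_{\partial\calO}|\theta-\Tr\zeta|\,d\mathcal H^{\ell-1}$ by the strict-convergence continuity of the trace, and $\eta_j\to 0$. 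Relabeling $\{\tilde\zeta_j\}_j$ by a parameter $\varepsilon\to 0$ gives the claimed sequence $\{\zeta_\varepsilon\}_\varepsilon$.

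The main obstacle is the boundary-layer correction: one needs a trace-lifting $\Psi_j$ whose $L^1$-norm of the \emph{gradient} is controlled by the $L^1$-norm of the prescribed boundary datum $\theta-\Tr\zeta_j$ on $\partial\calO$, with only a small (controllable) error, while keeping $\Psi_j\to 0$ in $L^1(\calO)$. On a $C^1$ domain this is achieved via a tubular-neighborhood parametrization $x\mapsto (y,t)$, $y\in\partial\calO$, $t\in[0,\delta)$, and the ansatz $\Psi_j(y,t) = \chi(t/\delta_j)\,g_j(y)$ where $g_j$ is a smooth $L^1$-approximation of $\theta-\Tr\zeta_j$ on $\partial\calO$ and $\chi$ is a cutoff; the tangential derivative contributes $O(\delta_j\|\nabla_\tau g_j\|_{L^1(\partial\calO)})$, which is made negligible by choosing $\delta_j$ small relative to $\|\nabla_\tau g_j\|_{L^1}$, and the normal derivative contributes exactly $\int_{\partial\calO}|g_j|\,d\mathcal H^{\ell-1}$ up to lower-order terms from the Jacobian of the parametrization (which is $1+O(\delta_j)$ since $\partial\calO\in C^1$). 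This is precisely the type of construction carried out in \cite[Lemma~4.1]{BFMbend} and analogous references, and I would cite/adapt it rather than reproduce all details. Everything else — the strict approximation of $\zeta$, the continuity of the trace under strict convergence, and the area subadditivity — is standard.
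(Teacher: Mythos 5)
Your proof is correct and follows essentially the same strategy as the paper: approximate $\zeta$ area-strictly by smooth $W^{1,1}$ functions, then add a Giusti-type boundary-layer lifting with $L^1$-gradient controlled by the boundary datum, and combine via the subadditivity inequality \eqref{eq:estsumang}. The only difference is that the paper invokes a trace-preserving area-strict approximation (Rindler, Lemma~11.1), so that the lifting target $\theta-\Tr\zeta$ is fixed and no diagonalization nor continuity-of-trace-under-strict-convergence step is needed, whereas you use a generic smooth approximation and compensate with those extra (valid) arguments; the paper also cites \cite[Theorem~2.16 and Remark~2.7]{Giusti} directly for the lifting rather than reconstructing the collar ansatz.
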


\begin{proof}[Proof]
By  \cite[Lemma~11.1]{Rin}, there exists a sequence
\(\{v_\varepsilon\}_\varepsilon
\subset W^{1,1}(\calO;\mathbb
R^m)\cap C^\infty(\calO;\mathbb
R^m)\) that area strictly converges to $\zeta $
in $\calO$, with $\Tr v_\varepsilon = \Tr  \zeta $. On the other 
hand, by \cite[Theorem~2.16 and Remark~2.7]{Giusti}, we can find a
sequence \(\{\sigma_\varepsilon\}_\varepsilon \subset W^{1,1}(\calO;\mathbb
R^m)\)
such that \(\Tr \sigma_\varepsilon= \theta- \Tr \zeta \), $\int_\calO
|\sigma_\varepsilon|\, dx \leq \varepsilon \int_{\partial \calO} |\theta
-\Tr \zeta |\,d \mathcal H^{\ell-1}$, and $\int_\calO |\nabla
\sigma_\varepsilon|\,dx
\leq (1+\varepsilon )\int_{\partial \calO} |\theta -\Tr \zeta |\,d \mathcal
H^{\ell-1} $.

Then, defining $\zeta _\varepsilon:= v_\varepsilon + \sigma_\varepsilon$,
we have  that $\Tr \zeta _\varepsilon= \theta$ and, also using \eqref{eq:estsumang},
\begin{align*}
\limsup_{\varepsilon \to 0 }\int_\calO \sqrt{1+ |\nabla\zeta _\varepsilon|^2}\,dx
&\leq \limsup_{\varepsilon\to 0}\bigg(\int_{\calO} \sqrt{1+
|\nabla v_\varepsilon|^2\,}dx + \int_{\calO} |\nabla
\sigma_\varepsilon | \,dx \bigg)
\\
&\leq \int_{\calO} \sqrt{1+ |\nabla\zeta |^2}\,dx
+ |D^s \zeta |(\calO)+ \int_{\partial \calO} |\theta - \Tr \zeta |\,d
\mathcal H^{\ell-1}.\qedhere
\end{align*}
\end{proof}

\begin{remark} If $\theta \in W^{1,1}(\calO;\mathbb R^\scrk ) $ is extended
outside $\calO$,
 then we can extend $v_\varepsilon $ as $\theta$ (its boundary trace on $\partial
\Omega$) outside \(\calO\), say \(\calO'\supset \calO\), to get that $v_\varepsilon$
 area-strictly converges in \(\calO'\) to \[\zeta ':=\begin{cases}
\zeta  & \hbox{ in } \calO,\\
 \theta &\hbox{ in } \calO'\setminus \calO.
\end{cases}\]
\end{remark}

\begin{proposition}
    \label{thm2.16G_BH}
    Let $\calO\subset\RR^\ell$ be a bounded open set with $C^1$ boundary,
and let $\varphi \in L^1(\partial \calO;\RR^m)$.  For every $\varepsilon
>0$, there exists  $\psi \in W^{2,1}(\calO;\RR^m)$ such that 
\begin{align}
&(\Tr  (\nabla\psi))_\nu= \varphi, &\label{normalderivative}\\
&\Tr (\psi) =0, \; (\Tr (\nabla\psi))_{\tau}=0, &\label{tracebd}
\end{align}
and
\begin{equation}
\label{normboundtrace}
\begin{aligned}
&\int_{\calO} |\psi|dx \leq \varepsilon \int_{\partial \calO} |\varphi| d
{\mathcal H}^{\ell-1},\\
& \int_{\calO} |\nabla\psi| dx \leq \varepsilon\int_{\partial \calO}
|\varphi| d {\mathcal H}^{\ell-1},\\
&\int_{\calO} |\nabla^2 \psi| dx \leq (1+\varepsilon )\int_{\partial \calO}
|\varphi| d {\mathcal H}^{\ell-1}.
\end{aligned}
\end{equation}
\end{proposition}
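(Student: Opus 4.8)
The plan is to carry the argument behind \cite[Theorem~2.16 and Remark~2.7]{Giusti} one derivative higher, realizing $\psi$ as a cut-off, concentrated in a thin collar of $\partial\calO$, of a fixed right inverse of the second-order trace operator. First I would invoke Theorem~\ref{Dthm1} (assuming for the moment that $\partial\calO$ is regular enough; see below) to produce some $\bar\psi\in W^{2,1}(\calO;\RR^m)$ with $\Tr^{2}\bar\psi=(0,\varphi)$, i.e.\ $\Tr(\bar\psi)=0$ and $(\Tr(\nabla\bar\psi))_\nu=\varphi$; this is possible since $0\in\Tr(W^{2,1}(\calO))$ and $\varphi\in L^1(\partial\calO;\RR^m)$. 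As $(\Tr(\nabla\bar\psi))_\tau=\nabla_\tau(\Tr\bar\psi)=0$ automatically, $\bar\psi$ already satisfies \eqref{normalderivative}--\eqref{tracebd}; only its norms need not be small. To remedy this, set $d:=\dist(\cdot,\partial\calO)$ and, for small $\delta>0$, $\psi:=\theta_\delta\,\bar\psi$ with $\theta_\delta(x):=\theta(d(x)/\delta)$, where $\theta\in C^{1,1}([0,\infty))$ is a profile with $\theta(0)=1$, $\supp\theta\subset[0,1)$ and $\theta'(1)=0$, to be chosen. Because $\theta_\delta\equiv1$ on $\partial\calO$ and $\Tr\bar\psi=0$, the trace identities survive the multiplication ($\Tr\psi=0$, $(\Tr\nabla\psi)_\tau=0$, and $\Tr(\nabla\psi)=\Tr(\nabla\bar\psi)=\varphi\otimes\nu$), so \eqref{normalderivative}--\eqref{tracebd} hold. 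The first two estimates in \eqref{normboundtrace} are then easy: $\psi$ is supported in the collar $\{0<d<\delta\}$, so $\int_\calO|\psi|\le\int_{\{d<\delta\}}|\bar\psi|\to0$ by dominated convergence, while $\int_\calO|\nabla\psi|\le\int_{\{d<\delta\}}|\nabla\bar\psi|+\tfrac{C}{\delta}\int_{\{d<\delta\}}|\bar\psi|\to0$, using $\Tr\bar\psi=0$, hence $\int_{\{d<\delta\}}|\bar\psi|=O(\delta^2)$. Both are therefore $\le\varepsilon\int_{\partial\calO}|\varphi|\,d\mathcal H^{\ell-1}$ for $\delta$ small.

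The heart of the matter is the third estimate, which must hold with the sharp constant $1+\varepsilon$. Expanding $\nabla^2\psi=\theta_\delta\nabla^2\bar\psi+\nabla\theta_\delta\otimes\nabla\bar\psi+\nabla\bar\psi\otimes\nabla\theta_\delta+\bar\psi\otimes\nabla^2\theta_\delta$, the first term contributes $\int_{\{d<\delta\}}|\nabla^2\bar\psi|\to0$. For the remaining three one substitutes $\nabla\theta_\delta=\tfrac1\delta\theta'(d/\delta)\nabla d$ and $\nabla^2\theta_\delta=\tfrac1{\delta^2}\theta''(d/\delta)\,\nabla d\otimes\nabla d+\tfrac1\delta\theta'(d/\delta)\nabla^2 d$, uses $|\nabla d|=1$ and $\nabla d=-\nu$ near $\partial\calO$ together with $\Tr(\nabla\bar\psi)=\varphi\otimes\nu$ and the first-order vanishing of $\bar\psi$ at $\partial\calO$; then the $\nabla^2 d$-term is $O(\delta)$ and the leading part collapses to $-\varphi\otimes\nabla d\otimes\nabla d\cdot\big(\tfrac2\delta\theta'(d/\delta)+\tfrac{d}{\delta^2}\theta''(d/\delta)\big)$, of Frobenius norm $|\varphi|\,\big|\tfrac2\delta\theta'(d/\delta)+\tfrac{d}{\delta^2}\theta''(d/\delta)\big|$. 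By the coarea formula and $\int_{\{d=t\}}|\nabla\bar\psi|\,d\mathcal H^{\ell-1}\to\int_{\partial\calO}|\varphi|\,d\mathcal H^{\ell-1}$ as $t\to0$,
\begin{equation*}
\limsup_{\delta\to 0}\int_\calO|\nabla^2\psi|\,dx\ \le\ \Big(\int_0^1|\gamma''(s)|\,ds\Big)\int_{\partial\calO}|\varphi|\,d\mathcal H^{\ell-1},\qquad\gamma(s):=s\,\theta(s),
\end{equation*}
where one recognizes $2\theta'(s)+s\theta''(s)=\gamma''(s)$, and $\gamma$ satisfies $\gamma(0)=0$, $\gamma'(0)=1$, $\gamma(1)=\gamma'(1)=0$, so that $\int_0^1\gamma''=-1$ and $\int_0^1|\gamma''|\ge1$. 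The point is that the infimum of $\int_0^1|\gamma''|$ over admissible profiles equals $1$: taking $\gamma'$ to decrease linearly from $1$ to $-\tfrac\varepsilon2$ on $[0,\tfrac\varepsilon2]$ and then increase linearly back to $0$ on $[\tfrac\varepsilon2,1]$ yields a non-negative $\gamma$ with $\int_0^1\gamma'=0$ and $\int_0^1|\gamma''|=1+\varepsilon$, whose associated $\theta=\gamma/s$ is an admissible $C^{1,1}$ profile with $\theta(0)=1$, $\theta(1)=\theta'(1)=0$. Fixing such $\theta$, then $\delta$ small, and relabelling $\varepsilon$ gives the last estimate in \eqref{normboundtrace}.

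Two points will need care, and the second is the main obstacle. The first is making the display above rigorous: the lower-order pieces of $\nabla^2\psi$ (the $\nabla^2 d$-term and the discrepancy between $\nabla\bar\psi$ and its boundary trace) must be controlled uniformly so that only the one-dimensional profile cost $\int_0^1|\gamma''|$ survives in the limit --- this is the usual coarea/trace-slicing bookkeeping. The second is the regularity of $\partial\calO$: the construction above uses that the signed distance function is $C^{1,1}$ in a collar, so that $\nabla^2\theta_\delta$ is a function and $\psi\in W^{2,1}$; this holds for $C^{1,1}$ boundaries but not for merely $C^1$ ones, and for a $C^1$ boundary one must instead localize in graph charts $\calO\cap U_j=\{x_\ell<g_j(x')\}$ and build $\bar\psi$ chart by chart, which brings in second derivatives of the $g_j$'s and of the partition of unity that one has to show are lower order. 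In the applications of this paper, where $\calO$ is a product domain and the relevant boundary portion is flat (or a product with a flat factor) --- and in the one-dimensional case $\ell=1$, where the construction is completely explicit --- this difficulty does not arise. An alternative that bypasses Theorem~\ref{Dthm1} is to take $\psi:=-d\,\beta(d/\delta)\,\tilde\varphi$ directly, with $\beta(0)=1$, $\supp\beta\subset[0,1)$, and $\tilde\varphi$ a fixed-scale ($\mu$) mollification of an extension of $\varphi$, iterating once more to absorb the error $\varphi-\tilde\varphi$ in the normal trace; the estimates are identical, with $\delta/\mu\to0$ now controlling the tangential terms.
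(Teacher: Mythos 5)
Your construction is a genuinely different route from the paper's. The paper first reduces, via a partition-of-unity/straightening argument borrowed from \cite[Proposition~1, Appendix]{D}, to the model cylinder $\calO = B(0,1)\times\, ]0,1[$ with flat boundary piece $B(0,1)\times\{0\}$; there it chooses a sequence $\varphi_k\in C^\infty$ approximating $\varphi$ in $L^1(\partial\calO)$ with $\sum_k\|\varphi_k-\varphi_{k+1}\|_{L^1}\le (1+\varepsilon/4)\|\varphi\|_{L^1}$, picks $t_k\downarrow0$ with small gaps, defines $v(\bar x,t)$ by piecewise-linear interpolation of the $\varphi_k$'s in the $t$-variable, and takes $\psi:=\int_0^{x_\ell}v$; the sharp constant $(1+\varepsilon)$ then comes from the telescoping bound on $\partial_{x_\ell}v=\tfrac{\varphi_k-\varphi_{k+1}}{t_k-t_{k+1}}$. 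You instead multiply a fixed right inverse $\bar\psi$ of $\Tr^2$ by a collar cutoff $\theta(d/\delta)$ and optimize the profile so that the dominant contribution $\int_0^1|\gamma''|$ can be made $\le 1+\varepsilon$; the observation that $\gamma(0)=0$, $\gamma'(0)=1$, $\gamma(1)=\gamma'(1)=0$ forces $\int_0^1|\gamma''|\ge1$, with the infimum approachable but not attained, gives a clean conceptual explanation of why $1+\varepsilon$ is the natural constant. This is a nice and legitimate alternative.

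There are two genuine issues with your main construction, one of which you flag yourself but do not fully repair. First, the collar cutoff via the signed distance $d$ requires $d\in C^{1,1}$ near $\partial\calO$ (so that $\nabla^2\theta_\delta$ is a function), which holds for $C^{1,1}$ boundaries but not for merely $C^1$ ones; and the appeal to Theorem~\ref{Dthm1} to produce $\bar\psi$ also needs $C^2$-uniform regularity of $\partial\calO$. Your proposed chart-by-chart repair will not work as stated: if the local graphs $g_j$ are only $C^1$ there are no second derivatives of $g_j$ to show are "lower order". The natural fix (which the paper effectively uses, at the price of its own silent regularity upgrade in the straightening step) is to smooth the boundary or work in model charts and then approximate; but this has to be said. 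Second, the passage from $\nabla\bar\psi$ to its surface trace $\varphi\otimes\nu$ in the dominant terms is a coarea/trace argument that you defer; it is standard but nontrivial for $W^{1,1}$-traces ($\nabla\bar\psi|_{\{d=t\}}\to\Tr(\nabla\bar\psi)$ in $L^1$ only in the averaged sense as $t\to0$, and you integrate against a $\delta$-scaled kernel, so one must check that the error is $o(1)$ after averaging over $t\in(0,\delta)$, not pointwise in $t$). Neither issue is a fatal flaw, and the paper's own proof also glosses over the $C^1$ reduction, but both should be addressed before the argument is complete; your closing alternative (taking $\psi:=-d\,\beta(d/\delta)\tilde\varphi$ with a fixed-scale mollification $\tilde\varphi$ and then iterating to absorb the error) is in fact closer to the paper's interpolation and would sidestep the appeal to $\Tr^2$-surjectivity.
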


 \begin{proof}[Proof]
   Fix \(\varepsilon>0\). Following the proof of \cite[Proposition 1]{D},
which relies on the same construction of \cite[Theorem 2.16]{Giusti} and
on a partition of the unity argument, we can assume $\calO$ to be the cylinder
\(B(0,1)\times]0,1[\), where \(B(0,1)\subset\RR^{\ell-1}\) is the unit ball
centered at the origin of \(\RR^{\ell-1}\), and \(\partial\calO\) to be 
\(B(0,1)\times \{0\}\) (or \(B(0,1)\times \{1\})\), which we identify with
\(B(0,1)\). Moreover, using a density argument,  we can construct a sequence
of $C^\infty(\partial\calO;\RR^m)$ functions,  $(\varphi_k)_{k\in\N}$, such
that 
\begin{equation}
 \begin{aligned}\label{eq:cauchyseq}
 &\lim_k \Vert \varphi_k - \varphi\Vert_{L^1(\partial\calO;\RR^m)} = 0,\\
 &\sup_k\|\varphi_k\|_{L^1(\partial\calO;\RR^m)} \leq 2 \|\varphi\|_{L^1(\partial\calO;\RR^m)},\\
 &\sum_{k=0}^\infty \|\varphi_k- \varphi_{k+1}\|_{L^1(\partial\calO;\RR^m)}
 \leq  (1+\tfrac\varepsilon4)\|\varphi\|_{L^1(\partial\calO;\RR^m)}. &
 \end{aligned}
 \end{equation}
 We can further consider a decreasing sequence $t_k$ of nonnegative numbers
converging to $0$ and such that
\begin{equation}
\label{eq:defts}
\begin{aligned}
t_k- t_{k+1}\leq \frac{\varepsilon\,2^{-k-4}  \|\varphi\|_{L^1(\partial\calO;\RR^m)}
}{{{}}\sup_{
\tilde k\in\{ k, k+1\}} (1+ {{2}}\|\varphi_{\tilde k}\|_{W^{1,2}(\partial\calO;\RR^m)})}.
\end{aligned}
\end{equation}

 Then, for $\bar x: =(x_1,...,x_{\ell-1})\in \partial\calO$ and $t\in(0,1)$,
we define 
 \begin{align*}
 v(\bar x,t):=\left\{
 \begin{array}{ll} 0 &\hbox{ if } t \geq t_0,\\
 \frac{t-t_{k+1}}{t_k- t_{k+1}}\varphi_k(\bar x)+ \frac{t_k-t}{t_k-t_{k+1}}
\varphi_{k+1}(\bar x) & \hbox{ if } t_{k+1}\leq t \leq t_k.
 \end{array}
 \right.\end{align*}
 We start by proving that 
\begin{align}\label{eq:L1conv}
\lim_{t \to 0} v(\cdot, t)= \varphi(\cdot) \hbox{ in }L^1(\partial \calO;\RR^m).
 \end{align}
In fact, given \(t\in]0,1[\) with \(t<t_0\), there exists \(k\in\N\) such
that \(t\in [t_{k+1}, t_k]\). Hence,
\begin{equation*}
\begin{aligned}\int_{\partial \calO} |v(\bar x, t) - \varphi(\bar x)|\, d\bar
x &=
\int_{\partial \calO} \Big|\frac{t-t_{k+1}}{t_k- t_{k+1}}\big(\varphi_k(\bar
x)-\varphi(\bar x)\big)+ \frac{t_k-t}{t_k-t_{k+1}}
\big(\varphi_{k+1}(\bar x)-\varphi(\bar x)\big) \Big|\, d\bar x \\ 
&\leq  \int_{\partial \calO} |\varphi_k(\bar
x) - \varphi(\bar x)|\, d\bar
x + \int_{\partial \calO} |\varphi_{k+1}(\bar
x) - \varphi(\bar x)|\, d\bar
x,
\end{aligned}
\end{equation*}
which, together with \eqref{eq:cauchyseq}  and the fact that \(k\to\infty
\) as \(t\to0\), yields \eqref{eq:L1conv}. 

Consequently,   setting $$\psi(\bar x,x_\ell):=\int_0^{x_\ell} v(\bar x,
t) dt,$$
 we have that \(\psi\in W^{2,1}(\calO;\RR^m) \) satisfies  \eqref{normalderivative}--\eqref{tracebd},
with
\begin{equation*}
\begin{aligned}
&\nabla \psi(\bar x, x_\ell) = \left(\int_0^{x_\ell} \nabla_{\bar x} v(\bar
x,
t) dt, v(\bar x,
 x_\ell) \right) \quad \text{and}\\ 
&\nabla^2  \psi(\bar x, x_\ell) = \left(\begin{matrix}\displaystyle \int_0^{x_\ell}
\nabla^2_{\bar x} v(\bar
x,
t) dt 
&\left(\nabla_{\bar x} v (\bar x, x_\ell)\right)^T\\ \nabla_{\bar x} v (\bar
x, x_\ell) &\displaystyle\frac{\partial v}{\partial x_\ell}(\bar x, x_\ell)
\end{matrix}\right).  
\end{aligned}
\end{equation*}

Furthermore, using \eqref{eq:defts}, we  have that 
\begin{equation*}
\begin{aligned}
\int_{\calO} | \psi|dx &\leq\int\!\!\!\int_{\partial \calO\times ]0,1[}\left|v(\bar
x,
 t)\right|d\bar
x dt \\
&\leq \sum_{k=0}^\infty \int_{\partial\calO} \int_{t_{k+1}}^{t_k}\left|\frac{t-t_{k+1}}{t_k-
t_{k+1}}\varphi_k(\bar x)+ \frac{t_k-t}{t_k-t_{k+1}}
\varphi_{k+1}(\bar x)\right|d\bar
x dt\\
&\leq\sum_{k=0}^\infty |t_k - t_{k+1}|\left(\Vert \varphi_k\Vert_{L^1(\partial
\calO;\RR^m)}
+ \Vert \varphi_{k+1}\Vert_{L^1(\partial \calO;\RR^m)}\right)  \leq \frac\varepsilon4
\|\varphi\|_{L^1(\partial
\calO;\RR^m)}
\end{aligned}
\end{equation*}
and, for \(j, j_1, j_2 \in\{1,...,\ell -1\} \), 
\begin{equation*}
\begin{aligned}
\int_{\calO} |\nabla \psi|dx &\leq \int\!\!\!\int_{\partial \calO\times ]0,1[}\left|\nabla_{\bar
x} v(\bar
x,
t)\right|d\bar
x dt +\int\!\!\!\int_{\partial \calO\times ]0,1[}\left|v(\bar x,
 x_\ell)\right|d\bar
x dx_\ell\\
& =\sum_{k=0}^\infty \int_{\partial\calO} \int_{t_{k+1}}^{t_k}\left|\frac{t-t_{k+1}}{t_k-
t_{k+1}}\nabla\varphi_k(\bar x)+ \frac{t_k-t}{t_k-t_{k+1}}
\nabla \varphi_{k+1}(\bar x)\right|d\bar
x dt+ \frac\varepsilon4 \|\varphi\|_{L^1(\partial
\calO;\RR^m)}  \\
&\leq\sum_{k=0}^\infty |t_k - t_{k+1}|\left(\Vert\nabla  \varphi_k\Vert_{L^1(\partial
\calO;\RR^{m\times (\ell -1)})}
+ \Vert\nabla \varphi_{k+1}\Vert_{L^1(\partial \calO;\RR^{m\times (\ell -1)})}\right)
+ \frac\varepsilon4 \|\varphi\|_{L^1(\partial
\calO;\RR^m)}  \\
&\leq \frac\varepsilon2 \|\varphi\|_{L^1(\partial
\calO;\RR^m)}.
\end{aligned}
\end{equation*}
Similarly,
recalling \eqref{eq:cauchyseq},%
\begin{equation*}
\begin{aligned}
\int_{\calO} |\nabla^2 \psi|dx &\leq\frac{3\varepsilon}4 \|\varphi\|_{L^1(\partial
\calO;\RR^m)} + \sum_{k=0}^\infty \int_{\partial\calO} \int_{t_{k+1}}^{t_k}\left|\frac{\varphi_k(\bar
x) -\varphi_{k+1}(\bar x) }{t_k-
t_{k+1}}\right|d\bar
x dt \\
&\leq\frac{3\varepsilon}4 \|\varphi\|_{L^1(\partial
\calO;\RR^m)} +  \left(1+\frac\varepsilon4\right)\|\varphi\|_{L^1(\partial\calO;\RR^m)}
=(1+\varepsilon)\|\varphi\|_{L^1(\partial\calO;\RR^m)},
\end{aligned}
\end{equation*}
which concludes the proof of  \eqref{normboundtrace} and of the proposition.
  \end{proof}

\begin{proposition}\label{asLemma4.1BFMTracesBH}
Let $\calO\subset\RR^\ell$ be a bounded open set with $C^1$ boundary.
For every $\vartheta \in W^{2,1}(\calO;\mathbb R^\scrk )$  and $w \in {{\vartheta+BH(\calO;\mathbb
R^\scrk )}}$ with \(\Tr(w)=\theta\), there exists a sequence $w_\varepsilon \in W^{2,1}(\calO;\mathbb
R^\scrk )$ such that $\Tr ^2(w_\varepsilon)= \Tr ^2(\vartheta)$, $w_\varepsilon
\to w$ in $W^{1,1}(\calO;\mathbb R^\scrk )$,
and 
\begin{equation*}
\begin{aligned}
\limsup_{\varepsilon \to 0} \int_\calO \sqrt{1+ |\nabla^2 w_\varepsilon|^2}
\,dx \leq \int_\calO \sqrt{1+ |\nabla^2w|^2}\,dx + |D^s(\nabla w) |(\calO)+
\int_{\partial
\calO} |(\Tr (\nabla\vartheta-\nabla w))_{\nu_{\partial\calO}}| d \mathcal
H^{\ell-1}.
\end{aligned}
\end{equation*}
\end{proposition}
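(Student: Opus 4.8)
The plan is to follow the proof of Proposition~\ref{asLemma4.1BFMTraces} verbatim, replacing its first-order inputs by second-order ones: the area-strict approximation of Theorem~\ref{Lemma1KRBH} in place of \cite[Lemma~11.1]{Rin}, and the boundary corrector of Proposition~\ref{thm2.16G_BH} in place of \cite[Theorem~2.16]{Giusti}. I read the hypothesis on $w$ as $\Tr(w)=\Tr(\vartheta)$ on $\partial\calO$: the zeroth-order traces already coincide and only the normal derivative of $w$ has to be corrected to that of $\vartheta$, which is exactly why the right-hand side contains only the penalization $\int_{\partial\calO}|(\Tr(\nabla\vartheta-\nabla w))_{\nu_{\partial\calO}}|\,d\mathcal H^{\ell-1}$ and no term measuring a jump of the function itself. \emph{Step 1 (interior approximation).} Since $\partial\calO$ is $C^1$ we have $\mathcal L^\ell(\partial\calO)=0$, so Theorem~\ref{Lemma1KRBH} applied to $w\in BH(\calO;\RR^\scrk)$ yields $\{v_\varepsilon\}_\varepsilon\subset W^{2,1}_w(\calO;\RR^\scrk)\cap C^\infty(\calO;\RR^\scrk)$ with $v_\varepsilon\to w$ in $W^{1,1}(\calO;\RR^\scrk)$ and $D(\nabla v_\varepsilon)\to D^2w$ (area) $\langle\cdot\rangle$-strictly in $\mathcal M(\calO;(\RR^{\ell\times\ell})^\scrk)$. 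By the defining condition in \eqref{W21u}, $w-v_\varepsilon$ extended by $0$ outside $\calO$ lies in $BH(\RR^\ell;\RR^\scrk)$ with $|D^2(w-v_\varepsilon)|(\partial\calO)=0$; since a $BH$ function is in $W^{1,1}$ (hence has no jump) and a $BV$ gradient with no singular mass on $\partial\calO$ has coinciding inner and outer traces there, this forces $\Tr(v_\varepsilon)=\Tr(w)$ and $(\Tr(\nabla v_\varepsilon))_{\nu_{\partial\calO}}=(\Tr(\nabla w))_{\nu_{\partial\calO}}$, i.e. $\Tr^2(v_\varepsilon)=\Tr^2(w)$ in the sense of Theorem~\ref{Dthm1}. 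Moreover, as $v_\varepsilon\in W^{2,1}(\calO)$, the area-strict convergence reads, by Definition~\ref{1100}(iv),
\[\int_\calO\sqrt{1+|\nabla^2 v_\varepsilon|^2}\,dx\ \longrightarrow\ \int_\calO\sqrt{1+|\nabla^2 w|^2}\,dx+|D^s(\nabla w)|(\calO).\]

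\emph{Step 2 (boundary corrector and conclusion).} Apply Proposition~\ref{thm2.16G_BH} on $\calO$ with datum $\varphi:=(\Tr(\nabla\vartheta-\nabla w))_{\nu_{\partial\calO}}\in L^1(\partial\calO;\RR^\scrk)$: for each $\varepsilon>0$ it provides $\psi_\varepsilon\in W^{2,1}(\calO;\RR^\scrk)$ with $(\Tr(\nabla\psi_\varepsilon))_{\nu_{\partial\calO}}=\varphi$, $\Tr(\psi_\varepsilon)=0$, $(\Tr(\nabla\psi_\varepsilon))_\tau=0$, with $\int_\calO|\psi_\varepsilon|\,dx$ and $\int_\calO|\nabla\psi_\varepsilon|\,dx$ bounded by $\varepsilon\int_{\partial\calO}|\varphi|\,d\mathcal H^{\ell-1}$, and $\int_\calO|\nabla^2\psi_\varepsilon|\,dx\le(1+\varepsilon)\int_{\partial\calO}|\varphi|\,d\mathcal H^{\ell-1}$. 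Set $w_\varepsilon:=v_\varepsilon+\psi_\varepsilon\in W^{2,1}(\calO;\RR^\scrk)$. Then $\Tr(w_\varepsilon)=\Tr(w)=\Tr(\vartheta)$ and $(\Tr(\nabla w_\varepsilon))_{\nu_{\partial\calO}}=(\Tr(\nabla w))_{\nu_{\partial\calO}}+\varphi=(\Tr(\nabla\vartheta))_{\nu_{\partial\calO}}$, hence $\Tr^2(w_\varepsilon)=\Tr^2(\vartheta)$; and $w_\varepsilon\to w$ in $W^{1,1}(\calO;\RR^\scrk)$ since $\|\psi_\varepsilon\|_{W^{1,1}}\le 2\varepsilon\int_{\partial\calO}|\varphi|\,d\mathcal H^{\ell-1}\to0$. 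Finally, applying the pointwise inequality \eqref{eq:estsumang} with $A=\nabla^2 v_\varepsilon(x)$, $B=\nabla^2\psi_\varepsilon(x)$ and integrating,
\[\int_\calO\sqrt{1+|\nabla^2 w_\varepsilon|^2}\,dx\le\int_\calO\sqrt{1+|\nabla^2 v_\varepsilon|^2}\,dx+\int_\calO|\nabla^2\psi_\varepsilon|\,dx,\]
and passing to $\limsup_{\varepsilon\to0}$ — the first term converges by Step~1, the second has $\limsup$ at most $\int_{\partial\calO}|\varphi|\,d\mathcal H^{\ell-1}$ by Step~2 — yields exactly the claimed bound.

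The computationally heavy inputs (Theorem~\ref{Lemma1KRBH} and Proposition~\ref{thm2.16G_BH}) are already available, and the remainder is trace bookkeeping; the one point that genuinely needs care, and the main obstacle, is Step~1: verifying that membership in $W^{2,1}_w(\calO;\RR^\scrk)$ (equivalently $|D^2(w-v_\varepsilon)|(\partial\calO)=0$) really forces $v_\varepsilon$ to share with $w$ \emph{both} its trace and the normal trace of its gradient, together with the concomitant identification of the area-strict limit of $\int_\calO\sqrt{1+|\nabla^2 v_\varepsilon|^2}\,dx$ with $\int_\calO\sqrt{1+|\nabla^2 w|^2}\,dx+|D^s(\nabla w)|(\calO)$ via Definition~\ref{1100}(iv).
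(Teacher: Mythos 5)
Your proof is correct and follows the same route as the paper's: interior approximation via Theorem~\ref{Lemma1KRBH}, boundary corrector via Proposition~\ref{thm2.16G_BH} with $\varphi=(\Tr(\nabla\vartheta-\nabla w))_{\nu_{\partial\calO}}$, and the inequality \eqref{eq:estsumang} to split the area integrand of $w_\varepsilon=v_\varepsilon+\psi_\varepsilon$. The one point where you add content the paper leaves implicit is the justification that $v_\varepsilon\in W^{2,1}_w$ forces $\Tr^2(v_\varepsilon)=\Tr^2(w)$ (via the vanishing of $|D^2 v_w|$ on $\partial\calO$ and the $W^{1,1}$-regularity of the extended function and of its gradient), and your reading of the hypothesis as $\Tr(w)=\Tr(\vartheta)$ — a typo in the statement — is the correct one.
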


\begin{proof}
As we show next, it suffices to define $w_\varepsilon:= v_\varepsilon + \psi_\varepsilon$,
where $v_\varepsilon \in W^{2,1}_w(\calO ;\mathbb R^\scrk )$ is given by
Lemma~\ref{Lemma1KRBH} with \(u:=w\) and $\psi_\varepsilon\in W^{2,1}(\calO;\RR^m)$
is given by Proposition \ref{thm2.16G_BH} with \(\varphi:=\Tr (\nabla\vartheta-
\nabla w)\cdot\nu_{\partial\calO}\).

By 
Lemma~\ref{Lemma1KRBH}, we have that \( \Tr ^2(v_\varepsilon) = \Tr ^2 (w)\),
$v_\varepsilon \to w$ in $W^{1,1}(\calO ;\mathbb R^\scrk ), \nabla^2 v_\varepsilon
\overset{\ast}{\rightharpoonup} D^2 w$ in $\mathcal M(\calO;(\mathbb R^{\ell\times\ell})^\scrk
 )$, and 
\begin{equation*}
\begin{aligned}
\lim_{\varepsilon\to0}\int_\calO \sqrt{1+ |\nabla^2 v_\varepsilon|} \,dx
= \int_\calO \sqrt{1+
|\nabla^2 w|}\,dx + |D^s(\nabla w)|(\calO).
\end{aligned}
\end{equation*}

On the other hand, by Proposition \ref{thm2.16G_BH}, we have that  $\Tr (\psi_\varepsilon)=0$,
 $(\Tr (\nabla\psi_\varepsilon))_{\nu_{\partial \calO}}=(\Tr (\nabla \vartheta-
\nabla w))_{\nu_{\partial \calO}} $,
 $\psi_\varepsilon \to 0$ in $L^1(\calO;\mathbb R^\scrk )$, and
\begin{equation*}
\begin{aligned}
\limsup_{\varepsilon \to 0}\int_\calO|\nabla^2 \psi_\varepsilon|\,dx \leq
\int_{\partial \calO} |(\Tr (\nabla\vartheta-\nabla w))_{\nu_{\partial \calO}}|\,
d {\mathcal H}^{\ell-1}.
\end{aligned}
\end{equation*}
Observe further that \eqref{normboundtrace}, together with the compact embedding
of $BH$ in $W^{1,1}$,
guarantee that $\psi_\varepsilon \to 0$ in $W^{1,1}(\calO;\mathbb R^\scrk
)$.

Hence, recalling \eqref{eq:estsumang},
\begin{align*}
\limsup_{\varepsilon \to 0 }\int_\calO \sqrt{1+ |\nabla^2 w_\varepsilon|^2}\,dx&
\leq \limsup_{\varepsilon\to 0}\left(\int_{\calO} \sqrt{1+ |\nabla^2 v_\varepsilon|^2}\,dx
+ \int_{ \calO} |\nabla^2 \psi_\varepsilon|\, dx \right)
\\
&  \leq \int_\calO \sqrt{1+ |\nabla^2 w|^2}\,dx + |D^s (\nabla w)|(\calO)+
\int_{\partial
\calO} |(\Tr (\nabla\vartheta-\nabla w))_{\nu_{\partial \calO}}|\,d \mathcal
H^{\ell-1}.\qedhere
\end{align*}
\end{proof}

\begin{proposition}\label{BFMTracestog}
Let $\calO\subset\RR^\ell$ be a bounded open set with $C^1$ boundary.
For every  $w \in {{BH_0(\calO;\mathbb R^\scrk )}}$
 with \(\Tr(w)=\vartheta\) for some $\vartheta \in W^{2,1}(\calO;\mathbb R^\scrk )$,  and for every $\theta \in L^1(\partial
\calO;\RR^m)$ and $\zeta \in BV(\calO;\mathbb R^m)$, there exists a sequence
$(w_\varepsilon, \zeta _\varepsilon) \in W^{2,1}(\calO;\mathbb R^\scrk )\times
W^{1,1}(\calO;\mathbb
R^m)$ such that $(w_\varepsilon, \zeta _\varepsilon) \to (w,\zeta )$ in $W^{1,1}(\calO;\mathbb
R^\scrk )\times L^1(\calO;\mathbb R^m)$, $\Tr ^2(w_\varepsilon)= \Tr ^2(\vartheta)$,
$\Tr (\zeta _\varepsilon)= \theta$, and 
\begin{equation*}
%\label{areastrict2}
\begin{aligned}
\limsup_{\varepsilon \to 0} \int_\calO \sqrt{1+ |(\nabla ^2 w_\varepsilon,
\nabla\zeta _\varepsilon)|^2}
\,dx &\leq \int_\calO \sqrt{1+ |(\nabla^2 w, \nabla\zeta )|^2}\,dx  + |(D^2_s
w, D^s
\zeta )|(\calO
)\\&\qquad+ \int_{\partial\calO} |((\Tr (\nabla \vartheta-\nabla w))_{\nu_{\partial
\calO}},\theta-
\Tr \zeta )|\, d \mathcal H^{\ell-1}.
\end{aligned}
\end{equation*}
\end{proposition}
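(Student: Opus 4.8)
The plan is to superpose a \emph{joint} interior approximation, which carries the correct boundary traces of the data $(w,\zeta)$ and realizes area-strict convergence of the pair, with a \emph{joint} boundary corrector that simultaneously fixes the normal trace of the $W^{2,1}$-component and the trace of the $W^{1,1}$-component at a cost of only $\int_{\partial\calO}|(\varphi,\theta-\Tr\zeta)|\,d\mathcal H^{\ell-1}$, where we abbreviate $\varphi:=(\Tr(\nabla\vartheta-\nabla w))_{\nu_{\partial\calO}}\in L^1(\partial\calO;\RR^\scrk)$. The delicate point is that naively superposing the one-variable correctors of Proposition~\ref{thm2.16G_BH} and of the proof of Proposition~\ref{asLemma4.1BFMTraces} would only give the strictly larger bound $\int_{\partial\calO}|\varphi|\,d\mathcal H^{\ell-1}+\int_{\partial\calO}|\theta-\Tr\zeta|\,d\mathcal H^{\ell-1}$, so the two correctors must be built on the same construction.

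First I would invoke Theorem~\ref{thm:mixedKRBV} with $u:=w$, $\xi:=\zeta$ (note $\mathcal L^\ell(\partial\calO)=0$ since $\partial\calO$ is $C^1$) to get $(v_\varepsilon,\tilde v_\varepsilon)\in\big(W^{2,1}_w(\calO;\RR^\scrk)\cap C^\infty\big)\times\big(W^{1,1}_\zeta(\calO;\RR^m)\cap C^\infty\big)$ with $(v_\varepsilon,\tilde v_\varepsilon)\to(w,\zeta)$ in $W^{1,1}\times L^1$ and $(D(\nabla v_\varepsilon),D\tilde v_\varepsilon)\to(D^2w,D\zeta)$ area-strictly. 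By the definitions \eqref{W21u} and \eqref{W11u} (the extensions by zero of $w-v_\varepsilon$, resp.\ $\zeta-\tilde v_\varepsilon$, have no concentration of $D^2$, resp.\ $D$, on $\partial\calO$) one gets $\Tr^2(v_\varepsilon)=\Tr^2(w)$ and $\Tr(\tilde v_\varepsilon)=\Tr(\zeta)$, while area-strictness of the pair yields
\[
\int_\calO\sqrt{1+|(\nabla^2 v_\varepsilon,\nabla\tilde v_\varepsilon)|^2}\,dx\;\longrightarrow\;\int_\calO\sqrt{1+|(\nabla^2 w,\nabla\zeta)|^2}\,dx+|(D^2_sw,D^s\zeta)|(\calO).
\]

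Next I would build the joint boundary corrector $(\psi_\varepsilon,\sigma_\varepsilon)\in W^{2,1}(\calO;\RR^\scrk)\times W^{1,1}(\calO;\RR^m)$ by running the construction in the proof of Proposition~\ref{thm2.16G_BH} (partition of unity reducing to the model half-cylinder $\calO=B(0,1)\times\,]0,1[$ with $\partial\calO\simeq B(0,1)$, based on \cite[Theorem~2.16, Remark~2.7]{Giusti}) on the \emph{pair} $(\varphi,\theta-\Tr\zeta)$: mollify it jointly to obtain $C^\infty$ pairs $(\varphi_k,\phi_k)$ satisfying the Cauchy-type estimates \eqref{eq:cauchyseq} for the joint norm, choose the mesh $(t_k)$ as in \eqref{eq:defts} so small that all tangential-derivative contributions are negligible, and set $\psi_\varepsilon(\bar x,x_\ell):=\int_0^{x_\ell}v(\bar x,t)\,dt$ and $\sigma_\varepsilon(\bar x,x_\ell):=u(\bar x,x_\ell)$ with $v,u$ the piecewise-affine-in-$t$ interpolants of $(\varphi_k)$, $(\phi_k)$. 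Exactly as in Propositions~\ref{thm2.16G_BH} and~\ref{asLemma4.1BFMTraces} this gives $(\Tr(\nabla\psi_\varepsilon))_\nu=\varphi$, $\Tr(\psi_\varepsilon)=0$, $(\Tr(\nabla\psi_\varepsilon))_\tau=0$, $\Tr(\sigma_\varepsilon)=\theta-\Tr\zeta$, and $(\psi_\varepsilon,\sigma_\varepsilon)\to(0,0)$ in $W^{1,1}\times W^{1,1}$ (the $L^1$-smallness of $\psi_\varepsilon,\nabla\psi_\varepsilon,\sigma_\varepsilon$, plus the compact embedding $BH\embed W^{1,1}$ for the $\nabla^2\psi_\varepsilon$ part). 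The crucial estimate is that the dominant entries of $\nabla^2\psi_\varepsilon$ and of $\nabla\sigma_\varepsilon$ both point in the $x_\ell$-direction and, on the slab $\{t_{k+1}\le x_\ell\le t_k\}$, equal $(\varphi_k-\varphi_{k+1})/(t_k-t_{k+1})$ and $(\phi_k-\phi_{k+1})/(t_k-t_{k+1})$; hence, using $\sqrt{A^2+B^2+C^2}\le\sqrt{A^2+B^2}+|C|$ to split off the tangential part, integrating over each slab, and telescoping,
\[
\int_\calO|(\nabla^2\psi_\varepsilon,\nabla\sigma_\varepsilon)|\,dx\;\le\;(1+\varepsilon)\int_{\partial\calO}|(\varphi,\theta-\Tr\zeta)|\,d\mathcal H^{\ell-1}.
\]

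Finally I would set $w_\varepsilon:=v_\varepsilon+\psi_\varepsilon$ and $\zeta_\varepsilon:=\tilde v_\varepsilon+\sigma_\varepsilon$. Then $\Tr^2(w_\varepsilon)=\Tr^2(v_\varepsilon)+(0,\varphi)=(\Tr\vartheta,(\Tr\nabla w)_\nu+\varphi)=(\Tr\vartheta,(\Tr\nabla\vartheta)_\nu)=\Tr^2(\vartheta)$ and $\Tr(\zeta_\varepsilon)=\Tr\zeta+(\theta-\Tr\zeta)=\theta$, while $w_\varepsilon\to w$ in $W^{1,1}$ and $\zeta_\varepsilon\to\zeta$ in $L^1$. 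Writing $(\nabla^2w_\varepsilon,\nabla\zeta_\varepsilon)=(\nabla^2 v_\varepsilon,\nabla\tilde v_\varepsilon)+(\nabla^2\psi_\varepsilon,\nabla\sigma_\varepsilon)$ and using \eqref{eq:estsumang},
\[
\int_\calO\sqrt{1+|(\nabla^2w_\varepsilon,\nabla\zeta_\varepsilon)|^2}\,dx\;\le\;\int_\calO\langle(\nabla^2 v_\varepsilon,\nabla\tilde v_\varepsilon)\rangle\,dx+\int_\calO|(\nabla^2\psi_\varepsilon,\nabla\sigma_\varepsilon)|\,dx,
\]
and taking $\limsup_{\varepsilon\to0}$ together with the two displays above yields the asserted inequality. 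The main obstacle is precisely the joint boundary corrector: one must check that the $W^{2,1}$- and $W^{1,1}$-corrector constructions can be run on a \emph{common} mesh $(t_k)$ so that their normal-direction derivatives are estimated together against the joint norm $|(\varphi,\theta-\Tr\zeta)|$ rather than separately, and that the partition-of-unity reduction preserves both this sharp bound (since $\sum_i\phi_i|(\varphi,\theta-\Tr\zeta)|=|(\varphi,\theta-\Tr\zeta)|$ for a nonnegative partition of unity) and all the trace identities; the rest is a routine adaptation of the proofs of Propositions~\ref{thm2.16G_BH}, \ref{asLemma4.1BFMTraces}, \ref{asLemma4.1BFMTracesBH} and of Theorem~\ref{thm:mixedKRBV}.
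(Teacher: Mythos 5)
Your proposal is correct and follows the same strategy as the paper: superpose a joint interior approximation from Theorem~\ref{thm:mixedKRBV} with a joint boundary corrector, and conclude via \eqref{eq:estsumang}. You also correctly pinpoint the crux that the paper leaves compressed into the phrase ``a construction entirely analogous to the ones in the latter results'': the correctors $\psi_\varepsilon$ and $\sigma_\varepsilon$ must be built on a common mesh $(t_k)$ with joint Cauchy-type estimates, so that the dominant $x_\ell$-derivative terms telescope against the joint norm $\int_{\partial\calO}|(\varphi,\theta-\Tr\zeta)|\,d\mathcal H^{\ell-1}$ rather than against the larger sum $\int_{\partial\calO}|\varphi|\,d\mathcal H^{\ell-1}+\int_{\partial\calO}|\theta-\Tr\zeta|\,d\mathcal H^{\ell-1}$ that separate applications of Propositions~\ref{thm2.16G_BH} and~\ref{asLemma4.1BFMTraces} would give.
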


\begin{proof}[Proof]
Take $w_\varepsilon:= v_\varepsilon + \psi_\varepsilon$ and $\zeta _\varepsilon:=
\tilde v_\varepsilon + \sigma_\varepsilon$  as in the proof of Propositions
\ref{asLemma4.1BFMTracesBH} and \ref{asLemma4.1BFMTraces}, respectively.
Then, applying \eqref{eq:estsumang}, Theorem \ref{thm:mixedKRBV} to the sequence
$(v_\varepsilon, \tilde v_\varepsilon)$, and with a construction entirely
analogous to the ones in the latter results for $\psi_\varepsilon$  and $\sigma_\varepsilon$,
we conclude the proof.
\end{proof}

\begin{remark}
    \label{afterProp 4.8}
    The argument to construct $\sigma_\varepsilon$ and $\psi_\varepsilon$
in Propositions \ref{asLemma4.1BFMTraces} and \ref{asLemma4.1BFMTracesBH},
respectively, is based on a partition of the unity, restricting first to
cylinders where the boundary data $\vartheta$ and $\theta$ are prescribed
just either on the top or bottom of the cylinders. Hence, if $\calO  =\,\,]0,1[$,
we can say that there exists a sequence $(w_\varepsilon, \zeta _\varepsilon)
\in W^{2,1}(]0,1[, \mathbb R^\scrk )\times W^{1,1}(]0,1[;\mathbb R^m) $ such
that $\Tr^2 (w_\varepsilon)= \Tr^2 (\vartheta)$ on $\{1\}$,  $\Tr \zeta _\varepsilon=
\Tr \theta $ on $\{1\}$,   $(w_\varepsilon, \zeta _\varepsilon) \to (w,\zeta
)$ in $W^{1,1}(\calO;\mathbb R^\scrk )\times L^1(\calO ;\mathbb R^m)$,  and
\begin{equation*}
%\label{areastrictdeb}
\begin{aligned}
 \limsup_{\varepsilon \to 0} \int_0^1\sqrt{1+ |(\nabla ^2 w_\varepsilon,
\nabla\zeta _\varepsilon)|^2}
\,dx_N 
& \leq \int_0^1 \sqrt{1+ |(\nabla^2 w, \nabla\zeta )|^2}\,dx_N  + |(D^2_s
w, D^s \zeta )|(]0,1[)\\ &\qquad +  \left|\bigg( \frac{d\vartheta}{d x_N}(1)-
\left(\frac{d w}{d x_N}\right)^-(1),
 \theta(1)-  \zeta ^-(1)\bigg)\right|.
\end{aligned}
\end{equation*}
\end{remark}

We are now in position to apply the first result above to the context of
our $ \widetilde\Acal$-quasiconvex operator (hessian-gradient), along sequences
with prescribed boundary conditions.

\begin{theorem}\label{Thm4.3}
Let $\omega\subset \RR^{N-1}$ be a bounded open set with uniformly $C^2$
boundary, and let $\Psi\colon \allowbreak\big(\RR_s^{(N-1)\times (N-1)}\big)^d\times
\RR^{d\times (N-1)} \to{{[0,+\infty)}}$ be a cross $2$-quasiconvex-quasiconvex
function with linear growth at $\infty$. Assume that    $\vartheta^b_1 \in
W^{2,1}(\omega';\mathbb R^d)$ and $\vartheta^b_2 \in W^{1,1}(\omega';\mathbb
R^d)$, where    $\omega' \supset
\supset \omega$ is a bounded open set. Let  $ u_n \in \vartheta^b_1 + W^{2,1}_0(\omega;\mathbb
R^d)$ and $v_n \in \vartheta^b_2+ W^{1,1}_0(\omega;\mathbb R^d)$ be such
that $u_n \overset{*}{\rightharpoonup} u$ in $BH(\omega;\mathbb R^d) $ and
$v_n \overset{*}{\rightharpoonup} v$ in $BV(\omega;\mathbb R^d)$. Then,
\begin{equation}
\label{lscARDPR}
\begin{aligned}
&\int_\omega \Psi(\nabla^2 u, \nabla v)\, dx + \int_\omega \Psi^\infty\left(\frac{d
((D^2)^s u, D^s v)}{d |((D^2)^s u, D^s v)|}\right)d |((D^2)^s u, D^s v)|\\
&\quad + \int_{\partial \omega} \Psi^\infty(-((\nabla u)^- - \nabla \vartheta^b_1,
v^--\vartheta^b_2)\otimes
\nu_{\partial \omega})\,d \mathcal H^{N-2} \leq \liminf_{n\to +\infty} \int_\omega
\Psi(\nabla^2 u_n, \nabla v_n )\,dx.
\end{aligned}
\end{equation}
\end{theorem}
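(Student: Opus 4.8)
The strategy is to reduce the inequality with nonzero boundary data to the boundary-free lower semicontinuity statement of Theorem~\ref{thmARDPR}, applied to the $\widetilde{\mathcal A}$-quasiconvex integrand $\Psi$ (which is Lipschitz, since it has linear growth and is cross $2$-quasiconvex--quasiconvex, hence $\widetilde{\mathcal A}$-quasiconvex, by Remark~\ref{hypfinfty}). First I would fix $\varepsilon>0$ and, using Proposition~\ref{BFMTracestog} (with $\calO:=\omega$, $\scrk:=d$, $\vartheta:=\vartheta^b_1$, $\theta:=\vartheta^b_2$ on $\partial\omega$, applied to the weak-$*$ limits $u\in BH$ and $v\in BV$), produce a sequence $(w_\varepsilon,\zeta_\varepsilon)\in W^{2,1}(\omega;\RR^d)\times W^{1,1}(\omega;\RR^d)$ with $\Tr^2(w_\varepsilon)=\Tr^2(\vartheta^b_1)$, $\Tr(\zeta_\varepsilon)=\vartheta^b_2$, converging to $(u,v)$ in $W^{1,1}\times L^1$, and satisfying the stated $\limsup$ estimate on $\int_\omega\langle(\nabla^2 w_\varepsilon,\nabla\zeta_\varepsilon)\rangle\,dx$.

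Next I would glue. Since $u_n-\vartheta^b_1\in W^{2,1}_0(\omega;\RR^d)$ and $w_\varepsilon-\vartheta^b_1\in W^{2,1}_0(\omega;\RR^d)$ (both vanish to first order on $\partial\omega$ in the trace sense), and similarly $v_n-\vartheta^b_2, \zeta_\varepsilon-\vartheta^b_2\in W^{1,1}_0(\omega;\RR^d)$, the extended fields
\[
\bar u_{n,\varepsilon}:=\begin{cases} u_n & \text{in }\omega,\\ w_\varepsilon & \text{in }\omega'\setminus\omega,\end{cases}
\qquad
\bar v_{n,\varepsilon}:=\begin{cases} v_n & \text{in }\omega,\\ \zeta_\varepsilon & \text{in }\omega'\setminus\omega,\end{cases}
\]
lie in $BH(\omega';\RR^d)$ and $BV(\omega';\RR^d)$ respectively, with no extra singular part created on $\partial\omega$ because the traces of gradients match; the jump of $\nabla\bar u_{n,\varepsilon}$ across $\partial\omega$ is $-(\nabla u_n)^- + \nabla w_\varepsilon = -(\nabla u_n)^-+\nabla\vartheta^b_1$ (and analogously for $\bar v$). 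Now the measure $\mu_{n,\varepsilon}:=(D^2\bar u_{n,\varepsilon}, D\bar v_{n,\varepsilon})$ is $\widetilde{\mathcal A}$-free on $\omega'$ (it is a hessian paired with a gradient, restricted to the relevant components), and $\mu_{n,\varepsilon}\overset{*}{\rightharpoonup}\mu_\varepsilon:=(D^2\bar u_\varepsilon, D\bar v_\varepsilon)$ as $n\to\infty$, where $\bar u_\varepsilon$ extends $u$ by $w_\varepsilon$ and $\bar v_\varepsilon$ extends $v$ by $\zeta_\varepsilon$. Applying Theorem~\ref{thmARDPR} on $\omega'$ to $\Psi$,
\[
\int_{\omega'}\Psi\Big(\tfrac{d\mu_\varepsilon^a}{d\Leb^{N-1}}\Big)dx + \int_{\omega'}\Psi^\infty\Big(\tfrac{d\mu_\varepsilon^s}{d|\mu_\varepsilon^s|}\Big)d|\mu_\varepsilon^s| \le \liminf_{n\to\infty}\int_{\omega'}\Psi\Big(\tfrac{d\mu_{n,\varepsilon}^a}{d\Leb^{N-1}}\Big)dx,
\]
and since the right-hand side over $\omega'\setminus\omega$ is exactly $\int_{\omega'\setminus\omega}\Psi(\nabla^2 w_\varepsilon,\nabla\zeta_\varepsilon)\,dx$ (constant in $n$), subtracting it gives the liminf over $\omega$ of $\int_\omega\Psi(\nabla^2 u_n,\nabla v_n)\,dx$ on the right, while on the left I separate the contribution on $\omega$, the absolutely continuous/singular part on $\omega'\setminus\omega$ (which equals $\int_{\omega'\setminus\omega}\Psi(\nabla^2 w_\varepsilon,\nabla\zeta_\varepsilon)dx$ and cancels), and the jump part concentrated on $\partial\omega$, which contributes $\int_{\partial\omega}\Psi^\infty\big(-((\nabla u)^- - \nabla\vartheta^b_1, v^- - \vartheta^b_2)\otimes\nu_{\partial\omega}\big)\,d\mathcal H^{N-2}$ by the rank-one structure of jumps of $BH\times BV$ fields across a hypersurface.

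Finally I would let $\varepsilon\to 0$. The point is that the left-hand side over $\omega$ and the boundary term do not depend on $\varepsilon$, while the cancelled term $\int_{\omega'\setminus\omega}\Psi(\nabla^2 w_\varepsilon,\nabla\zeta_\varepsilon)dx$ is, by the linear growth bound $\Psi(\xi)\le C_\Psi(1+|\xi|)$ and the $\limsup$ estimate from Proposition~\ref{BFMTracestog}, controlled by the total variation of $(D^2 u, Dv)$ restricted to $\omega'\setminus\omega$ plus the boundary integral; but since $(u,v)$ is defined only on $\omega$ and $w_\varepsilon,\zeta_\varepsilon$ are the extensions, a more careful accounting is needed. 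The cleaner route, which I would actually follow, is to run the whole argument on $\omega$ itself (not $\omega'$): approximate the boundary data directly via Proposition~\ref{BFMTracestog}, use the constructed $(w_\varepsilon,\zeta_\varepsilon)$ only to absorb the boundary discrepancy, and invoke lower semicontinuity together with the Reshetnyak-type upper bound of Theorem~\ref{ReshetnyaktheoremBCMS} to pass to the limit; the $\varepsilon$-dependent error terms then vanish because $(w_\varepsilon,\zeta_\varepsilon)\to(u,v)$ area-strictly in the interior and the boundary traces are fixed. The main obstacle is precisely this bookkeeping of the jump term across $\partial\omega$: one must verify that the gluing creates exactly the asserted rank-one jump density $-((\nabla u)^- - \nabla\vartheta^b_1, v^- - \vartheta^b_2)\otimes\nu_{\partial\omega}$ and no Cantor-type contribution on $\partial\omega$, which relies on $\partial\omega$ being $C^2$ (so traces of $BH$ and $BV$ functions are well defined and the extension does not charge lower-dimensional sets beyond the jump), on condition~\eqref{f=0=g}-type regularity of $\vartheta^b_1,\vartheta^b_2$, and on $\mathcal L^{N-1}(\partial\omega)=0$ so that Theorem~\ref{Lemma1KRBH} and Proposition~\ref{BFMTracestog} apply.
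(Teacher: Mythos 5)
Your high-level strategy is exactly the paper's: extend the problem to a slightly larger set $\omega'\supset\supset\omega$, apply the unconstrained $\mathcal A$-free lower semicontinuity theorem (Theorem~\ref{thmARDPR}) on $\omega'$, and read the boundary term in \eqref{lscARDPR} off the jump in the singular part of $(D^2u',D v')$ concentrated on $\partial\omega$. The observation that $\Psi$, being $\widetilde{\mathcal A}$-quasiconvex with linear growth, is Lipschitz (hence a legitimate integrand for Theorem~\ref{thmARDPR}) is also correctly identified.

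However, your specific construction has a real gap, and it introduces unnecessary complications that you then never fully resolve. You bring in Proposition~\ref{BFMTracestog} to build a family $(w_\varepsilon,\zeta_\varepsilon)$ on $\omega$ with the right traces, and then propose to glue $u_n$ with $w_\varepsilon$ ``in $\omega'\setminus\omega$''. But Proposition~\ref{BFMTracestog} produces $(w_\varepsilon,\zeta_\varepsilon)$ on $\calO=\omega$, not on $\omega'\setminus\omega$, so the extension $\bar u_{n,\varepsilon}$ you write down is not defined. Moreover, Proposition~\ref{BFMTracestog} is an upper-bound device (a recovery-sequence construction); it has no business in a lower-bound argument, and invoking it is what creates the spurious parameter $\varepsilon$ that you then struggle to remove --- indeed you stop at ``a more careful accounting is needed'' and retreat to a vague ``cleaner route'' without carrying it out. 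The missing idea is much simpler: since $u_n-\vartheta^b_1\in W^{2,1}_0(\omega;\RR^d)$ and $v_n-\vartheta^b_2\in W^{1,1}_0(\omega;\RR^d)$, and since $\vartheta^b_1\in W^{2,1}(\omega';\RR^d)$ and $\vartheta^b_2\in W^{1,1}(\omega';\RR^d)$ are already given on the \emph{larger} set $\omega'$, one extends $u_n,u$ by $\vartheta^b_1$ and $v_n,v$ by $\vartheta^b_2$ on $\omega'\setminus\overline\omega$. The extended $u'_n$ are in $W^{2,1}(\omega')$, $v'_n$ in $W^{1,1}(\omega')$, they converge weak-$*$ in $BH(\omega')\times BV(\omega')$ to the correspondingly extended $(u',v')$, and there is no $\varepsilon$ anywhere. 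Applying Theorem~\ref{thmARDPR} on $\omega'$ and cancelling the common $\int_{\omega'\setminus\omega}\Psi(\nabla^2\vartheta^b_1,\nabla\vartheta^b_2)\,dx$ term from both sides then yields \eqref{lscARDPR} directly, with the jump of $(\nabla u',v')$ across $\partial\omega$ producing exactly $-((\nabla u)^- - \nabla\vartheta^b_1,\,v^- -\vartheta^b_2)\otimes\nu_{\partial\omega}$.
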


\begin{remark}
    \label{HessianJump}
    As proven in \cite[(2.5) in Theorem 2.1]{D} and in \cite[(2.2)]{FLP},
we have that 
    \begin{align*}
    [(\nabla u)^+ - \nabla \vartheta^b_1]\otimes \nu_{\partial \omega} =
((\Tr  (\nabla u- \nabla \vartheta^b_1)) \cdot \nu_{\partial \omega})\nu_{\partial
\omega}\otimes \nu_{\partial \omega}.
    \end{align*}
\end{remark}

\begin{proof}
[Proof of Theorem \ref{Thm4.3}]
We start by observing  that the assumptions of the theorem yield ${\Tr}(u_n)=\vartheta^b_1$
 on $\partial \omega$, in the sense of Theorem \ref{Dthm1}, and $\Tr (v_n)=\vartheta^b_2$
on $\partial \omega$, in the sense of Theorem~\ref{TrBV}. Moreover, $\Tr  (u_n)\to \Tr  (u)=\vartheta^b_1$
and \(\Tr  (v_n)\to \Tr  (v)=\vartheta^b_2\) strongly in $L^1(\partial \omega;\mathbb
R^d)$.

Let $\omega'$ be as in the statement, and set 
\begin{equation*}
\begin{aligned}
& u':= \begin{cases}
u &\hbox{ in } \omega,\\
\vartheta^b_1 & \hbox{ in } \omega'\setminus \overline \omega,
\end{cases}
\qquad 
u'_n:= \begin{cases}
u_n &\hbox{ in } \omega,\\
\vartheta^b_1 & \hbox{ in } \omega'\setminus \overline \omega,
\end{cases}\\
& v':= \begin{cases}
v &\hbox{ in } \omega,\\
\vartheta^b_2 & \hbox{ in } \omega'\setminus \overline \omega,
\end{cases}
\qquad 
v'_n:= \begin{cases}
v_n &\hbox{ in } \omega,\\
\vartheta^b_2 & \hbox{ in } \omega'\setminus \overline \omega,
\end{cases}\\
\end{aligned}
\end{equation*}
for \(n\in\N\). 
Then, it can be checked  that $u_n' \in W^{2,1}(\omega';\mathbb R^d)$ and
 $v_n' \in W^{1,1}(\omega' ;\mathbb R^{d \times N})$, with  $u_n'\overset{*}{\rightharpoonup}
u$ in $BH(\omega';\mathbb R^d)$ and $v_n' \overset{*}{\rightharpoonup}v$
 in $BV(\omega' ;\mathbb R^{d \times N})$.

Because  $\Psi$ is a  $\widetilde{\mathcal A}$-quasiconvex function
with
linear growth,   it also is Lipschitz continuous; this follows
from the fact that \(\Psi\) is  separately
convex in each direction of the so-called characteristic cone associated
with \(\widetilde \Acal\) by arguments entirely similar to those in \cite[Proposition
3.6]{SZ}.
Thus, \(\Psi\) satisfies the assumptions of Theorem~\ref{thmARDPR}, so that  it can be  applied to $\calO=\omega'$,  yielding 
\begin{align*}
&\int_{\omega'} \Psi(\nabla^2 u', \nabla v')\, dx + \int_{\omega'} \Psi^\infty\left(\frac{d
((D^2)^s u', D^s v')}{d |((D^2)^s u', D^s v')|}\right)d |((D^2)^s u', D^s
v')|
\leq 
\liminf_{n\to +\infty} \int_{\omega'} \Psi(\nabla^2 u'_n, \nabla v'_n )\,dx,
 \end{align*}
from which we get that
\begin{align*}
&\int_\omega \Psi(\nabla^2 u, \nabla v)\, dx + \int_\omega \Psi^\infty\left(\frac{d
((D^2)^s u, D^s v)}{d |((D^2)^s u, D^s v)|}\right)d |((D^2)^s u, D^s v)|
\\
&\quad +\int_{\partial \omega} \Psi^\infty(-((\nabla u)^- - \nabla \vartheta^b_1,
v^- -\vartheta^b_2)\otimes \nu_{\partial \omega})\,d \mathcal H^{N-2} +\int_{\omega'
\setminus \omega} \Psi(\nabla^2 \vartheta^b_1, \nabla \vartheta^b_2)\,dx
 \nonumber \\
&\qquad\leq \liminf_{n\to +\infty} \int_\omega \Psi(\nabla^2 u_n, \nabla
v_n )\,dx + \int_{\omega' \setminus \omega} \Psi(\nabla^2 \vartheta^b_1,
\nabla \vartheta^b_2)\,dx. 
\end{align*}
Hence, \eqref{lscARDPR} holds.
\end{proof}

\begin{theorem}\label{Thm4.3conv}
Let $\Psi\colon\mathbb R^{d \times (N-1)} \times \mathbb R^d\to{{[0,+\infty)}}$
be a convex function with linear growth at $\infty$.  Fix $\delta >0 $ and assume that $\vartheta^a_1 \in W^{2,1}(]0, 1+\delta[;\mathbb
R^d)$ and $\vartheta^a_2 \in W^{1,1}(]0,1+\delta[;\mathbb R^{d\times (N-1)})$.
Let  $ u_n \in \vartheta^a_1 + {{W^{2,1}_{\Gamma^a}}}(]0,1[;\mathbb R^d)$
and $v_n \in \vartheta^a_2 + {{W^{1,1}_{\Gamma^a}}}(]0,1[;\mathbb R^{d\times
(N-1)})$ be such that $u_n \overset{*}{\rightharpoonup} u$
in $BH(]0,1[;\mathbb R^d) $ and $v_n \overset{*}{\rightharpoonup} v$ in $BV(]0,1[;\mathbb
R^{d\times (N-1)})$. Then,
\begin{equation}
\label{eq:lscARDPRconv}
\begin{aligned}
&\int_0^1 \Psi( \nabla v,\nabla^2 u)\, dx_N + \int_0^1 \Psi^\infty\left(\frac{d
(D^s v,(D^2)^s
u)}{d |(D^s v,(D^2)^s u)|}\right)d |(D^s v,(D^2)^s u)| \\
&\quad + \Psi^\infty\big( -v^-(1)+\vartheta^a_2(1),-(\tfrac{d u}{d x_N})^-(1)+
\tfrac{d\vartheta^a_1}{d
x_N}(1)\big)
   \leq \liminf_{n\to +\infty} \int_0^1 \Psi ( \nabla v_n
,\nabla^2 u_n)\,dx_N.
\end{aligned}
\end{equation}
\end{theorem}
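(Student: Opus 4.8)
The plan is to treat \eqref{eq:lscARDPRconv} as the one-dimensional counterpart of Theorem~\ref{Thm4.3}: first I would extend every field from $]0,1[$ to the larger interval $]0,1+\delta[$ by gluing it with the prescribed boundary datum, thereby turning the Dirichlet condition at $\Gamma^a=\{1\}$ into a jump carried by the single point $x_N=1$; next I would invoke lower semicontinuity for convex integrands of linear growth on the enlarged interval; and finally I would localize the resulting inequality, so that the contributions over $]1,1+\delta[$ cancel and the atomic part at $\{1\}$ reproduces exactly the boundary term in \eqref{eq:lscARDPRconv}.

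For the extension, I would let $u_n'$ equal $u_n$ on $]0,1[$ and $\vartheta^a_1$ on $[1,1+\delta[$, and $v_n'$ equal $v_n$ on $]0,1[$ and $\vartheta^a_2$ on $[1,1+\delta[$, defining $u',v'$ the same way from the limits $u,v$. Since $u_n\in\vartheta^a_1+W^{2,1}_{\Gamma^a}(]0,1[;\mathbb R^d)$ forces $\Tr(u_n)(1)=\vartheta^a_1(1)$ and $\Tr(\nabla u_n)(1)=\nabla\vartheta^a_1(1)$, and $\vartheta^a_1\in W^{2,1}(]0,1+\delta[;\mathbb R^d)$, these matchings make $u_n'$ and $\nabla u_n'$ absolutely continuous across $x_N=1$, so $u_n'\in W^{2,1}(]0,1+\delta[;\mathbb R^d)$; likewise $v_n'\in W^{1,1}(]0,1+\delta[;\mathbb R^{d\times(N-1)})$. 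The limits belong to $BH(]0,1+\delta[;\mathbb R^d)$ and $BV(]0,1+\delta[;\mathbb R^{d\times(N-1)})$, with $\nabla u'$ acquiring the jump $\nabla\vartheta^a_1(1)-(\nabla u)^-(1)$ and $v'$ the jump $\vartheta^a_2(1)-v^-(1)$ at $x_N=1$ and no new singular part on $]1,1+\delta[$ (there $\vartheta^a_1\in W^{2,1}$, $\vartheta^a_2\in W^{1,1}$ are absolutely continuous). The weak-$*$ convergences entail $u_n\to u$ in $W^{1,1}(]0,1[;\mathbb R^d)$ (using the interpolation inequality of Theorem~\ref{interpolBH}) and $v_n\to v$ in $L^1(]0,1[;\mathbb R^{d\times(N-1)})$; combining this with the uniform $BH$- and $BV$-bounds and testing against $C_c(]0,1+\delta[)$-functions, I would check that $u_n'\overset{*}{\rightharpoonup} u'$ in $BH(]0,1+\delta[;\mathbb R^d)$ and $v_n'\overset{*}{\rightharpoonup} v'$ in $BV(]0,1+\delta[;\mathbb R^{d\times(N-1)})$, whence $\mu_n:=(Dv_n',D(\nabla u_n'))\overset{*}{\rightharpoonup}\mu:=(Dv',D(\nabla u'))$ in $\mathcal M(]0,1+\delta[;\mathbb R^{d\times(N-1)}\times\mathbb R^d)$.

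For the lower semicontinuity step, I note that $\Psi$, being convex with linear growth, is globally Lipschitz (its subgradients are bounded by the growth constant), and that for the trivial operator $\mathcal A\equiv0$ — the one relevant here, the domain being one-dimensional — $\mathcal A$-quasiconvexity reduces to convexity while every measure is $\mathcal A$-free; hence Theorem~\ref{thmARDPR} applies on $\calO=]0,1+\delta[$ to the functional $\mathcal I$ of \eqref{959} with $\Phi=\Psi$ evaluated at the pair $(\nabla v',\nabla^2 u')$, giving $\mathcal I(\mu)\le\liminf_{n\to\infty}\mathcal I(\mu_n)$. To conclude, I would localize this inequality on the disjoint Borel sets $]0,1[$, $\{1\}$, and $]1,1+\delta[$: on $]1,1+\delta[$ both sides equal the fixed constant $\int_1^{1+\delta}\Psi(\nabla\vartheta^a_2,\nabla^2\vartheta^a_1)\,dx_N$, which cancels; on $]0,1[$ the fields coincide with $u_n,v_n$ (resp.\ $u,v$), producing the bulk integral, the diffuse singular integral, and the right-hand side $\liminf$ of \eqref{eq:lscARDPRconv}; and the atomic part of $\mu^s$ at $\{1\}$, namely $\big(\vartheta^a_2(1)-v^-(1),\,\nabla\vartheta^a_1(1)-(\nabla u)^-(1)\big)\delta_1$, contributes — by the positive $1$-homogeneity of $\Psi^\infty$ — precisely $\Psi^\infty\big(-v^-(1)+\vartheta^a_2(1),\,-(\tfrac{d u}{d x_N})^-(1)+\tfrac{d\vartheta^a_1}{d x_N}(1)\big)$ on the left and nothing on the right (the $u_n',v_n'$ are Sobolev, hence have no singular part at $\{1\}$). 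Assembling the three contributions yields \eqref{eq:lscARDPRconv}. I expect the only genuinely delicate point to be the bookkeeping in the extension step — verifying that no mass of $D^2 u_n'$ or $Dv_n'$ is lost in the weak-$*$ limit and that the jump at $\{1\}$ is exactly as claimed — which, however, follows routinely from the trace identities; everything else is a transcription of the proof of Theorem~\ref{Thm4.3}.
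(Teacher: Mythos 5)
Your proposal is correct and follows precisely the same strategy the paper uses: the paper's proof of Theorem~\ref{Thm4.3conv} simply declares itself ``identical to the one of Theorem~\ref{Thm4.3}'' after writing down the very same extensions $u_n',v_n',u',v'$ to $]0,1+\delta[$, and then invokes Theorem~\ref{thmARDPR} on the enlarged interval and subtracts the constant contribution on $]1,1+\delta[$. Your version spells out the details the paper leaves implicit — that $\Psi$ convex with linear growth is globally Lipschitz, that in dimension one the relevant constraint operator is $\mathcal A\equiv 0$ so $\mathcal A$-quasiconvexity collapses to convexity and every measure is admissible, that the trace conditions in $W^{2,1}_{\Gamma^a}$ and $W^{1,1}_{\Gamma^a}$ make the glued fields Sobolev across $\{1\}$ while the limit fields acquire exactly the claimed atomic singular part there — but the decomposition and the localization are the paper's own argument, so no genuinely new route is taken.
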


\begin{proof}[Proof]
The proof is identical to the one of Theorem \ref{Thm4.3}, provided we extend
the functions $u_n'$ and $v_n'$  in $]0,1+\delta[$  as follows:
\begin{align*}
& u'(x_N):= \begin{cases}
u(x_N) &\hbox{ if \( x_N \in\ ]0,1[\)} ,\\
\vartheta^a_1(x_N) &\hbox{ if \( x_N \in\ ]0,1+\delta[\)} ,
\end{cases} \qquad 
u'_n(x_N):=\begin{cases}
u_n(x_N) &\hbox{ if \( x_N \in\ ]0,1[\)} ,\\
\vartheta^a_1(x_N) &\hbox{ if \( x_N \in\ ]0,1+\delta[\)} ,
\end{cases} \\
& v'(x_N):= \begin{cases}
v(x_N) &\hbox{ if \( x_N \in\ ]0,1[\)} ,\\
\vartheta^a_2(x_N) &\hbox{ if \( x_N \in\ ]0,1+\delta[\)} ,
\end{cases} \qquad 
v'_n(x_N):=\begin{cases}
v_n(x_N) &\hbox{ if \( x_N \in\ ]0,1[\)} ,\\
\vartheta^a_2(x_N) &\hbox{ if \( x_N \in\ ]0,1+\delta[\)}.
\end{cases}
\end{align*}
Arguing as in Theorem \ref{Thm4.3}, we obtain \eqref{eq:lscARDPRconv}.
\end{proof}

\section{Proof of the main results}\label{MR}

In this section, we prove our main results, Theorem~\ref{thm:mainFMZ}
and Corollary~\ref{cor:min}.
 We
start by characterizing the asymptotic behavior of sequences that are uniformly
bounded in energy, which is encoded in the space \(\Vcal\) given by
\begin{equation}
\label{eq:spaceV}
\begin{aligned}
\Vcal:=
        \mathcal{U} \times \Xi \times Z,
\end{aligned}
\end{equation}
where \(\Ucal\) and \(\Xi\) are the spaces in \eqref{eq:UcalXi} and
\begin{equation*}%\label{eq:Z}
\begin{aligned}
&    Z=\mathcal M(]0,1[;BH_{av}\big(\omega;\big(\mathbb{R}^{(N-1) \times (N-1)}\big)^d\big) \times  {\mathcal
M}(\omega;BH_{av}(]-1,0[;\RR^d)),                           
\end{aligned}
\end{equation*}
       with
\begin{equation*}
\begin{aligned}
&  BH_{av}(A;\RR^\ell)=\bigg\{v \in
        BH(A;\RR^\ell): \,\int_{A}v \,d x=0,  \int_{A}\nabla
v
        \,d x=0\bigg\}
\end{aligned}
\end{equation*}
for any subset $A\subset\mathbb{R}^k $. As we next remark, there
is no loss of generality in assuming null bulk
loads, \(H_n^a=0\) and \(H_n^b=0\) in \eqref{kna} and \eqref{knb}, respectively. 
 Analogously, as  mentioned  in the Introduction, we neglect the presence of the hyperelastic energy since, under  the continuity and \eqref{Vgrowth}  assumptions  on the function $V$,  it represents a continuous term under $\Gamma$-convergence.

\begin{remark}\label{rmk:Hnzero}
 For the sake of exposition,  we assume null bulk
loads in the proof of Proposition~\ref{Compactnessres} below. The general
case under assumption \eqref{loadconv}  can be tackled similarly by exploiting the
Poincar\'e inequality and suitable uniform \(L^\infty\) bounds of
\(H^a_n\) and \(H^b_n\). We refer the reader     to the arguments in \cite[Lemma 2.2]{GGLM1} and \cite[Proposition
5.1]{GZ} for a more  detailed justification. 

We further observe that, under condition \eqref{loadconv}, the load terms in
\eqref{kna} and \eqref{knb} are
continuous with respect to the $\Gamma$-convergence  in
Theorem~\ref{thm:mainFMZ}. Precisely, 
   \[
    \lim_{n\to\infty}\int_{\O^a} H_n^a\cdot u_n^a\, dx = \int_{\O^a}
H \cdot u^a \,dx \quad \text{and} \quad \lim_{n\to\infty}\int_{\O^b}
H_n^{b} \cdot
u_n^b \,dx = \int_{\O^b} H \cdot u^b \,dx
    \]
    whenever $u_n^a\to u^a$ in $L^1(\O^a;\RR^d)$ and
$u_n^b\to u^b$ in $L^1(\O^b;\RR^d)$.
 Consequently,  we can neglect these load terms in our subsequent
analysis, without loss of generality.

 Similarly, using \eqref{Vgrowth} and Proposition~\ref{Compactnessres} below, we can also neglect the hyperelastic energy terms because
\begin{equation*}
        \begin{aligned}
                &\lim_{n\to\infty}\int_{\O^a} V\left(\frac{1}{r_n} D_{x'}u^a_n, D_{x_N}u^a_n\right)   \, dx = \int_{\O^a} V(\xi^a, D_{x_N} u^a ) \,dx ,\\ 
                &   \lim_{n\to\infty}\int_{\O^b} V\left(D_{x'}u^b_n, \frac{1}{h_n}D_{x_N}u^b_n\right)   \, dx = \int_{\O^b} V(D_{x'}u^b, \xi^b) \,dx,    \end{aligned}
\end{equation*}
whenever $({\frac{1}{r_{n}}}D_{x'} u^a_{n}, D_{x_N}u^a_n )
\to  ({\xi}^a,D_{x_N}u^a ) $ in $L^1(\O^a; \mathbb R^{
        d\times N})$ and   $(D_{x'}u^b_n, \frac{1}{h_n}D_{x_N}u^b_n) \to (D_{x'}u^b, \xi^b)$ in $L^1(\Omega^b;\mathbb R^{
        d\times N})$. 
\end{remark}

\begin{proposition}\label{Compactnessres}
       Let    \(W :  (\mathbb{R}^{{N\times 
N}}_s)^d\to 
\mathbb{R}\)  be a function satisfying conditions  (\ref{contvsBorel})
        and (\ref{coerci}),  let $V \colon \mathbb R^{d \times N}\to \mathbb R$ be a continuous function satisfying \eqref{Vgrowth},  and consider the corresponding sequence of  functionals
\((F_n)_{n\in\N}\) introduced in  \eqref{energy}--\eqref{knb}, where the parameters \(r_n\)
and \(h_n\) satisfy conditions     
         (\ref{hrzero}) and (\ref{rate}), and where the loads
\(H_n\) satisfy 
\eqref{eq:loads}--\eqref{loadconv}. Consider further  the spaces
\(\mathcal{U}_n\) and \(\Vcal\) introduced in \eqref{Vn} and \eqref{eq:spaceV},
respectively. Then, for every  
        $( u_n^a, u_n^b) \in \mathcal{U}_n$  such that \(\sup_{n\in\N} |F_n[(
 u_n^a,  u_n^b)]|
<+\infty\), there exist an increasing sequence of positive integer
        numbers $\{n_i\}_{i \in \mathbb N}$ and
        $(( {u}^a, {u}^b),
        ( {\xi}^a, {\xi}^b),
        ( {z}^a, {z}^b))\in \Vcal$, depending possibly on
        the selected subsequence $\{n_i\}_{i \in \mathbb N}$, such that
\begin{align}
&\begin{cases}
u^a_{n_i}\overset{\ast}{\rightharpoonup}  {u}^a  
\hbox{ in }BH(\O^a;\RR^d),\\[1.5mm]
u^b_{n_i}\overset{\ast}{\rightharpoonup }{u}^b \hbox{ in }BH(\O^b;\RR^d),
\end{cases}\label{weakdefinitive}\\[2.5mm]
&\begin{cases}
\displaystyle{\frac{1}{r_{n_i}}}\nabla_{x'}  u^a_{n_i}
\overset{\ast}{\rightharpoonup}  {\xi}^a \hbox{ in } BV(\O^a; \mathbb R^{
d\times (N-1)}),\\[2.5mm]
\displaystyle{\frac{1}{h_{n_i}}}\nabla_{x_N}  u^b_{n_i}
\overset{\ast}{\rightharpoonup}  {\xi}^b \hbox{ 
in  }BV(\O^b;\mathbb R^d),
\end{cases}\label{weakgradiente}\\[2.5mm]
&\begin{cases}
\displaystyle{\frac{1}{r^2_{n_i}}}D^2_{x'}
 u^a_{n_i} \overset{\ast}{\rightharpoonup}  {z}^a\hbox{ in
 the sense of measures in } \mathcal M\big(\O^a;\big(\mathbb{R}^{(N-1)
\times (N-1)}\big)^d\big),\\[2.5mm]
\displaystyle{\frac{1}{h^2_{n_i}}}D^2_{x_N}  u^b_{n_i}
 \overset{\ast}\rightharpoonup  {z}^b \hbox{ in the
sense of measures in }\mathcal M(\O^b;\RR^d),
\end{cases}\label{weakhessiano}
\end{align}        
as $ i \to + \infty$.
\end{proposition}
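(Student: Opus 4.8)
The strategy is: (i) from the coercivity bound \eqref{coerci}, the uniform energy bound, and the rate condition \eqref{rate}, extract uniform $L^1$-bounds for all the rescaled second-order derivatives appearing in \eqref{kna}--\eqref{knb}; (ii) via Poincar\'e-type inequalities compatible with the partial Dirichlet conditions on $\Gamma^a$ and $\Gamma^b$, upgrade these to boundedness in $BH$ (resp.\ $BV$, resp.\ $\mathcal M$) and extract weakly-$\ast$ convergent subsequences; (iii) identify the structure of the limits (loss of one variable, boundary values, disintegrated form of the Hessian measures). For (i): by Remark~\ref{rmk:Hnzero} we may assume $H_n^a=H_n^b=0$; then $\sup_n|F_n[(u_n^a,u_n^b)]|<+\infty$, the lower bound in \eqref{coerci}, and $h_n/r_n^{N-1}\to q>0$ give a constant $C$ with $\big\|\tfrac1{r_n^2}D^2_{x'}u_n^a\big\|_{L^1(\Omega^a)}$, $\big\|\tfrac1{r_n}D^2_{x',x_N}u_n^a\big\|_{L^1(\Omega^a)}$, $\|D^2_{x_N}u_n^a\|_{L^1(\Omega^a)}$, $\|D^2_{x'}u_n^b\|_{L^1(\Omega^b)}$, $\big\|\tfrac1{h_n}D^2_{x',x_N}u_n^b\big\|_{L^1(\Omega^b)}$, $\big\|\tfrac1{h_n^2}D^2_{x_N}u_n^b\big\|_{L^1(\Omega^b)}$ all $\le C$. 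Since $r_n,h_n<1$, this in particular bounds $\|D^2u_n^a\|_{L^1(\Omega^a)}$ and $\|D^2u_n^b\|_{L^1(\Omega^b)}$, and moreover $D^2_{x'}u_n^a$, $D^2_{x',x_N}u_n^a$, $D^2_{x',x_N}u_n^b$, $D^2_{x_N}u_n^b\to0$ in $L^1$.

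For the $a$-component: since $u_n^a-\theta_n^a\in W^{2,1}_{\Gamma^a}(\Omega^a;\mathbb R^d)$ and $\theta_n^a$ is affine (so $D^2\theta_n^a=0$ and $D^2_{x',x_N}\theta_n^a=0$), integrating twice in $x_N$ from $x_N=1$ (for smooth functions vanishing with their gradient near $\Gamma^a$, then by density) yields $\|u_n^a-\theta_n^a\|_{L^1(\Omega^a)}\le\|D^2_{x_N}u_n^a\|_{L^1(\Omega^a)}$ and $\|\nabla(u_n^a-\theta_n^a)\|_{L^1(\Omega^a)}\le\|D^2_{x_N}u_n^a\|_{L^1}+\|D^2_{x',x_N}u_n^a\|_{L^1}$; since $\theta_n^a\to\theta^a_1$ in $W^{1,1}(\Omega^a)$, the sequence $\{u_n^a\}$ is bounded in $BH(\Omega^a;\mathbb R^d)$ and $\nabla_{x'}u_n^a\to0$ in $L^1$. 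Extract $u_{n_i}^a\overset{\ast}{\rightharpoonup}u^a$ in $BH(\Omega^a;\mathbb R^d)$; then $\nabla_{x'}u^a=0$, so $u^a$ depends only on $x_N$, i.e.\ $u^a\in BH(]0,1[;\mathbb R^d)$. Setting $v_n^a:=\tfrac1{r_n}\nabla_{x'}u_n^a$, one has $\nabla_{x'}v_n^a=r_n\tfrac1{r_n^2}D^2_{x'}u_n^a\to0$ in $L^1$, $\partial_{x_N}v_n^a=\tfrac1{r_n}D^2_{x',x_N}u_n^a$ bounded in $L^1$, and $v_n^a-\theta^a_2$ vanishes near $\Gamma^a$, so Poincar\'e's inequality on $\Omega^a$ for functions vanishing on $\Gamma^a$ gives $\{v_n^a\}$ bounded in $W^{1,1}(\Omega^a;\mathbb R^{d\times(N-1)})\hookrightarrow BV$; extracting further, $v_{n_i}^a\overset{\ast}{\rightharpoonup}\xi^a$ in $BV(\Omega^a;\mathbb R^{d\times(N-1)})$ with $D_{x'}\xi^a=0$, hence $\xi^a\in BV(]0,1[;\mathbb R^{d\times(N-1)})$. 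Finally $\tfrac1{r_n^2}D^2_{x'}u_n^a$ is bounded in $\mathcal M(\Omega^a;(\mathbb R^{(N-1)\times(N-1)})^d)$, so up to a further subsequence $\tfrac1{r_{n_i}^2}D^2_{x'}u_{n_i}^a\overset{\ast}{\rightharpoonup}z^a$; writing $\tfrac1{r_n^2}D^2_{x'}u_n^a=D^2_{x'}\phi_n$, where $\phi_n(\cdot,x_N)$ is the rescaled $x'$-fluctuation of $u_n^a$ normalized to have zero $x'$-mean of value and gradient (hence $\phi_n(\cdot,x_N)\in BH_{av}(\omega)$) and is bounded in $L^1(]0,1[;BH_{av}(\omega))$ by Poincar\'e on $\omega$, one identifies $z^a\in\mathcal M(]0,1[;BH_{av}(\omega;(\mathbb R^{(N-1)\times(N-1)})^d))$.

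The $b$-component is treated analogously, interchanging the roles of $x'$ and $x_N$ and using $\theta_n^b(x',x_N)=\theta^b_1(x')+h_n\theta^b_2(x')x_N$, so that $D^2\theta_n^b$ is bounded in $L^1(\Omega^b)$ (as $\theta^b_1,\theta^b_2\in W^{2,1}(\omega)$) and $\theta_n^b\to\theta^b_1$ in $W^{1,1}(\Omega^b)$, together with $u_n^b-\theta_n^b\in W^{2,1}_{\Gamma^b}(\Omega^b;\mathbb R^d)$; the Poincar\'e inequality is now the one for functions vanishing with their gradient near the lateral surface $\Gamma^b$ (using also Theorem~\ref{interpolBH} for the gradient bound). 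This yields $u_{n_i}^b\overset{\ast}{\rightharpoonup}u^b$ in $BH(\Omega^b;\mathbb R^d)$ with $u^b$ independent of $x_N$, $\tfrac1{h_{n_i}}\nabla_{x_N}u_{n_i}^b\overset{\ast}{\rightharpoonup}\xi^b$ in $BV(\Omega^b;\mathbb R^d)$, and $\tfrac1{h_{n_i}^2}D^2_{x_N}u_{n_i}^b\overset{\ast}{\rightharpoonup}z^b$ in $\mathcal M(\Omega^b;\mathbb R^d)$, with $u^b\in BH(\omega;\mathbb R^d)$, $\xi^b\in BV(\omega;\mathbb R^d)$, and $z^b\in\mathcal M(\omega;BH_{av}(]-1,0[;\mathbb R^d))$, i.e.\ the conditions defining $\Xi$ and $Z$. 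It remains to verify the boundary (and, for $N=2$, junction) conditions so that $((u^a,u^b),(\xi^a,\xi^b),(z^a,z^b))\in\mathcal U\times\Xi\times Z=\mathcal V$. The $x'$-average of $u_{n_i}^a$ is bounded in $BH(]0,1[;\mathbb R^d)\hookrightarrow C([0,1];\mathbb R^d)$ (Proposition~\ref{embedding}) and converges to $u^a$, with value at $1$ equal to the $x'$-average of $\theta_{n_i}^a\to\theta^a_1$; hence $u^a(1)=\theta^a_1$. The $x_N$-average of $u_{n_i}^b$ is bounded in $BH(\omega;\mathbb R^d)$, converges to $u^b$, and by the compact embedding $BH(\omega)\hookrightarrow W^{1,1}(\omega)$ its trace on $\partial\omega$ converges in $L^1(\partial\omega;\mathbb R^d)$ to that of $u^b$; since this trace equals the $x_N$-average of $\theta_n^b\to\theta^b_1$, we get $u^b=\theta^b_1$ on $\partial\omega$. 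For $N=2$, $\Omega^a$, $\Omega^b$ are planar, $BH(\Omega^a)\hookrightarrow W^{1,p}(\Omega^a)$ compactly for $p<2$ (and similarly for $\Omega^b$), and both limit profiles are continuous; passing to the limit in the first relation of \eqref{eq:bc} on $\omega\times\{0\}$ — the traces of $u_{n_i}^a$ there converging to $u^a(0)$ while $u_{n_i}^b(r_{n_i}x',0)\to u^b(0')$ as $r_{n_i}\to0$ — yields $u^a(0)=u^b(0')$, by an oscillation/continuity argument for planar $BH$ functions parallel to that of \cite[Section~4]{GZ} and \cite{GauZa2}.

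The main obstacle is two-fold. First, to convert the $L^1$-bounds on second derivatives into $BH$- and $BV$-bounds one must match the partial Dirichlet data (prescribed only on $\Gamma^a$, resp.\ only on the lateral surface $\Gamma^b$) with the correct Poincar\'e/interpolation inequalities, and then pass to the limit in the boundary and junction conditions, where the relevant $BH$-traces are a priori continuous only under strict/area-strict convergence — handled here by the slice-averaging reduction to the continuous-embedding regime for $BH$ in dimensions $\le2$, and, for $N=2$, by a finer oscillation estimate near the junction. Second, pinning down the disintegrated structure of the limiting measures $z^a\in\mathcal M(]0,1[;BH_{av}(\omega;\cdot))$ and $z^b\in\mathcal M(\omega;BH_{av}(]-1,0[;\cdot))$ requires the boundedness of the renormalized fluctuations $\phi_n$ in the corresponding $L^1$-of-$BH_{av}$ spaces and a weak-$\ast$ compactness argument therein; this is the most technical bookkeeping.
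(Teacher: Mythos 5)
Your proof is correct and follows essentially the same approach as the paper: coercivity and the rate condition to extract uniform $L^1$-bounds on all rescaled second derivatives, Poincar\'e-type inequalities matched to the partial Dirichlet data to upgrade these to $W^{2,1}$ (hence $BH$/$BV$) bounds, weak-$\ast$ compactness, identification of the lost variable and boundary traces, and a Poincar\'e--Wirtinger slicing argument for the disintegrated Hessian measures $z^a$, $z^b$. The only cosmetic differences are that you carry out the Poincar\'e estimates by explicit integration from $\Gamma^a$ and verify the boundary/junction conditions via slice-averaging into one-dimensional $BH$, whereas the paper invokes Poincar\'e abstractly, uses the compact embedding $W^{2,1}\hookrightarrow W^{1,1^*}$ together with trace continuity, and cites \cite{GGLM1} for the $N=2$ junction condition.
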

\begin{proof}[Proof] As  noted in Remark~\ref{rmk:Hnzero}, we
may assume that \(H_n^a=0\) and \(H_n^b=0\) without loss of
generality.  Moreover, as we show next, \eqref{weakdefinitive}--\eqref{weakhessiano} hold even when $V=0$, which we assume in the sequel.  We also observe that we have adopted the symbols $D^2_{x'}{
 z^a}$ and $D^2_{x_N}{  z^b}$  in \eqref{weakhessiano} to underline the coincidence
with the distributional convergence.

The coercivity assumption on $W$ yields        \begin{equation}\label{boundedenergy}
                \begin{array}{ll}
                        \displaystyle{\sup_{n\in\NN}}\Bigg\{&\displaystyle{\int_{\O^a}\left(\frac{1}{{r^{2}
                                                _n}}|D^2_{x'}  u^a_n|+
                                \frac{1}{r_n}|D^2_{x',x_N}  u^a_n|
                                +|D^2_{x_N}  u^a_n|\right)dx  }\smallskip\\
                        & \quad + \displaystyle{\frac{h_n}{r^{N-1}_n}\int_{\O^b}\left(|D^2_{x'}

                                u^b_n|+ \frac{1}{{h_n}}|D^2_{x',x_N}  u^b_n|
                                +\frac{1}{h_n^{2}}|D^2_{x_N}  u^b_n|\right)dx\quad
                                \Bigg\}< \infty.}
                \end{array}
        \end{equation}
        Using this estimate, conditions    (\ref{hrzero}) and  \eqref{rate},
and the boundary conditions in the definition of \(\mathcal{U}_n\)  combined
with Poincar\'e's inequality (see \cite[Theorem 5.10]{EG} and \cite{Ziemer:1989aa}), applied twice,   we conclude
that
\(\{u_n^a\}_{n\in\NN}\) is uniformly bounded in \(W^{2,1}(\Omega^a;\RR^d)\)
and 
\(\{u_n^b\}_{n\in\NN}\) is uniformly bounded in \(W^{2,1}(\Omega^b;\RR^d)\),
with \(\Vert D^2_{x'}
 u^a_n\Vert_{L^1} \leq C r_n^2 \), \(\Vert D^2_{x',x_N}  u^a_n\Vert_{L^1}
\leq C r_n \), \(\Vert D^2_{x',x_N}
 u^b_n\Vert_{L^1} \leq \tilde C h_n \), and \(\Vert D^2_{x_N,x_N}
 u^b_n\Vert_{L^1} \leq \tilde C h_n^2 \) for some positive constants
 \(C\) and \(\tilde C\) that independent of \(n\).
 Thus, using the compactness properties in \(BH\) of bounded sequences in
\(W^{2,1}\) (see \cite{D,D1}), the  compact embedding of \(W^{2,1}\)
in \(W^{1,1^*}\)
(see \cite{D, D1}), and the continuity of the trace operator in
Sobolev
spaces see \cite{CDA02}, we can find an increasing sequence of
        positive integer numbers $\{n_i\}_{i \in \mathbb N}$ and 
        functions $ {u}^a \in BH(\O^a;\RR^d)$ and  $ {u}^b \in BH(\O^b;\RR^d)$
    for which (\ref{weakdefinitive}) holds and, in addition,  $ u^a_{n_i}\to
 u^a$ in $W^{1,1^*}(\Omega^a;\mathbb R^d)$, $ u^b_{n_i}\to  u^b$ in $W^{1,1^*}(\Omega^b;\mathbb
R^d)$, and 
\begin{equation*}
\begin{aligned}
&u^a = \theta^a_1 \text{ on } \o\times \{1\}, \enspace D^2_{x'} {u}^a =0
\text{
a.e.~in } \Omega^a, \enspace  D^2_{x',x_N} {u}^a =0 \text{
a.e.~in } \Omega^a,\\
& u^b = \theta^b_1 \text{ on }
\partial\o\times(-1,0), \enspace   D^2_{x',x_N}
 u^b = 0 \text{
a.e.~in } \Omega^b,  \enspace   D^2_{x_N}
 u^b = 0 \text{
a.e.~in } \Omega^b.
\end{aligned}
\end{equation*}
In particular,  $ {u}^a$ is independent of
        $x'$ and  $ {u}^b$ is independent of
        $x_N$, which allows us to identify \((u^a, u^b)\) with an element
in
\(\mathcal{U}\) if \(N\geq3\).
On the other hand, if $N=2$,  we further have that $BH(\Omega^a;\mathbb R^d)\subseteq W^{1,2}(\Omega^a;\mathbb R^d)\cap C(\overline{\Omega^a};\mathbb R^d)$ and  $BH(\Omega^b;\mathbb R^d)\subseteq W^{1,2}(\Omega^b;\mathbb R^d)\cap C(\overline{\Omega^b};\mathbb R^d)$, with continuous embeddings, by Proposition~\ref{embedding}. Then, we can use \cite[Proposition 2.1 and Corollary 2.3]{GGLM1} to conclude  that $u^a(0)=u^b(0)$. We further observe that the functions $u^a$ and $u^b$ are Lipschitz continuous because they are defined on intervals (see \cite{D}).

To prove 
(\ref{weakgradiente}), set \(\xi^a_n:=\frac{1}{r_{n}}\nabla_{x'}
 u^a_{n} \) and \(\xi^b_n:=\frac{1}{h_{n}}\nabla_{x_N}
 u^a_{n} \). Then, by definition of \(\mathcal{U}_n\), \(\xi^a_n
 = \theta^a_2\) on \(\omega\times \{1\}\) and \(\xi^b_n = \theta^b_2\) on
\(\partial
 \omega \times\, ]-1,0[\). Thus, arguing as above, from 
        (\ref{boundedenergy}) and   Poincar\'e's inequality,
 we may find  \({\xi}^a
\in BV(\O^a; \mathbb R^{d \times (N-1)})\), with 
        %${ {\xi}^a}_{|_{\o\times \{1\}}} = \theta^a_2$,
        $D_{x'} {\xi}^a =0$ a.e.~in $\O^a$, and \({\xi}^b
\in BV(\O^b;\mathbb R^d)\), with 
        %${ {\xi}^a}_{|_{\o\times \{1\}}} = \theta^a_2$,
        $D_{x_N} {\xi}^b =0$ a.e.~in $\O^b$, for which  
(\ref{weakgradiente}) holds (extracting a further  subsequence
if necessary). Moreover, we can identify \((\xi^a,
\xi^b)\) with an element  of \(\Xi\).

        Finally, we establish  (\ref{weakhessiano}).
        For every $n \in \mathbb N$, set
        $$
        m^{(1)}_n(x_N):=  \fint_\omega
        \nabla_{x'}  u^a_n(x',x_N)\,dx',\quad  \hbox{for  a.e.~}x_N\in\,]0,1[,
        $$
        and
        $$
        m^{(2)}_n(x_N)= \fint_\omega \big( 
        u^a_n(x',x_N)- m^{(1)}_n(x_N) x'\big)\,dx',\quad \hbox{for  a.e.~}x_N\in\,]0,1[.
        $$
        Applying (twice) the Poincar\'e--Wirtinger inequality,
        there exists $c\in\,]0,+\infty[$ (depending only on $\omega$ but
not
        on $x_N$) such that
        $$
        \begin{array}{ll}
                \left\Vert\displaystyle{\frac{1} {r^2_n}}\left( 
                u^a_n(\cdot,x_N)-m^{(1)}_n(x_N) x' - m^{(2)}_n(x_N)
                \right)\right\Vert_{W^{2,1}(\omega;\RR^d)}\leq
                \displaystyle{\frac{c} {r^2_n} }\left\Vert \nabla^2_{x'}
                u^a_n(\cdot,
                x_N) \right\Vert_{L^{1}(\omega;(\mathbb{R}^{(N-1) \times

(N-1)})^d) },
        \end{array}
                        $$
for all \(n\in\N\) and for a.e.~\(x_N\in\,]0,1[\).                      
 
        Thus, by integrating this inequality over $x_N\in\, ]0,1[$,
        and extracting a further subsequence if necessary, which
        we do not relabel, estimate
        (\ref{boundedenergy}) allows us to find a  measure \(z^a
\in \mathcal M(]0,1[;BH_{av}(\omega;\RR^d)) \)  such
        that
\begin{equation}
\label{eq:44a}
\begin{aligned}
\frac{1}{r^2_{n_i}}\left(  u^a_{n_i} - m^{(1)}_{n_i}
                x' - m^{(2)}_{n_i} \right)\rightharpoonup  {z}^a
\hbox{
                        weakly* in } \mathcal M(]0,1[;BH_{av}(\o;\RR^d)).
\end{aligned}
\end{equation}

Analogously, setting for a.e.~\(x'\in\omega\),
\begin{equation*}
\begin{aligned}
& {\mu}^{(1)}_n(x'):=\int_{-1}^0 D_{x_N}  u^b_n(x',x_N)\,dx_N,\\
& {\mu}^{(2)}_n(x'):=\int_{-1}^0 \big(  u^b_n(x',x_N)-
        \mu^{(1)}_n(x')
        x_N\big)\,dx_N,
\end{aligned}
\end{equation*}
we can find \(z^b\in M(\o; BH_{av}(]-1,0[;\RR^d) \)
such that
\begin{equation}
\label{eq:44b}
\begin{aligned}
\frac{1}{h_{n_i}}\left(  u^b_{n_i}- {\mu}^{(1)}_{n_i}
        x_N - {\mu}^{(2)}_{n_i}\right) \rightharpoonup  {z}^b
        \hbox{ weakly* in } \mathcal M(\o; BH_{av}(]-1,0[;\RR^d).
\end{aligned}
\end{equation}

 To conclude, we observe that \eqref{weakhessiano}
follows from \eqref{eq:44a} and \eqref{eq:44b}. \end{proof}

As
it will become clear, Theorem~\ref{thm:mainFMZ}  will be a consequence of
the following theorem.

\begin{theorem}\label{thm:FMZ}  Let    \(W :  (\mathbb{R}^{{N\times 
N}}_s)^d\to 
\mathbb{R}\)  be a function satisfying conditions  (\ref{contvsBorel})--(\ref{coerci}),  let $V \colon \mathbb R^{d \times N}\to \mathbb R$ be a continuous function satisfying \eqref{Vgrowth},  and consider the corresponding sequence of  functionals
\((F_n)_{n\in\N}\) introduced by  \eqref{energy}--\eqref{knb}, where the parameters \(r_n\)
and \(h_n\) satisfy conditions     
         (\ref{hrzero}) and (\ref{rate}), and where the loads
\(H_n\) satisfy 
\eqref{eq:loads}--\eqref{loadconv}. Consider further the space \(\mathcal
U \times \Xi\) and  the functionals  \(K^a\) and \(K^b\)  introduced
in \eqref{eq:UcalXi}--\eqref{Kb}, and define, for  every $(( u^a,  u^b),
 (\xi^a,  \xi^b))\in \mathcal U \times \Xi$,
\begin{equation}
\label{Gammalimitdef}
\begin{aligned}
F[(u^a, u^b), (\xi^a, \xi^b)]:= \inf \bigg\{ \liminf_{n\to +\infty} F_n[u_n]
\colon& u_n=(u^a_n, u^b_n) \in{\mathcal
U}_n,\,
u^a_n\overset{\ast}{\rightharpoonup} u^a \hbox{ in }BH(\Omega^a;\RR^d),\\
&u_n^b  \overset{\ast}{\rightharpoonup} u^b \hbox{ in }BH(\Omega^b;\mathbb
R^d),\\ 
& \frac{1}{r_n}\nabla_{x'}u^a_n \overset{\ast}{\rightharpoonup} \xi^a\hbox{
in }BV\big(\Omega^a; \mathbb
R^{d \times (N-1)}\big),\\
&\frac{1}{h_n}\nabla_{x_N}u^b_n
\overset{\ast}{\rightharpoonup} \xi^b\hbox{ in } BV(\Omega^b;\RR^d)\bigg\},
\end{aligned}
\end{equation}
where \({\mathcal U}_n\)  is given by \eqref{Vn}.
Then, for  every $(( u^a,  u^b),  (\xi^a,  \xi^b))\in \mathcal U \times \Xi$,
we have that  %
\begin{equation*}
%\label{eq:mainid}
\begin{aligned}
F[(u^a, u^b), (\xi^a, \xi^b)]=K^a[({u}^a, \xi^a)]+ q
K^b[({u}^b,{\xi}^b)].
\end{aligned}
\end{equation*}
 \end{theorem}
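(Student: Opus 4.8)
The plan is to prove the two matching bounds: first, for every admissible sequence, $\liminf_{n}F_n[u_n]\ge K^a[(u^a,\xi^a)]+q\,K^b[(u^b,\xi^b)]$ (which gives $F\ge K^a+qK^b$), and second, the existence of a recovery sequence attaining this value (which gives $F\le K^a+qK^b$). Since $F_n=K_n^a[u^a_n]+\tfrac{h_n}{r_n^{N-1}}K_n^b[u^b_n]$ with $\tfrac{h_n}{r_n^{N-1}}\to q$ by \eqref{rate}, and the two terms act on the disjoint domains $\Omega^a$, $\Omega^b$, being coupled only through the junction conditions \eqref{eq:bc} --- which concentrate on an arbitrarily thin neighbourhood of $\{x'=0\}$ and, by Proposition~\ref{Compactnessres}, reduce in the limit to the single constraint $u^a(0)=u^b(0)$ when $N=2$ and to no constraint when $N\ge3$ --- the ``rod'' term $K_n^a$ and the ``plate'' term $\tfrac{h_n}{r_n^{N-1}}K_n^b$ will be analysed separately and recombined at the end. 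By Remark~\ref{rmk:Hnzero} we may assume $H_n\equiv0$ throughout.

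For the $\liminf$ inequality, fix $(u^a_n,u^b_n)\in\Ucal_n$ with $\sup_n|F_n[u_n]|<\infty$ and, up to a subsequence, assume the convergences \eqref{weakdefinitive}--\eqref{weakhessiano} of Proposition~\ref{Compactnessres}, so $(u^a,u^b)\in\Ucal$ and $(\xi^a,\xi^b)\in\Xi$. For the rod, discarding the blown-up $A$-block by the very definition of $\hat W$ in \eqref{eq:defhatzero} and then passing to its convex envelope gives
\[
K_n^a[u^a_n]\ \ge\ \int_{\Omega^a}\hat W\Big(\tfrac1{r_n}D^2_{x',x_N}u^a_n,\ D^2_{x_N}u^a_n\Big)\,dx\ \ge\ \int_{\Omega^a}\hat W^{**}\Big(\tfrac1{r_n}D^2_{x',x_N}u^a_n,\ D^2_{x_N}u^a_n\Big)\,dx .
\]
Since $\tfrac1{r_n}D^2_{x',x_N}u^a_n=\nabla_{x_N}\big(\tfrac1{r_n}\nabla_{x'}u^a_n\big)\overset{\ast}{\rightharpoonup}D_{x_N}\xi^a$ and $D^2_{x_N}u^a_n\overset{\ast}{\rightharpoonup}D^2_{x_N}u^a$ as measures, with $x'$-independent limits, the problem reduces to the one-dimensional functional, and the lower semicontinuity Theorem~\ref{Thm4.3conv} applied with $\Psi=\hat W^{**}$ (convex, linear growth) and the Dirichlet datum $\theta^a$ on $\Gamma^a$ yields $\liminf_n K_n^a[u^a_n]\ge K^a[(u^a,\xi^a)]$, boundary recession term included. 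Symmetrically, for the plate one discards the blown-up $C$-block via $W_0$ in \eqref{eq:defhatzero}, passes to $Q_{\widetilde{\mathcal A}}W_0$ (which, being a cross $2$-quasiconvex--quasiconvex function of linear growth, is Lipschitz and $\widetilde{\mathcal A}$-quasiconvex, cf.~Subsection~\ref{sect:Aqcx}), observes that $\big(D^2_{x'}u^b_n,\tfrac1{h_n}D^2_{x',x_N}u^b_n\big)=\big(D^2_{x'}u^b_n,\nabla_{x'}(\tfrac1{h_n}\nabla_{x_N}u^b_n)\big)\overset{\ast}{\rightharpoonup}(D^2_{x'}u^b,D_{x'}\xi^b)$ with $x_N$-independent limits, and applies Theorem~\ref{Thm4.3} with $\Psi=Q_{\widetilde{\mathcal A}}W_0$ and Dirichlet data $(\theta^b_1,\theta^b_2)$ on $\partial\omega$ to get $\liminf_n K_n^b[u^b_n]\ge K^b[(u^b,\xi^b)]$. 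Adding the two and using $\tfrac{h_n}{r_n^{N-1}}\to q$ proves $F\ge K^a+qK^b$.

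For the $\limsup$ inequality, since $F$ is defined as an infimum of $\liminf$s, a diagonal argument reduces matters to constructing, for each $\delta>0$, an admissible sequence with $\limsup_n F_n\le K^a[(u^a,\xi^a)]+qK^b[(u^b,\xi^b)]+\delta$. First reduce to regular target data: by Theorems~\ref{Lemma1KRBH}, \ref{thm:KRBV} and \ref{thm:mixedKRBV}, approximate $(u^a,\xi^a)$ and $(u^b,\xi^b)$ in the area-strict sense by functions in $W^{2,1}\times W^{1,1}$ (and, by a further mollification, by piecewise polynomials), while restoring the prescribed traces through Propositions~\ref{asLemma4.1BFMTraces}, \ref{thm2.16G_BH}, \ref{asLemma4.1BFMTracesBH} and \ref{BFMTracestog} (and their one-dimensional counterpart) at the cost of precisely the boundary recession terms appearing in \eqref{Ka} and \eqref{Kb}; by the Reshetnyak-type continuity Theorem~\ref{ReshetnyaktheoremBCMS} (legitimate because $\hat W^{**}$ and $Q_{\widetilde{\mathcal A}}W_0$ are Lipschitz) the functionals $K^a$, $K^b$ are continuous along these approximations, so it suffices to treat smooth data.

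For smooth data, take $u^a_n(x',x_N):=u^a(x_N)+r_n\,\xi^a(x_N)x'+r_n^2\varphi^a_n(x',x_N)$ and $u^b_n(x',x_N):=u^b(x')+h_n\,x_N\,\xi^b(x')+h_n^2\varphi^b_n(x',x_N)$, together with boundary and junction modifications as above. The ``slow'' parts already produce the limits required in \eqref{Gammalimitdef} and force the $A$-block of $K_n^a$ (respectively the $C$-block of $K_n^b$) to vanish, while $\varphi^a_n$ (respectively $\varphi^b_n$) carries two ingredients: a structural corrector realizing a measurable selection of the minimizing $A$ in the definition of $\hat W$ (respectively of the minimizing $C$ in $W_0$), and a fast oscillation on a mesoscale $\eps_n$ with $r_n,h_n\ll\eps_n\ll1$, built from near-optimal periodic test fields in the definitions of the convex envelope $\hat W^{**}$ and of $Q_{\widetilde{\mathcal A}}W_0$ --- the former available because in one space variable convexity coincides with $\Acal$-quasiconvexity for the scalar curl operator, the latter taken in the product form $b_1\otimes w_1\otimes w_1$, $b_2\otimes w_2$ recalled in Subsection~\ref{sect:Aqcx} so that the correctors stay (parts of) a Hessian and a gradient. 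Letting $r_n,h_n,\eps_n\to0$ in the appropriate order and using dominated convergence together with the continuity and linear growth of $W$, one obtains $\limsup_n K_n^a[u^a_n]\le K^a[(u^a,\xi^a)]$ and $\limsup_n\tfrac{h_n}{r_n^{N-1}}K_n^b[u^b_n]\le qK^b[(u^b,\xi^b)]$; a gluing on a vanishing neighbourhood of $\{x'=0\}$ then turns $(u^a_n,u^b_n)$ into an element of $\Ucal_n$ with negligible interaction energy --- automatically for $N\ge3$, and for $N=2$ thanks to the compatibility $u^a(0)=u^b(0)$ encoded in $\Ucal$, adapting \cite[Section~4]{GZ} and \cite{GauZa2}. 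The main obstacle is exactly this construction of the oscillating correctors realizing $\hat W^{**}$ and $Q_{\widetilde{\mathcal A}}W_0$ while keeping the added mass controlled under the merely linear growth of $W$ and simultaneously respecting all the prescribed weak-$\ast$ convergences and the Dirichlet and junction constraints, together with the final diagonalization back to general $BV\times BH$ targets.
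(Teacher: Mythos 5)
Your lower‐bound argument coincides with the paper's (Proposition~\ref{lbprop}): drop the blown‐up $A$-block via $\hat W$, pass to $\hat W^{**}$, reduce by Fubini/Fatou to the fibre functionals, and invoke Theorem~\ref{Thm4.3conv}; symmetrically for the plate via $W_0$, $Q_{\widetilde{\mathcal A}}W_0$ and Theorem~\ref{Thm4.3}. That part is correct and essentially the paper's proof.

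For the upper bound you take a genuinely different route, and it has a gap. The paper does \emph{not} build oscillating correctors realizing $\hat W^{**}$ and $Q_{\widetilde{\mathcal A}}W_0$ directly. It factors the argument: (i) Lemma~\ref{ubSobolev} constructs an explicit recovery sequence for smooth data that realizes the \emph{unrelaxed} densities $\hat W$ and $W_0$ (the correctors $r_n^2 (x')^T z^a(x_N)x'$ and $h_n^2 x_N^2 z^b(x')$ only serve as measurable selections for the minimizing $A$ and $C$, via Aumann's lemma); (ii) Lemma~\ref{lscGammalimit} shows $F$ itself is sequentially weakly-$\ast$ lower semicontinuous on $\Ucal\times\Xi$, by a diagonal argument from its definition as an infimum of $\liminf$'s; (iii) Lemma~\ref{ubSobolev2} combines (i), (ii) and the Braides--Fonseca--Leoni relaxation theorem for $\mathcal A$-free fields (\cite{BrFoLe00}) to pass from $\hat W, W_0$ to $\hat W^{**}, Q_{\widetilde{\mathcal A}}W_0$ on $\tilde V$ \emph{without} constructing any oscillations; (iv) Proposition~\ref{ubprop} extends from $\tilde V$ to $\Ucal\times\Xi$ via Reshetnyak upper semicontinuity and the trace-preserving area-strict approximations (Propositions~\ref{asLemma4.1BFMTraces}--\ref{BFMTracestog}, Remark~\ref{afterProp 4.8}). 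Your sketch, in contrast, tries to produce the relaxation by hand; this is where you correctly identify "the main obstacle", and in the present form the ansatz is not right: the mesoscale oscillation needed to realize $\hat W^{**}$ of $(\tfrac{1}{r_n}D^2_{x',x_N}u^a_n,D^2_{x_N}u^a_n)$ (respectively $Q_{\widetilde{\mathcal A}}W_0$ of $(D^2_{x'}u^b_n,\tfrac{1}{h_n}D^2_{x',x_N}u^b_n)$) must perturb the $(B,C)$-blocks (respectively $(A,B)$-blocks) at $O(1)$, hence must be superposed directly on $(u^a,\xi^a)$ (respectively $(u^b,\xi^b)$) with $O(1)$ amplitude and only weak-$\ast$ convergence to the targets --- it cannot live inside the $r_n^2\varphi^a_n$ / $h_n^2\varphi^b_n$ slots, which enter the relevant blocks at order $r_n^2/\eps_n^2$ and $h_n^2/\eps_n^2$ and therefore vanish. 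Carrying this out, while keeping the sequence in $\Ucal_n$ (junction conditions \eqref{eq:bc}) and preserving the Dirichlet traces, is precisely the delicate work the paper avoids by invoking the abstract relaxation theorem together with (ii). Also a minor but real confusion: the gluing in Lemma~\ref{ubSobolev} is a cubic interpolation in the $x_N$ variable on the slab $\omega\times]0,\varepsilon_n[$ of $\Omega^a$ (to match $u^b_n(r_n x',0)$ and its scaled derivatives), not a modification on a vanishing neighbourhood of $\{x'=0\}$.

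Finally, note that the coupling $u^a(0)=u^b(0)$ when $N=2$ is not something you need to impose by hand in the recovery construction: it is encoded in $\Ucal$ (and proved in Proposition~\ref{Compactnessres} from the uniform convergence granted by $BH\hookrightarrow C$ in one dimension), and the construction in Lemma~\ref{ubSobolev2} handles it by adjusting the boundary data on $]0,1[$ and on $(\alpha,0)\cup(0,\beta)$ to match $u^a(0)$ at the junction.
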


We will prove Theorem~\ref{thm:FMZ} in the Subsections~\ref{sect:lb} and
\ref{sect:ub} below, while Theorem~\ref{thm:mainFMZ} and
Corollary~\ref{cor:min} are addressed
in Subsection~\ref{sect:proofmain}.

\subsection{Lower bound}\label{sect:lb}

In this subsection, we prove the lower bound for \(F\) in Theorem~\ref{thm:FMZ}.

\begin{proposition}\label{lbprop}
 Under the notations and assumptions of Theorem~\ref{thm:FMZ}, we have that if $(( u^a,
 u^b), \allowbreak (  \xi^a,  \xi^b))\in \mathcal U \times \Xi$ and    $(u_n^a, u_n^b)
\in \mathcal U_n$ are such that $u_n^a \overset{\ast}{\rightharpoonup}
u$  in $BH(\Omega^a; \mathbb R^d)$, 
$u_n^b  \overset{\ast}{\rightharpoonup} u^b$ in $BH(\Omega^b;\mathbb
R^d)$,   $\frac{1}{r_n}\nabla_{x'}u^a_n \overset{\ast}{\rightharpoonup} \xi^a$
in $BV(\Omega^a;\mathbb R^{d \times (N-1)})$, and $ \frac{1}{h_N} \nabla_{x_N}u^b_n
\overset{\ast}{\rightharpoonup} \xi^b$ in $ BV(\Omega^b;\mathbb R^d)$, then
\begin{equation}
\label{liminfineq1}
\begin{aligned}
\liminf_{n\to\infty} F_n[u_n]=\liminf_n\left(K^a_n[  u^a_n]+
\frac{h_n}{r_n^{N-1}}K^b_n[  u^b_n]\right) \geq  K^a[({u}^a, \xi^a)]+ q
K^b[({u}^b,{\xi}^b)].
\end{aligned}
\end{equation}
\end{proposition}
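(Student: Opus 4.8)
The plan is to decouple the rod and film contributions and to treat each of them by a dimension‑reduction argument resting on the lower semicontinuity theorems proved above. First, by Remark~\ref{rmk:Hnzero} the load terms converge along the given sequences, so I would reduce to the case $H\equiv0$; adding a constant to $W$ (which shifts $F_n$ and the candidate limit by matching bounded affine amounts), I would also assume $W\ge0$, so that $\hat W$, $W_0$ and their envelopes are nonnegative — and, being envelopes of functions with linear growth, $\hat W^{**}$ is convex with linear growth (hence Lipschitz) and $Q_{\widetilde{\mathcal A}}W_0$ is $\widetilde{\mathcal A}$‑quasiconvex with linear growth (hence Lipschitz). Since $\liminf$ is superadditive, $h_n/r_n^{N-1}\to q>0$ by \eqref{rate}, and $K_n^a[u_n^a],K_n^b[u_n^b]\ge0$, it then suffices to prove separately that $\liminf_nK_n^a[u_n^a]\ge K^a[(u^a,\xi^a)]$ and $\liminf_nK_n^b[u_n^b]\ge K^b[(u^b,\xi^b)]$.

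For the rod term, writing $\xi_n^a:=\tfrac1{r_n}\nabla_{x'}u_n^a$ and using the block decomposition \eqref{eq:defM} and the definition of $\hat W$ in \eqref{eq:defhatzero}, the integrand of $K_n^a$ is at least $\hat W(\nabla_{x_N}\xi_n^a,\nabla^2_{x_N}u_n^a)\ge\hat W^{**}(\nabla_{x_N}\xi_n^a,\nabla^2_{x_N}u_n^a)$. Because $\hat W^{**}$ is convex, Jensen's inequality in the cross‑section $\omega$ lets me replace $u_n^a$, $\xi_n^a$ by their $x'$‑averages $\tilde u_n^a$, $\tilde\xi_n^a$, which are one‑dimensional; by the a priori bounds \eqref{boundedenergy} these are bounded in $BH(\,]0,1[\,;\RR^d)$ and $BV(\,]0,1[\,;\RR^{d\times(N-1)})$ and converge in $L^1$ to $u^a$, $\xi^a$, hence weakly‑$\ast$. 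After the harmless subtraction of the constant $r_n\theta_2^a\fint_\omega x'\,dx'$ from $\tilde u_n^a$ (which leaves $d^2\tilde u_n^a/dx_N^2$ unchanged and restores the exact datum $\theta_1^a$ at $x_N=1$), Theorem~\ref{Thm4.3conv} applied with $\Psi=\hat W^{**}$, $\vartheta_1^a=\theta_1^a$, $\vartheta_2^a=\theta_2^a$ delivers exactly $K^a[(u^a,\xi^a)]$ as the lower bound (after the standing normalization $\mathcal L^{N-1}(\omega)=1$).

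For the film term, with $\xi_n^b:=\tfrac1{h_n}\nabla_{x_N}u_n^b$ the integrand of $K_n^b$ is at least $W_0(\nabla^2_{x'}u_n^b,\nabla_{x'}\xi_n^b)\ge Q_{\widetilde{\mathcal A}}W_0(\nabla^2_{x'}u_n^b,\nabla_{x'}\xi_n^b)$, and I set $I_n:=\int_{\O^b}Q_{\widetilde{\mathcal A}}W_0(\nabla^2_{x'}u_n^b,\nabla_{x'}\xi_n^b)\,dx\le K_n^b[u_n^b]$. Since $Q_{\widetilde{\mathcal A}}W_0$ is only $\widetilde{\mathcal A}$‑quasiconvex, I cannot average over $x_N$; instead I would pass to a slice. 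Fix $\epsilon>0$. As $]-1,0[$ has unit length, Markov's inequality shows the set of levels $s$ with $\int_\omega Q_{\widetilde{\mathcal A}}W_0\big(\nabla^2_{x'}u_n^b(\cdot,s),\nabla_{x'}\xi_n^b(\cdot,s)\big)\,dx'\le(1+\epsilon)I_n$ has measure $\ge\epsilon/(1+\epsilon)$, while — using the uniform $L^1$‑bounds \eqref{boundedenergy} together with $u_n^b\to u^b$ and $\xi_n^b\to\xi^b$ in $L^1(\O^b;\RR^d)$ — the set of levels for which the slices $u_n^b(\cdot,s)$, $\xi_n^b(\cdot,s)$ stay bounded, with uniform constants, in $BH(\omega;\RR^d)$, $BV(\omega;\RR^d)$ and converge in $L^1(\omega)$ to $u^b$, $\xi^b$ has measure $\to1$. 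So for $n$ large a common level $s_n$ exists; the slices $\hat u_n:=u_n^b(\cdot,s_n)$, $\hat\xi_n:=\xi_n^b(\cdot,s_n)$ then converge weakly‑$\ast$ in $BH(\omega;\RR^d)$, $BV(\omega;\RR^d)$, carry the traces $\hat\xi_n=\theta_2^b$ and $\hat u_n=\theta_1^b+h_ns_n\theta_2^b$ on $\partial\omega$, and replacing $\hat u_n$ by $\hat u_n-h_ns_n\theta_2^b\in\theta_1^b+W^{2,1}_0(\omega;\RR^d)$ alters the integral by at most $C\,h_n\|\nabla^2_{x'}\theta_2^b\|_{L^1(\omega)}\to0$ (Lipschitz continuity of $Q_{\widetilde{\mathcal A}}W_0$). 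Theorem~\ref{Thm4.3}, with $\vartheta_1^b$, $\vartheta_2^b$ suitable $W^{2,1}$‑extensions of $\theta_1^b$, $\theta_2^b$ to some $\omega'\supset\supset\omega$, then gives $(1+\epsilon)\liminf_nK_n^b[u_n^b]\ge(1+\epsilon)\liminf_nI_n\ge K^b[(u^b,\xi^b)]$; letting $\epsilon\to0$ closes this case. Summing the two bounds and reinstating the convergent load terms yields \eqref{liminfineq1}.

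I expect the slicing step for $K^b$ to be the genuine difficulty: because $Q_{\widetilde{\mathcal A}}W_0$ is $\widetilde{\mathcal A}$‑quasiconvex but not convex for $N\ge3$, the thin direction $x_N$ cannot be collapsed by Jensen's inequality, and one must instead trade an arbitrarily small loss in the energy for a slice that is simultaneously well behaved for the $BH\times BV$‑convergence — which is exactly what makes the uniform $L^1$‑estimates \eqref{boundedenergy} indispensable. The identity $Q_{\widetilde{\mathcal A}}W_0=(W_0)^{**}$ in the case $N=2$ reduces that case to the argument used for $K^a$.
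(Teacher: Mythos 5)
Your argument is correct and reaches the same result, but the dimension-reduction step is executed differently from the paper. The paper's proof treats both the rod and film contributions by the same mechanism: after $W\ge\hat W\ge\hat W^{**}$ (resp.\ $W\ge W_0\ge Q_{\widetilde{\mathcal A}}W_0$) and superadditivity, it applies Fubini together with Fatou's lemma to pass the $\liminf$ inside the cross-sectional integral, establishes that for a.e.\ slice the restricted sequences converge weakly-$\ast$ in $BH\times BV$ (along a subsequence realizing the slice-wise $\liminf$), applies Theorem~\ref{Thm4.3conv} (resp.\ Theorem~\ref{Thm4.3}) to each such slice, and integrates the resulting slice-wise bounds. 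You instead use Jensen's inequality to collapse the cross-section of $\Omega^a$ onto the $x_N$-average and apply Theorem~\ref{Thm4.3conv} once to the averaged 1D sequence; this exploits the \emph{convexity} of $\hat W^{**}$ and is arguably cleaner for that term. For $\Omega^b$, since $Q_{\widetilde{\mathcal A}}W_0$ is only $\widetilde{\mathcal A}$-quasiconvex, you rightly reject Jensen and instead use a Markov/Chebyshev selection to pick, for each $n$, a single ``good'' level $s_n$ whose slice simultaneously carries at most $(1+\varepsilon)$ of the average energy and is bounded in $BH\times BV$ with $L^1$-distance to the limit tending to $0$; after a harmless $O(h_n)$ boundary correction, Theorem~\ref{Thm4.3} is applied to that one sequence of slices. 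This buys you a uniform treatment without integrating slice-wise lower bounds, at the cost of a diagonal selection of thresholds: the phrase ``the set of levels for which the slices \dots converge in $L^1(\omega)$ \dots has measure $\to1$'' should really be replaced by ``for a sequence $\delta_n\to0$ chosen slowly, the set $\{s:\|u_n^b(\cdot,s)-u^b\|_{L^1(\omega)}+\|\xi_n^b(\cdot,s)-\xi^b\|_{L^1(\omega)}\le\delta_n\}$ has measure $\to1$'', and the $BH\times BV$ bound is only available for a Markov-selected set of measure $\ge1-C/M$ rather than tending to $1$; both issues are resolved by choosing $M$ large relative to $\varepsilon$ and passing along a subsequence realizing $\liminf_nI_n$. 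This is harmless, and the paper's Fubini--Fatou route glosses over exactly the same subsequence extraction, so neither approach is more rigorous than the other in the version written; the paper's, however, avoids the explicit diagonalization. One further small omission in your write-up: after averaging $u_n^a$ over $\omega$ you should also check that the trace of the $x_N$-derivative at $x_N=1$ vanishes (which it does, since $\theta_n^a$ is independent of $x_N$), so that the averaged function lies in $\theta^a_1+W^{2,1}_{\Gamma^a}(]0,1[;\RR^d)$ as required by Theorem~\ref{Thm4.3conv}.
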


\begin{proof}[Proof]
The proof follows along the lines of \cite[Proposition 4.2]{GZNODEA} and
\cite[Theorem 11]{BFLM}, exploiting Fubini's theorem and the  extension results
addressed  in Theorem~\ref{Thm4.3}   and Theorem~\ref{Thm4.3conv}.

We start by observing that the only  nontrivial case is when  the limit inferior
on left-hand side of \eqref{liminfineq1} is finite, which we assume hereafter.

Then,  exploiting condition \eqref{rate}, the superadditivity of the liminf,
the definitions of \(W_0\) and \(\hat W\) together with the  inequalities
$\hat W\geq\hat W^{**}$ and $W_0 \geq Q_{\widetilde{\mathcal A}}W_0$
(cf.~Subsection~\ref{sect:Aqcx}), and Fubini's theorem combined with Fatou's
lemma, yields  %
\begin{equation*}
\begin{aligned}
+\infty&>\liminf_{n\to +\infty}\left[\int_{\Omega^a} W\left(\left(\begin{array}{cc}\displaystyle{\frac{1}{r_n^2}D^2_{x'}u^{a}_n}
&\displaystyle{\left(\frac{1}{r_n}D^2_{x',x_N}u^{a}_n\right)^T}\smallskip\\
\displaystyle{\frac{1}{r_n}D^2_{x',x_N}u^{a}_n} &D^2_{x_N}u^{a}_n
\end{array}\right)\right)dx\right.
\\
&\qquad+\left.\frac{h_n}{r_n^{N-1}}\int_{{\O}^b}
W\left(\left(\begin{array}{cc}D^2_{x'}u^b_n
&\left(\displaystyle{\frac{1}{h_n}D^2_{x',x_N}u^b_n}\right)^T\smallskip\\
\displaystyle{\frac{1}{h_n}D^2_{x',x_N}u^b_n}
&\displaystyle{\frac{1}{h^2_n}D^2_{x_N}u^b_n
}\end{array}\right)\right)dx\right]\\ 
&\geq \liminf_{n\to +\infty}\left[\int_{\Omega^a} {\hat W}\left(
\displaystyle{\frac{1}{r_n}D^2_{x',x_N}u^{a}_n,} D^2_{x_N}u^{a}_n
\right)dx\right.
\\
&\qquad+\left.\frac{h_n}{r_n^{N-1}}\int_{{\O}^b}
W_0\left(\left(D^2_{x'}u^b_n,
\displaystyle{\frac{1}{h_n}D^2_{x',x_N}u^b_n}
\right)\right)dx\right]\\
&\geq\liminf_{n\to +\infty}\int_{\Omega^a} {\hat W}^{\ast \ast}\left(
\displaystyle{\frac{1}{r_n}D^2_{x',x_N}u^{a}_n}, D^2_{x_N}u^{a}_n
\right)dx
\\
&\qquad+q \liminf_{n \to +\infty}\int_{{\O}^b}
Q_{\widetilde{\mathcal A}}W_0\left(D^2_{x'}u^b_n,
\displaystyle{\frac{1}{h_n}D^2_{x',x_N}u^b_n}
\right)dx, \\
&\geq \int_\omega\bigg[\liminf_{n\to +\infty} \int_0^1 {\hat W}^{\ast \ast}\left(
\displaystyle{\frac{1}{r_n}D^2_{x',x_N}u^{a}_n}, D^2_{x_N}u^{a}_n
\right)dx_N\Bigg] dx'
\\
&\qquad+q \int_{-1}^0 \bigg[\liminf_{n \to +\infty} \int_\omega
Q_{\widetilde{\mathcal A}}W_0\left(D^2_{x'}u^b_n,
        \displaystyle{\frac{1}{h_n}D^2_{x',x_N}u^b_n}
\right)dx'\bigg]dx_N.
\end{aligned}
\end{equation*}

Next, we use Proposition~\ref{lbprop} to conclude that
\begin{equation}\label{eq:convlb}
\begin{aligned}
& u^a_n (x',\cdot) \weaklystar u^a\enspace \hbox{ in }BH(]0,1[;\mathbb R^d)
\hbox{ for a.e. } x' \in \omega,\\
&\frac{1}{r_n} \nabla_{x'} u^a_n(x',\cdot)  \weaklystar  \xi^a\enspace \hbox{
in } BV(]0,1[;\mathbb R^{d\times (N-1)}) \hbox{ for a.e. } x' \in \omega,\\
&u^b_n(\cdot,x_N)    \weaklystar u^b\enspace \hbox{ in } BH(\omega;\mathbb
R^d)
\hbox{ for a.e. } x_N \in ]-1,0[,\\
&\frac{1}{h_n} \nabla_{x_N} u^b_n(\cdot,x_N)  \weaklystar \xi^b\enspace \hbox{
in } BH(\omega;\mathbb R^d) \hbox{
for a.e. } x_N \in ]-1,0[.
\end{aligned}
\end{equation}
In fact, the bounds established in Proposition~\ref{lbprop}, the    compact
embedding of \(W^{2,1}\) in \(W^{1,1}\), and  Fubini's theorem combined with
Fatou's
lemma, imply that
\begin{equation*}
\begin{aligned}
u^a_n (x',\cdot) \to u^a \hbox{ in }W^{1,1}(]0,1[;\mathbb R^d) \quad \text{
and } \quad \liminf_{n\to\infty} \int_{0}^{1}|\nabla^2 u^a_n (x',x_N)| dx_N<+\infty
\end{aligned}
\end{equation*}
 for a.e.~\(x' \in \omega\). Consequently, the uniqueness of weak-\(\ast\)
limits yields the first convergence in \eqref{eq:convlb}. The remaining convergences
in \eqref{eq:convlb} can be justified similarly.

Hence, applying Theorem~\ref{Thm4.3conv} in $]0,1[$ to $u^a_n(x',\cdot)$
and $\frac{1}{r_n} \nabla_{x'} u^a_n(x',\cdot)$ for a.e. $x' \in \omega$,
and  Theorem~\ref{Thm4.3}  in $\omega$  to $u^b_n(\cdot, x_N)$ and $\frac{1}{h_N}
\nabla_{x_N}u^b_n(\cdot, x_N)$ for a.e. $x_N \in\, ]-1,0[$, we get 
\begin{equation*}
\begin{aligned}
&\liminf_{n\to +\infty} \int_0^1 {\hat W}^{\ast \ast}\left(
\displaystyle{\frac{1}{r_n}D^2_{x',x_N}u^{a}_n}, D^2_{x_N}u^{a}_n
\right)dx_N\\
&\qquad \geq  \int_0^1 {\hat W}^{\ast \ast} 
\big(\nabla_{x_N}\xi^a(x_N),D^2_{x_N}u^{a}(x_N)\big) dx_N \\
&\qquad\quad + \int_0^1 (\hat W^{**})^\infty\left(\frac{d( (D_{x_N})^s \xi^a,
(D^2_{x_N})^s
u^a )}{d |( (D_{x_N})^s \xi^a,(D^2_{x_N})^s u^a)|}(x_N)\right)d |( (D_{x_N})^s
\xi^a,(D^2_{x_N})^s u^a)|(x_N) \\
&\qquad\quad+  ({\hat W}^{**})^\infty \left(  - (\xi^a)^-(1)+\theta^a_2,
-\left(\frac{d u^a}{dx_N}   \right)^-(1)\right) 
\end{aligned}
\end{equation*}
and 
\begin{equation*}
\begin{aligned}
&\liminf_{n \to +\infty} \int_\omega
Q_{\widetilde{\mathcal A}}W_0\left(D^2_{x'}u^b_n,
        \displaystyle{\frac{1}{h_n}D^2_{x',x_N}u^b_n}
\right)dx'\\
&\qquad  \geq \int_\omega
Q_{\widetilde{\mathcal A}}W_0 \big( D^2_{x'}u^b(x'),\nabla_{x'}\xi^b(x')
\big)dx'\\
 &\qquad\quad+
 \int_\omega (Q_{\widetilde{\mathcal A}}W_0)^\infty \left(\frac{d( (D^2_{x'})^s
u^b , (D_{x'})^s \xi^b)}{d |((D^2_{x'})^s u^b , (D_{x'})^s \xi^b)|}(x')\right)d
|((D^2_{x'})^s u^b , (D_{x'})^s \xi^b)|(x')\\
&\qquad\quad + \int_{\partial \omega}  (Q_{\widetilde{\mathcal A}}W_0)^\infty
(-((\nabla_{x'} u^b)^-(x')- \nabla_{x'} \theta^b_1(x'), (\xi^b)^-(x')- \theta^b_2(x'))\otimes
\nu_{\partial \omega})  d{\mathcal H^{N-2}},
 \end{aligned}
\end{equation*}
which, together with Remark~\ref{rmk:Hnzero}, concludes the proof.
\end{proof}

\subsection{Upper bound}\label{sect:ub}

In this subsection, we prove the upper bound for \(F\) in Theorem~\ref{thm:FMZ}.
We start by addressing the case in which the limit fields are Sobolev functions, which uses the following density result.

\begin{proposition}
        \label{density}
        Let   $\mathcal{S}$  be the space given by
\begin{equation}
\label{V}
\begin{aligned}
 \mathcal{S}:= \Big\{(u^a, u^b, \xi^a, \xi^b)\colon  
&u^a \in  (\theta^a_1 + C^\infty_{\Gamma^a}([0,1];\mathbb R^d)),\, \, u^b\in
(\theta^b_1
+ C^\infty_0(\omega;\mathbb R^d)),\\
 &\xi^a \in (\theta^a_2 + C_{\Gamma^a}^\infty([0,1];(\mathbb R^{d\times (N-1)})),\,\,
\xi^b \in (\theta^b_2 + C^\infty_0(\omega;\mathbb R^d)), \\ 
& u^a(0) = u^b(0'),\,\,
 \nabla_{x'}
u^b(0') = \xi^a(0),\,\,\nabla_{x_N} u^a(0)
= \xi^b(0')\Big\}, 
\end{aligned}
\end{equation}
with $C^\infty_{\Gamma^a}([0, 1];\mathbb R^d) =
\left\{u^a \in C^\infty([0, 1];\mathbb R^d)\colon u^a(1) = 0,\, \nabla^i
u^a(1) =
0
\enspace\forall i \in \mathbb N\right\}$.
 Then, \(\mathcal{S}\) is dense in the space 
 \begin{equation}\label{Vtilde} 
        \widetilde{\mathcal{S}}:=(\theta^a_1+
W^{2,1}_{\Gamma^a}(]0,1[;\mathbb R^d)) \times(\theta^b_1+{{
W^{2,1}_0}}(\omega;\mathbb R^d))\times
(\theta^a_2+ {{W^{1,1}_{\Gamma^a}}}(]0,1[;\mathbb R^{d\times (N-1)})) \times
 (\theta^b_2+ {{W^{1,1}_{0}}}(\omega;\mathbb R^d))
 \end{equation}
        with respect to the strong topology of  $W^{2,1}\times W^{2,1}\times
W^{1,1}\times W^{1,1}$. If $N=2$, we further have that $u^a(0)=u^b(0)$ for every $(u^a,u^b,\xi^a, \xi^b)\in    \widetilde{\mathcal{S}}$.
\end{proposition}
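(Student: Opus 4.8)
The plan is to prove density of $V$ in $\tilde V$ by building the approximating sequence in two stages: first mollify each component separately to gain smoothness while only approximately respecting the junction and clamped conditions, then add a small corrector supported near the junction point $0'$ (and, for the $x_N$-dependent fields, near $x_N=0$) to restore the three junction identities exactly, and near $\Gamma^a$ to restore the clamped conditions on the rod. Since the ambient norm is $W^{2,1}\times W^{2,1}\times W^{1,1}\times W^{1,1}$ and point evaluations and first-derivative evaluations at $0'$ make sense by the embedding $W^{2,1}(\omega)\hookrightarrow C(\overline\omega)$ when $N-1\le 2$ together with trace theory (and, for $N-1\ge3$, the junction conditions at the interior point only survive in the $N=2$ case per \eqref{eq:UcalXi}, so the relevant constraints are genuinely finite-dimensional), the corrections needed are of ``finite rank'' and can be made arbitrarily small in norm.

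Concretely, I would proceed as follows. Fix $(u^a,u^b,\xi^a,\xi^b)\in\tilde V$ and let $\vec w=(u^a,u^b,\xi^a,\xi^b)$. Step 1: use a standard mollification/cutoff near $\Gamma^a=\{1\}$ and near $\partial\omega$ to produce $u^a_\varepsilon\in\theta^a_1+C^\infty_{\Gamma^a}([0,1];\RR^d)$, $\xi^a_\varepsilon\in\theta^a_2+C^\infty_{\Gamma^a}([0,1];\RR^{d\times(N-1)})$, $u^b_\varepsilon\in\theta^b_1+C^\infty_0(\omega;\RR^d)$, $\xi^b_\varepsilon\in\theta^b_2+C^\infty_0(\omega;\RR^d)$ converging to the respective components in the strong $W^{2,1}$, resp. $W^{1,1}$, topology; here one uses \eqref{f=0=g} to ensure the clamped/zero boundary conditions are preserved under the construction near $\partial\omega$, exactly as in \cite{GZNODEA}. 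After Step 1 the junction quantities $u^a_\varepsilon(0)$, $u^b_\varepsilon(0')$, $\nabla_{x'}u^b_\varepsilon(0')$, $\xi^a_\varepsilon(0)$, $\nabla_{x_N}u^a_\varepsilon(0)$, $\xi^b_\varepsilon(0')$ converge to the corresponding quantities for $\vec w$, hence the three defects $d^{(1)}_\varepsilon:=u^a_\varepsilon(0)-u^b_\varepsilon(0')$, $d^{(2)}_\varepsilon:=\nabla_{x'}u^b_\varepsilon(0')-\xi^a_\varepsilon(0)$, $d^{(3)}_\varepsilon:=\nabla_{x_N}u^a_\varepsilon(0)-\xi^b_\varepsilon(0')$ tend to $0$ (recalling that $\vec w$ itself satisfies $u^a(0)=u^b(0')$ in the $N=2$ case; the other two junction identities are automatic in $\tilde V$ only when the corresponding components are traces of $W^{2,1}$, so here I only need the identities that define $V$). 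Step 2: choose cutoff functions $\varphi_\delta$ on $[0,1]$ with $\varphi_\delta(0)=1$, $\varphi_\delta\equiv0$ near $1$, and $\chi_\delta$ on $\overline\omega$ with $\chi_\delta(0')=1$, $\chi_\delta\equiv0$ near $\partial\omega$, and $\psi_\delta$ on $[-1,0]$ analogous; add to $u^a_\varepsilon$ a correction of the form $-d^{(1)}_\varepsilon\varphi_\delta(x_N)-d^{(3)}_\varepsilon x_N\tilde\varphi_\delta(x_N)$ and similarly to $\xi^a_\varepsilon$, $\xi^b_\varepsilon$, $u^b_\varepsilon$, designed so that the six junction evaluations are exactly matched while the clamped conditions near $\Gamma^a$ and the zero conditions near $\partial\omega$ are untouched. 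The key point is that a correction of magnitude $|d_\varepsilon|$ localized in a $\delta$-neighborhood costs $O(|d_\varepsilon|\cdot(1+\delta^{-1}+\delta^{-2}))$ in the relevant norm, so by first sending $\varepsilon\to0$ with $\delta$ fixed and then $\delta\to0$ (or choosing $\delta=\delta(\varepsilon)\to0$ slowly enough), the total correction goes to $0$; this is the standard ``diagonal'' argument.

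The main technical point — the step I expect to require the most care — is ensuring the corrector simultaneously (a) fixes all three junction conditions, (b) does not destroy the smooth clamped behavior $\nabla^i u^a(1)=0$ for all $i$ near $x_N=1$ and the $C^\infty_0$ behavior near $\partial\omega$, and (c) keeps $u^b$ depending only on $x'$ and $u^a$, $\xi^a$ depending only on $x_N$ so that the algebraic compatibility $u^a(0)=u^b(0')$ etc. is the only coupling. Since the corrections for the rod fields live on $[0,1]$ (functions of $x_N$ only) and those for the plate fields live on $\omega$ (functions of $x'$ only), and the three defects $d^{(1)}_\varepsilon,d^{(2)}_\varepsilon,d^{(3)}_\varepsilon$ are vectors/matrices that can be absorbed one into the rod side and correspondingly one into the plate side, there is no circular dependence: e.g. correct $u^b_\varepsilon$ near $0'$ by $+d^{(1)}_\varepsilon\chi_\delta$ to match $u^a_\varepsilon(0)$, then $\xi^a_\varepsilon$ near $0$ by $+d^{(2)}_\varepsilon\varphi_\delta$ to match the (already corrected) $\nabla_{x'}u^b_\varepsilon(0')$ — but note $\nabla_{x'}(d^{(1)}_\varepsilon\chi_\delta)(0')=d^{(1)}_\varepsilon\nabla\chi_\delta(0')$, which can be made $0$ by choosing $\chi_\delta$ with vanishing gradient at $0'$ — and finally $\xi^b_\varepsilon$ near $0'$ by $+d^{(3)}_\varepsilon\chi_\delta$ to match $\nabla_{x_N}u^a_\varepsilon(0)$, again using $\nabla_{x_N}$ of the $u^a$-correction vanishing at $0$ if we pick $\varphi_\delta$ flat at $0$. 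Once this bookkeeping is set up carefully the estimates are routine mollification estimates. Finally, the last sentence of the statement — that $u^a(0)=u^b(0)$ for every element of $\tilde V$ when $N=2$ — follows because for $N=2$ one has $\omega=(\alpha,\beta)\ni 0$, $W^{2,1}(\omega;\RR^d)\hookrightarrow C(\overline\omega;\RR^d)$ and $W^{2,1}(]0,1[;\RR^d)\hookrightarrow C([0,1];\RR^d)$ by Proposition~\ref{embedding}, so the point evaluations at $0$ are continuous linear functionals and the identity $u^a(0)=u^b(0')$ is built into $\tilde V$ via the definition of $\mathcal U$ in \eqref{eq:UcalXi}; one simply records that this identity passes to all of $\tilde V$, not merely to $V$.
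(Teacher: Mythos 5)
Your mollify-then-correct strategy is genuinely different from the paper's proof, which is essentially a terse transfer of \cite[Prop.~5.1(iii)]{GZ} to the $p=1$ setting, noting that the relevant estimates (5.2), (5.4), (5.10), (5.11) survive, and invoking Mazur's theorem to upgrade weak to strong convergence of second derivatives when $N=3$. So these are not the same route.

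More importantly, your construction has a genuine gap in the claim that the three junction defects $d^{(j)}_\varepsilon$ tend to zero. After Step 1 these defects converge to the corresponding defects of the target $(u^a,u^b,\xi^a,\xi^b)\in\tilde V$, and the space in \eqref{Vtilde} imposes none of the three junction identities (except, as a side remark, the first one when $N=2$); the limiting defects are therefore in general nonzero. For the low dimensions that actually matter, the associated point evaluations are \emph{continuous} linear functionals on the strong topology: $W^{1,1}$ on a $1$D domain embeds in $C$, so when $N=2$ the quantities $\xi^a(0)$, $\xi^b(0)$, $\nabla_{x'}u^b(0)$ are all continuous and the second and third junction identities are closed constraints on $V$; and $W^{2,1}$ on a $2$D domain embeds in $C$, so when $N=3$ the quantity $u^b(0')$ is continuous and the first junction identity is a closed constraint. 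In those cases any corrector $g$ supported near the junction with $g(0')=d$ and vanishing at the far boundary has $\|Dg\|_{L^1}\geq |d|$ on a $1$D domain, respectively $\|D^2g\|_{L^1}\gtrsim |d|$ on a $2$D domain, so the Sobolev cost of the correction does \emph{not} vanish as $\delta\to0$ and your diagonalization ($\varepsilon\to0$, then $\delta\to0$) collapses. In fact your cost estimate $O(|d|(1+\delta^{-1}+\delta^{-2}))$ is not the one in the $L^1$-based norms being used: a corrector concentrated on a $\delta$-ball in $n$ dimensions costs $O(|d|(\delta^n+\delta^{n-1}+\delta^{n-2}))$ in $W^{2,1}$, which blows up for $n=1$ (the $u^a$-correction on $]0,1[$) and is bounded away from zero for $n=2$. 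Finally, the parenthetical involving ``$N-1\geq3$'' conflates two different dichotomies: whether the junction constraints are topologically closed (this is decided by the Sobolev embeddings, so the critical cases are $N=2$ and $N=3$) versus whether the junction is retained in $\mathcal U$ (this is $N=2$ only); it is the former that governs density. A correct argument must either restrict $\tilde V$ by the closed junction identities (which is the reading forced on the proposition by the ``$u^a(0)=u^b(0)$'' remark and by the restricted usage inside Lemma~\ref{ubSobolev2}), or argue via a $W^{k,1}$-capacity removability argument, which only applies for $N\geq4$.
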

\begin{proof}
        This result follows as in \cite[(iii) in Proposition 5.1]{GZ}. Indeed, we can
rely on the estimates \cite[(5.2), (5.4)]{GZ} that hold also for $p=1$ in $]0,1[$,
and on the estimates   \cite[(5.10) and (5.11)]{GZ} that are still valid for $p=1$ in $\mathbb
R^{N-1}$. 
Observing that \cite[(5.11)]{GZ} with $p=1$ provides only a bound on the second-order derivatives if $N=3$,  we can apply Mazur's Theorem to conclude the argument. 
 Finally, if $N=2$, we can consider the case \cite[(ii) in Proposition 5.1]{GZ} regarding the convergences in \cite[(5.3)]{GZ}, which also hold in $C(\overline \omega;\mathbb R^d)$; hence, the partial junction condition is satisfied. 
 \end{proof}

\begin{lemma}\label{ubSobolev}  Under the notations and assumptions of Theorem~\ref{thm:FMZ},
with  $V=0$,   \(H_n^a=0\), and \(H_n^b=0\),  we have that
\begin{equation}\label{eq:uppersmooth}
\begin{aligned}
&F[(u^a, u^b), (\xi^a, \xi^b)]\leq   \int_{0}^1 \hat W( \nabla_{x_N}\xi^a,\nabla^2_{x_N}
u^a)\, dx_N+ q \int_\omega W_0( \nabla^2_{x'} u^b,
\nabla_{x'} \xi^b)\, dx'.
\end{aligned}
\end{equation}
for every $(u^a, u^b,\xi^a, \xi^b) \in          \widetilde{\mathcal{S}} $, where  $     \widetilde{\mathcal{S}}$ is the space in \eqref{Vtilde}. 
\end{lemma}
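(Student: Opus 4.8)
\emph{Overall plan.} The plan is to exhibit, for smooth data, explicit recovery sequences realising the unrelaxed densities $\hat W$ and $W_0$, and then to upgrade to all of $\tilde V$ by density. Since $W$ is continuous and satisfies \eqref{coerci}, the infimal projections $\hat W,W_0$ in \eqref{eq:defhatzero} are continuous with linear growth (the infima being attained on bounded sets by coercivity), so the right-hand side of \eqref{eq:uppersmooth} is continuous on $\tilde V$ for the strong $W^{2,1}\times W^{2,1}\times W^{1,1}\times W^{1,1}$-topology, and by Proposition~\ref{density} the space $V$ of \eqref{V} is dense there. Hence it suffices to prove \eqref{eq:uppersmooth} for $(u^a,u^b,\xi^a,\xi^b)\in V$, together with the fact that for such data the infimum in \eqref{Gammalimitdef} is attained (up to an arbitrarily small error) along sequences that stay bounded in $BH\times BH$: a standard diagonalisation — in which $L^1$-convergence along the diagonal is upgraded to weak-$*$ convergence precisely because bounded energy forces a $BH\times BV$-bound, cf.\ Proposition~\ref{Compactnessres} — then settles the general case. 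When $N=2$ one also notes that $BH(\Omega^a),BH(\Omega^b)\hookrightarrow C$ (Proposition~\ref{embedding}) forces the partial coupling $u^a(0)=u^b(0)$, which is built into $\mathcal U$ and is respected by the construction below.

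\emph{The recovery sequence.} Fix $(u^a,u^b,\xi^a,\xi^b)\in V$ and $\epsilon,\delta>0$. By a piecewise-constant $\epsilon$-almost-optimal selection in \eqref{eq:defhatzero} followed by mollification (legitimate because $W$ is continuous with linear growth and the selections are uniformly bounded by \eqref{coerci}), choose $\psi^a\in C^\infty([0,1];(\RR^{(N-1)\times (N-1)}_s)^d)$ vanishing on $[1-\tfrac\delta2,1]$ and $\gamma\in C^\infty(\overline\omega;\RR^d)$ vanishing on a $\delta$-neighbourhood of $\partial\omega$ such that
\begin{equation*}
\int_0^1 W\!\left(\begin{array}{cc}\psi^a(x_N)&(\nabla_{x_N}\xi^a(x_N))^{T}\\ \nabla_{x_N}\xi^a(x_N)&\nabla^2_{x_N}u^a(x_N)\end{array}\right)dx_N\le\int_0^1\hat W(\nabla_{x_N}\xi^a,\nabla^2_{x_N}u^a)\,dx_N+\epsilon+C\delta,
\end{equation*}
\begin{equation*}
\int_\omega W\!\left(\begin{array}{cc}\nabla^2_{x'}u^b(x')&(\nabla_{x'}\xi^b(x'))^{T}\\ \nabla_{x'}\xi^b(x')&\gamma(x')\end{array}\right)dx'\le\int_\omega W_0(\nabla^2_{x'}u^b,\nabla_{x'}\xi^b)\,dx'+\epsilon+C\delta,
\end{equation*}
the $C\delta$ accounting for the cost on the $\delta$-layers, where $W$ is evaluated at the frozen boundary data and has linear growth. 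Then set, on $\Omega^a$ and $\Omega^b$ respectively,
\begin{equation*}
u^a_n(x',x_N):=u^a(x_N)+r_n\,\xi^a(x_N)\,x'+\tfrac{r_n^2}{2}\big((x')^{T}\psi^{a,k}(x_N)\,x'\big)_{k=1}^{d}+\Gamma_n(x',x_N),
\end{equation*}
\begin{equation*}
u^b_n(x',x_N):=u^b(x')+h_n\,\xi^b(x')\,x_N+\tfrac{h_n^2x_N^2}{2}\gamma(x'),
\end{equation*}
where $\Gamma_n(x',x_N):=-\big(g_n(x')+k_n(x')\,x_N\big)\chi(x_N/r_n)$, with $\chi\in C^\infty([0,\infty))$, $\chi\equiv1$ on $[0,\tfrac13]$, $\chi\equiv0$ on $[\tfrac23,\infty)$, and $g_n,k_n\in C^\infty(\overline\omega;\RR^d)$ the residuals in the first and third junction conditions of \eqref{eq:bc} left by the terms preceding $\Gamma_n$ (the second condition being an automatic consequence of the first).

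\emph{Verification and energy.} Because $\psi^a$ and $\Gamma_n$ vanish near $\Gamma^a$, because $\gamma$ vanishes near $\partial\omega$, and because $u^a(1)=\theta^a_1$, $\xi^a(1)=\theta^a_2$, $u^a,\xi^a$ are flat at $x_N=1$, while $u^b=\theta^b_1$, $\xi^b=\theta^b_2$ near $\partial\omega$, the full Dirichlet conditions of $\mathcal U_n$ on $\Gamma^a$ and $\Gamma^b$ hold; the compatibility built into $V$, namely $u^a(0)=u^b(0')$, $\nabla_{x'}u^b(0')=\xi^a(0)$, $\nabla_{x_N}u^a(0)=\xi^b(0')$, makes $g_n=O(r_n^2)$ and $k_n=O(r_n)$ in $C^\infty(\overline\omega)$, and $\Gamma_n$ — flat at $x_N=0$, hence not disturbing the first junction condition through its normal derivative there — restores \eqref{eq:bc} exactly, so $(u^a_n,u^b_n)\in\mathcal U_n$. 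Moreover $u^a_n\to u^a$, $u^b_n\to u^b$ strongly in $W^{2,1}$ and $\tfrac1{r_n}\nabla_{x'}u^a_n\to\xi^a$, $\tfrac1{h_n}\nabla_{x_N}u^b_n\to\xi^b$ strongly in $W^{1,1}$ (the $\Gamma_n$-, $\psi^a$- and $\gamma$-terms being infinitesimal in these norms), so the sequence is admissible in \eqref{Gammalimitdef}. Off the layer $\omega\times\,]0,r_n[$ one has $\tfrac1{r_n^2}D^2_{x'}u^a_n\to\psi^a$, $\tfrac1{r_n}D^2_{x',x_N}u^a_n\to\nabla_{x_N}\xi^a$, $D^2_{x_N}u^a_n\to\nabla^2_{x_N}u^a$ and $D^2_{x'}u^b_n\to\nabla^2_{x'}u^b$, $\tfrac1{h_n}D^2_{x',x_N}u^b_n\to\nabla_{x'}\xi^b$, $\tfrac1{h_n^2}D^2_{x_N}u^b_n\to\gamma$, while on that layer the rescaled Hessians are $O(1)$ on a set of measure $O(r_n)$; since $W$ is continuous with linear growth, dominated convergence together with \eqref{rate} gives
\begin{equation*}
\limsup_{n\to\infty}F_n[(u^a_n,u^b_n)]\le\int_{\Omega^a}W\!\left(\begin{array}{cc}\psi^a&(\nabla_{x_N}\xi^a)^{T}\\ \nabla_{x_N}\xi^a&\nabla^2_{x_N}u^a\end{array}\right)dx+q\int_{\Omega^b}W\!\left(\begin{array}{cc}\nabla^2_{x'}u^b&(\nabla_{x'}\xi^b)^{T}\\ \nabla_{x'}\xi^b&\gamma\end{array}\right)dx,
\end{equation*}
and since the integrands are independent of $x'$, resp.\ of $x_N$, the choice of $\psi^a,\gamma$ bounds the right-hand side by (the right-hand side of \eqref{eq:uppersmooth}) $+\,2\epsilon+C\delta$. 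Letting $\epsilon,\delta\to0$ and using $F\le\liminf_n F_n$ yields \eqref{eq:uppersmooth}.

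\emph{Main obstacle.} The delicate point is the exact matching of the junction conditions \eqref{eq:bc}: since $\mathcal U_n$ demands them as trace identities, the explicit ansatz — which matches only to leading order thanks to the compatibility in $V$ — must be patched by $\Gamma_n$, and one has to be sure that $\Gamma_n$ both repairs \eqref{eq:bc} and is energetically negligible. This is what the scaling $\chi(x_N/r_n)$ achieves: although $\tfrac1{r_n^2}D^2_{x'}\Gamma_n$, $\tfrac1{r_n}D^2_{x',x_N}\Gamma_n$ and $D^2_{x_N}\Gamma_n$ are only $O(1)$ pointwise, they are supported in $\omega\times\,]r_n/3,2r_n/3[$, so by linear growth their contribution to $K^a_n$ is $O(r_n)\to0$; the flatness of $\chi$ at the origin decouples the two conditions being corrected. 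The only routine points left are the piecewise-constant selection and mollification producing smooth $\epsilon$-optimal $\psi^a,\gamma$, the bookkeeping of the $O(\delta)$ boundary-layer cost, and checking the $N=2$ case (where the construction is unchanged, the partial coupling already sitting inside $V$ and $\mathcal U$).
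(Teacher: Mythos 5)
Your proposal is correct and follows essentially the same strategy as the paper: a polynomial-in-the-thin-variable ansatz (constant plus $r_n$-linear plus $r_n^2$-quadratic in $x'$ for $\Omega^a$, and the analogue in $x_N$ for $\Omega^b$) yielding an explicit recovery sequence whose energy converges, followed by a density/diagonalisation argument via Proposition~\ref{density}. The two cosmetic differences are (i) you repair the junction conditions \eqref{eq:bc} with a rescaled cutoff $\Gamma_n=-(g_n+k_n x_N)\chi(x_N/r_n)$ supported on a layer of thickness $O(r_n)$, whereas the paper splices in a cubic-in-$x_N$ interpolant on $\omega\times\,]0,\varepsilon_n[$ with $\varepsilon_n=r_n^2$; and (ii) you pick $\varepsilon$-optimal smooth selections $\psi^a,\gamma$ for the infima defining $\hat W$ and $W_0$ \emph{before} passing to the limit (measurable selection plus mollification), whereas the paper keeps free auxiliary fields $z^a,z^b$, passes to the limit, and only then optimises via Aumann's selection lemma — both routes land at the same inequality.
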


\begin{proof}[Proof]
We start proving \eqref{eq:uppersmooth} for fields $(u^a, u^b, \xi^a, \xi^b)
\in   {\mathcal{S}}$, where \( {\mathcal{S}}\) is the space in \eqref{V}.
  To this end, we take $z^a \in W^{2,1}_0(]0,1[;  (\RR_s^{(N-1)\times)N-1)})^d)$,
$z^b\in W^{2,1}_0(\omega;\mathbb R^d)$,  and we define  $(u^a_n, u^b_n) \in
\mathcal U_n$ as follows.
For some $\{\varepsilon_n\}_{n\in N} \subset ]0, 1[$ suitably chosen (for
example, one can choose $\varepsilon_n = r^2_n$) and  a.e.~$x = (x', x_N)
\in \omega\times]\varepsilon_n, 1[$, we set
\begin{equation*}
%\label{eq:una1}
\begin{aligned}
u^a_n(x', x_N) : = u^a(x_N) + r_n\xi^a(x_N)  x' + r^2_n (x' )^T  z^a(x_N)
 x',
\end{aligned}
\end{equation*}
while for a.e.~$x = (x', x_N) \in \omega  \times ]0,\varepsilon_n[$, we set
\begin{equation*}
%\label{eq:una2}
\begin{aligned}
u^a_n(x', x_N) &:= \Big[ -\tfrac{2}{\varepsilon_n^3} u^a(\varepsilon_n)-
2 \tfrac{r^n}{\varepsilon_n^3}\xi^a(\varepsilon_n)
 x'-2 \tfrac{r^2_n}{\varepsilon^3_n}
(x' )^T z^a(\varepsilon_n)
 x' + \tfrac{2}{\varepsilon_n^3}
u^b(r_nx')\\
&\qquad +\tfrac{1}{\varepsilon^2_n}
\nabla_{x_N}u^a(\varepsilon_n) + \tfrac{r_n}{\varepsilon_n^2}
\nabla_{x_N}\xi^a(\varepsilon_n)  x'
+\tfrac{r^2_n}{\varepsilon^2_n}(x')^T
\nabla_{x_N}z^a(\varepsilon_n)  x'+ 
\tfrac{1}{\varepsilon_n^2} \xi^b(r_nx')\Big]x_N^3\\&\quad +\left[\tfrac{3}{\varepsilon^2_n}
u^a(\varepsilon_n) +\tfrac{3 r_n}{\varepsilon^2_n}\xi^a(\varepsilon_n) x'
+ \tfrac{3 r^2_n}{\varepsilon^2_n} (x')^T z^a(\varepsilon_n) x'-\tfrac{3}{\varepsilon^2_n}
u^b(r_n x')
\right.\\
&\qquad \left. - \tfrac{1}{\varepsilon_n} \nabla_{x_N}
u^a(\varepsilon_n) -\tfrac{r_n}{\varepsilon_n} \nabla_{x_N}\xi^a(\varepsilon_n)
 x'-
 \tfrac{r^2_n}{\varepsilon_n} (x')^T\nabla_{x_N} z^a(\varepsilon_n) x' -\tfrac{2}{\varepsilon_n}
\xi^b(r_n x')\right] x^2_N\\
&\quad +\xi^b(r_nx')x_N + u^b(r_nx'). 
\end{aligned}
\end{equation*}
For
 a.e. $x = (x',x_N) \in \Omega_b$, we set
\begin{equation*}
%\label{4.4}
\begin{aligned}
u^b_n(x',x_N):= u^b(x') +h_n \xi^b(x')x_N+ h^2_n x^2_Nz^b(x').
\end{aligned}
\end{equation*}

Recalling \eqref{Vn},  it can be checked that $(u^a_n, u^b_n) \in \mathcal U_n$, $(u^a_n, u^b_n)\to
(u^a, u^b) $ strongly in $ W^{2,1}(\Omega^a;\mathbb R^d)\times 
W^{2, 1}(\Omega^b;\mathbb R^d),$ and
$(\frac{1}{r_n}\nabla_{x'} u^a_n ,\frac{1}{h_n} \nabla_{x_N} u^b_n) \to 
 (\xi^a, \xi^b)$ strongly in $W^{1,1}(\Omega^a;\mathbb R^{d \times( N-1)})\times
W^{1,1}(\Omega^b;\mathbb R^d).$
Then, using the continuity of \(W\) and  \eqref{coerci},  Vitali--Lesbesgue's theorem yields
 
\begin{align}\label{4.5I}
\lim_{n\to\infty}K_n^a[u^a_n] = \int_{\Omega^a} W\left(\begin{array}{ll}
2 z^a (x_N )& (\nabla_{x_N} \xi^a(x_N))^T\\
(\nabla_{x_N} \xi^a(x_N)) & \nabla^2_{x_N} u^a(x_N)
\end{array}\right) dx' dx_N
\end{align}
 and
 \begin{align}\label{4.6I}
\lim_{n\to\infty} K_n^a[u^b_n] = \int_{\Omega^a} W\left(\begin{array}{ll}
 \nabla^2_{x'}u^b(x') & (\nabla_{x'} \xi^b(x'))^T\\
(\nabla_{x'} \xi^b(x')) & 2 z^b(x')
\end{array}\right) dx' dx_N,
\end{align}
where $z^a \in  W^{2,1}_0(]0,1[;\mathbb (\RR_s^{(N-1)\times( N-1)})^d)$ and
$z^b \in W^{2,1}_0(\omega;\mathbb R^d)$ can be arbitrarily chosen.
Hence, arguing as in the estimates \cite[(4.7), (4.8), and (4.9)]{GZNODEA},
replacing the spaces $W^{2,p}_0(]0,1[;\mathbb R^d)$ and $W^{2,p}_0(\omega;\mathbb
R^d)$ by $W^{2,1}_0(]0,1[;\mathbb (\RR_s^{(N-1)\times( N-1)})^d)$ and $ W^{2,1}_0(\omega;\mathbb
R^d)$ and the spaces $L^p(]0,1[;\mathbb R^d)$ and $L^p(\omega;\mathbb R^d)$
by $L^1(]0,1[;\mathbb R^d)$ and $L^1(\omega;\mathbb R^d)$, respectively,
using the Dominated convergence theorem, Aumann' selection lemma, and definitions
$W_0$ and $\hat W$ (see \eqref{eq:defhatzero}), from \eqref{4.5I} and \eqref{4.6I}
we obtain that
\begin{equation*}%\label{ubSoboleveq}
\begin{aligned}
F[(u^a, u^b), (\xi^a, \xi^b)] &\leq  \liminf_{n\to +\infty}\Big(K_n^a[u^a_n]+
\tfrac{h_n}{r_n^{N-1}}K_n^b[u^b_n]\Big)\\
& \leq \int_0^1 \hat W( \nabla_{x_N}\xi^a,\nabla^2_{x_N} u^a)\, dx_N+ q\int_\omega
W_0( \nabla^2_{x'} {u}^b,\nabla_{x'}{\xi}^b)\,dx'.
\end{aligned}
\end{equation*}

The proof is concluded by combining the previous arguments with a standard
diagonalization argument and Proposition~\ref{density}. 
\end{proof}

\begin{lemma}\label{lscGammalimit}
Under the notations and  assumptions of Theorem~\ref{thm:FMZ},
 the functional $F$ in \eqref{Gammalimitdef} is sequentially lower semicontinuous
in $U\times \Xi$ with respect to the $BH\times BH\times BV \times BV$ weak-\(*\)
convergence; i.e.,\begin{align*}
F[(u^a, u^b), (\xi^a, \xi^b)] \leq \liminf_{n\to +\infty} F[(u^a_n, u^b_n),
(\xi^a_n, \xi^b_n)]
\end{align*}
whenever $ u^a_n \overset{\ast}{\rightharpoonup} u^a$  in $BH(\Omega^a;\mathbb
R^d)$, $u_n^b  \overset{\ast}{\rightharpoonup} u^b$ in $BH(\Omega^b;\mathbb
R^d)$, $\xi^a_n \overset{\ast}{\rightharpoonup} \xi^a$ in $BV(\Omega^a;\mathbb
R^{d \times (N-1)})$, $\xi^b_n \overset{\ast}{\rightharpoonup} \xi^b$  in
 $BV(\Omega^b;\mathbb R^d)$.
\end{lemma}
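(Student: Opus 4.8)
The plan is to exploit the fact that $F$ in \eqref{Gammalimitdef} is, by its very construction, a $\Gamma$-lower limit: it is the infimal value of $\liminf_{n} F_n[u_n]$ over all sequences admissible in the prescribed weak-$*$ sense. For such functionals, sequential lower semicontinuity with respect to the convergence used in the definition is automatic, and I would establish it here by the standard diagonalization argument, taking care of the one genuinely technical point, namely the simultaneous metrization of the four weak-$*$ convergences involved. Throughout I would use Remark~\ref{rmk:Hnzero} to dispose of the load terms, so that the densities may be assumed nonnegative.

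First I would reduce to the nontrivial case, assuming that $\liminf_{k} F[(u^a_k,u^b_k),(\xi^a_k,\xi^b_k)]<+\infty$ and, passing to a non-relabelled subsequence, that this liminf is actually a limit $L$, with $F[(u^a_k,u^b_k),(\xi^a_k,\xi^b_k)]\le L+1$ for every $k$. Arguing exactly as in the proof of Proposition~\ref{Compactnessres} (coercivity \eqref{coerci} together with the Poincar\'e-type estimates there), I would produce a radius $R>0$ bounding, uniformly in $k$, the $BH(\Omega^a;\RR^d)\times BH(\Omega^b;\RR^d)\times BV(\Omega^a;\RR^{d\times(N-1)})\times BV(\Omega^b;\RR^d)$ norms of all relevant objects: the limit fields $(u^a_k,u^b_k,\xi^a_k,\xi^b_k)$ and $(u^a,u^b,\xi^a,\xi^b)$, and the rescaled data $(u^a_n,u^b_n,\tfrac1{r_n}\nabla_{x'}u^a_n,\tfrac1{h_n}\nabla_{x_N}u^b_n)$ of any admissible sequence with energy $\le L+2$. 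On the ball $B_R$ of that product space, weak-$*$ convergence in $BH$ and in $BV$ is metrizable, so I would fix a single metric $\mathrm d$ on $B_R$ inducing all four weak-$*$ convergences simultaneously.

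Next, for each fixed $k$, using the definition of $F$ as an infimum, I would select an admissible sequence $(u^{(k)}_n)_n$ with $u^{(k)}_n=(u^{a,(k)}_n,u^{b,(k)}_n)\in\mathcal U_n$ such that $(u^{a,(k)}_n,u^{b,(k)}_n,\tfrac1{r_n}\nabla_{x'}u^{a,(k)}_n,\tfrac1{h_n}\nabla_{x_N}u^{b,(k)}_n)$ converges, in the $\mathrm d$-sense as $n\to\infty$, to $(u^a_k,u^b_k,\xi^a_k,\xi^b_k)$, and $\liminf_n F_n[u^{(k)}_n]\le F[(u^a_k,u^b_k),(\xi^a_k,\xi^b_k)]+\tfrac1k$. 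After extracting a subsequence in $n$ realizing this liminf, I obtain an index $M_k$, which I may take strictly increasing in $k$, so that for all $n\ge M_k$ one has both $F_n[u^{(k)}_n]\le F[(u^a_k,u^b_k),(\xi^a_k,\xi^b_k)]+\tfrac2k$ and $\mathrm d$-distance of the rescaled data of $u^{(k)}_n$ to $(u^a_k,u^b_k,\xi^a_k,\xi^b_k)$ at most $\tfrac1k$. I would then set $w_n:=u^{(k)}_n$ for $M_k\le n< M_{k+1}$, and let $w_n$ be any element of $\mathcal U_n$ for $n<M_1$ (nonempty, as it contains the affine/Sobolev extensions of the boundary data). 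The triangle inequality, combined with $(u^a_k,u^b_k,\xi^a_k,\xi^b_k)\to(u^a,u^b,\xi^a,\xi^b)$, shows that $(w_n)_n$ is admissible for $((u^a,u^b),(\xi^a,\xi^b))$ in \eqref{Gammalimitdef}, while $F_n[w_n]\le F[(u^a_k,u^b_k),(\xi^a_k,\xi^b_k)]+\tfrac2k$ for $n\in[M_k,M_{k+1})$ yields $\liminf_n F_n[w_n]\le L$; hence $F[(u^a,u^b),(\xi^a,\xi^b)]\le L$, as claimed.

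The main obstacle I anticipate is purely at the bookkeeping level: one must verify that the weak-$*$ convergences in $BH(\Omega^a)$, $BH(\Omega^b)$, $BV(\Omega^a)$, $BV(\Omega^b)$ are each metrizable on the bounded set $B_R$ — which holds because on bounded sets $BH$-weak-$*$ convergence reduces to strong $W^{1,1}$ convergence together with weak-$*$ convergence of the Hessian measures, and $BV$-weak-$*$ convergence to strong $L^1$ convergence together with weak-$*$ convergence of the gradient measures, all of which are metrizable on bounded sets — and that a weighted sum of such metrics encodes the four convergences simultaneously. Once this is in place, the diagonal extraction above is routine, and a minor point is only that admissibility and the value $\liminf_n F_n[w_n]$ depend only on the tail of $(w_n)_n$, so the finitely many initial terms with $n<M_1$ are irrelevant.
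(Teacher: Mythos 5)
Your proposal follows exactly the route the paper has in mind: the paper's proof is a one-line citation of ``standard diagonalization arguments'' and Choksi--Fonseca, and what you write out is precisely that diagonalization, with the same ingredients (coercivity for uniform bounds, metrizability of weak-$*$ convergence on bounded sets of $BH$ and $BV$, selection of near-optimal admissible sequences, diagonal extraction, and the observation that only the tail matters).

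One bookkeeping slip worth flagging, though it does not change the outcome: from $\liminf_n F_n[u^{(k)}_n]\le F[(u^a_k,u^b_k),(\xi^a_k,\xi^b_k)]+\tfrac1k$ you cannot deduce that $F_n[u^{(k)}_n]\le F[\cdots]+\tfrac2k$ \emph{for all} $n\ge M_k$ in the original index set — only that this holds for infinitely many $n$. Your index $M_{k+1}$ is chosen to control the $\mathrm d$-distance for the $(k{+}1)$-st sequence, so the block $[M_k,M_{k+1})$ is not guaranteed to contain one of the ``good'' indices for the $k$-th sequence. The fix is the standard one: when choosing $M_{k+1}$, enlarge it also so that $[M_k,M_{k+1})$ contains at least one index $n_k$ with $F_{n_k}[u^{(k)}_{n_k}]\le F[(u^a_k,u^b_k),(\xi^a_k,\xi^b_k)]+\tfrac2k$ (possible since $\liminf$ guarantees infinitely many such $n$). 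Then $(n_k)_k$ is strictly increasing, $w_{n_k}=u^{(k)}_{n_k}$, and $\liminf_n F_n[w_n]\le\liminf_k F_{n_k}[w_{n_k}]\le L$, while the $\mathrm d$-convergence of the rescaled data of $w_n$ is controlled along the \emph{whole} sequence exactly as you argue. With this adjustment, the argument is complete and matches the paper's intent.
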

\begin{proof}[Proof]
The proof is standard, relying on diagonalization arguments, the definition
of $F$, and the coercivity of the energy density, cf. ~the arguments in \cite[Proposition
5.1]{ChoksiFonseca} for instance.
\end{proof}
\begin{lemma}\label{ubSobolev2}
Under the notations and  assumptions of Theorem~\ref{thm:FMZ},
with  $V=0$,   \(H_n^a=0\), and \(H_n^b=0\),  
we have %for  every $(u^a, \xi^a) \in (\theta^a_1+{{ W^{2,1}_{\Gamma^a}}}(]0,1[;\mathbb
%R^d)) \times (\theta^a_2+ {{W^{1,1}_{\Gamma^a}}}(]0,1[;\mathbb R^{d\times
%(N-1)}))$ and $(u^b, \xi^b)\in (\theta^b_1+ {{W^{2,1}_0}}(\omega;\mathbb
%R^d))\times (\theta^b_2+ {{W^{1,1}_{0}}}(\omega;\mathbb R^d))$ that 
%
\begin{equation*}
\begin{aligned}
&F[(u^a, u^b), (\xi^a, \xi^b)]\leq   \int_{0}^1 \hat W^{**}(\nabla_{x_N}\xi^a,\nabla^2_{x_N}
u^a)\, dx_N+ q \int_\omega Q_{\widetilde{\mathcal A}}W_0( \nabla^2_{x'} u^b,
\nabla_{x'} \xi^b)\, dx'.
\end{aligned}
\end{equation*}
 for every $(u^a,u^b, \xi^a, \xi^b) \in         \widetilde{\mathcal{S}}$, where \(        \widetilde{\mathcal{S}}\) is the space  in \eqref{Vtilde}.
\end{lemma}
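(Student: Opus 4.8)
The plan is to deduce Lemma~\ref{ubSobolev2} from Lemma~\ref{ubSobolev} by a standard relaxation-type argument combined with the lower semicontinuity of $F$ established in Lemma~\ref{lscGammalimit}. The point is that Lemma~\ref{ubSobolev} gives an upper bound for $F$ in terms of the \emph{unrelaxed} densities $\hat W$ and $W_0$, and we must pass to their (appropriate) convex/quasiconvex envelopes $\hat W^{**}$ and $Q_{\widetilde{\mathcal A}}W_0$. The mechanism is: characterize $\hat W^{**}$ and $Q_{\widetilde{\mathcal A}}W_0$ through integral representation formulas over oscillating test fields, approximate the relaxed densities by averages of the original densities evaluated along such oscillations, lift these oscillations to actual competitors in $\tilde V$ (or in the larger admissible class), apply Lemma~\ref{ubSobolev} to each, and finally use the lower semicontinuity of $F$ from Lemma~\ref{lscGammalimit} to pass to the limit in the oscillation parameter.

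First I would fix $(u^a,u^b,\xi^a,\xi^b)\in\tilde V$ and treat the two summands separately, since the $x_N$-dependent ``$a$''-part and the $x'$-dependent ``$b$''-part decouple in the energy. For the $a$-part, $\hat W^{**}$ is just the convex envelope of $\hat W\colon \RR^{d\times(N-1)}\times\RR^d\to\RR$; by Carath\'eodory's theorem $\hat W^{**}(\eta)=\inf\{\sum_k\lambda_k\hat W(\eta_k)\colon \sum_k\lambda_k\eta_k=\eta,\ \sum_k\lambda_k=1\}$ over convex combinations of length $\le d(N-1)+d+1$. I would then, for a.e.\ $x_N$, measurably select such optimal weights and vectors (Aumann's selection lemma, exactly as invoked in the proof of Lemma~\ref{ubSobolev}), build a fine layering in the $x_N$ variable that realizes these convex combinations, and produce a sequence $(u^a_j,\xi^a_j)$ converging weakly-$\ast$ in $BH\times BV$ to $(u^a,\xi^a)$ along which, by Lemma~\ref{ubSobolev} applied at each scale, $F$ is controlled by an energy converging to $\int_0^1\hat W^{**}(\nabla_{x_N}\xi^a,\nabla^2_{x_N}u^a)\,dx_N$. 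The same layering construction is standard in dimension-reduction arguments (cf.\ the corresponding step in \cite{GZNODEA}); one must take care that the boundary conditions on $\Gamma^a$ are not disturbed, which is arranged by localizing the oscillations away from $x_N=1$.

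For the $b$-part, the relevant envelope is the $\widetilde{\mathcal A}$-quasiconvex envelope $Q_{\widetilde{\mathcal A}}W_0$, with $\widetilde{\mathcal A}$ as in \eqref{Atilde} and its kernel as described in Subsection~\ref{sect:Aqcx}; here I would use the characterization of $Q_{\widetilde{\mathcal A}}W_0$ through periodic $\widetilde{\mathcal A}$-free oscillations: $Q_{\widetilde{\mathcal A}}W_0(\eta)=\inf\{\int_Q W_0(\eta+w)\,dy\colon w\in C^\infty_{\mathrm{per}}\cap\ker\widetilde{\mathcal A},\ \int_Q w=0\}$. Because $\ker\widetilde{\mathcal A}$ consists of pairs $(\nabla^2_{x'}\phi,\nabla_{x'}\psi)$ up to lower-order corrections, such an oscillation $w$ can be realized as $(\nabla^2_{x'}\phi_j,\nabla_{x'}\psi_j)$ for rapidly oscillating periodic $\phi_j,\psi_j$; adding these to $u^b,\xi^b$ (again cut off near $\partial\omega$ so as to preserve the boundary datum $\theta^b_1,\theta^b_2$) gives competitors in $\tilde V$ whose energy, via Lemma~\ref{ubSobolev}, converges to $\int_\omega Q_{\widetilde{\mathcal A}}W_0(\nabla^2_{x'}u^b,\nabla_{x'}\xi^b)\,dx'$. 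Then Lemma~\ref{lscGammalimit} gives $F[(u^a,u^b),(\xi^a,\xi^b)]\le\liminf_j F[(u^a_j,u^b_j),(\xi^a_j,\xi^b_j)]$ bounded by the sum of the two relaxed integrals, which is the claim; a final diagonalization glues the $a$- and $b$-constructions together.

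The main obstacle I anticipate is the bookkeeping needed to make the oscillating competitors genuinely admissible, i.e.\ to keep them in $\tilde V$ (or in the larger class of Sobolev competitors satisfying the clamped conditions on $\Gamma^a$ and on $\partial\omega$) while the oscillation frequency tends to infinity, and — in the $N=2$ case — to preserve the junction condition $u^a(0)=u^b(0)$; this is precisely why Proposition~\ref{density} and the fine-structure of $\ker\widetilde{\mathcal A}$ are needed, and why the oscillations must be supported away from the constrained portions of the boundary and from the junction point. A secondary technical point is verifying that the layered/oscillatory sequences indeed converge weakly-$\ast$ in $BH\times BH\times BV\times BV$ with the right limits (the second-order correctors $z^a,z^b$ in Lemma~\ref{ubSobolev} must be chosen to absorb the extra curvature produced by the oscillations), but this is routine once the scales are chosen subordinate to one another.
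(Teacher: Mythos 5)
Your strategy is essentially the paper's: combine the Sobolev upper bound from Lemma~\ref{ubSobolev} with the lower semicontinuity of $F$ from Lemma~\ref{lscGammalimit}, and interpose a relaxation step that replaces $\hat W$ by $\hat W^{**}$ on the ``$a$''-part and $W_0$ by $Q_{\widetilde{\mathcal A}}W_0$ on the ``$b$''-part; both arguments treat the two parts separately and both recognize the boundary/junction bookkeeping as the delicate point. Where the approaches diverge is in the implementation of the relaxation step: you propose to reconstruct it from scratch via Carath\'eodory decompositions, layered oscillations in $x_N$, and periodic $\widetilde{\mathcal A}$-free correctors in $x'$, while the paper simply invokes the already-established $\mathcal A$-quasiconvex relaxation theorem (\cite[Lemma~3.1, Corollary~3.2, Theorem~3.6]{BrFoLe00}) applied separately to $\int_0^1 \hat W(\nabla_{x_N}\xi^a,\nabla^2_{x_N}u^a)\,dx_N$ and $\int_\omega W_0(\nabla^2_{x'}u^b,\nabla_{x'}\xi^b)\,dx'$. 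The paper's route is shorter because the heavy lifting (measurable selection, oscillation construction, passage to the limit) is packaged in the cited result. One concrete caution about your sketch: in the linear-growth setting the step ``cut off near $\partial\omega$ so as to preserve the boundary datum'' is genuinely nontrivial, because a naive cutoff in $BH\times BV$ can create a boundary layer that costs energy of order one; the paper handles exactly this point by appealing to the slicing argument of \cite[Lemma~2.2]{BFMbend}, applied to approximating fields sharing the same boundary data as the target, and any self-contained version of your proof would have to reproduce that argument rather than wave at a cutoff. You correctly flag the $N=2$ junction condition $u^a(0)=u^b(0)$ as an extra constraint on admissible competitors; the paper addresses it there via a splitting of $\omega=(\alpha,\beta)$ at the junction point with adjusted boundary data, which would be the concrete form of what you describe as ``supporting oscillations away from the junction point.''
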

\begin{proof}[Proof]
The proof is a consequence of Lemma \ref{lscGammalimit} and \cite[Lemma~3.1,
Corollary~3.2, and Theorem~3.6]{BrFoLe00}. Indeed,  recalling \eqref{Vtilde}, we are exploiting the
fact that the latter relaxation result with respect to weak convergence in
$L^1$ for $\mathcal A$-free fields, is applied separately to 
the functional 
$\int_0^1 \hat W(\nabla_{x_N}\xi^a,\nabla^2_{x_N}
u^a)\,dx$ defined
in $ (\theta^a_2+{{W^{1,1}_{\Gamma^a}}}(]0,1[;\mathbb
R^{d \times (N-1)})\times (\theta^a_1+ {{W^{2,1}_{\Gamma^a}}}(]0,1[;\mathbb
R^d))$ and to the functional  $\int_\omega W_0( \nabla^2_{x'}
u^b,
\nabla_{x'} \xi^b)\, dx'$ defined in $ (\theta^b_1+ W^{2,1}_0(\omega;\mathbb
R^d))\times (\theta^b_2 +W^{1,1}_0(\omega;\mathbb R^d))$. 
The fact that the we have a clamped problem can be bypassed applying arguments
entirely similar to those in \cite[Lemma 2.2]{BFMbend}, taking into account
that we can consider approximating fields which have the same boundary data
as the target.
 In the \(N=2\) case, we observe that  $\omega = (\alpha,\beta)$ for some $\alpha<0<\beta$;  hence, for every $(u^a, u^b,\xi^a, \xi^b) \in      \widetilde{\mathcal{S}}$ such that $u^a(0)= u^b(0)$, the spaces $\theta^a_1+{{W^{2,1}_{\Gamma^a}}}(]0,1[;\mathbb
R^{d \times (N-1)})$ and  $\theta^b_1+ W^{2,1}_0(\omega;\mathbb
R^d)$  can be replaced by ${\tilde \theta^a_1}+{W^{2,1}_0}(]0,1[;\mathbb
R^{d \times (N-1)})$ and ${\tilde\theta^b_1}+ W^{2,1}_0((\alpha,0);\mathbb
R^d)\cup {\tilde\theta^b_1}+ W^{2,1}_0((0,\beta);\mathbb
R^d)$, respectively, with  $\tilde{\theta^a_1}(1)= \theta^a_1(1)$, $\tilde{\theta^a_1}(0)= u^a(0)$, $\tilde{\theta^b_1}(\alpha)= \theta^b_1(\alpha)$, $\tilde{\theta^b_1}(\beta)= \theta^b_1(\beta)$,  $\tilde{\theta^b_1}(0)=u^a(0)$.
\end{proof}

We are finally in position to prove the upper bound for \(F\) in Theorem~\ref{thm:FMZ}.
\begin{proposition}\label{ubprop}
Under the assumptions of Theorem~\ref{thm:FMZ}, we have that if $(( u^a,
 u^b),(  \xi^a,  \xi^b))\in \mathcal U \times \Xi$ and    $\{(u_n^a, u_n^b)\}_{n\in\NN}
\subset \mathcal U_n$ are such that $u_n^a \overset{\ast}{\rightharpoonup}
u$  in $BH(\Omega^a; \mathbb R^d)$, 
$u_n^b  \overset{\ast}{\rightharpoonup} u^b$ in $BH(\Omega^b;\mathbb
R^d)$,   $\frac{1}{r_n}\nabla_{x'}u^a_n \overset{\ast}{\rightharpoonup} \xi^a$
in $BV(\Omega^a;\mathbb R^{d \times (N-1)})$, and $ \frac{1}{h_N} \nabla_{x_N}u^b_n
\overset{\ast}{\rightharpoonup} \xi^b$ in $ BV(\Omega^b;\mathbb R^d)$, then
\begin{equation}
\label{liminfineq}
\limsup_{n\to\infty} F_n[u_n]=\limsup_{n\to\infty}\left(K^a_n[  u^a_n]+
\frac{h_n}{r_n^{N-1}}K^b_n[  u^b_n]\right) \leq  K^a[({u}^a, \xi^a)]+ q
K^b[({u}^b,{\xi}^b)].
\end{equation}
\end{proposition}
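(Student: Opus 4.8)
The plan is to upgrade the Sobolev-level estimate of Lemma~\ref{ubSobolev2} to arbitrary limit fields by an area-strict density argument, and then to pass to the limit using the lower semicontinuity of $F$ together with Reshetnyak's upper semicontinuity theorem. By Remark~\ref{rmk:Hnzero} we may assume $H_n^a=H_n^b=0$, and (adding a harmless additive constant to $W$, which shifts $F_n$, $K^a$ and $K^b$ by the same quantity) that $W\ge 0$, so that $\hat W^{**}\ge 0$, $Q_{\widetilde{\mathcal A}}W_0\ge 0$, and the cited semicontinuity results apply. Fix $((u^a,u^b),(\xi^a,\xi^b))\in\mathcal U\times\Xi$; by linear growth the right-hand side of \eqref{liminfineq} is finite, and it suffices to produce an admissible recovery sequence realizing it, equivalently to show $F[(u^a,u^b),(\xi^a,\xi^b)]\le K^a[(u^a,\xi^a)]+qK^b[(u^b,\xi^b)]$ and then to diagonalize as in Lemma~\ref{ubSobolev}.

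For each $\varepsilon>0$, I would apply Proposition~\ref{BFMTracestog} on $\calO=\omega$ (which, as in Theorem~\ref{Thm4.3}, we take to have $C^2$ boundary) to the pair $(u^b,\xi^b)$ with prescribed data $\theta^b_1,\theta^b_2$, obtaining $(u^b_\varepsilon,\xi^b_\varepsilon)\in W^{2,1}(\omega;\RR^d)\times W^{1,1}(\omega;\RR^d)$ with $\Tr^2 u^b_\varepsilon=\Tr^2\theta^b_1$ and $\Tr\xi^b_\varepsilon=\theta^b_2$ on $\partial\omega$, with $(u^b_\varepsilon,\xi^b_\varepsilon)\to(u^b,\xi^b)$ in $W^{1,1}\times L^1$, and with
\[
\begin{aligned}
\limsup_{\varepsilon\to0}\int_\omega\langle(\nabla^2u^b_\varepsilon,\nabla\xi^b_\varepsilon)\rangle\,dx
&\le\int_\omega\langle(\nabla^2u^b,\nabla\xi^b)\rangle\,dx+|(D^2_su^b,D^s\xi^b)|(\omega)\\
&\quad+\int_{\partial\omega}\big|\big((\Tr(\nabla\theta^b_1-\nabla u^b))_{\nu},\,\theta^b_2-\Tr\xi^b\big)\big|\,d\mathcal H^{N-2}.
\end{aligned}
\]
For the rod block I would use the one-dimensional version recorded in the remark following Proposition~\ref{BFMTracestog} on $]0,1[$, getting $(u^a_\varepsilon,\xi^a_\varepsilon)$ with $\Tr^2 u^a_\varepsilon(1)=\Tr^2\theta^a_1(1)$, $\Tr\xi^a_\varepsilon(1)=\theta^a_2$, convergence in $W^{1,1}\times L^1$, and the analogous estimate carrying the endpoint term $\big|\big({-}(\tfrac{du^a}{dx_N})^-(1),\,{-}(\xi^a)^-(1)+\theta^a_2\big)\big|$ (recall $\theta^a_1$ is constant, so $\nabla\theta^a_1=0$). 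When $N=2$, I would further correct $(u^a_\varepsilon,u^b_\varepsilon)$ by a vanishing affine shift, exactly as in the proof of Proposition~\ref{density}, so that $u^a_\varepsilon(0)=u^b_\varepsilon(0)$; this costs $o(1)$ in $W^{2,1}$ and affects no limit. Thus $(u^a_\varepsilon,u^b_\varepsilon,\xi^a_\varepsilon,\xi^b_\varepsilon)\in\tilde V$, and the mass bounds furnished by the area estimates give $(u^a_\varepsilon,u^b_\varepsilon)\weaklystar(u^a,u^b)$ in $BH\times BH$ and $(\xi^a_\varepsilon,\xi^b_\varepsilon)\weaklystar(\xi^a,\xi^b)$ in $BV\times BV$.

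Now Lemma~\ref{ubSobolev2} bounds $F[(u^a_\varepsilon,u^b_\varepsilon),(\xi^a_\varepsilon,\xi^b_\varepsilon)]$ by $\int_0^1\hat W^{**}(\nabla_{x_N}\xi^a_\varepsilon,\nabla^2_{x_N}u^a_\varepsilon)\,dx_N+q\int_\omega Q_{\widetilde{\mathcal A}}W_0(\nabla^2_{x'}u^b_\varepsilon,\nabla_{x'}\xi^b_\varepsilon)\,dx'$, and Lemma~\ref{lscGammalimit} gives
\[
\begin{aligned}
F[(u^a,u^b),(\xi^a,\xi^b)]
&\le\liminf_{\varepsilon\to0}F[(u^a_\varepsilon,u^b_\varepsilon),(\xi^a_\varepsilon,\xi^b_\varepsilon)]\\
&\le\limsup_{\varepsilon\to0}\Big(\int_0^1\hat W^{**}(\nabla_{x_N}\xi^a_\varepsilon,\nabla^2_{x_N}u^a_\varepsilon)\,dx_N+q\int_\omega Q_{\widetilde{\mathcal A}}W_0(\nabla^2_{x'}u^b_\varepsilon,\nabla_{x'}\xi^b_\varepsilon)\,dx'\Big).
\end{aligned}
\]
To evaluate the last $\limsup$, I would extend all fields to $]0,1+\delta[$ and to $\omega'\supset\supset\omega$ by the prescribed data (smooth, and constant on the rod), turning the clamped-boundary trace mismatches into interior jumps at $\{1\}$ and at $\partial\omega$, exactly as in Theorems~\ref{Thm4.3} and \ref{Thm4.3conv}; the area estimates of the previous step then say precisely that $(D\xi^a_\varepsilon,D^2u^a_\varepsilon)$ and $(D\xi^b_\varepsilon,D^2u^b_\varepsilon)$ converge $\langle\cdot\rangle$-strictly on the enlarged domains to $(D\xi^a,D^2u^a)$ and $(D\xi^b,D^2u^b)$. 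Since $\hat W^{**}$ and $Q_{\widetilde{\mathcal A}}W_0$ have linear growth and are convex, resp.\ $\widetilde{\mathcal A}$-quasiconvex, they are Lipschitz (by the separate-convexity-along-the-characteristic-cone argument recalled in the proof of Theorem~\ref{Thm4.3}), so Theorem~\ref{ReshetnyaktheoremBCMS} applies and bounds the $\limsup$ of each extended bulk integral by its Reshetnyak value $\int\hat W^{**}\,dm^a+\int(\hat W^{**})^\infty\,d|\mu^s|$ over $]0,1+\delta[$, resp.\ the analogue for $Q_{\widetilde{\mathcal A}}W_0$ over $\omega'$. Cancelling the contributions over $]1,1+\delta[$ and $\omega'\setminus\overline\omega$ (equal on both sides, as there the approximants coincide with the prescribed data), the surviving bulk, Cantor, jump, and free-boundary terms are exactly those of $K^a[(u^a,\xi^a)]$ and $K^b[(u^b,\xi^b)]$ (recall $H=0$), whence \eqref{liminfineq}.

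The main obstacle is the density step: one must simultaneously attain the Dirichlet data on $\Gamma^a$ and $\Gamma^b$, preserve, for $N=2$, the junction value $u^a(0)=u^b(0)$, and keep the area functionals converging with exactly the surface contributions appearing in $K^a$ and $K^b$. Propositions~\ref{asLemma4.1BFMTracesBH} and \ref{BFMTracestog} (and its one-dimensional remark) are tailored to this, but combining them while also using the extension-to-a-larger-domain device — which converts the fragile boundary-layer terms into robust interior jumps amenable to Reshetnyak's theorem — is the delicate part. A subsidiary point is the Lipschitz continuity of $\hat W^{**}$ and $Q_{\widetilde{\mathcal A}}W_0$, which is needed both here and inside Lemma~\ref{ubSobolev2}.
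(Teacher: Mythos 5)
Your proposal is correct and follows essentially the same route as the paper's proof: reduce to $W\ge 0$ and vanishing loads, exploit Lemmas~\ref{lscGammalimit} and~\ref{ubSobolev2} to pass from Sobolev-level approximants to the $BH\times BV$ target, construct the recovery sequence via Proposition~\ref{BFMTracestog} and Remark~\ref{afterProp 4.8}, extend to $]0,1+\delta[$ and $\omega'\supset\supset\omega$ to turn the boundary mismatch into interior jumps, and conclude with Reshetnyak's upper semicontinuity (Theorem~\ref{ReshetnyaktheoremBCMS}). The paper organizes the same estimates around the explicit infimum \eqref{convfin} and handles $N=2$ by uniform convergence rather than an affine correction, but these are presentational rather than substantive differences.
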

\begin{proof}[Proof] 
 We start by considering the $N\geq 3$ case. 
We first observe that by Remark~\ref{rmk:Hnzero}, we may assume that   $V=0$,  
 \(H_n^a=0\), and \(H_n^b=0\)   without
loss of generality. Moreover, by \eqref{coerci}, and up to considering $W+C$,  there
is also no loss of generality in assuming that $W\geq 0$.

Then, the proof is a consequence of Lemmas \ref{lscGammalimit} and \ref{ubSobolev2}
and Theorem \ref{ReshetnyaktheoremBCMS} applied separately to both the energies
$\int_0^1 \hat W^{**}(\nabla\xi^a,\nabla^2
u^a)\, dx_N$ defined in  $ (\theta^a_2+ {{W^{1,1}_{\Gamma^a}}}(]0,1[;\mathbb
R^{d\times (N-1)}))\times (\theta^a_1+ {{W^{2,1}_{\Gamma^a}}}(]0,1[;\mathbb
R^d))$  and $\int_\omega Q_{\mathcal A^2}W_0( \nabla^2u^b,
\nabla\xi^b)\, dx'$  defined in $( \theta^b_1+ W^{2,1}_0(\omega;\mathbb R^d))\times
(\theta^b_2+ W^{1,1}_{0}(\omega;\mathbb R^d))$, where in the first case we
take into account Remark~\ref{afterProp 4.8}.
For the readers' convenience, we provide next the details of these  applications.

In view of Lemma~\ref{lscGammalimit}, and the inequality provided by Lemma~\ref{ubSobolev2},
it results that 
\begin{equation}\label{convfin}
\begin{aligned}
&F[(u^a, u^b), (\xi^a, \xi^b)]\\ &\quad \leq\inf\left\{\liminf_{n\to\infty}
\int_0^1 \hat W^{\ast \ast}(\nabla
\xi^a_n,\nabla^2 u^a_n)\,d x_N+ q \int_\omega Q_{\widetilde{\mathcal A}}W_0(\nabla^2
u^b_n, \nabla \xi^b_n)\,
dx'\colon \right.
\\&\hskip45mm (u^a_n, \xi^a_n)\in( \theta^a_1+ {{W^{2,1}_{\Gamma^a}}}(]0,1[;\mathbb
R^d))  \times 
 (\theta^a_2+ {{W^{1,1}_{\Gamma^a}}}(]0,1[;\mathbb R^{d\times (N-1)})),\\
 &\hskip45mm (u^b_n, \xi^b_n)\in (\theta^b_1+ {{W^{2,1}_0}}(\omega;\mathbb
R^d))\times (\theta^b_2+ W^{1,1}_{0}(\omega;\mathbb R^d)),
 \\&\hskip45mm u^a_n \overset{\ast}{\rightharpoonup} u^a \hbox{ in }BH(]0,1[;\mathbb
R^d),\,
u_n^b  \overset{\ast}{\rightharpoonup} u^b \hbox{ in }BH(\omega;\mathbb R^d),
\\&\hskip45mm \xi^a_n \overset{\ast}{\rightharpoonup} \xi^a \hbox{ in }BV(]0,1[;\mathbb
R^{d \times (N-1)}),\, \xi^b_n \overset{\ast}{\rightharpoonup} \xi^b \hbox{
in } BV(\omega;\mathbb R^d)\Big\}\\
&\quad = \inf \left\{\liminf_{n\to\infty} \int_0^1 \hat W^{\ast \ast}(\nabla
\xi^a_n,\nabla^2 u^a_n)\,d x_N\colon  \right.
\\&\hskip45mm  (u^a_n, \xi^a_n)\in( \theta^a_1+ {{W^{2,1}_{\Gamma^a}}}(]0,1[;\mathbb
R^d))  \times 
 (\theta^a_2+ {{W^{1,1}_{\Gamma^a}}}(]0,1[;\mathbb R^{d\times (N-1)})),
\\
&\hskip45mm u^a_n \overset{\ast}{\rightharpoonup} u^a \hbox{ in }BH(]0,1[;\mathbb
R^d),\,
 \xi^a_n \overset{\ast}{\rightharpoonup} \xi^a \hbox{ in }BV(]0,1[;\mathbb
R^{d \times (N-1)})\Big\}\\ 
&\qquad  + q \inf \left\{\liminf_{n\to\infty} \int_\omega Q_{\widetilde{\mathcal
A}}W_0(\nabla^2 u^b_n,
\nabla \xi^b_n) dx'\colon  \right.
\\&\hskip45mm  (u^b_n, \xi^b_n)\in (\theta^b_1+ W^{2,1}_0(\omega;\mathbb
R^d))\times (\theta^b_2+ W^{1,1}_{0}(\omega;\mathbb R^d)),
\\
&\hskip45mm u_n^b  \overset{\ast}{\rightharpoonup} u^b \hbox{ in }BH(\omega;\RR^d),
\xi^b_n \overset{\ast}{\rightharpoonup} \xi^b \hbox{ in } BV(\omega;\RR^d)\Big\}.
\end{aligned}
\end{equation}

Fix \(\delta>0\), and consider the sequence $\{(\bar w_n,
\bar u_n)\}_{n\in\NN} \subset W^{2,1} (]0,1+\delta[);\RR^d)\times W^{1,1}(]0,1+\delta[;
\mathbb
R^{d \times (N-1)})$ defined by 
\begin{equation*}
\begin{aligned}
&\bar w_n :=  \begin{cases}
w_n &\text{in }
]0,1[,\\
\theta^a_1 &\text{in } ]1, 1+\delta[,
\end{cases} \quad \text{and} \quad
& \bar \zeta_n  :=  \begin{cases}
\zeta_n &\text{in } ]0,1[,\\
\theta^a_2 &\text{in } ]1, 1+\delta[,
\end{cases}
\end{aligned}
\end{equation*}
 where $\{( w_n,
 \zeta_n)\}_{n\in\NN} \subset (\theta^a_1+ {{W^{2,1}_{\Gamma^a}}}(]0,1[;\mathbb
R^d))\times(\theta^a_2 + {{W^{1,1}_{\Gamma^a}}}(]0,1[;R^{d \times (N-1)}))
 $ is the sequence constructed in Remark \ref{afterProp 4.8} that converges to \((u^a,\xi^a)\). We have that \(\{(\bar w_n,
\bar u_n)\}_{n\in\NN}\) converges in $BH \times BV$ weakly-\(\ast\) and (area)
\(\langle\cdot\rangle\)-strictly to \((\bar u,\bar \xi)\) given by 
\begin{equation*}
\begin{aligned}
&\bar u  :=  \begin{cases}
u^a  &\text{in } ]0,1[,\\
\theta^a_1 &\text{in } ]1, 1+\delta[,
\end{cases}\quad \text{and} \quad \bar \zeta  :=  \begin{cases}
\xi^a  &\text{in } ]0,1[,\\
\theta^a_2 &\text{in } ]1, 1+\delta[,
\end{cases}
\end{aligned}
\end{equation*}
and, exploiting the nonnegativity $W$ and Reshetnyak's upper semicontinuity
result in Theorem~\ref{ReshetnyaktheoremBCMS},  
\begin{equation*}
\begin{aligned}
& \inf \bigg\{\liminf_{n\to\infty} \int_0^1 \hat W^{\ast \ast}( \nabla \xi_n,\nabla^2
u_n)\,d x_N\colon   (u_n, \xi_n)\in( \theta^a_1+ {{W^{2,1}_{\Gamma^a}}}(]0,1[;\mathbb
R^d))  \times 
 (\theta^a_2+ {{W^{1,1}_{\Gamma^a}}}(]0,1[;\mathbb R^{d\times (N-1)})),
\\
&\hskip60.5mm u_n \overset{\ast}{\rightharpoonup} u^a \hbox{ in }BH(]0,1[;\mathbb
R^d),\,
 \xi_n \overset{\ast}{\rightharpoonup} \xi^a \hbox{ in }BV(]0,1[;\mathbb
R^{d \times (N-1)})\bigg\}\\
&\quad \leq\liminf_{n\to\infty} \int_0^1 \hat W^{\ast \ast}(
\nabla u_n,\nabla^2
w_n)\,d x_N \leq \limsup_{n\to\infty} \int_0^{1+\delta} \hat W^{\ast \ast}(
\nabla\bar  u_n,\nabla^2
\bar w_n)\,d x_N\\
&\quad \leq \int_0^{1+\delta} {\hat W}^{\ast \ast} 
\big(\nabla_{x_N}\bar\xi(x_N),D^2_{x_N} \bar u(x_N)\big) dx_N \\&\qquad+
\int_0^{1+\delta} (\hat W^{**})^\infty\left(\frac{d( (D_{x_N})^s \bar \xi,
(D^2_{x_N})^s \bar
u)}{d |( (D_{x_N})^s \bar \xi,
(D^2_{x_N})^s \bar
u)|} (x_N)\right)d |(D^2_{x_N})^s \bar u, (D_{x_N})^s \bar \xi| (x_N)\\
&\quad = \int_0^{1} {\hat W}^{\ast \ast} 
\big(\nabla_{x_N}\xi^a(x_N),D^2_{x_N}u^a(x_N)\big) dx_N  \\
&\qquad + \int_0^{1} (\hat W^{**})^\infty\left(\frac{d((D_{x_N})^s \xi^a
,(D^2_{x_N})^s  u^a )}{d |((D_{x_N})^s \xi^a
,(D^2_{x_N})^s  u^a)|}(x_N)\right)d
|((D_{x_N})^s \xi^a
,(D^2_{x_N})^s  u^a)|(x_N)\\
&\qquad +({\hat W}^{**})^\infty \left( \theta^a_2-(\xi^a)^-(1),-\left(\frac{d
u^a}{dx_N}   \right)^-(1) \right) +
\int_1^{1+\delta} {\hat W}^{\ast \ast} 
(0, 0\big) \,dx_N . 
\end{aligned}
\end{equation*}

Thus, letting $\delta \to 0$ yields
\begin{equation*}
\begin{aligned}
& \inf \bigg\{\liminf_{n\to\infty} \int_0^1 \hat W^{\ast \ast}( \nabla \xi_n,\nabla^2
u_n)\,d x_N\colon   (u_n, \xi_n)\in( \theta^a_1+ {{W^{2,1}_{\Gamma^a}}}(]0,1[;\mathbb
R^d))  \times 
 (\theta^a_2+ {{W^{1,1}_{\Gamma^a}}}(]0,1[;\mathbb R^{d\times (N-1)})),
\\
&\hskip61mm u_n \overset{\ast}{\rightharpoonup} u^a \hbox{ in }BH(]0,1[;\mathbb
R^d),\,
 \xi_n \overset{\ast}{\rightharpoonup} \xi^a \hbox{ in }BV(]0,1[;\mathbb
R^{d \times (N-1)})\bigg\}\\
&\quad \leq K^a[(u^a, \xi^a)]. 
\end{aligned}
\end{equation*}

On the other hand, relying on the Lipschitz continuity of $Q_{\widetilde{\mathcal
A}}W_0$, Proposition \ref{BFMTracestog}, exploiting an extension from $\omega$
to $\omega' \supset \supset \omega$, we can construct a $BH(\omega';\mathbb
R^d)\times BV(\omega';\mathbb R^d)$ weak-\(\ast\) converging sequence, $(w_\varepsilon,
\zeta_\varepsilon)$, such that $(\nabla^2 w_\varepsilon, \nabla\zeta_\varepsilon)$
converges (area) \(\langle\cdot\rangle\)-strictly in $\mathcal \omega'$.
Thus, as above, we are entitled to apply Reshetnyak's upper semicontinuity
result in Theorem~\ref{ReshetnyaktheoremBCMS}  to  deduce that
\begin{equation*}
\begin{aligned}
\inf \left\{\liminf_{n\to\infty} \int_\omega Q_{\widetilde{\mathcal A}}W_0(\nabla^2
u^b_n,
\nabla \xi^b_n)\, dx'\colon  \right.
&  (u^b_n, \xi^b_n)\in (\theta^b_1+ W^{2,1}_0(\omega;\mathbb
R^d))\times (\theta^b_2+ W^{1,1}_{0}(\omega;\mathbb R^d)),
\\
& u_n^b  \overset{\ast}{\rightharpoonup} u^b \hbox{ in }BH(\omega;\RR^d),\,
\xi^b_n \overset{\ast}{\rightharpoonup} \xi^b \hbox{ in } BV(\omega;\RR^d)\Big\}\leq
K_b(u^b,\xi^b),
\end{aligned}
\end{equation*}
which concludes the proof in this case.

Observe that if $N=2$, then we can rely on the embeddings  of $BH(]0,1[;\mathbb R^d])$ and $BH(\omega;\mathbb R^d)$ in $C([0,1];\mathbb R^d)$ and $C(\overline \omega;\mathbb R^d)$, respectively, as stated in Proposition \ref{embedding}. Moreover, the convergences $u^a_n \overset{\ast}{\rightharpoonup} u^a$ in $BH(]0,1[;\mathbb R^d])$ and of  $u^b_n \overset{\ast}{\rightharpoonup} u^b$  in $BH(\omega;\mathbb R^d)$ are uniform in $C([0,1],\mathbb R^d)$ and $C(\overline \omega;\mathbb R^d),$  respectively. Hence ,$u^a(0)=u^b(0)$, as proven in Proposition \ref{Compactnessres}. 
These facts entail that the admissible sequences in \eqref{convfin} naturally lead to a partially coupled limit problem.
\end{proof}

\begin{proof}[Proof of Theorem~\ref{thm:FMZ}]
The statement in Theorem~\ref{thm:FMZ} follows by 
 Proposition~\ref{Compactnessres}, Proposition~\ref{lbprop},
and Proposition~\ref{ubprop}.  
\end{proof}

\subsection{Proof of Theorem~\ref{thm:mainFMZ} and Corollary~\ref{cor:min}}\label{sect:proofmain}
We finally prove our main results, which are a simple consequence
of previous results and well-know properties of \(\Gamma\)-convergence.

\begin{proof}[Proof of Theorem~\ref{thm:mainFMZ}]
The statement in Theorem~\ref{thm:mainFMZ} is a direct consequence
 of \cite[Proposition~8.3]{Da93},  Theorem~\ref{thm:FMZ}, and
 Proposition~\ref{Compactnessres}.  
\end{proof}

\begin{proof}[Proof of  Corollary~\ref{cor:min}]
The statement in Corollary~\ref{cor:min} follows by
  \cite[Corollary~7.20]{Da93} together with Theorem~\ref{thm:mainFMZ}
and Proposition~\ref{Compactnessres}.
\end{proof}

\section{Auxiliary results}\label{sect:aux}

In this section, we prove some of the results we used in our arguments in
the preceding sections, the first of which Theorem~\ref{Lemma1KRBH}.

%Need to comment the true trace that is  predefined on a part
%of the boundary only. For example, when \(N=3\), the fixed trace
%in only on the top of the rod. The argument is based on the fact
%that our limit sets are regular enough so that the definition
%of \(W^{something}_u\) gives the true trace on the relevant parts
%of the boundary.    

\begin{proof}[Proof of Theorem~\ref{Lemma1KRBH}]
        We follow closely the arguments used in the proof of
\cite[Lemma 1]{KR}.

        Fix \(\varepsilon>0\). We claim that   there exists $v_\varepsilon
\in  W^{2,1}_u(\calO;\mathbb R^m)\cap C^\infty(\calO;\mathbb R^m)$ such that
\begin{equation*}
\begin{aligned}
 \|u- v_\varepsilon\|_{L^1} < \varepsilon \hbox{ and  } \langle
D^2 v_\varepsilon \rangle (\calO)\leq \langle D^2u \rangle(\calO)+
4 \varepsilon,
\end{aligned}
\end{equation*}
from which the conclusion follows in view of Theorem~\ref{interpolBH}.
        
To prove the claim, let $\{\calO_i\}_{i \in \mathbb N_0}$ be
a sequence of open subsets of $\calO$  such that $\bigcup_{i
\in \mathbb N_0}\calO_i = \calO$ and $\calO_i \Subset \calO_{i+1}$
for every \(i\in\N_0\).
Using the fact that   $|D^2u|$ is a finite measure, and renumbering
the sets if necessary, we may further assume that 
\begin{align}\label{d2uest}
|D^2u|(\calO \setminus \calO_0) < \varepsilon.
\end{align}

Define $U_{-1} := \emptyset$, $U_0 := \calO_1$, and  $U_i :=
\calO_{i+1}\setminus \overline \calO_{i-1}$ for $i \in \mathbb
N$. Then, $\{U_i\}_{i \in \mathbb N_0}$ is an open cover of $\calO$
with the property that each point of $\calO$ belongs to at
most two of the sets $U_i$. We can then find a $C^\infty$-smooth
partition of unity,  $\{\varrho_i\}_{i \in \mathbb
N_0}$,   subordinated to the cover $\{U_i\}_{i \in \mathbb
N_0}$; that is, for all \(i\in\N_0\), $\varrho_i \in C^\infty_c
(U_i)$, $0\leq \varrho_i\leq 1$, and $\sum_{i=0}^\infty \varrho_i
= \chi_{\calO}$.

Next, denoting by $B_\ell$  the
unit ball in \(\RR^\ell\) centered at the origin,  fix $\varphi
\in C^\infty_c(B_\ell)$  with $\varphi \geq 0$,  $\int_{\mathbb
R^d} \varphi dx=1$, and
$\varphi_\eta (x):=\eta^{-\ell}\varphi(x/\eta)$ for $x \in \mathbb
R^\ell$ and $\eta >0$.
Then, extending 
 $\varrho_i u$ and $u\otimes \varrho_i$
to $\mathbb R^d\setminus \overline\calO$ in the natural way,
standard mollification arguments ensure that for each $i \in
\mathbb N_0$, there exists $\eta_i >0$ such that 
\begin{align}
&{\rm supp} (\varphi_{\eta_i} \star (\varrho_iu))\subset
U_i, &\label{1}
\\
&\int_\calO |\varrho_i u- \varphi_{\eta_i}\star (\varrho_i u)|
dx < \varepsilon 2^{-i-1}, &\label{2}\\
&\int_{\calO} |u \otimes \nabla  \varrho_i- \varphi_{\eta_i}\star(u\otimes
\nabla \varrho_i)| dx < \varepsilon 2^{-i-1}, &\label{3}\\
&\int_{\calO} |u \otimes D^2 \varrho_i- \varphi_{\eta_i}\star(u\totimes
D^2\varrho_i)| dx < \varepsilon 2^{-i-1}, &\label{4}\\
&\int_{\calO} \varrho_i (\varphi_{\eta_i} \star \langle \varrho_i
Du\rangle) dx \leq \int_\calO \varrho_i d \langle \varrho_i Du
\rangle+ \varepsilon \fint_{\calO} \varrho_i dx, &\label{5}\\
&\int_{\calO} |\varphi_{\eta_i} \star (\nabla u \totimes \nabla
\varrho_i + \nabla \varrho_i \totimes \nabla u) - (\nabla u \totimes
\nabla \varrho_i + \nabla \varrho_i \totimes \nabla u) |dx <
\varepsilon 2^{-i-1}, &\label{6}\\
&\int_{\calO} \varrho_i (\varphi_{\eta_i} \star \langle \varrho_i
D^2u\rangle) dx \leq \int_\calO \varrho_i d \langle \varrho_i
D^2u \rangle+ \varepsilon \fint_{\calO} \varrho_i dx,& \label{7}
\end{align}
 where the convolution with $\varphi_{\eta_i}$ is understood
componentwise, and 
\begin{equation*}
\begin{aligned}
& u\totimes D^2\varrho_i := \bigg(\Big(u^k\frac{\partial^2 \rho_i}{\partial
x_j \partial x_l}\Big)_{1\leq l\leq \ell\atop 1\leq j\leq \ell}\bigg)_{k=1}^m
\in \big(\RR^{\ell \times \ell}\big)^m,\\
& \nabla \rho_i\totimes \nabla u := \bigg(\Big(\frac{\partial
\rho_i}{\partial x_l} \frac{\partial u^k}{\partial x_j}\Big)_{1\leq
l\leq m\atop
1\leq j\leq \ell}\bigg)_{k=1}^m \in \big(\RR^{\ell \times \ell}\big)^m,\\
& \nabla u \totimes \nabla\varrho_i := \bigg(\Big(\frac{\partial
\rho_i}{\partial
x_j}\frac{\partial u^k}{\partial
x_l}\Big)_{1\leq l\leq m\atop
1\leq j\leq \ell}\bigg)_{k=1}^m \in \big(\RR^{\ell \times \ell}\big)^m.
\end{aligned}
\end{equation*}

Let  $v_\varepsilon \in C^\infty(\calO;\RR^m)$  be the function
defined for \(x\in \calO\) by
\begin{align*}%\label{vepsilon}   
        v_\varepsilon(x) :=\sum_{i=0}^\infty\big(\varphi_{\eta_i}\star
(\varrho_i u)\big)(x).
\end{align*}
Observing that 
$u = \sum_{i=0}^\infty \varrho_i u$, we have 
\begin{align*}
        \int_\calO |v_\varepsilon- u| dx \leq \sum_{i=0}^\infty
\int_\calO |\varrho_i u- \varphi_{\eta_i}\star (\varrho_i u)|dx
< \varepsilon.
\end{align*}
Moreover, because 
$\sum_{i=0}^\infty \nabla \varrho_j=0$ and $ \sum_{i=0}^\infty
D^2 \varrho_i =0$ in $\calO$, it follows that 
\begin{equation*}
\begin{aligned}
D^2 v_\varepsilon &= \sum_{i=0}^\infty \varphi_{\eta_i} \star
(\varrho_i
D^2 u) +
        \sum_{i=0}^\infty \varphi_{\eta_i} \star \left( u \totimes
D^2 \varrho_i
+ (\nabla u \totimes \nabla \varrho_i + \nabla \varrho_i \totimes
\nabla u)\right) \\
        &=\sum_{i=0}^\infty \varphi_{\eta_i} \star (\varrho_i
D^2 u) + \sum_{i=0}^\infty
\left[(\varphi_{\eta_i} \star (u \totimes D^2 \varrho_i) -u \totimes
D^2 \varrho_i
) \right]  \nonumber\\
        &\quad +\sum_{i=0}^\infty \left[\varphi_{\eta_i} \star
(\nabla
u \totimes \nabla \varrho_i + \nabla \varrho_i \totimes \nabla
u) - (\nabla
u \totimes \nabla \varrho_i + \nabla \varrho_i \totimes \nabla
u)\right].
\end{aligned}
\end{equation*}
Recalling that each point of $\calO$
belongs to at
most two of the sets $U_i$, we conclude that    the sum above
at each point in \(\calO\) contains at most two non null summands;
this observation together with \eqref{4} and \eqref{6} allows
to write 
$$
D^2 v_\varepsilon = \varphi_{\eta_i} \star (\varrho_i D^2 u)+
E_i \hbox{ on } U_i, 
$$
where
\begin{equation}
\label{eq:estEi}
\begin{aligned}
\sum_{i=0}^\infty\int_{U_i} |E_i| dx \leq |D^2 u|(\calO \setminus
\overline
\calO_0)+ 2\varepsilon<3\varepsilon.
\end{aligned}
\end{equation}
Then,  \eqref{eq:estsumang} and  the properties
of $\varrho_i$ yield
\begin{align}\label{tosum}
\int_\calO\varrho_i \langle D^2v_\varepsilon\rangle dx \leq \int_\calO
\varrho_i\langle \varphi_{\eta_i}\star (\varrho_i D^2 u)\rangle
dx 
+ \sum_{i=0}^\infty\int_{U_i}|E_i| dx.
\end{align}
On the other hand, Jensen's inequality and \eqref{7} imply that
\begin{align*}
\int_\calO \varrho_i\langle \varphi_{\eta_i}\star (\varrho_i
D^2 u)\rangle
dx &\leq \int_{\calO} \varrho_i (\varphi_{\eta_i} \star \langle
\varrho_i D^2u\rangle) dx\\& \leq \int_\calO \varrho_i d \langle
\varrho_i D^2u \rangle+ \varepsilon \fint_{\calO} \varrho_i dx
\leq\int_\calO \varrho_i d\langle D^2 u\rangle +  \varepsilon
\fint_{\calO} \varrho_i dx.
\end{align*}
Summing over $i \in \mathbb N_0$ in \eqref{tosum},
the preceding estimate and \eqref{eq:estEi} yield
\begin{align*}
        \langle D^2 v_\varepsilon\rangle(\calO) \leq \langle
D^2 u\rangle(\calO) + 4 \varepsilon.
\end{align*}

We are left to show that $v_\varepsilon \in W^{2,1}_{u}(\calO;\mathbb
R^m)$, the Dirichlet class
introduced in \eqref{W21u}. In other words, we are left to prove
that
\begin{equation*}
\begin{aligned}
 w:= \begin{cases}
 u-v_\varepsilon &\hbox{ in }\calO,\\
        0 & \hbox{ in }\mathbb R^\ell \setminus \overline \calO.
\end{cases}
\end{aligned}
\end{equation*}
belongs  $BH(\mathbb R^\ell ;\mathbb R^m)$ with $|D^2w|(\partial
\calO) = 0$. 

Recalling that $u \in W^{1,1}(\calO;\mathbb R^m)$, we can use
the arguments in the proof of  \cite[Lemma~1]{KR} to deduce that
$ w \in W^{1,1}(\mathbb R^\ell;\mathbb R^m)$.
It remains to prove that $\nabla w \in BV(\mathbb R^{\ell};\mathbb
R^{ m\times \ell})$ and  $|D^2w|(\partial \calO) =0$.

Let $\psi \in C^\infty_c (\mathbb R^\ell )$.
Using the condition   \(\mathcal{L}^\ell(\partial\calO)=0\) and  \eqref{1},
we get for every   $1 \leq  l,j\leq \ell$ and
$ 1\leq k\leq m$  that
\begin{equation}
\label{eq:boundaryeval}
\begin{aligned}
\int_{\mathbb R^\ell} \frac{\partial w^k}{\partial x_l}\frac{\partial
\psi}{\partial x_j}dx &=
\int_\calO \frac{\partial(v_\varepsilon -u)^k}{\partial x_l}
\frac{\partial \psi}{\partial x_j} dx=
        \sum_{i=0}^\infty\int_\calO \left(\varphi_{\eta_i} \star
\frac{\partial (\varrho_i u)}{\partial x_l} - \frac{\partial
(\varrho_i u)}{\partial x_l}\right)^k\frac{\partial \psi}{\partial
x_j}dx\\
        &=\sum_{i=0}^\infty \left[\int_{U_i} \psi  \varrho_i
d D^2_{lj} u^k- \int_{U_i} \varphi_{\eta_i} \star(\varrho_i 
D^2_{lj}u^k) \psi dx
        \right]\\
        &\quad+\sum_{i=0}^\infty \int_{U_i}\psi \left[\frac{\partial
u^k}{\partial x_l}\frac{\partial \varrho_i}{\partial x_j}- \varphi_{\eta_i}
\star \left(\frac{\partial u^k}{\partial y_l}\frac{\partial \varrho_i}{\partial
y_j}\right) \right] dx\\ &\quad + \sum_{i=0}^\infty \int_{U_i}\psi
\left[u^k\frac{\partial^2 \varrho_i}{\partial x_j\partial x_l}-
\varphi_{\eta_i} \star \left(u^k\frac{\partial^2
\varrho_i}{\partial y_j\partial y_l}\right) \right]
dx.
\end{aligned}
\end{equation}

From \eqref{4} and \eqref{6}, we get that
$$
\int_{\mathbb R^\ell} \left|\frac{\partial w^k}{\partial x_l}\frac{\partial
\psi}{\partial x_j}\right| dx \leq 2\|\psi\|_{L^\infty} (|D^
2 u| (\calO)+\varepsilon).
$$
Hence, $w \in BH(\mathbb R^\ell; \mathbb R^m)$.

Next, we prove that $|D^2 w|(\partial \calO) =0$. Let \(\{A_\tau\}_{\tau\in\N}\subset
\RR^\ell\) be a  sequence of open sets such that \(A_{\tau}\supset A_{\tau
+1}\) for all \(\tau\in\N\), with 
\(\cap_{\tau\in\N} A_\tau = \partial \calO\) and \(\lim_{\tau\to\infty} |D^2
w|(A_\tau) = |D^2 w|(\partial\calO)\). For each \(\tau\in\N\), consider a
smooth function  $\psi_\tau\in C_0^\infty\big(A_\tau;\big([0,1]^{\ell\times\ell}\big)^m\big)$
such
that
\begin{equation}\label{eq:testAtau}
\begin{aligned}
|D^2
w|(A_\tau)+\tau \leq\sum_{k=1}^m \sum_{l,j=1}^\ell\int_{A_\tau} \frac{\partial
w^k}{\partial x_l}\frac{\partial
(\psi_\tau)^k_{l,j}}{\partial x_j} dx.
\end{aligned}
\end{equation}
Then, we can find \(i_\tau\in\N\) such that \(\eta_{i_\tau}\searrow
0\) as \(\tau\to\infty\) and \(A_\tau \cap U_i = \emptyset\) for all \(i<i_\tau\).
Moreover, using \eqref{eq:boundaryeval}, 
\begin{align*}
        \int_{\mathbb R^\ell} \frac{\partial w^k}{\partial x_l}\frac{\partial
(\psi_\tau)^k_{l,j}}{\partial x_j}dx         
        &=\sum_{i=i_\tau}^\infty \left[\int_{U_i} (\psi_\tau)^k_{l,j}  \varrho_i
d D^2_{lj} u^k- \int_{U_i} \varphi_{\eta_i} \star(\varrho_i 
D^2_{lj}u^k)(\psi_\tau)^k_{l,j}dx
        \right]\\
        &\quad+\sum_{i=i_\tau}^\infty \int_{U_i}(\psi_\tau)^k_{l,j} \left[\frac{\partial
u^k}{\partial x_l}\frac{\partial \varrho_i}{\partial x_j}- \varphi_{\eta_i}
\star \left(\frac{\partial u^k}{\partial y_l}\frac{\partial \varrho_i}{\partial
y_j}\right) \right] dx\\ &\quad + \sum_{i=i_\tau}^\infty \int_{U_i}(\psi_\tau)^k_{l,j}
\left[u^k\frac{\partial^2 \varrho_i}{\partial x_j\partial x_l}-
\varphi_{\eta_i} \star \left(u^k\frac{\partial^2
\varrho_i}{\partial y_j\partial y_l}\right) \right]
dx.
        \end{align*}
Thus, setting \(A_\tau^{\eta_{i_\tau}}:= \{ x\in \RR^\ell: \dist(x, A_\tau)<\eta_{i_\tau}\}
\) and recalling that each point of $\calO$
belongs to at
most two of the sets $U_i$, we deduce that
\begin{equation}
\label{eq:esttau}
\begin{aligned}
\int_{\mathbb R^\ell} \frac{\partial w^k}{\partial x_l}\frac{\partial
(\psi_\tau)^k_{l,j}}{\partial x_j}dx   \leq C\bigg( \Vert u\Vert_{BH(A_\tau\cap\calO;\mathbb
R^m)} + \Vert u\Vert_{_{BH\left(A_\tau^{\eta_{i_\tau}}\cap\calO;\mathbb
R^m\right)}}\bigg).
\end{aligned}
\end{equation}
Observing that
\(\{A_\tau\}_{\tau\in\N}\) and \(\{A_\tau^{\eta_{i_\tau}}\}_{\tau\in\N}\)
are decreasing sequences with respect to the inclusion operation, with  
\(\cap_{\tau\in\N} A_\tau = 
\cap_{\tau\in\N} A_\tau^{\eta_{i_\tau}} =  \partial \calO\), we can let \(\tau\to0\)
to conclude from \eqref{eq:testAtau}--\eqref{eq:esttau} that  $|D^2 w|(\partial
\calO) =0$.    

Finally, assuming that $u \in W^{2,1}(\calO;\mathbb R^m)$, we
can take $\eta_i$ such that, in addiction to \eqref{1}--\eqref{7},
we also require that
$$
\int_{U_i} |\nabla^2 (\varrho_i u)- \nabla^2 (\varphi_{\eta_i}\ast
\varrho_i u)| dx \leq \varepsilon 2^{-1-i}, 
$$
 which yields
\begin{equation*}
\begin{aligned}
\int_\calO |\nabla ^2 u -\nabla^2 v_\eta|dx < \varepsilon. 
\end{aligned}\qedhere
\end{equation*}
\end{proof}

\section*{Acknowledgments}

R.~Ferreira was partially supported by King Abdullah University of Science and Technology (KAUST) baseline funds and KAUST
OSR-CRG2021-4674

J.~Matias acknowledges the support of GNAMPA-INdAM through the project ``Professori Visitatori 2022" and the hospitality of Dipartimento di Scienze di Base e Applicate per l'Ingegneria of Sapienza-University of Rome, where he has been Visiting Professor in the Spring Semester 2023.
His research was also supported through FCT/Portugal through CAMGSD, IST-ID, projects UIDB/04459/2020 and UIDP/04459/2020.

E.~Zappale acknowledges the support of Piano Nazionale di Ripresa e Resilienza (PNRR) - Missione 4 ``Istruzione e Ricerca''
- Componente C2 Investimento 1.1, ``Fondo per il Programma Nazionale di Ricerca e
Progetti di Rilevante Interesse Nazionale (PRIN)" - CUP 853D23009360006.  She is a member of the Gruppo Nazionale per l'Analisi Matematica, la Probabilit\`a e le loro Applicazioni (GNAMPA) of the Istituto Nazionale di Alta Matematica ``F.~Severi'' (INdAM). 
She also acknowledges partial funding from the GNAMPA Project 2023 \emph{Prospettive nelle scienze dei materiali: modelli variazionali, analisi asintotica e omogeneizzazione}. She is also a collaborator of Centro de Investiga\c{c}\~ao em Matem\'atica e Aplica\c{c}\~oes - Universidade de \'Evora. 
 The work of E.~Zappale is also supported by Sapienza - University of Rome through the projects Progetti di ricerca medi, (2021), coordinator  S. Carillo e Progetti di ricerca piccoli,  (2022), coordinator E. Zappale.
She gratefully acknowledges the hospitality and support of CAMGSD, 
IST-ID.

The authors thank the anonimous referees for their careful reading and for having pointed out the references \cite{KeZh19, 
	ShXi17, ShXi17b, ShXuYa14,  ShZhWa22, Win17, Win18, Zhe19, Zhe22}.

\bibliographystyle{plain} % We choose the "plain" reference style

\end{document}